\documentclass[11pt, letterpaper]{amsart}
\usepackage{amsfonts,amsmath, amsthm, amssymb, latexsym, epsfig, mathtools, xcolor, array}
\usepackage[english]{babel}
\usepackage[utf8]{inputenc}
\usepackage[all]{xy}
\usepackage{xspace}
\usepackage{comment}
\usepackage{setspace}
\usepackage{enumerate, enumitem}
\usepackage{stmaryrd}
\usepackage{tikz}
\usepackage{tikz-cd}
\usepackage[mathscr]{eucal}
\usepackage{hyperref}
	\newcommand{\Ext}{\ensuremath{\operatorname{Ext}}}
	
	\newcommand{\multialg}[1]{\mathcal{M}(#1)\xspace}

	\newcommand{\Ad}{\ensuremath{\operatorname{Ad}}\xspace}
	\renewcommand{\min}{\mathrm{min}}

	\newcommand{\Aut}{\mathrm{Aut}}
	
    \newcommand{\Inn}{\mathrm{Inn}}
	
	\newcommand{\Aff}{\mathrm{Aff}}
    \DeclareMathOperator{\K}{K}
    \DeclareMathOperator{\T}{T}
    \DeclareMathOperator{\KT}{Ell_s}
    \DeclareMathOperator{\KTu}{KT_u}
    \DeclareMathOperator{\TKT}{\underline{K}T_s}
    \DeclareMathOperator{\TKTu}{\underline{K}T_u}
	\newcommand{\Ka}{\K^{\mathrm{alg}}_1}
    \newcommand{\HKa}{\overline{\K}^{\mathrm{alg}}_1}
	
	\newcommand{\Kd}{\K^{\Delta}_1}
    \newcommand{\HKd}{\overline{\K}^{\Delta}_1}
	\newcommand{\Thd}{\mathrm{Th}^{\Delta}}
    \newcommand{\oimrho}[1]{\overline{\rho_{#1}(\K_0(#1))}}
    \newcommand{\imrho}[1]{\rho_{#1}(\K_0(#1))}

	\newcommand{\id}{\mathrm{id}}
        \newcommand{\cs}{\mathrm{C}^\ast}
        \newcommand{\Det}{\mathrm{Det}}

        \newcommand{\hash}{\mathrel{\#}}

        \newcommand{\dd}[1]{\tfrac{\mathrm d}{\mathrm d#1}}
        \newcommand*{\suchthat}{\;\ifnum\currentgrouptype=16 \middle\fi|\;}
	
\newcommand{\Ped}{\mathrm{Ped}}

\newcommand{\Tr}{\mathrm{Tr}}

\newcommand{\Th}{\mathrm{Th}}
\newcommand{\Thod}{\overline{\mathrm{Th}}^{\Delta}}

    \newcommand{\isom}{\xrightarrow{\cong}}

    \theoremstyle{plain}
	\newtheorem{thma}{Theorem}
	
	\newtheorem{introcorollary}[thma]{Corollary}
	
	\theoremstyle{definition}

	\theoremstyle{plain}
	\newtheorem{thm}{Theorem}[section]
	\newtheorem{lemma}[thm]{Lemma}
	\newtheorem{theorem}[thm]{Theorem}
	\newtheorem{proposition}[thm]{Proposition}
	\newtheorem{corollary}[thm]{Corollary}

	\theoremstyle{definition}
	\newtheorem{definition}[thm]{Definition}
	
	\newtheorem{remark}[thm]{Remark}
	
	\newtheorem{example}[thm]{Example}

	\numberwithin{equation}{section}
	\numberwithin{figure}{section}

\begin{document}
	\title[$\mathrm{K}_1^\Delta$ for $\mathrm{C}^\ast$-algebras]{$\boldsymbol{\mathrm{K}_1^\Delta}$ for $\boldsymbol{\mathrm{C}^\ast}$-algebras}
    \author{James Gabe} 
    \author{Alistair Miller}

\thanks{J.~G.~was supported by the Carlsberg Foundation Grant CF24-1144, and A.~M.~was supported by the Research Foundation Flanders (FWO) through Project 1212126N. Both authors were supported by the Independent Research Fund Denmark Grant 1054-00094B}

\begin{abstract}
We introduce a topological abelian group $\mathrm{K}_1^\Delta(A)$ for every $\mathrm{C}^\ast$-algebra $A$, which is functorial in $A$ and invariant under stabilisations. $\mathrm{K}_1^\Delta$ can distinguish group actions indistinguishable by equivariant (K)K-theory and traces --- we illustrate this on the irrational rotation algebra by exhibiting strongly outer actions of $\mathbb Z$ which, via $\mathrm{K}_1^\Delta$, are not (stably) cocycle conjugate. These actions are moreover trivial in $\mathrm{KK}^{\mathbb Z}$ and on the Hausdorffised unitary algebraic $\mathrm{K}_1$ group $\overline{\mathrm{K}}_1^{\mathrm{alg}}(A)$.

The Hausdorffisation $\overline{\mathrm{K}}_1^{\Delta}(A)$ of $\mathrm{K}_1^\Delta(A)$ forms the final part of the total stable invariant $\mathrm{\underline{K}T}_{\mathrm{s}}(A)$, which we also introduce. We prove that $\mathrm{\underline{K}T}_{\mathrm{s}}$ classifies $*$-homomorphisms up to approximate unitary equivalence between certain stable $\mathrm{C}^\ast$-algebras which contain a non-zero projection.

$\mathrm{K}_1^\Delta(A)$ is new even in the unital setting, but under further assumptions on $A$ recovers the unitary algebraic $\mathrm{K}_1$ group $\mathrm{K}_1^{\mathrm{alg}}(A)$, whereas the Hausdorffisations $\overline{\mathrm{K}}_1^{\Delta}(A)$ and $\overline{\mathrm{K}}_1^{\mathrm{alg}}(A)$ agree for every unital $A$. 
\end{abstract}

\maketitle

\tableofcontents

\section{Introduction}
The unital classification theorem for $\cs$-algebras (\cite{GLN2}, \cite{EGLN}, \cite{CGSTW}) sta\-tes that unital simple separable nuclear $\cs$-algebras $A$ which are $\mathcal Z$-stable and satisfy the UCT are classified up to isomorphism by the invariant $\KTu$ containing K-theory, traces and the pairing map:
\begin{equation}\label{Elliott invariant}
\KTu(A) := (\K_0(A),\K_1(A),[1_A]_0,\T_1(A), \K_0(A) \xrightarrow{\rho_A} \Aff \T_1(A)).
\end{equation}
However, the classification of $*$-homomorphisms (up to various weak notions of unitary equivalence) between such $\cs$-algebras -- and in particular automorphisms -- requires further invariants. To classify $*$-homomorphisms up to approximate unitary equivalence \cite{CGSTW}, these extra invariants are the K-theory with coefficients groups $\K_*(A;\mathbb Z / n \mathbb Z)$, the Hausdorffised unitary algebraic $\K_1$ group $\HKa(A)$, and a system of natural homomorphisms on top of the pairing map $\rho_A$; altogether these form the \emph{total unital invariant} $\TKTu$. 

For non-unital $\cs$-algebras, the invariants above require modifications. All of the K-theory groups themselves, including those with coefficients, are well-suited to non-unital $\cs$-algebras. However, there is no unit class $[1_A]_0$ in $\K_0(A)$, and the tracial state space $\T_1(A)$ does not necessarily capture all the relevant tracial information; indeed it vanishes for stable $\cs$-algebras. The stable Elliott invariant
\begin{equation}\label{eq:KTs}
\KT(A) := (\K_0(A),\K_1(A),\T_{\mathbb R}(A),\K_0(A) \xrightarrow{\rho_A} \T_{\mathbb R}(A)^\ast)
\end{equation}
incorporates the space $\T_{\mathbb R}(A)$ \cite{GMtraces} of self-adjoint traces defined on the Pedersen ideal $\Ped(A)$ as the tracial invariant (by duality this is equivalent to the data of its topological dual space $\T_{\mathbb R}(A)^\ast$). Under mild assumptions (e.g. if $A$ is {simple} or commutative \cite[Corollary 6.5]{GMtraces}), the dual space $\T_{\mathbb R}(A)^\ast$ is isomorphic to the space $\Aff_0 \T_+(A)$ of real affine functions vanishing at $0$ on the space of lower semicontinuous densely defined positive traces, and for unital $A$, there is a natural isomorphism $\T_{\mathbb R}(A)^\ast \cong \Aff \T_1(A)$. 

The Hausdorffised unitary algebraic $\K_1$ group $\HKa(A)$ has two drawbacks: for a stable $\cs$-algebra $A$ it is simply the $\K_1$ group $\K_1(A)$, and it has no hope of distinguishing the asymptotic unitary equivalence classes of $*$-homomorphisms within the same approximate unitary equivalence class. 

The (unHausdorffised) unitary algebraic $\K_1$ group $\Ka(A)$ is simply the abelianisation of the infinite unitary group $\mathcal U_\infty(A) = \varinjlim_n \mathcal U_n(A)$. This is a semitopological abelian group and can better distinguish asymptotic unitary equivalence classes than $\HKa$, but retains the first drawback for stable $\cs$-algebras, and is poorly-behaved for general $\cs$-algebras.

In this article we introduce the invariant $\Kd$, which assigns a topological abelian group $\Kd(A)$ to each $\cs$-algebra $A$, and is functorial with respect to $*$-homomorphisms. The group $\Kd(A)$ is generated by unitaries in $\mathcal U_\infty(\Ped(A))$ modulo the relation that two unitaries are equivalent if they can be connected by a unitary path with trivial de la Harpe--Skandalis determinant. 
We use a modified version of the de la Harpe--Skandalis determinant \cite{dlHS-det} better suited to handling unbounded traces, see Section \ref{s:Det}.

\begin{thma}[Corollary \ref{c:K1Dfullher} and Propositions \ref{p:asu}, \ref{p:exactseq} and \ref{p:pure} respectively]
The functor $\Kd$ satisfies the following properties:
\begin{enumerate}
\item Stable invariance: for any $\cs$-algebra $A$ the top-corner inclusion $A \hookrightarrow A \otimes \mathcal K$ induces an isomorphism $\Kd(A) \isom \Kd(A \otimes \mathcal K)$.
\item Asymptotic unitary equivalence invariance: if $\phi, \psi \colon A \to B$ are $*$-homomorphisms with $A$ separable which are asymptotically unitarily equivalent, then $\Kd(\phi) = \Kd(\psi)$.
\item For every $\cs$-algebra $A$, the group $\Kd(A)$ fits into a natural exact sequence
\begin{equation}\label{eq:exactseqintro}
\K_0(A) \xrightarrow{\rho_A} \T_{\mathbb R}(A)^\ast \xrightarrow{\Thd_A} \Kd(A) \to \K_1(A) \to 0.
\end{equation}
\item For each unital separable simple exact finite pure $\cs$-algebra $A$, there is a natural isomorphism $\Kd(A) \cong \Ka(A)$.
\end{enumerate}
\end{thma}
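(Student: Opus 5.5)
The plan is to build $\Kd$ concretely and then derive the four properties in the order (3), (1), (2), (4), since the exact sequence drives everything else.

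\emph{Setup.} First I fix the modified de la Harpe--Skandalis determinant of Section \ref{s:Det}. For a piecewise smooth path $\xi\colon[0,1]\to\mathcal U_\infty(\Ped(A))^\sim$, set $\Det(\xi)\in\T_{\mathbb R}(A)^\ast$ by
\[
\Det(\xi)(\tau)=\frac{1}{2\pi i}\int_0^1\tau\bigl(\xi'(t)\xi(t)^*\bigr)\,dt .
\]
This is defined because $\xi'\xi^*$ lies in matrices over $\Ped(A)$. I then define $\Kd(A)$ as unitaries in $\mathcal U_\infty(\Ped(A)^\sim)$ with scalar part $1$, modulo homotopy through paths of trivial determinant, with the quotient topology.

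\emph{Step 1: the exact sequence (3).} I define $\Thd_A(f)$ as the class of the endpoint of a path $\xi$ starting at $1$ with $\Det(\xi)=f$. Surjectivity of $\Det$ onto $\T_{\mathbb R}(A)^\ast$ comes from paths $e^{2\pi i t h}$ with $h$ self-adjoint in $\Ped(A)$, together with the duality of \cite{GMtraces}, which says the functionals $\tau\mapsto\tau(h)$ exhaust $\T_{\mathbb R}(A)^\ast$.
\begin{itemize}
\item Exactness at $\K_1(A)$ is the observation that every $\K_1$ class is represented by a unitary over $\Ped(A)^\sim$, because $\Ped(A)$ is dense and holomorphically closed.
\item Exactness at $\Kd(A)$: a unitary trivial in $\K_1$ is joined to $1$ by some path, and its class is $\Thd_A$ of that path's determinant.
\item Exactness at $\T_{\mathbb R}(A)^\ast$: the determinant of a loop depends only on its class in $\K_0(A)\cong\K_1(SA)$ via Bott periodicity, and there it equals $\rho_A$. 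This is the usual dlHS argument, applied now to Pedersen-ideal loops.
\end{itemize}

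\emph{Step 2: stability (1) and asymptotic invariance (2).} For (1), I note that $\Ped(A\otimes\mathcal K)$ contains the algebraic tensor product $\Ped(A)\odot M_\infty$, and that traces correspond via $\tau\mapsto\tau\otimes\Tr$. The five lemma then applies to the exact sequences of Step 1: the maps on $\K_0$, $\K_1$ and $\T_{\mathbb R}(\cdot)^\ast$ are all isomorphisms, and the top-corner inclusion is compatible with $\rho$ and $\Thd$. This gives injectivity and surjectivity once I also check that the quotient topology is preserved. For (2), let $u_t$ be a unitary path with $\Ad u_t\circ\phi\to\psi$ pointwise. For a fixed unitary $v$ I compare $\psi(v)$ with $u_t\phi(v)u_t^*$, which has the same class as $\phi(v)$ by a trace-invariance computation. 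The difference is a small unitary $w=\psi(v)u_t\phi(v)^*u_t^*$. I then take the explicit path $e^{s\log w}$, whose determinant is a real part of $\tau(\log w)$. The work is to show this determinant can be made trivial, or absorbed, using the path $t\mapsto u_t$ and separability. I expect to bound it by a winding-type quantity that is continuous in $t$ and locally constant, and hence zero.

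\emph{Step 3: identifying with $\Ka$ (4), the main obstacle.} For unital $A$, $\Ped(A)=A$, and the natural map $\mathcal U_\infty(A)\to\Kd(A)$ factors through the abelianisation because commutators have trivial determinant. So there is a map $\Ka(A)\to\Kd(A)$. I would compare the exact sequence of Step 1 with the known sequence
\[
\K_0(A)\to\Aff\T_1(A)\to\Ka(A)\to\K_1(A)\to0,
\]
which needs that commutators exhaust the unitaries of trivial determinant in $\mathcal U_\infty^0(A)$. That in turn needs exactly the Thomsen/Ng--Robert type result that unitaries with vanishing dlHS determinant are finite products of commutators. Pureness, simplicity, exactness and finiteness are the hypotheses under which this is available, in the form of the results of Ng and Robert on commutators in pure $\cs$-algebras. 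Given that, the five lemma with $\T_{\mathbb R}(A)^\ast\cong\Aff\T_1(A)$ finishes the argument. I expect this commutator step to be the real obstacle, and I would try to cite it rather than reprove it.
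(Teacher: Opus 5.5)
Your parts (1) and (3) follow the paper: the five lemma on the natural sequence \eqref{eq:exactseq}, and determinants of Pedersen-ideal loops computed by $\rho_A$. Part (4) rests on the same input as the paper, but the paper argues more directly. The map $\Ka(A)\to\Kd(A)$ is surjective. If $[u]_\Delta=0$, there is a path from $1_n$ to $u$ in $\mathcal U_n(A)$ with zero determinant, so $u$ is a product of commutators by Ng--Robert, applied to $M_n(A)$, which has stable rank one by Lin's theorem. Your five-lemma route already needs this commutator theorem to make $\widehat a\mapsto[e^{2\pi i a}]_{\mathrm{alg}}$ well defined on the un-Hausdorffised $\Ka(A)$, so it gains nothing. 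Also, (4) is a citation; the real difficulty is (2).

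\textbf{There is a genuine gap in (2).} You correctly reduce to showing $\Thd_B(\widehat{a_t})=0$, where $a_t=\tfrac{1}{2\pi i}\log w_t$. Constancy in $t$ is in fact available. For $t<t'$ near $1$, concatenate $s\mapsto e^{-2\pi i s a_t}\psi(v)$, then $s\mapsto u_s\phi(v)u_s^\ast$ on $[t,t']$ (determinant $0$ by Lemma \ref{l:pathconjmulti}), then the reversed exponential path for $a_{t'}$. This loop at $\psi(v)$ is uniformly close to the constant loop, so it has determinant $0$ by Lemma \ref{l:unitary} and homotopy invariance of $\Det_B$; hence $\widehat{a_t}=\widehat{a_{t'}}$.

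However, ``locally constant, hence zero'' needs the constant to be identified, i.e.\ $\widehat{a_t}\to 0$ as $t\to 1$. The fact that $\|a_t\|\to 0$ gives no such control, because traces in $\T_{\mathbb R}(B)$ are unbounded on $\Ped(B)$. The example after Proposition \ref{p:DetOfShortPath}, with $\lambda_n=c/n$, has $e^{2\pi i \mathbf a(t)}\to 1$ in norm while $\Tr(\mathbf a(t))=c$ for all $t>0$. So norm-small Pedersen-ideal unitaries can carry a fixed non-zero $\Kd$-class.

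The missing idea is Lemma \ref{l:Dvtrace}. Fix a hereditary $D\subseteq\Ped(B)$ with $\phi(v),\psi(v)\in\mathcal U_n(D)$, and put $D_t=\cs(M_n(D),u_tM_n(D)u_t^\ast)$. Then $a_t\in D_t$ and $\|\tau^{(n)}|_{D_t}\|\le 2n\|\tau|_D\|$ uniformly in $t$, which forces $\widehat{a_t}\to 0$ uniformly on compact sets of traces (Propositions \ref{p:DetOfShortPath} and \ref{p:Detasym}). The limit must be taken in the Hausdorff space $\T_{\mathbb R}(B)^\ast$. Convergence $\Thd_B(\widehat{a_t})\to 0$ only in $\Kd(B)$ would give equality merely in $\HKd(B)$.

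A smaller point concerns the topology. ``The quotient topology'' cannot mean the quotient of the norm topology on $\mathcal U_\infty(\Ped(A)^\sim)$. By the same example, that topology on $\Kd(\mathcal K)\cong\mathbb R/\mathbb Z$ is indiscrete, so your check in (1) that the topology is preserved would fail. The paper instead uses the basic sets $U_{x,S,\epsilon}$, defined through determinants.
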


The topological abelian group $\Kd(A)$ is generally not Hausdorff. We also study its Hausdorffisation $\HKd(A)$, which is a Hausdorff topological abelian group. This group turns out to agree with $\HKa(A)$ for unital $\cs$-algebras but retains many of the same benefits as $\Kd(A)$ such as stability. It is also an approximate unitary equivalence invariant, and is therefore suitable for Elliott intertwining arguments in classification theory.

\begin{thma}[Proposition \ref{p:HK1Dfullher}, Corollaries \ref{c:approx} and \ref{c:HK1Dexactseq} and Proposition \ref{p:HK1Dalg} respectively]
The functor $\HKd$ satisfies the following properties:
\begin{enumerate}
\item Stable invariance: for any $\cs$-algebra $A$ the top-corner inclusion $A \hookrightarrow A \otimes \mathcal K$ induces an isomorphism $\HKd(A) \isom \HKd(A \otimes \mathcal K)$.
\item Approximate unitary equivalence invariance: if $\phi, \psi \colon A \to B$ are $*$-homomorphisms which are approximately unitarily equivalent, then $\HKd(\phi) = \HKd(\psi)$.
\item For every $\cs$-algebra $A$, the group $\HKd(A)$ fits into a natural exact sequence
\begin{equation}\label{eq:exactseqintro2}
0 \to \frac{\T_{\mathbb R}(A)^\ast}{\, \, \overline{ \imrho{A} }\, \,} \xrightarrow{\Thod_A} \HKd(A) \to \K_1(A) \to 0.
\end{equation}
\item For each unital $\cs$-algebra $A$, there is a natural isomorphism $\HKd(A)$ $ \cong $ $\HKa(A)$.
\end{enumerate}
\end{thma}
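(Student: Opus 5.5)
This theorem collects four properties of $\HKd$, and the plan is to derive each from the corresponding property of $\Kd$ in the previous theorem. I work with $\HKd(A) = \Kd(A)/\overline{\{0\}}$, the quotient of $\Kd(A)$ by the closure of the identity. Every continuous homomorphism of topological groups maps the closure of the identity into the closure of the identity. So $\HKd$ is a functor, and any natural isomorphism of topological groups descends to the Hausdorffisations.

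\emph{Stable invariance.} The top-corner inclusion induces a homeomorphic isomorphism $\Kd(A)\to\Kd(A\otimes\mathcal K)$. This needs the homeomorphism, not just the algebraic isomorphism given by Corollary~\ref{c:K1Dfullher}; I would check this from the construction. Granting it, property (1) is immediate.

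\emph{Exact sequence.} The closure of zero in $\Kd(A)$ should be exactly $\Thd_A(\overline{\rho_A(\K_0(A))})$. Granting this, exactness of \eqref{eq:exactseqintro} passes to the quotient and gives \eqref{eq:exactseqintro2}. Injectivity of $\Thod_A$ is then exactly the statement that $\ker\Thd_A=\rho_A(\K_0(A))$ together with this description of $\overline{\{0\}}$. This identification is the main obstacle, and I would argue it in two directions.
\begin{itemize}
\item[$\supseteq$] By continuity, $\Thd_A$ maps the closure of $\ker \Thd_A$ into $\overline{\{0\}}$.
\item[$\subseteq$] Suppose $x\in\overline{\{0\}}$. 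Its image in $\K_1(A)$ lies in the closure of zero there. Since $\K_1(A)$ is discrete, the image is $0$, so $x=\Thd_A(f)$ for some $f\in\T_{\mathbb R}(A)^\ast$. Next I need $\Thd_A$ to be a quotient map onto the identity component, which should come from an explicit determinant-of-paths formula. Then $f$ lies in the closure of $\ker\Thd_A + \{\text{nbhd of }0\}$, that is, in $\overline{\rho_A(\K_0(A))}$.
\end{itemize}
Concretely, I would show that if unitaries $u_n\to 1$ all represent $x$, then paths from $u_n$ to $1$ of small length have determinant close to a fixed representative of $f$ modulo the image of $\rho_A$.

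\emph{Approximate unitary equivalence invariance.} Suppose $\psi=\lim \Ad v_n\circ\phi$ pointwise. For a unitary $u\in\mathcal U_\infty(\Ped(A))$, the classes $[\psi(u)]$ and $[v_n\phi(u)v_n^*]$ in $\Kd(B)$ become arbitrarily close. Inner automorphisms act trivially on $\Kd$, after passing to multiplier unitaries and using that $\Kd$ is a commutator-type quotient; I would obtain this from Proposition~\ref{p:asu} or verify it directly. Hence $[\psi(u)]-[\phi(u)]$ lies in every neighbourhood of $0$, so it vanishes in $\HKd(B)$.

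\emph{Unital case.} For unital $A$, I would compare the two exact sequences. Thomsen's sequence gives $0\to \Aff\T_1(A)/\overline{\rho_A\K_0(A)}\to\HKa(A)\to\K_1(A)\to0$. The natural map comes from $\mathcal U_\infty(A)=\mathcal U_\infty(\Ped(A))\to\Kd(A)$, which factors through the abelianisation $\Ka(A)$ and then through the Hausdorffisations. This map is compatible with $\T_{\mathbb R}(A)^\ast\cong\Aff\T_1(A)$, since for unital $A$ the modified determinant agrees with de la Harpe--Skandalis. The five lemma then gives an algebraic isomorphism. For it to be a homeomorphism, I would compare neighbourhood bases of the identity via the determinant description on both sides.
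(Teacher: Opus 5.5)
Parts (1), (3) and (4) of your plan essentially follow the paper's route, but part (2) has a genuine gap.

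\textbf{The gap in (2).} You assert that $[v_n\phi(u)v_n^\ast]_\Delta$ and $[\psi(u)]_\Delta$ ``become arbitrarily close'' in $\Kd(B)$. The only reason available is that $v_n\phi(u)v_n^\ast\to\psi(u)$ in norm, and norm convergence does \emph{not} imply convergence in $\Kd(B)$.

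\emph{Why norm convergence is not enough.} The topology on $\Kd(B)$ is measured by determinants, i.e.\ by traces in $\T_{\mathbb R}(B)$, and these may be unbounded on $\Ped(B)$. Lemma~\ref{l:unitarycontinuity} gives continuity only on $\mathcal U_k(D)$ for a \emph{fixed} $\cs$-subalgebra $D\subseteq\Ped(B)$. Your unitaries instead lie in the moving sets $v_n\mathcal U_k(D)v_n^\ast$. For a concrete failure, take $h_m=\tfrac{1}{2m}\sum_{j=1}^m e_{j,j}$ in $\mathcal K$. Then $e^{2\pi i h_m}\to 1$ in norm. Yet $[e^{2\pi i h_m}]_\Delta=\Thd_{\mathcal K}(\tfrac12)$ is a fixed non-zero element of the Hausdorff group $\Kd(\mathcal K)\cong\mathbb R/\mathbb Z$.

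\emph{What is missing.} In the approximately unitarily equivalent setting the approximants are conjugates of one fixed unitary, and the ingredient you need is the uniform trace bound of Lemma~\ref{l:Dvtrace}. The argument runs as follows.
\begin{enumerate}
\item Pick a hereditary $D\subseteq\Ped(B)$ with $\phi(u),\psi(u)\in\mathcal U_k(D)$.
\item For large $n$, set $a_n=\tfrac{1}{2\pi i}\log(\psi(u)^\ast v_n\phi(u)v_n^\ast)$. It lies in $D_{v_n}=\cs(M_k(D),v_nM_k(D)v_n^\ast)$.
\item Lemma~\ref{l:Dvtrace} gives $\|\tau^{(k)}|_{D_{v_n}}\|\le 2k\|\tau|_D\|$, independently of $n$.
\item For compact $S\subseteq\T_{\mathbb R}(B)$ one has $\sup_{\tau\in S}\|\tau|_D\|<\infty$. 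The path $s\mapsto\psi(u)e^{2\pi i s a_n}$ has determinant $\widehat{a_n}$, so
$\|\widehat{a_n}\|_S\le 2k\sup_{\tau\in S}\|\tau|_D\|\,\|a_n\|\to 0$.
\end{enumerate}
This is exactly the convergence your argument needs. Together with $[v_n\phi(u)v_n^\ast]_\Delta=[\phi(u)]_\Delta$ (Lemma~\ref{l:abelian}), it yields (2).

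\textbf{The other parts.}
\begin{itemize}
\item For (1), Corollary~\ref{c:K1Dfullher} already asserts a \emph{topological} isomorphism, so there is nothing left to check.
\item For (3), your neighbourhood argument is equivalent to the paper's Proposition~\ref{p:zeroclosure}, which uses openness of $\Thd_A$ (Proposition~\ref{p:Thomsen map}) and discreteness of $\K_1(A)$. However, your ``concrete'' version should speak of paths whose \emph{determinant} is small on compact sets of traces, not of norm-short paths, for the reason above. Small determinant is precisely the definition of the basic neighbourhoods $U_{0,S,\epsilon}$, and \eqref{eq:FTC} then finishes.
\item For (4), passing to the Hausdorffisations uses continuity of $\mathcal U_n(A)\to\Kd(A)$, which is Lemma~\ref{l:unitarycontinuity} with $D=A$. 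The homeomorphism claim is then automatic, since both Thomsen maps are open and $\K_1(A)$ is discrete.
\end{itemize}
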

In fact, we establish the isomorphism $\HKd(A) \cong \HKa(A)$ for a larger class of $\cs$-algebras which we call \textit{tracially unital} $\cs$-algebras; these algebras behave like unital $\cs$-algebras tracially and include all separable simple $\cs$-algebras with continuous scale such as the Razak--Jacelon algebra $\mathcal W$ \cite{Razak, Jacelon}. 

The functor $\HKd$ forms the final piece of the \emph{total stable invariant} $\TKT$, along with K-theory, K-theory with coefficients, the space $\T_{\mathbb R}(A)$ of self-adjoint traces, and a system of natural homomorphisms as in the total unital invariant (see Definition \ref{d:total} for a complete description of $\TKT$). The aim of the total stable invariant is to classify $*$-homomorphisms between stable $\cs$-algebras up to approximate unitary equivalence. In the setting where the domain has a full projection, we obtain the following classification theorem through the application of \cite{CGSTW}. $\TKT$ will similarly be used in \cite{CGSTW2} to get more general classification results of $\ast$-homomorphisms between more general $\cs$-algebras without assuming the existence of projections.

\begin{thma}\label{t:embeddingsintro}
Let $A$ and $B$ be stable separable nuclear $\cs$-algebras such that $A$ satisfies the UCT and contains a full projection, and $B$ is simple and $\mathcal Z$-stable. Then embeddings $A \hookrightarrow B$ which map traces on $B$ to faithful traces on $A$ correspond up to approximate unitary equivalence bijectively to morphisms $\TKT(A) \to \TKT(B)$ of the total stable invariant.
\end{thma}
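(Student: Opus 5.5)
The plan is to reduce to the unital classification theorem of \cite{CGSTW} by cutting down by a full projection, and then to check that the total stable invariant $\TKT$ is determined by the total unital invariant $\TKTu$ of the corner, and conversely.

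\textbf{Reduction to a unital corner.} Fix a full projection $p \in A$ and set $A_0 := pAp$. This is a unital separable nuclear $\cs$-algebra satisfying the UCT. By Brown's theorem and stability of $A$, there is an isomorphism $A \cong A_0 \otimes \mathcal K$, implemented so that $p$ corresponds to $1_{A_0} \otimes e_{11}$.

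\textbf{Restriction of an embedding.} Take an embedding $\phi \colon A \hookrightarrow B$ sending traces on $B$ to faithful traces on $A$. Then $q := \phi(p)$ is a non-zero projection in the simple algebra $B$, and $\phi$ restricts to a unital map $\phi_0 \colon A_0 \to qBq$. The corner $qBq$ is unital, simple, separable, nuclear and $\mathcal Z$-stable. Since $B$ is stable and $q$ is full, $B \cong qBq \otimes \mathcal K$.

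\textbf{Apply \cite{CGSTW}.} The unital classification theorem there says that unital full (here injective, with the faithfulness condition on traces) $*$-homomorphisms $A_0 \to qBq$ correspond up to approximate unitary equivalence to morphisms $\TKTu(A_0) \to \TKTu(qBq)$. Both the existence and the uniqueness halves are needed. The faithful-trace hypothesis must be translated: traces on $qBq$ extend uniquely to densely defined lower semicontinuous traces on $B$, and restrict to $A_0$ faithfully exactly when the condition in the statement holds.

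\textbf{Passage back to the stable algebras.} Recover $\phi$ from $\phi_0$ up to approximate unitary equivalence as $\phi_0 \otimes \id_{\mathcal K}$. Two points need care here.
\begin{enumerate}
\item Two embeddings $A \to B$ inducing the same $\TKT$ data satisfy $\phi(p) \sim \psi(p)$ in $B$. This uses that $\K_0(B)$ detects Murray--von Neumann equivalence, which holds because $B$ is simple and $\mathcal Z$-stable and so has strict comparison and cancellation for projections. Conjugating by a partial isometry, completed to a unitary in $\mathcal M(B)$ (and approximated inside the unitisation, or via stability), we may assume $\phi(p) = \psi(p) = q$.
\item Approximate unitary equivalence of $\phi_0$ and $\psi_0$ in $qBq$ then propagates to $\phi$ and $\psi$ via $A = \overline{\bigcup_n M_n(A_0)}$.
\end{enumerate}

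\textbf{Matching the invariants.} We need a natural bijection between morphisms $\TKT(A) \to \TKT(B)$ and pairs consisting of a morphism $\TKTu(A_0) \to \TKTu(qBq)$ together with a choice of $q$ up to equivalence. This is compatible with the unit class $[1_{A_0}]_0$ being sent to $[q]_0$; in fact the unit class is recovered from $\TKT$ as the image of $[p]_0$.

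The componentwise identifications are as follows.
\begin{enumerate}
\item K-theory and K-theory with coefficients are stable.
\item $\T_{\mathbb R}(A)^\ast \cong \T_{\mathbb R}(A_0)^\ast \cong \Aff \T_1(A_0)$ naturally, by the results quoted from \cite{GMtraces} together with hereditary-subalgebra invariance.
\item For the last component, $\HKd(A) \cong \HKd(A_0) \cong \HKa(A_0)$. This combines stable invariance of $\HKd$ (Proposition \ref{p:HK1Dfullher}) with the unital comparison (Proposition \ref{p:HK1Dalg}), and the same on the $B$ side.
\end{enumerate}
The structure maps, namely the pairing map and the maps $\Thod$ and $\HKd \to \K_1$, must also correspond to those in $\TKTu$. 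This uses naturality of the exact sequence in Corollary \ref{c:HK1Dexactseq}.

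\textbf{Main obstacle.} I expect the hardest step to be showing that a morphism of $\TKT$ actually yields a morphism of $\TKTu$ with the correct unit and the correct trace map into $\T_1(qBq)$. The $\TKT$ morphism is given only on the stable level, with the tracial map on $\T_{\mathbb R}$ unnormalised. One must check that it sends $\T_1$ of the corner to $\T_1(A_0)$ after the rescaling determined by $[q]$, that positivity is preserved, and that the identification $\HKd \cong \HKa$ intertwines the Thomsen-type maps. My first attempt would be to work with the determinant map $\Thd$ directly, comparing the modified de la Harpe--Skandalis determinant with the classical one on the unital corner.
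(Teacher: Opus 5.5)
Your architecture is the paper's: cut down to $pAp$, use a Brown-type isomorphism $A\cong pAp\otimes\mathcal K$, apply Theorem \ref{t:unital classification}, and transport back using stability of $\TKT$. However, the existence half has a genuine gap. You never say where the projection $q\in B$ comes from when you start from an abstract morphism $\Phi=(\underline\alpha,\beta,\gamma)\colon\TKT(A)\to\TKT(B)$.

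Your ``restriction'' step takes $q=\phi(p)$, which presupposes the embedding you are trying to build. Your ``matching'' step only says that the unit class should be $\alpha_0([p]_0)$. In a stable $\cs$-algebra, $\alpha_0([p]_0)$ is a priori just an element of $\K_0(B)$. Realising it as $[q]_0$ for a projection $q\in B$ is exactly where the faithful-trace hypothesis and $\mathcal Z$-stability enter the reduction.

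The paper does this as follows. Compatibility with the pairing gives
\[
\rho_B(\alpha_0([p]_0))(\tau)=\gamma(\rho_A([p]_0))(\tau)=\rho_A([p]_0)(\gamma^\ast\tau)>0
\]
for every non-zero $\tau\in\T_+(B)$, because $\gamma^\ast\tau$ is faithful and $p\neq 0$. Then R{\o}rdam's result for simple exact $\mathcal Z$-stable algebras \cite[Corollary 4.10]{ROrdamZstable}, together with stability of $B$, yields a non-zero (hence full) projection $q\in B$ with $[q]_0=\alpha_0([p]_0)$. Without this there is no unital corner $qBq$ to feed into Theorem \ref{t:unital classification}, so surjectivity of the correspondence is not established.

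Once $q$ is available, what you flag as the main obstacle is essentially formal. Since $pAp\subseteq A$ and $qBq\subseteq B$ are full hereditary, $\Phi$ descends to $\TKT(pAp)\to\TKT(qBq)$ by stability of $\TKT$. The trace map is automatically unital, since $\gamma(\widehat p)=\rho_B([q]_0)=\widehat q$. The identification with the invariant of \cite{CGSTW} goes through Propositions \ref{p:tracially unital trace space identification} and \ref{p:HK1Dalg}, and it must also intertwine the $\zeta$-maps, which are missing from your list. You also need the sharpened isomorphism of Lemma \ref{l:stabilisation trick} on the $B$ side as well, so that $\theta_B^{-1}\circ(\phi_0\otimes\id)\circ\theta_A$ really induces $\Phi$.

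Your uniqueness route is a legitimate variant of the paper's. The paper applies the unital theorem to each corner $A_n=\theta_A^{-1}(pAp\otimes M_n)$, uses \cite[Theorem 3.5]{Jiang97}, and then \cite[Proposition 3.13]{Gabe-o2class}. Your extension of a partial isometry to a multiplier unitary works because $1-e\sim 1\sim 1-f$ in $\mathcal M(B)$ for stable separable $B$. Two corrections, though:
\begin{enumerate}
\item $\phi(p)\sim\psi(p)$ comes from Jiang's result for full projections, not from general cancellation, which fails when $B$ is traceless.
\item $\phi(p)=\psi(p)$ together with $\phi_0\sim_{\mathrm{au}}\psi_0$ does not by itself propagate to $M_n(pAp)$. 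You must also conjugate so that the images of the matrix units $1\otimes e_{ij}$ agree, or simply apply the unital theorem to each $A_n$ directly.
\end{enumerate}
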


As a corollary we recover the following well-known result.

\begin{introcorollary}\label{c:classintro}
    Let $A$ and $B$ be stable simple separable nuclear $\mathcal Z$-stable $\cs$-algebras which satisfy the UCT and contain a non-zero projection. Then $A \cong B$ if and only if $\KT(A) \cong \KT(B)$. 
\end{introcorollary}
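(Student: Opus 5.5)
The plan is to deduce Corollary \ref{c:classintro} from Theorem \ref{t:embeddingsintro} by an Elliott two-sided approximate intertwining. The forward implication is trivial, since $\KT$ is functorial under isomorphisms. For the converse, fix an isomorphism $\KT(A) \cong \KT(B)$. The first task is to lift it to an isomorphism $\alpha \colon \TKT(A) \to \TKT(B)$ of total stable invariants.

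For the lift I will proceed in three stages.
\begin{enumerate}
\item The K-theory with coefficients. The UCT gives a lift of $\K_*(A) \cong \K_*(B)$ to an invertible $\mathrm{KK}$-element, which induces compatible isomorphisms of $\K_*(\,\cdot\,;\mathbb Z/n\mathbb Z)$ commuting with the Bockstein maps.
\item The group $\HKd$. I will use the natural split exact sequence \eqref{eq:exactseqintro2} from Corollary \ref{c:HK1Dexactseq}. The kernel term $\T_{\mathbb R}(\cdot)^\ast/\overline{\rho(\K_0)}$ is already identified by the $\KT$ isomorphism, since that isomorphism preserves the pairing. I then need an isomorphism $\HKd(A) \to \HKd(B)$ extending this map on the kernel and lifting the given map on $\K_1$. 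I expect the extension to exist because the kernel is divisible: it is a quotient of a real vector space. So the sequence splits (non-naturally), and one can choose splittings on both sides and glue. I need to check that this can be done compatibly with the rest of the $\TKT$ structure maps, namely the maps from $\K_0(\,\cdot\,;\mathbb Z/n)$ into $\HKd$ or its torsion. This is the analogue of the unital argument in \cite{CGSTW}, and I will imitate it.
\item Positivity. Since $A$ and $B$ are simple, every nonzero trace is faithful, so the trace condition in Theorem \ref{t:embeddingsintro} is automatic.
\end{enumerate}

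Next I apply Theorem \ref{t:embeddingsintro} twice. Both $A$ and $B$ are stable, separable, nuclear, simple, $\mathcal Z$-stable and UCT, and each contains a non-zero projection, which is full by simplicity. Applying the theorem to $\alpha$ and to $\alpha^{-1}$ gives embeddings $\phi \colon A \to B$ and $\psi \colon B \to A$ with $\TKT(\phi) = \alpha$ and $\TKT(\psi) = \alpha^{-1}$. Then $\TKT(\psi\phi) = \TKT(\id_A)$, and the uniqueness half of the theorem gives $\psi\phi \approx_u \id_A$; likewise $\phi\psi \approx_u \id_B$. Elliott's intertwining argument then produces an isomorphism $A \cong B$.

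The hard step is stage 2, lifting the isomorphism to $\HKd$ compatibly with the torsion and coefficient maps. The first thing I would try is to reduce to the known unital case. Cut down by a nonzero projection $p \in A$, so that $A \cong pAp \otimes \mathcal K$ by Brown's theorem and stability. Choose $q \in B$ with $[q]_0 = \alpha_0([p]_0)$; this is possible because the simple, $\mathcal Z$-stable, stable algebra $B$ has cancellation/strict comparison for projections in this setting. Then $\KTu(pAp) \cong \KTu(qBq)$. From here I can either use the unital classification of \cite{CGSTW} to get $pAp \cong qBq$ directly, and hence $A \cong B$ by stabilising, or use the existing lift to $\TKTu$ together with stable invariance (Proposition \ref{p:HK1Dfullher}) and $\HKd \cong \HKa$ on unital algebras (Proposition \ref{p:HK1Dalg}) to obtain $\alpha$. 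The direct route through unital classification may bypass Theorem \ref{t:embeddingsintro} entirely, and I would use it if the compatibility check in stage 2 becomes messy.
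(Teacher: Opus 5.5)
Your proposal is correct and follows the paper's route. You extend the $\KT$-isomorphism to a $\TKT$-isomorphism by the argument of \cite[Theorem 3.9]{CGSTW}, which the paper likewise only cites; apply Theorem~\ref{t:stable classification} to it and its inverse; and conclude by uniqueness and two-sided intertwining. Your fallback via Brown's theorem and unital classification is exactly the alternative the paper remarks on.

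Two cautions apply. First, gluing arbitrary splittings will generally not respect the $\zeta$-maps. Instead, define $\beta$ on the subgroup generated by the images of $\Thod_A$ and of the $\zeta$-maps, check well-definedness using Proposition~\ref{p:zeta maps} and the Bockstein compatibility of $\underline\alpha$, and then extend using divisibility of $\ker(\HKd(B)\to\K_1(B))$. Second, in this non-unital setting the intertwining step should be quoted from \cite[Proposition 6.3]{Gabe-Oinfty}, since the standard reference \cite[Corollary 2.3.4]{RordamClassBook} has a gap.
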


While Corollary \ref{c:classintro} also follows straightforwardly from the unital classification theorem \cite{GLN2}, \cite{EGLN}, \cite{CGSTW} and Brown's stable isomorphism theorem \cite{Brown77}, the strategy of using Theorem \ref{t:embeddingsintro} further yields the following corollary.

\begin{introcorollary}
    Let $A$ be a stable simple separable nuclear $\mathcal Z$-stable $\cs$-algebra which satisfies the UCT and contains a non-zero projection. Then the canonical map $\Aut(A) \to \Aut(\TKT(A))$ induces a group isomorphism 
    \[\Aut(A)/\overline{\Inn(A)} \isom \Aut(\TKT(A)). \]
\end{introcorollary}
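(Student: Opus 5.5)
The plan is to derive the corollary from Theorem \ref{t:embeddingsintro} with $B = A$, in the same way Corollary \ref{c:classintro} is obtained. I first check the hypotheses. Since $A$ is simple, every projection in $A$ is either zero or full, so $A$ contains a full projection. Simplicity also makes every non-zero trace on $A$ faithful. Hence every automorphism, and more generally every injective $*$-homomorphism $A \to A$, sends traces on $A$ to faithful traces on $A$. Theorem \ref{t:embeddingsintro} therefore gives a bijection between approximate unitary equivalence classes of embeddings $A \hookrightarrow A$ and morphisms $\TKT(A) \to \TKT(A)$. This bijection is induced by the functor $\TKT$, and by Theorem B(2) (together with the analogous invariance of the K-theoretic and tracial parts) it is well defined on classes.

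Next I show that the canonical map $\Aut(A) \to \Aut(\TKT(A))$ is a group homomorphism and identify its kernel. It is a homomorphism by functoriality. Its kernel contains $\overline{\Inn(A)}$, because an automorphism in the point-norm closure of $\Inn(A)$ is approximately unitarily equivalent to $\id_A$. Here one has to be careful in the stable (non-unital) setting: "inner" should be read via unitaries in the minimal unitisation or in $\multialg{A}$. I would adopt the convention that $\overline{\Inn(A)}$ consists of the automorphisms approximately unitarily equivalent to $\id_A$, and check that this agrees with the point-norm closure. That agreement holds because $\mathcal U(\multialg{A})$ is strictly approximable by unitaries in $\tilde A$ when $A$ is stable; alternatively one can use unitaries in $1+\Ped(A)$. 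Conversely, if $\TKT(\alpha) = \id$, then injectivity in Theorem \ref{t:embeddingsintro} shows that $\alpha$ is approximately unitarily equivalent to $\id_A$. So the kernel is exactly $\overline{\Inn(A)}$, and $\overline{\Inn(A)}$ is normal in $\Aut(A)$.

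The main step is surjectivity. Given $\Phi \in \Aut(\TKT(A))$, Theorem \ref{t:embeddingsintro} provides embeddings $\phi, \psi \colon A \to A$ with $\TKT(\phi) = \Phi$ and $\TKT(\psi) = \Phi^{-1}$. Then $\TKT(\psi\phi) = \id = \TKT(\phi\psi)$, so injectivity gives $\psi\phi \approx_u \id_A$ and $\phi\psi \approx_u \id_A$. Elliott's approximate intertwining argument, applicable since $A$ is separable, then produces an automorphism $\alpha$ approximately unitarily equivalent to $\phi$. It follows that $\TKT(\alpha) = \TKT(\phi) = \Phi$.

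The expected obstacle is making the intertwining work with non-unital stable algebras. The unitaries implementing the approximate equivalences live in $\multialg{A}$ or $\tilde A$, and one must ensure the conjugated maps still send $A$ into $A$ and that the resulting limit is an automorphism. I would handle this by using unitaries in $\tilde A$, which suffices after approximating multiplier unitaries strictly on finite sets, and then running the standard two-sided intertwining on a dense sequence.
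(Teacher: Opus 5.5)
Your proof is correct and follows the paper's route. You specialise Theorem \ref{t:stable classification} to $B=A$. Uniqueness, together with approximate unitary invariance of $\TKT$, identifies the kernel with $\overline{\Inn(A)}$. Existence for $\Phi$ and $\Phi^{-1}$, followed by a two-sided intertwining, gives surjectivity.

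The genuine difference is how the non-unital intertwining is made to work.

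\emph{The paper's fix.} It uses its two-sided intertwining proposition, built from \cite[Proposition 6.3]{Gabe-Oinfty}. Since multiplier unitaries cannot in general be pushed through a possibly degenerate $\phi$, that result only yields an isomorphism approximately Murray--von Neumann equivalent to $\phi$. The paper then applies \cite[Proposition 3.13]{Gabe-o2class}, which upgrades this to approximate unitary equivalence because the target is stable.

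\emph{Your fix.} You reduce to unitaries in $\tilde A$, where $\phi\circ\Ad v=\Ad \tilde\phi(v)\circ\phi$ for the unitised map $\tilde\phi$. The classical argument then runs verbatim and directly gives an automorphism approximately unitarily equivalent to $\phi$. This reduction is valid, but it is exactly where stability enters, and it needs an actual argument rather than an assertion:
\begin{enumerate}
\item For separable stable $A$, $\mathcal U(\multialg{A})$ is norm-connected by the Cuntz--Higson version of Kuiper's theorem.
\item Hence every multiplier unitary is a finite product of exponentials $e^{ih_j}$ with $h_j\in\multialg{A}_{\mathrm{sa}}$.
\item For an approximate unit $(e_\lambda)$ of $A$, the unitaries $e^{ie_\lambda h_j e_\lambda}\in\mathcal U(\tilde A)$ converge strictly to $e^{ih_j}$, since multiplication is jointly strictly continuous on bounded sets.
\end{enumerate}
For general non-unital algebras this strict approximation can fail. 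So your route trades the Murray--von Neumann machinery for this connectedness theorem; both approaches use stability at the same point.

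One minor correction. Theorem \ref{t:stable classification} does not give a bijection onto all endomorphisms of $\TKT(A)$. It only covers morphisms whose trace component sends non-zero positive traces to faithful traces. You need this condition for $\Phi$ and $\Phi^{-1}$ before invoking existence, so state it explicitly. It holds because the dual of the invertible positive trace component $\gamma$ sends non-zero positive traces to non-zero positive traces, and these are faithful since $A$ is simple.
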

We illustrate this result by computing $\Aut(A)/\overline{\Inn(A)}$ for certain stable $\cs$-algebras $A$, see Examples \ref{e:Aut1} and \ref{e:Aut2}.

We predict that in the future, $\Kd(A)$ will play a prominent role in classifying group actions on $\mathrm{C}^\ast$-algebras. In the purely infinite setting, certain outer actions of a group $G$ may be classified up to cocycle conjugacy using K-theoretic invariants such as $\mathrm{KK}^G$ \cite{IzumiMatuiPolyZII, DynamicalKP}. Classification of group actions up to cocycle conjugacy is even more challenging in the tracial setting, yet there is remarkable progress for various special classes \cite{MatuiSatoStronglyOuter, MatuiSatoStronglyOuterII} of actions. One should expect future classification results to involve K-theory, traces and $\mathrm{KK}^G$. The following example suggests that $\Kd$ should be added to this arsenal, as it exhibits elementary strongly outer actions of $\mathbb Z$ on irrational rotation algebras that K-theory, traces and $\mathrm{KK}^G$ cannot distinguish, but which $\Kd$ can. Here, given an action $\alpha \colon G \to \Aut(A)$, we write $\Kd(A,\alpha)$ for $\Kd(A)$ along with the $G$-module structure it inherits, and similarly for $\T_{\mathbb R}(A,\alpha)^\ast$.

\begin{thma}
There exist strongly outer actions $\alpha, \beta \colon \mathbb Z \to \Aut(A)$ on an irrational rotation algebra $A$ such that there exists a $\mathrm{KK}^{\mathbb Z}$-equivalence $x\in \mathrm{KK}^{\mathbb Z}((A, \alpha), (A, \beta))$ with $\K_0(x)([1_A]_0) = [1_A]_0$ and $\rho_A \circ \K_0(x) = \rho_A$, but also such that $(\T_{\mathbb R}(A,\alpha)^\ast \xrightarrow{\Thd_A} \Kd(A,\alpha)) \not \cong (\T_{\mathbb R}(A,\beta)^\ast \xrightarrow{\Thd_A} \Kd(A,\beta))$. Consequently, $(A,\alpha)$ and $(A,\beta)$ are not stably cocycle conjugate.
\end{thma}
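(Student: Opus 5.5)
We construct the actions explicitly on $A = A_\theta$, generated by unitaries $u,v$ with $vu = e^{2\pi i\theta}uv$. Both actions will be compositions of a fixed automorphism with gauge-type rotations.

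\textbf{Construction.} Let $\sigma \in \Aut(A)$ be induced by a matrix $M \in \mathrm{SL}_2(\mathbb Z)$, for instance $u \mapsto v$, $v \mapsto u^{-1}$ up to phase; this is the order-four "Fourier" automorphism. Then set
\[
\alpha = \sigma, \qquad \beta = \sigma \circ \gamma_z ,
\]
where $\gamma_z$ is the gauge rotation $u \mapsto z_1 u$, $v \mapsto z_2 v$, with $z = (z_1,z_2) \in \mathbb T^2$ chosen generically. If commuting the two factors is awkward, one can instead take both to be rotations $\gamma_z, \gamma_w$ composed with a hyperbolic $M$.

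\textbf{Step 1: strong outerness.} On $A_\theta$ the trace is unique, so strong outerness amounts to each $\beta^n$ ($n \neq 0$) acting non-weakly-inner in the GNS representation of the trace. When $M^n \neq 1$ (hyperbolic $M$), $\beta^n$ moves the Fourier basis nontrivially. This forces outerness in $L(\mathbb Z^2)_\theta$ by the standard argument for automorphisms of noncommutative tori: a unitary $w$ implementing $\beta^n$ must satisfy $w\, u^a v^b = \lambda\, u^{M^n(a,b)} w$, and this is impossible in $\ell^2$, since the orbits of $M^n$ are infinite.

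\textbf{Step 2: indistinguishability by $\mathrm{KK}^{\mathbb Z}$.} The rotations $\gamma_z$ are homotopic to the identity through automorphisms, via a path $z_t$ in $\mathbb T^2$. Hence $\sigma\circ\gamma_{z_t}$ gives a homotopy of $\mathbb Z$-actions. Homotopic actions are $\mathrm{KK}^{\mathbb Z}$-equivalent: the mapping-cylinder algebra $C([0,1],A)$ with the path of actions gives $\mathrm{KK}^{\mathbb Z}$-equivalences to both endpoints, because evaluation maps are $\mathrm{KK}$-equivalences and an equivariant map that is a nonequivariant $\mathrm{KK}$-equivalence is a $\mathrm{KK}^{\mathbb Z}$-equivalence, by the Pimsner--Voiculescu/Baum--Connes argument for $\mathbb Z$. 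The induced map on $\K_0$ is the identity composed with evaluation, so it fixes $[1_A]_0$ and preserves $\rho_A$.

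\textbf{Step 3: distinguishing via $\Kd$ (main obstacle).} Here we use the exact sequence from Theorem 1(3) together with unitality. We have $\T_{\mathbb R}(A)^\ast = \mathbb R$, and $\K_0$ has image $\mathbb Z+\theta\mathbb Z$. The map $\Thd_A$ has kernel exactly $\mathbb Z + \theta\mathbb Z$, which is dense, so $\Kd(A)$ is an extension
\[
0 \to \mathbb R/(\mathbb Z+\theta\mathbb Z) \to \Kd(A) \to \mathbb Z^2 \to 0 .
\]
By Theorem 1(4), or directly, this is $\Ka(A)$ with determinant coordinates. The extension splits via $[u],[v]$. Under $\beta = \sigma\gamma_z$ a lift changes by the determinant of the scalar phase: $\Kd(\beta)[u] = \Kd(\alpha)[u] + \Thd(\tfrac{1}{2\pi i}\log z_1)$ in $\mathbb R/(\mathbb Z+\theta\mathbb Z)$.

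Thus the $\mathbb Z$-module $\Kd(A,\beta)$ is the $\alpha$-module twisted by a cocycle $c_z \colon \mathbb Z^2 \to \mathbb R/(\mathbb Z+\theta\mathbb Z)$. Isomorphism of the pairs $(\T_{\mathbb R}^\ast \to \Kd)$ as $\mathbb Z$-modules forces $c_z$ to be a coboundary, i.e. to lie in $(M-1)\,\mathrm{Hom}(\mathbb Z^2, \mathbb R/(\mathbb Z+\theta\mathbb Z))$, modulo automorphisms of the data. Such automorphisms act on $\mathbb R$ by an order-preserving scalar fixing $1$ and preserving $\mathbb Z+\theta\mathbb Z$, hence trivially, and on $\mathbb Z^2$ by $\mathrm{GL}_2(\mathbb Z)$ elements commuting with $M$.

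When $M-1$ is invertible over $\mathbb Q$ (hyperbolic $M$), the cokernel of $M-1$ on $(\mathbb R/\Lambda)^2$ is trivial, so that choice fails. We must therefore pick $M$ with $M-1$ singular, e.g. $M$ unipotent, $u\mapsto u$, $v\mapsto uv$; strong outerness of $\sigma\gamma_z$ is then rechecked using the generic phase. In that case $\mathrm{coker}(M-1)\cong\mathbb R/\Lambda$, which is uncountable, while the relevant automorphism/normaliser group is countable. Hence a generic $z$ gives a non-coboundary class, and the two modules are non-isomorphic. This counting argument is the crux.

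Finally, stable cocycle conjugacy would induce an isomorphism of these data, by stability invariance in Theorem 1(1) and invariance of $\Kd$ under inner perturbations/cocycles. The latter follows from asymptotic unitary equivalence invariance, Theorem 1(2), since $\mathcal U(A_\theta)$ is connected. The conclusion follows.
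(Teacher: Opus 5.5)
Your argument is correct once you commit to the unipotent $\sigma\colon u\mapsto u,\ v\mapsto uv$ with $\alpha=\sigma$ and $\beta=\sigma\gamma_z$. However, it uses a different example from the paper.

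The paper takes $M=1$. Both actions are gauge rotations $\alpha_{z_i,w_i}$, strong outerness is quoted from Osaka--Phillips, and the $\mathrm{KK}^{\mathbb Z}$-equivalence comes from Meyer's theorem, since $[\alpha_i]=[\id_A]$ in $\mathrm{KK}$. In that case the cokernel of $M-1$ is all of $(\mathbb R/\Lambda)^2$, with $\Lambda=\mathbb Z+\theta\mathbb Z$. The residual symmetry is all of $\mathrm{GL}_2(\mathbb Z)$ together with a positive scalar $\lambda$ satisfying $\lambda\Lambda=\Lambda$. Strong outerness also forces both classes to be non-zero. Hence the paper has to assume $\theta$ is not quadratic (to get $\lambda=1$) and then separate $\mathrm{GL}_2(\mathbb Z)$-orbits by a countability argument.

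In your case, strong outerness of the untwisted $\sigma$ already follows from $M^n\neq 1$, so you can compare against the zero class. An intertwiner $\begin{psmallmatrix}\lambda & k\\ 0 & P\end{psmallmatrix}$ must satisfy $k\circ(N-1)=c_z\circ P$, where $N=\begin{psmallmatrix}1&1\\0&1\end{psmallmatrix}$ and $P=\pm\begin{psmallmatrix}1&m\\0&1\end{psmallmatrix}$. Evaluating at $[u]_1$ gives $0=\pm[s_1]$. So $\lambda$ drops out, no counting is needed, every irrational $\theta$ works, and any $s_1\notin\Lambda$ suffices. The price is that your actions are less elementary and act non-trivially on $\K_1$.

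Your $\mathrm{KK}^{\mathbb Z}$ step is also valid and gives underlying class $[\id_A]$. It uses the homotopy $\sigma\gamma_{z_t}$ together with the Meyer--Nest fact that, for $\mathbb Z$, an equivariant map which is a nonequivariant $\mathrm{KK}$-equivalence is a $\mathrm{KK}^{\mathbb Z}$-equivalence.

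Several statements along the way are wrong, though none is load-bearing.
\begin{itemize}
\item The Fourier automorphism you start with satisfies $(\sigma\gamma_z)^4=\id$ for every $z$, so it is not even outer, and $M-1$ is invertible over $\mathbb Q$ besides. State the unipotent choice from the outset.
\item In Step 1, ``the orbits of $M^n$ are infinite'' fails for unipotent $M$, since the axis $b=0$ is fixed. Comparing Fourier coefficients in $w\,u^av^b=c\,u^{a'}v^{b'}w$, with $(a',b')=M^n(a,b)$, actually shows that the moduli of the coefficients of $w$ are invariant under translation by $(1-M^n)\mathbb Z^2$. This subgroup is infinite whenever $M^n\neq 1$. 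That argument shows the untwisted $\sigma$ is strongly outer, so no generic phase is needed for $\alpha$.
\item In Step 3, the scalar on $\T_{\mathbb R}(A)^\ast$ need not fix $1$, because the data contain no unit class. As noted above, it drops out anyway.
\item $\mathcal U(A_\theta)$ is not connected, since $[u]_1\neq 0$ in $\K_1(A_\theta)$. Connectedness is irrelevant in any case: $[vuv^\ast]_\Delta=[u]_\Delta$ for every multiplier unitary $v$ (Lemma~\ref{l:abelian}), which is exactly the invariance under cocycle perturbations that you need.
\end{itemize}
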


In the above example, the canonical map $\HKa(A) \to \K_1(A)$ is an isomorphism of $G$-modules with respect to both $\alpha$ and $\beta$, hence $\HKa$ is also not helpful for distinguishing $(A,\alpha)$ and $(A,\beta)$.

We also predict that $\Kd$ will be useful for studying KMS states on $\cs$-algebras with a given flow, by considering the interaction between $\Kd$ of the (non-unital) crossed product and its induced trace, which is scaled by the dual action (see e.g. \cite[Theorem 3.2]{KishimotoKumjian}).

\textbf{Acknowledgements.}
A.~M.~would like to thank Sergio Gir\'on Pacheco for communicating Example \ref{e:Bunce Deddens} and G\'abor Szab\'o for a reference on strong outerness.

\textbf{Tool disclosure.} No LLMs were used in the production of this article.

\section{Preliminaries}

This article builds upon the space $\T_{\mathbb R}(A)$ of self-adjoint traces on a $\cs$-algebra $A$ introduced in \cite{GMtraces}; we recall here some of the basic facts needed to work with this invariant. Traces are defined on the Pedersen ideal $\Ped(A)$, which is the unique minimal dense two-sided ideal in $A$. We will make frequent use of \cite[Proposition 5.6.2]{Pedersen-book-automorphism} that for any finite set $X \subseteq \Ped(A)$, the hereditary $\cs$-subalgebra of $A$ generated by $X$ is contained in $\Ped(A)$, and its variation \cite[Proposition 2.2]{GMtraces} that for a finite set $\mathcal D$ of $\cs$-subalgebras of $\Ped(A)$, the hereditary $\cs$-subalgebra of $A$ generated by $D \in \mathcal D$ is contained in $\Ped(A)$. As a result, $\Ped(A)$ is the union of its $\cs$-subalgebras, and we equip it with the resulting inductive limit topological vector space structure. 

The space $\T_{\mathbb R}(A)$ consists of continuous self-adjoint tracial functionals on $\Ped(A)$, equipped with the weak$^\ast$ topology with respect to $\Ped(A)$. We write $\T_+(A)$ for the subset of positive traces. The dual space $\T_{\mathbb R}(A)^\ast$ is a quotient of $\Ped(A)_{\mathrm{sa}}$, and the quotient topology is given by the compact-open topology \cite[Theorem C]{GMtraces}. In this article, we write $\phi_\# \colon \T_{\mathbb R}(A)^\ast \to \T_{\mathbb R}(B)^\ast$ for the map induced by a $*$-homomorphism $\phi \colon A \to B$.

For $\tau \in \T_{\mathbb R}(A)$, we consider the (unnormalised) trace $\tau^{(n)} \in \T_{\mathbb R}(M_n(A))$ given at $a \in \Ped(M_n(A)) = M_n(\Ped(A))$ by 
\[ \tau^{(n)}(a) = \sum_{i=1}^n \tau(a_{i,i}). \]
For $a\in M_n(\Ped(A))_{\mathrm{sa}}$ we let $\widehat a \in \T_{\mathbb R}(A)^\ast$ denote the function given by
\[
\widehat a(\tau) = \tau^{(n)}(a) , \qquad \textrm{for all }\tau\in \T_{\mathbb R}(A).
\]
For a $\cs$-algebra $A$ we write $A^\dagger$ for the forced unitisation of $A$, and $\Ped(A)^\dagger$ for those $a \in A^\dagger$ with $s(a) - a \in \Ped(A)$ where $s\colon A^\dagger \to \mathbb C \subseteq A^\dagger$ is the canonical scalar $\ast$-homomorphism. For $n\in \mathbb N$ we let $\mathcal U_n(A)$ denote the group of unitaries $u\in M_n(A^\dagger)$ for which $1_{n}-u \in M_n(A)$. We write $\mathcal U_n(\Ped(A))$ for the unitaries $u \in \mathcal U_n(A)$ with $1_n - u \in M_n(\Ped(A))$. 
Also, we let $\mathcal M(A)$ denote the multiplier algebra.

\section{The de la Harpe--Skandalis determinant}\label{s:Det}

In this section we extend the de la Harpe--Skandalis determinant \cite{dlHS-det} to the setting of possibly unbounded self-adjoint traces on the Pedersen ideal, and document its basic properties. We provide a self-contained account, but follow the same procedure as for the classical de la Harpe--Skandalis determinant on bounded traces.

\begin{definition}[The de la Harpe--Skandalis determinant on specific unitary paths]
Let $A$ be a $\cs$-algebra and let $\mathbf u \colon [t_0,t_1] \to \mathcal U_n(\Ped(A))$ be a piecewise $C^1$ path  defined on a real interval $[t_0,t_1] \subseteq \mathbb R$ with values in $\mathcal U_n(D)$ for some $\cs$-subalgebra $D \subseteq \Ped(A)$. The \emph{de la Harpe--Skandalis determinant} $\Det_A(\mathbf u) \colon \T_{\mathbb R}(A) \to \mathbb C$ of $\mathbf u$ is given at $\tau\in \T_{\mathbb R}(A)$ by
\begin{equation}\label{eq:DetPiecewisesmooth}
    \mathrm{Det}_A(\mathbf u)(\tau) = \frac{1}{2\pi i} \int_{t_0}^{t_1} \tau^{(n)}(\mathbf u(t)^\ast \tfrac{\mathrm d}{\mathrm dt} \mathbf u(t)) \, \mathrm dt.
\end{equation}
\end{definition}

We extend the determinant to norm-continuous unitary paths $\mathbf u \colon [t_0,t_1] \to \mathcal U_n(\Ped(A))$. The following elementary lemmas explain how this is done, while simultaneously recording some basic facts about $\mathcal U_n(\Ped(A))$.

A priori, it is not clear that $\Det_A(\mathbf u) \colon \T_{\mathbb R}(A) \to \mathbb C$ is continuous and real valued, and hence an element in $\T_{\mathbb R}(A)^\ast$, but this follows easily from Lemma \ref{l:unitary} \eqref{l:unitary-4} and \eqref{l:unitary-5} and Lemma \ref{l:Det_lemma} below, since each $\widehat a_j \in \T_{\mathbb R}(A)^\ast$.

\begin{lemma}\label{l:unitary}
    Let $A$ be a $\cs$-algebra, $n\in \mathbb N$ and $t_0, t_1 \in \mathbb R$ with $t_0 < t_1$. 
    \begin{enumerate}
        \item\label{l:unitary-1} $\mathcal U_n(\Ped(A))$ is a normal subgroup of $\mathcal U_n(\mathcal M(A))$.

        \item\label{l:unitary-1.5} $\mathcal U_n(\Ped(A))$ is norm-dense in $\mathcal U_n(A)$.
        
        \item\label{l:unitary-2} For $a\in M_n(\Ped(A))_{\mathrm{sa}}$ one has $e^{2\pi i a}\in \mathcal U_n(\Ped(A))$.
        
        \item\label{l:unitary-3} If $u,v\in \mathcal U_n(\Ped(A))$ and $\| u - v \| < 2$ then $a:=\tfrac{1}{2\pi i} \log(u^\ast v) \in M_n(\Ped(A))$, where $\log$ is the branch of logarithm with domain $\mathbb C \setminus (-\infty , 0]$, satisfies $u e^{2\pi i a} = v$ and
        \[
        \| a\| = \tfrac{1}{2\pi} \arccos(1-\tfrac{1}{2} \| u - v\|^2)
        \]
        where $\arccos \colon [-1,1]\to [0,\pi]$. 
        \item\label{l:unitary-6} For a $\cs$-subalgebra $D \subseteq \Ped(A)$ and piecewise $C^1$ unitary paths $\mathbf u,\mathbf v \colon [t_0,t_1] \to \mathcal U_n(D)$, 
        \[\Det_A(\mathbf u \mathbf v) = \Det_A(\mathbf u) + \Det_A(\mathbf v).\]
        \item\label{l:unitary-7} For a $\cs$-subalgebra $D \subseteq \Ped(A)$ and a piecewise $C^1$ self-adjoint path $\mathbf a \colon [t_0,t_1] \to M_n(D)_{\mathrm{sa}}$, the unitary path $e^{2 \pi i \mathbf a}$ has determinant
        \[\Det_A(e^{2 \pi i \mathbf a}) = \widehat{\mathbf a(t_1)} - \widehat{\mathbf a(t_0)}. \]
        \item\label{l:unitary-4} For norm-continuous unitary paths $\mathbf u, \mathbf v \colon [t_0,t_1] \to \mathcal U_n(\Ped(A))$ with
        \[
        \mathbf u(t_0) = \mathbf v(t_0),\quad \mathbf u(t_1) = \mathbf v(t_1), \quad \textrm{and} \quad \sup_{t\in [t_0,t_1]}\| \mathbf u(t) - \mathbf v(t)\| < 2,
        \]
        then there is an endpoint-preserving norm-homotopy between $\mathbf u$ and $\mathbf v$ with values in $\mathcal U_n(\Ped(A))$. If moreover $\mathbf u$ and $\mathbf v$ take values in $\mathcal U_n(D)$ for some $\cs$-subalgebra $D \subseteq \Ped(A)$, then the homotopy may be chosen within $\mathcal U_n(D)$. 
        \item\label{l:unitary-5} If $\mathbf u \colon [t_0,t_1] \to \mathcal U_n(\Ped(A))$ 
        is a norm-continuous unitary path, then for any $\epsilon > 0$, there is a $\cs$-subalgebra $D \subseteq \Ped(A)$ and a piecewise smooth unitary path $\mathbf v \colon [t_0,t_1] \to \mathcal U_n(D)$ with
        \[
        \mathbf u(t_0) = \mathbf v(t_0),\quad \mathbf u(t_1) = \mathbf v(t_1), \quad \textrm{and} \quad \sup_{t\in [t_0,t_1]}\| \mathbf u(t) - \mathbf v(t)\| < \epsilon.
        \]
        Moreover, we may pick $\mathbf v$ such that there exist $m\in \mathbb N$ and self-adjoint elements $a_1,\dots,a_m \in M_n(D)_{\mathrm{sa}}$ such that $\mathbf u(t_0)^\ast \mathbf u(t_1) = e^{2\pi i a_1}\cdots e^{2\pi i a_m}$ and $\Det_A(\mathbf v) = \sum_{j=1}^m \widehat a_j$. 
    \end{enumerate}
\end{lemma}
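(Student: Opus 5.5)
We prove the eight items in order; each uses only the structure of $\Ped(A)$ recalled in the preliminaries and elementary functional calculus.

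\emph{Items (1)--(3).} For (1), $\Ped(A)$ is an ideal of $\mathcal M(A)$, since it is invariant under multiplier conjugation by minimality. Hence $M_n(\Ped(A))$ is an ideal in $M_n(\mathcal M(A))$, and for $w\in \mathcal U_n(\mathcal M(A))$ and $u \in \mathcal U_n(\Ped(A))$ we get $1-w^\ast u w = w^\ast(1-u)w \in M_n(\Ped(A))$. Closure under products and adjoints follows in the same way: use $1-uv = (1-u)+u(1-v)$ and $1-u^\ast = -u^\ast(1-u)$.

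For (3), write $e^{2\pi i a}-1 = a f(a)$ with $f$ continuous. Then $a f(a)$ lies in the $\cs$-algebra generated by $a$, which lies in the hereditary subalgebra generated by $\{a\}$, and this is contained in $\Ped(A)$.

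For (2), given $u\in\mathcal U_n(A)$, first choose $u'\in \mathcal U_n(A)$ close to $u$ with $-1\notin\sigma(u')$. Alternatively, write $u$ locally as a finite product of exponentials, which is possible in the connected component. In general we avoid this and argue directly. Take an approximate unit $e_\lambda$ of $A$ inside $\Ped(A)$. Take $x = 1-u\in M_n(A)$ and approximate it by $y=e_\lambda x e_\lambda \in M_n(\Ped(A))$. Then $1-y$ is close to $u$, hence invertible, and its polar part $(1-y)|1-y|^{-1}$ is a unitary. The polar part differs from $1$ by an element of the $\cs$-algebra generated by $y$ (via functional calculus on $|1-y|^2 = 1-z$ with $z\in M_n(\Ped(A))$), hence by an element of $M_n(\Ped(A))$ by the hereditary-subalgebra fact.

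\emph{Item (4).} The estimate $\|u-v\|<2$ gives $-1\notin\sigma(u^\ast v)$. We then write $\log(u^\ast v) = \log(1 - w)$ with $w = 1-u^\ast v \in M_n(\Ped(A))$, and this is $w g(w)$ for a continuous $g$ on the spectrum, so it lies in $M_n(\Ped(A))$. The norm formula follows from the spectral computation $|1-e^{i\theta}|^2 = 2-2\cos\theta$.

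\emph{Items (5) and (6).} For (5), compute
\[
(uv)^\ast (uv)' = v^\ast u^\ast u' v + v^\ast v'
\]
and use the trace property for $\tau^{(n)}$ on $M_n(D)$, where $D\subseteq\Ped(A)$. For (6), use the standard identity
\[
\tau\bigl(e^{-2\pi i a}\tfrac{d}{dt}e^{2\pi i a}\bigr) = 2\pi i\,\tau(a'),
\]
obtained from Duhamel's formula $\tfrac{d}{dt}e^{X} = \int_0^1 e^{sX}X'e^{(1-s)X}\,ds$ and the trace property, and then integrate.

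\emph{Items (7) and (8).} For (7), apply (4) pointwise to get $\mathbf a(t) = \tfrac1{2\pi i}\log(\mathbf u(t)^\ast \mathbf v(t))$, which is continuous in $t$ and vanishes at the endpoints. The homotopy $H(s,t)=\mathbf u(t)e^{2\pi i s\mathbf a(t)}$ preserves endpoints and stays in $\mathcal U_n(\Ped(A))$ by (1) and (3), or in $\mathcal U_n(D)$ if everything lies in $D$.

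For (8), use compactness and uniform continuity. Choose a partition $t_0=s_0<\dots<s_m=t_1$ with $\|\mathbf u(s_{j-1})-\mathbf u(s_j)\|$ small. By (4), set $a_j = \tfrac1{2\pi i}\log(\mathbf u(s_{j-1})^\ast\mathbf u(s_j))\in M_n(\Ped(A))_{\mathrm{sa}}$. Define $\mathbf v$ on $[s_{j-1},s_j]$ by
\[
\mathbf v(t)=\mathbf u(s_{j-1})\,e^{2\pi i \frac{t-s_{j-1}}{s_j-s_{j-1}}a_j}.
\]
This is piecewise smooth, agrees with $\mathbf u$ at the nodes, and is uniformly close to $\mathbf u$ because $\|a_j\|$ is small. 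Let $D$ be the hereditary $\cs$-subalgebra generated by the finitely many entries of the $1-\mathbf u(s_j)$ and of the $a_j$, which lies in $\Ped(A)$. Then $\mathbf v$ takes values in $\mathcal U_n(D)$.

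The determinant of $\mathbf v$ is computed piecewise. On each segment, $\mathbf v(t)^\ast\mathbf v'(t) = 2\pi i\,a_j/(s_j-s_{j-1})$, since the constant left factor cancels, so the segment contributes exactly $\widehat a_j$. This avoids needing (5) with a constant factor outside $D$. Telescoping gives $\mathbf u(t_0)^\ast\mathbf u(t_1)=\prod_j e^{2\pi i a_j}$.

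\emph{Main obstacle.} The main point of care throughout is ensuring that the functional-calculus outputs land in $\Ped(A)$ rather than merely in $A$. We handle this uniformly: each such element has the form $x\,h(x)$ with $x$ in a finite set from $\Ped(A)$, so it lies in a hereditary subalgebra contained in $\Ped(A)$ by \cite[Proposition 5.6.2]{Pedersen-book-automorphism}. The secondary subtlety is item (2), where we must produce a genuine unitary from a $\Ped$-approximation; this is where we use the polar part.
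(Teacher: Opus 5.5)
Your proposal is correct and follows essentially the same route as the paper item by item: normality via $w^\ast(1-u)w$, the polar part of a $\Ped$-approximation for (2), logarithms and exponentials via functional calculus with functions vanishing at $0$ for (3)--(4), the homotopy $\mathbf u(t)e^{2\pi i s\mathbf a(t)}$ for (7), and piecewise exponential interpolation on a fine partition for (8). The differences are cosmetic: you use Duhamel's formula instead of the power-series expansion in (6) and compute $\mathbf v^\ast\mathbf v'$ directly in (8) instead of citing (5)--(6), and the abandoned aside at the start of (2), approximating $u$ by unitaries with $-1\notin\sigma(u')$, should be deleted since such unitaries lie in the identity component and cannot approximate $u$ in general (you do not rely on it).
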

\begin{proof}
    \eqref{l:unitary-1}: If $u,v\in \mathcal U_n(\Ped(A))$ then $uv -1_n = (u-1_n)v + (v-1_n)$ is in $M_n(\Ped(A))$ since $M_n(\Ped(A))$ is an ideal in $M_n(\mathcal M(A))$, and $u^\ast - 1_n = (u-1_n)^\ast \in M_n(\Ped(A))$, so $\mathcal U_n(\Ped(A))$ is a subgroup. If $w\in \mathcal U_n(\mathcal M(A))$ then $wuw^\ast - 1_n = w(u-1_n)w^\ast \in M_n(\Ped(A))$ (again, since $M_n(\Ped(A))$ is an ideal in $M_n(\mathcal M(A))$) and hence the subgroup is normal.

    \eqref{l:unitary-1.5}: Let $u\in \mathcal U_n(A)$ and $0<\epsilon <1/2$. Pick $x \in M_n(\Ped(A))$ such that $\| x - (u-1_n)\| < \epsilon$ and $\| x+1_n\| \leq 1$. Then 
    \[
    \delta := \| x^\ast x + x +x^\ast \| = \|(x+1_n)^\ast (x+1_n) - u^\ast u \| < 2 \epsilon < 1
    \]
    so $v := (x+1_n)|x+1_n|^{-1} \in \mathcal U_n(A^\dagger)$. Let $f\in C_0([-\delta, \delta]\setminus\{0\})$ be given by $f(t) = (t+1)^{-1/2} -1$. Then $|x+1_n|^{-1} -1_n = f(x^\ast x +x + x^\ast) \in C^\ast(x) \subseteq M_n(\Ped(A))$ and thus
    \[
        v-1_n = x|x+1_n|^{-1} + (|x+1_n|^{-1} -1_n) \in M_n(\Ped(A))
    \]
    so $v\in \mathcal U_n(\Ped(A))$. Moreover,
    \[
    \| v-u\| \leq \| (x+1_n)(|x+1_n|^{-1} -1_n)\| + \| x+1_n -u \| \leq \| f\| + \epsilon \leq \tfrac{2\epsilon}{1-2\epsilon} + \epsilon.
    \]
    
    \eqref{l:unitary-2}: Let $D \subseteq \Ped(A)$ be a $\cs$-subalgebra containing $a_{k,\ell} \in \Ped(A)$ for each $k, \ell = 1, \dots, n$. Then $a \in M_n(D)$ and thus $e^{2\pi i a} -1_n = f(a) \in M_n(D) \subseteq M_n(\Ped(A))$ by functional calculus where $f(t) = e^{2\pi i t}-1$ satisfies $f(0)= 0$.

    \eqref{l:unitary-3}: Since $u^\ast v - 1_n \in M_n(\Ped(A))$ by \eqref{l:unitary-1}, we let $D = \cs((u^\ast v-1_n)_{k,\ell} \mid k, \ell = 1, \dots, n)$, which is a $\cs$-subalgebra of $\Ped(A)$. Then $a \in M_n(D)$ (as opposed to $M_n(D^\dagger)$) since $u^\ast v -1_n \in M_n(D)$, and $\log(u^\ast v) = f(u^\ast v - 1_n)$ where $f(z) = \log(z+1)$ vanishes at $0$. Clearly $u e^{2\pi i a} =v$. The norm computation follows from functional calculus (and using that $\| u^\ast v - 1_n\|= \| u - v\|$). In fact, if $t\in (-\tfrac{1}{2},\tfrac{1}{2})$ and $z = e^{2\pi i t}$ then 
    \[
    |z-1|^2 = (\cos(2\pi t)-1)^2 + \sin^2(2\pi t) = 2(1-\cos(2\pi t))
    \]
    and hence
    \[
    |\tfrac{1}{2\pi i} \log(z) | = |t|  = \tfrac{1}{2\pi}\arccos(\cos(2\pi t)) = \tfrac{1}{2\pi}\arccos(1-\tfrac{1}{2}|z-1|^2).
    \]
    
    \eqref{l:unitary-6}: Let $\tau \in \T_{\mathbb R}(A)$ and note that $\tau^{(n)}$ is bounded on $M_n(D)$. For $t \in [t_0,t_1]$ we calculate
\begin{eqnarray*}
& \tau^{(n)}\left( (\mathbf u\mathbf v)(t)^\ast \dd t (\mathbf u \mathbf v)(t)\right) & = \tau^{(n)}\left( \mathbf v(t)^\ast \dd t \mathbf v(t) + \mathbf v(t)^\ast \mathbf u(t)^\ast  (\dd t \mathbf u(t))  \mathbf v(t) \right) \\
&& = \tau^{(n)} \left(  \mathbf v(t)^\ast \dd t \mathbf v(t) + \mathbf u(t)^\ast  \dd t \mathbf u(t)  \right),
\end{eqnarray*}
from which the conclusion is immediate.

    \eqref{l:unitary-7}: Let $\tau \in \T_{\mathbb R}(A)$ and note that $\tau^{(n)}$ is bounded on $M_n(D)$. For $t \in [t_0,t_1]$ we calculate
\begin{eqnarray*}
&  \tau^{(n)}\left(e^{-2 \pi i \mathbf a(t)} \dd t e^{2 \pi i \mathbf a(t)} \right) & = \sum_{k \geq 0,\ell \geq 0} \tau^{(n)} \left( \tfrac{(-2 \pi i \mathbf a(t))^k}{k!} \dd t \tfrac{(2 \pi i \mathbf a(t))^\ell}{\ell !}   \right)  \\
&& = \sum_{k \geq 0,\ell \geq 1} \tau^{(n)} \left( 2 \pi i \tfrac{(-2 \pi i \mathbf a(t))^k}{k!} \tfrac{(2 \pi i \mathbf a(t))^{\ell-1}}{(\ell-1) !}  \dd t \mathbf a(t) \right) \\
&& = 2 \pi i \tau^{(n)} \left( e^{-2 \pi i \mathbf a(t)} e^{2 \pi i \mathbf a(t)} \dd t \mathbf a(t) \right) \\
&& =  2 \pi i \dd t \tau^{(n)}( \mathbf a(t) ).
\end{eqnarray*}
The result then follows from the fundamental theorem of calculus.

    \eqref{l:unitary-4}: Set $\mathbf a = \tfrac{1}{2\pi i} \log(\mathbf u^* \mathbf v) \colon [t_0,t_1] \to M_n(\Ped(A))_{\mathrm{sa}}$. Then 
    \[(s,t) \mapsto \mathbf u(t) e^{2 \pi i s \mathbf a(t)} \colon [0,1] \times [t_0,t_1] \to \mathcal U_n(\Ped(A))\]
    is a endpoint preserving norm-homotopy from $\mathbf u$ to $\mathbf v$. It is contained within $\mathcal U_n(D)$ if both $\mathbf u$ and $\mathbf v$ are.
    
    \eqref{l:unitary-5}: Pick $t_0 = s_0 < s_1 < \dots < s_m = t_1$ such that $\sup_{t\in [s_{j-1}, s_j]}\| \mathbf u(s_{j-1}) - \mathbf u(t)\| < \tfrac{\epsilon}{2}$ for $j=1,\dots,m$. 
    Let 
    \[
    a_j  = \tfrac{1}{2\pi i} \log(\mathbf u(s_{j-1})^\ast \mathbf u(s_j)) \in M_n(\Ped(A))_{\mathrm{sa}}
    \]
    for $j=1,\dots, m$. 
    We obtain smooth paths $\mathbf v_j \colon [s_{j-1},s_j] \to \mathcal U_n(\Ped(A))$ defined by 
    \[
    \mathbf v_j(t) = \mathbf u(s_{j-1}) e^{\tfrac{t-s_{j-1}}{s_j-s_{j-1}} 2\pi i a_j}
    \]
    for $j=1, \dots, m$ and $t\in [s_{j-1},s_j]$. Note that $\mathbf v_j(s_j) = \mathbf v_{j+1}(s_j)$ for $j=1,\dots, m-1$. Hence we can concatenate the paths and obtain a piecewise smooth path $\mathbf v \colon [t_0,t_1] \to \mathcal U_n(\Ped(A))$ which satisfies $\mathbf v(s_j) = \mathbf u(s_j)$ for $j = 0, \dots, m$. Moreover, 
\begin{eqnarray*}
 & \sup_{t\in [t_0,t_1]} \| \mathbf v(t) - \mathbf u(t)\| & \leq \sup_{j=1,\dots, m} \sup_{t \in [s_{j-1}, s_j]} \| \mathbf v_j(t) -  \mathbf u(s_{j-1}) \| + \tfrac{\epsilon}{2}\\
 && = \sup_{j=1,\dots, m} \sup_{t \in [s_{j-1}, s_j]} \| e^{\tfrac{t-s_{j-1}}{s_j-s_{j-1}} 2\pi i a_j} - 1_n \| + \tfrac{\epsilon}{2} \\
 && < \epsilon.
\end{eqnarray*}
For the last inequality, notice that the function $z \mapsto e^{\tfrac{t-s_{j-1}}{s_j-s_{j-1}} 2\pi i z} - 1$ has absolute value bounded by $\tfrac{\epsilon}{2}$ on $[-\|a_j\|,\|a_j\|]$ by construction.

Let $D \subseteq \Ped(A)$ be a $\cs$-subalgebra which contains $(1_n - \mathbf u(t_0))_{k,\ell}$ and $(a_j)_{k,\ell}$ for each $k, \ell = 1, \dots, n$ and $j = 1,\dots, m$. Note that $\mathbf u(s_j) = \mathbf u(t_0) e^{2\pi i a_1} \cdots e^{2\pi i a_j}$, so that $\mathbf u(s_j) \in \mathcal U_n(D)$ for $j=0,\dots, m$. It follows that $\mathbf v ([t_0,t_1]) \subseteq \mathcal U_n(D)$.   

Finally, we have $\mathbf u(t_0)^\ast \mathbf u(t_1) = e^{2\pi i a_1}\cdots e^{2\pi i a_m}$ by construction and by \eqref{l:unitary-6} and \eqref{l:unitary-7}, we compute $\Det_A(\mathbf v) = \sum_{j=1}^m \Det_A(\mathbf v_j) = \sum_{j=1}^m \widehat a_j$.
\end{proof}

\begin{lemma}\label{l:Det_lemma}
    Let $A$ be a $\cs$-algebra, let $n \in \mathbb N$, let $t_0 < t_1$ be real numbers, and let $\mathbf u \colon [t_0,t_1] \to \mathcal U_n(\Ped(A))$ be a norm-continuous path. Then there is a $\cs$-subalgebra $D\subseteq \Ped(A)$ and a piecewise smooth path $\mathbf v \colon [t_0,t_1] \to \mathcal U_n(D)$ which is homotopic to $\mathbf u$ in an endpoint-preserving way. 
    
    Moreover, if $D_1,D_2\subseteq \Ped(A)$ are $\cs$-subalgebras and $\mathbf v_1, \mathbf v_2 \colon [t_0,t_1] \to \mathcal U_n(D_j)$ are piecewise $C^1$ paths which are norm-homotopic within $\mathcal U_n(\Ped(A))$ in an endpoint-preserving way, then $\Det_A(\mathbf v_1) = \Det_A(\mathbf v_2)$.
\end{lemma}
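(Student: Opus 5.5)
The first part follows directly from Lemma \ref{l:unitary}. Given $\mathbf u$, I will use Lemma \ref{l:unitary}\eqref{l:unitary-5} with $\epsilon = 2$ (any $\epsilon<2$ works) to obtain a $\cs$-subalgebra $D\subseteq \Ped(A)$ and a piecewise smooth path $\mathbf v\colon[t_0,t_1]\to\mathcal U_n(D)$. This path has the same endpoints as $\mathbf u$ and satisfies $\sup_t\|\mathbf u(t)-\mathbf v(t)\|<2$. Lemma \ref{l:unitary}\eqref{l:unitary-4} then supplies an endpoint-preserving norm-homotopy in $\mathcal U_n(\Ped(A))$ between $\mathbf u$ and $\mathbf v$.

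For the second part, let $H\colon[0,1]\times[t_0,t_1]\to\mathcal U_n(\Ped(A))$ be an endpoint-preserving homotopy from $\mathbf v_1$ to $\mathbf v_2$. The plan is to replace $H$ by a homotopy that is piecewise smooth and takes values in a single $\cs$-subalgebra, and then to compute determinants locally.

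\textbf{Discretise.} By compactness and uniform continuity, I choose partitions $0=r_0<\dots<r_N=1$ and $t_0=s_0<\dots<s_M=t_1$ such that on each small rectangle all values of $H$ lie within distance, say, $1/10$ of one another. The grid points $H(r_k,s_j)$ all lie in $\mathcal U_n(\Ped(A))$.

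\textbf{Common subalgebra.} I take $D\subseteq\Ped(A)$ to be a $\cs$-subalgebra containing the matrix entries of $D_1$, $D_2$ and of $1_n-H(r_k,s_j)$ for all grid points. Such a $D$ exists by \cite[Proposition 2.2]{GMtraces}: the hereditary subalgebra generated by finitely many subalgebras of $\Ped(A)$ and finitely many elements lies in $\Ped(A)$. All logarithms $\tfrac1{2\pi i}\log(H(r_k,s_j)^*H(r_{k'},s_{j'}))$ between neighbouring grid points then lie in $M_n(D)_{\mathrm{sa}}$ by Lemma \ref{l:unitary}\eqref{l:unitary-3}.

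\textbf{Polygonal paths.} For each $k$, I define a piecewise smooth path $\mathbf w_k$ in $\mathcal U_n(D)$ by joining consecutive grid points $H(r_k,s_{j-1})$ and $H(r_k,s_j)$ with exponential segments, as in the proof of Lemma \ref{l:unitary}\eqref{l:unitary-5}.

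\textbf{Comparing consecutive paths.} I claim $\Det_A(\mathbf w_{k-1})=\Det_A(\mathbf w_k)$. Each small rectangle gives a loop in $\mathcal U_n(D)$ made of four exponential segments, all lying in a ball of radius less than $1$ around a single unitary $w$. After multiplying the loop by $w^*$, it lies in the ball of radius $<1$ about $1_n$. There the logarithm $\mathbf a=\tfrac1{2\pi i}\log(w^*\cdot)$ is a smooth map into $M_n(D)_{\mathrm{sa}}$, since $D$ is a $\cs$-algebra and $\log$ is holomorphic on the relevant disc. By Lemma \ref{l:unitary}\eqref{l:unitary-6} and \eqref{l:unitary-7}, the determinant of the loop is $\widehat{\mathbf a(\text{end})}-\widehat{\mathbf a(\text{start})}=0$; left multiplication by the constant $w^*$ contributes determinant zero. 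The vertical sides at $s=t_0$ and $s=t_1$ are constant paths, because $H$ is endpoint-preserving, so they contribute nothing. Summing over $j$, the interior vertical edges cancel, which gives the claim.

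\textbf{Comparing with $\mathbf v_1$ and $\mathbf v_2$.} It remains to show $\Det_A(\mathbf v_1)=\Det_A(\mathbf w_0)$, and likewise for $\mathbf v_2$ and $\mathbf w_N$. I refine the $s$-partition so that $\mathbf v_1$ varies by less than $1/10$ on each piece. On each piece, $\mathbf v_1$ and the exponential segment both lie in a small ball in $\mathcal U_n(D)$ and share their endpoints. The same logarithm argument then shows that their determinants agree.

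\textbf{Main obstacle.} The key technical point is that the comparison must take place inside one $\cs$-subalgebra $D$ on which every $\tau^{(n)}$ is bounded. Once the grid is discretised and $D$ is chosen to contain the finitely many relevant elements, local logarithms turn each small loop into an exact differential $\mathrm d\widehat{\mathbf a}$, which reduces everything to the fundamental theorem of calculus.
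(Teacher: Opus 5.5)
Your proof is correct, and your first part is exactly the paper's argument. For the second part you take a genuinely different route.

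\textbf{The paper's route.} It discretises only in the homotopy parameter. It uses Lemma~\ref{l:unitary}\eqref{l:unitary-5} to replace the intermediate paths of the homotopy by piecewise $C^1$ paths $\mathbf w_j$. These are essentially the same exponential-segment polygons as your $\mathbf w_k$. The chain starts at $\mathbf v_1$ itself and ends at $\mathbf v_2$, and consecutive members satisfy $\sup_t\|\mathbf w_{j-1}(t)-\mathbf w_j(t)\|<2$. It then takes one global logarithm $\tfrac{1}{2\pi i}\log(\mathbf w_{j-1}^\ast\mathbf w_j)$. This is a piecewise $C^1$ self-adjoint path in a common $M_n(D)$ that vanishes at both ends. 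Multiplicativity (Lemma~\ref{l:unitary}\eqref{l:unitary-6}) together with Lemma~\ref{l:unitary}\eqref{l:unitary-7} then gives $\Det_A(\mathbf w_{j-1})=\Det_A(\mathbf w_j)$ in one line.

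\textbf{Your route.} Yours is the classical Stokes-type proof of homotopy invariance for a locally exact $1$-form. You use a two-dimensional grid, local logarithms about a base unitary on each small rectangle, and cancellation of interior vertical edges. You then need a separate comparison of $\mathbf v_1,\mathbf v_2$ with the polygonal paths $\mathbf w_0,\mathbf w_N$.

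\textbf{What each buys.} The paper's argument is shorter. It exploits that $\Det_A$ turns pointwise products of paths into sums, so closeness of two entire paths is handled by a single pointwise logarithm. No edge bookkeeping is needed, and $\mathbf v_1,\mathbf v_2$ enter the chain directly. Your argument only ever uses local information: on small balls, $\tau^{(n)}(\mathbf u^\ast\tfrac{\mathrm d}{\mathrm dt}\mathbf u)=2\pi i\tfrac{\mathrm d}{\mathrm dt}\tau^{(n)}(\mathbf a)$. That is the more standard mechanism, but it costs the two-dimensional discretisation and the cancellation bookkeeping.

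\textbf{A minor point.} The refinement of the $s$-partition in your last step is unnecessary, because your grid already controls the oscillation of $\mathbf v_1=H(0,\cdot)$ and $\mathbf v_2=H(1,\cdot)$ on each piece. If you did refine, it would have to happen before the $\mathbf w_k$ are defined.
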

\begin{proof}
The first part follows immediately from Lemma \ref{l:unitary} \eqref{l:unitary-4} and \eqref{l:unitary-5}. 

For the second part, let $(\mathbf u_t)_{t \in [0,1]}$ be a norm-continuous endpoint-pres\-erv\-ing homotopy from $\mathbf v_1 = \mathbf u_{0}$ to $\mathbf v_2 = \mathbf u_{1}$. Pick $0 = s_0 < s_1 < \dots < s_m = 1$ such that $\| \mathbf u_{s_{j-1}} - \mathbf u_{s_{j}} \| < 2$ for $j = 1, \dots, m$. Using Lemma \ref{l:unitary} \eqref{l:unitary-5} find, for $j = 0, \dots, m$, $\cs$-subalgebras $D_j' \subseteq \Ped(A)$ and piecewise $C^1$ paths $\mathbf w_j \colon [t_0,t_1] \to \mathcal U_n(D_j')$ with $\mathbf w_1 = \mathbf v_1$, $\mathbf w_n = \mathbf v_2$, and $\mathbf w_j(t_0) = \mathbf v_1(t_0)$, $\mathbf w_j(t_1) = \mathbf v_1(t_1)$ and $\| \mathbf w_{j-1} - \mathbf w_{j} \| < 2$ for $j = 1, \dots, m$.  Let $D \subseteq \Ped(A)$ be a $\cs$-subalgebra containing $D_1$, $D_2$ and $D_j'$ for each $j$. For each $j$ the path $\tfrac{1}{2 \pi i} \log( \mathbf w_{j-1}^\ast \mathbf w_j)$ is $C^1$ and self-adjoint with endpoints 0 and so $\Det_A(\mathbf w_{j-1}^\ast \mathbf w_j) = 0$ by Lemma \ref{l:unitary} \eqref{l:unitary-7}. Thus by Lemma \ref{l:unitary} \eqref{l:unitary-6} we may conclude $\Det_A(\mathbf v_1) = \Det_A(\mathbf v_2)$.
\end{proof}

\begin{definition}[The de la Harpe--Skandalis determinant on norm-continuous unitary paths in the Pedersen ideal]
Let $A$ be a $\cs$-algebra, let $n \in \mathbb N$ and let $t_0 < t_1$ be real numbers, and let  $\mathbf u \colon [t_0,t_1] \to \mathcal U_n(\Ped(A))$ be a norm-continuous unitary path. The \emph{de la Harpe--Skandalis determinant} $\Det_A(\mathbf u) \in \T_{\mathbb R}(A)^\ast$ of $\mathbf u$ is given at $\tau \in \T_{\mathbb R}(A)$ by
\begin{equation}\label{eq:Det}
\Det_A(\mathbf u)(\tau) = \Det_A(\mathbf v)(\tau) =  \frac{1}{2\pi i} \int_{t_0}^{t_1} \tau^{(n)}(\mathbf v(t)^\ast \tfrac{\mathrm d}{\mathrm dt} \mathbf v(t)) \, \mathrm dt,
\end{equation}
where $\mathbf v \colon [t_0,t_1] \to \mathcal U_n(\Ped(A))$ is any piecewise $C^1$ unitary path which is norm-homotopic to $\mathbf u$ within $\mathcal U_n(\Ped(A))$ in an endpoint-preserving way and has values in $\mathcal U_n(D)$ for some $\cs$-subalgebra $D \subseteq \Ped(A)$.
\end{definition}
This is well-defined by Lemma \ref{l:Det_lemma}. The following basic properties of the de la Harpe--Skandalis determinant are immediate from Lemma \ref{l:unitary} and Lemma \ref{l:Det_lemma} after perhaps changing a given path into a path for which \eqref{eq:Det} is applicable. The details are left for the reader.

\begin{lemma}\label{l:basicdetproperties}
Let $A$ be a $\cs$-algebra, let $m,n \in \mathbb N$ and let $t_0 < t_1 < t_2$ be real numbers.
\begin{enumerate}
\item\label{l:basicdetproperties 1} If $\mathbf u \colon [t_0,t_1] \to \mathcal U_n(\Ped(A))$ and $\mathbf v \colon [t_0,t_1] \to \mathcal U_n(\Ped(A))$ are norm-continuous unitary paths which are endpoint-preserving norm-homo\-topic within $\mathcal U_n(\Ped(A))$, then \[\Det_A(\mathbf u) = \Det_A(\mathbf v).\]
\item\label{l:basicdetproperties 2} Let $\mathbf u \colon [t_0,t_1] \to \mathcal U_n(\Ped(A))$ and $\mathbf v \colon [t_1,t_2] \to \mathcal U_n(\Ped(A))$ be norm-continuous unitary paths with $\mathbf u(t_1)= \mathbf v(t_1)$. Then the concatenation $\mathbf u \hash \mathbf v \colon [t_0,t_2] \to \mathcal U_n(\Ped(A))$ has determinant \[\Det_A(\mathbf u \hash \mathbf v) = \Det_A(\mathbf u) + \Det_A(\mathbf v).\]
\item\label{l:basicdetproperties 3} For a constant path $\mathbf u \colon [t_0,t_1] \to \mathcal U_n(\Ped(A))$, \[\Det_A(\mathbf u) = 0.\]
\item\label{l:basicdetproperties 4} For any two norm-continuous paths $\mathbf u, \mathbf v \colon [t_0,t_1] \to \mathcal U_n(\Ped(A))$,  \[\Det_A(\mathbf u \mathbf v) = \Det_A(\mathbf u) + \Det_A(\mathbf v).\]
\item\label{l:basicdetproperties 5} For any two norm-continuous paths $\mathbf u\colon [t_0,t_1] \to \mathcal U_n(\Ped(A))$ and $\mathbf v \colon [t_0,t_1] \to \mathcal U_m(\Ped(A))$, the diagonal sum
\[
\mathbf u \oplus \mathbf v \colon [t_0,t_1] \to \mathcal U_{n+m}(\Ped(A))
\]
has determinant
\[
\Det_A(\mathbf u \oplus \mathbf v) = \Det_A(\mathbf u) + \Det_A(\mathbf v).
\]
\item\label{l:basicdetproperties 6} Let $\mathbf u \colon [t_0, t_1] \to \mathcal U_n(\Ped(A))$ be a norm-continuous path and let $\overline{\mathbf u}, \mathbf u^\ast \colon [t_0, t_1] \to \mathcal U_n(\Ped(A))$ be the paths given by 
\[
\overline{\mathbf u} (t) = \mathbf u(t_0 + t_1 -t), \qquad \mathbf u^\ast(t) = \mathbf u(t)^\ast, \qquad t\in [t_0,t_1].
\]
Then  
\[
\Det_A(\overline{\mathbf u}) = \Det_A(\mathbf u^\ast) = - \Det_A (\mathbf u)
\]
\item\label{l:basicdetproperties 7} Let $\mathbf u \colon [t_0,t_1] \to \mathcal U_n(\Ped(A))$ be a norm-continuous path, and let $\mathbf v \colon [0,1] \to \mathcal U_n(\Ped(A))$ be defined at $s \in [0,1]$ by 
\[\mathbf v(s) = \mathbf u(t_0 + s(t_1 - t_0)).\]
Then $\Det_A(\mathbf v) = \Det_A(\mathbf u)$.
\item\label{l:basicdetproperties 8} For any $a \in M_n(\Ped(A))_{\mathrm{sa}}$, the path $t \mapsto e^{2 \pi i t a} \colon [0,1] \to \mathcal U_n(\Ped(A))$ has determinant \[\Det_A(t \mapsto e^{2 \pi i t a}) = \widehat a.\]
\item\label{l:basicdetproperties 9} For any norm-continuous path $\mathbf a \colon [t_0,t_1] \to M_n(\Ped(A))_{\mathrm{sa}}$,  \[\Det_A(e^{2 \pi i \mathbf a}) = \widehat{\mathbf a(t_1)} - \widehat{\mathbf a(t_0)}.\]
\end{enumerate}
\end{lemma}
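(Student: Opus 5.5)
The common strategy is to reduce every item to the piecewise $C^1$ setting where \eqref{eq:Det} applies directly. By Lemma \ref{l:Det_lemma}, any norm-continuous path $\mathbf u$ into $\mathcal U_n(\Ped(A))$ is endpoint-preservingly homotopic to a piecewise smooth path $\mathbf v$ with values in some $\mathcal U_n(D)$, $D\subseteq\Ped(A)$ a $\cs$-subalgebra. For finitely many paths, I will enlarge $D$ by \cite[Proposition 2.2]{GMtraces}, taking the hereditary $\cs$-subalgebra generated by the finitely many $\cs$-subalgebras involved. This gives a single $D$ containing all the approximants, and $\tau^{(n)}$ is bounded on $M_n(D)$.

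Items (1) and (7) follow directly from the definition. For (1), a path homotopic to $\mathbf u$ is homotopic to $\mathbf v$ by transitivity, so the same smooth representative computes both determinants. For (7), affine reparametrisation commutes with the homotopies, and the substitution rule leaves the integral unchanged. Items (3) and (2) are also immediate. A constant path is already smooth with zero derivative. For concatenation, I concatenate the smooth representatives and concatenate the homotopies (they agree at $t_1$ since endpoints are fixed), and the integral splits as a sum.

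Items (4) and (5) use pointwise operations on the representatives. For (4), pointwise products of the endpoint-preserving homotopies give an endpoint-preserving homotopy $\mathbf u\mathbf v\simeq \mathbf v_1\mathbf v_2$, and Lemma \ref{l:unitary}\eqref{l:unitary-6} applies in the common $D$. For (5), diagonal sums of homotopies work in the same way, and $\tau^{(n+m)}$ of a block-diagonal element is $\tau^{(n)}+\tau^{(m)}$ of the blocks, since $(\mathbf v_1\oplus\mathbf v_2)^*\tfrac{\mathrm d}{\mathrm dt}(\mathbf v_1\oplus\mathbf v_2)$ is block diagonal.

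Item (6) takes slightly more care. For $\overline{\mathbf u}$, the reversed smooth representative works, and the change of variable $t\mapsto t_0+t_1-t$ flips the sign of the integral. For $\mathbf u^\ast$, I would use (4): the product path $\mathbf u\mathbf u^\ast$ is constant, so (3) gives $0=\Det_A(\mathbf u)+\Det_A(\mathbf u^\ast)$.

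Items (8) and (9) reduce to Lemma \ref{l:unitary}\eqref{l:unitary-7}. Item (8) is that lemma applied with $D$ generated by the entries of $a$ and $\mathbf a(t)=ta$. For (9), the path $\mathbf a$ is only continuous, and this is the main (mild) obstacle: a continuous path into $M_n(\Ped(A))$ need not lie in a single $M_n(D)$, nor be $C^1$. I would handle it as follows. I pick a partition $s_0<\dots<s_m$ fine enough that $\|\mathbf a(t)-\mathbf a(s_{j-1})\|$ is small on each $[s_{j-1},s_j]$. Let $\mathbf b$ be the piecewise-linear interpolation of the values $\mathbf a(s_j)$; it lies in $M_n(D)$ for $D$ generated by finitely many elements, and it is piecewise $C^1$. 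Then $e^{2\pi i\mathbf a}$ and $e^{2\pi i\mathbf b}$ have the same endpoints and are uniformly within distance $<2$, so Lemma \ref{l:unitary}\eqref{l:unitary-4} gives an endpoint-preserving homotopy in $\mathcal U_n(\Ped(A))$. Combining (1) with Lemma \ref{l:unitary}\eqref{l:unitary-7} applied to $\mathbf b$ then yields $\widehat{\mathbf b(t_1)}-\widehat{\mathbf b(t_0)}=\widehat{\mathbf a(t_1)}-\widehat{\mathbf a(t_0)}$. Alternatively, the straight-line homotopy $e^{2\pi i((1-s)\mathbf a+s\mathbf b)}$ also works.
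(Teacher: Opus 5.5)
Your proposal is correct and takes the route the paper indicates. The paper only says that all nine items are immediate from Lemma \ref{l:unitary} and Lemma \ref{l:Det_lemma}, after replacing each path by an endpoint-preserving homotopic piecewise $C^1$ path in a single $\mathcal U_n(D)$, and leaves the details to the reader; you have supplied those details. Your piecewise-linear interpolation (or straight-line homotopy) for item (9) is the right way to handle norm-continuous paths $\mathbf a$ that do not lie in a single $M_n(D)$, which is exactly how (9) is later used in Proposition \ref{p:DetOfShortPath}.
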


Recall from \cite[Remark 2.9]{GMtraces} that 
\begin{equation}\label{eq: K nought}
\K_0(A) = \left\{ [p]_0 - [q]_0 \suchthat \begin{array}{c}  n \in \mathbb N, \; p,q \in M_n(\Ped(A)^\dagger), \\ p - q \in M_n(\Ped(A))  \end{array} \right\}. 
\end{equation}
The pairing map $\rho_A \colon \K_0(A) \to \T_{\mathbb R}(A)^\ast$ \cite[Proposition 2.10]{GMtraces} is given for projections $p,q \in M_n(\Ped(A)^\dagger)$ with $p-q \in M_n(\Ped(A))$ by 
\begin{equation}\label{eq:Kthtraces}
\rho_A([p]_0-[q]_0) = \widehat{p - q}.
\end{equation}
An easy consequence of Lemma \ref{l:unitary} \eqref{l:unitary-1.5} and \eqref{l:unitary-3} is that
\begin{equation}\label{eq:K one}
\K_1(A) = \{ [u]_1 \mid n \in \mathbb N, \; u \in \mathcal U_n(\Ped(A)) \}, 
\end{equation}
with $[u]_1 = [v]_1$ for $u,v \in \mathcal U_n(\Ped(A))$ if and only if there are $k \in \mathbb N$ and self-adjoint elements $a_1,a_2,\dots,a_m \in M_{n+k}(\Ped(A))_{\mathrm{sa}}$ with $v \oplus 1_k = (u \oplus 1_k) e^{2 \pi i a_1} \cdots e^{2 \pi i a_m}$. 

By Bott periodicity, there is an isomorphism $\K_0(A) \cong \varinjlim_n \pi_1(\mathcal U_n(A),1_n)$ induced by mapping the class $[p]_0 - [q]_0 \in \K_0(A)$ for projections $p,q \in M_n(A^\dagger)$ with $p-q \in M_n(A)$ to the class of the loop $\mathbf u_q^\ast \mathbf u_p \colon [0,1] \to \mathcal U_n(A)$, where
\begin{equation}
\mathbf u_p(t) = e^{2\pi i t p}, \; \mathbf u_q(t) = e^{2\pi i t q}.
\end{equation}
This gives us an isomorphism $\K_0(A) \cong \varinjlim_{n}\pi_1(\mathcal U_n(\Ped(A)),1_n)$ by viewing $\K_0(A)$ an inductive limit over $\cs$-subalgebras $D \subseteq \Ped(A)$. Explicitly, for $n \in \mathbb N$ and projections $p,q \in M_n((\Ped(A)^\dagger)$ with $p-q \in M_n(\Ped(A))$, the loop $\mathbf u_q^\ast \mathbf u_p$ takes values in $\mathcal U_n(\Ped(A))$ and we have the isomorphism
\begin{equation}\label{eq:K nought unitary loops}
[p]_0 - [q]_0 \mapsto [\mathbf u_q^\ast \mathbf u_p] \colon \K_0(A) \isom \varinjlim_{n}\pi_1(\mathcal U_n(\Ped(A)),1_n). 
\end{equation}
Under this isomorphism, the pairing map $\rho_A \colon \K_0(A) \to \T_{\mathbb R}(A)^\ast$ becomes induced by the de la Harpe--Skandalis determinant $\Det_A$:

\begin{proposition}\label{p:pairing}
Let $A$ be a $\cs$-algebra, let $n \in \mathbb N$. Then for projections $p,q \in M_n(\Ped(A)^\dagger)$ with $p-q \in M_n(\Ped(A))$,
\[ \Det_A(\mathbf u_q^\ast \mathbf u_p) = \rho_A([p]_0 - [q]_0)  = \widehat{p-q}. \]
\end{proposition}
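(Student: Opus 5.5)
The second equality is the definition \eqref{eq:Kthtraces} of $\rho_A$, so it only remains to show $\Det_A(\mathbf u_q^\ast \mathbf u_p) = \widehat{p-q}$. The plan is to split the loop using the additivity of the determinant and compute each factor separately.

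\textbf{Step 1 (splitting).} By Lemma \ref{l:basicdetproperties} \eqref{l:basicdetproperties 4} and \eqref{l:basicdetproperties 6}, we would write
\[
\Det_A(\mathbf u_q^\ast \mathbf u_p) = \Det_A(\mathbf u_p) - \Det_A(\mathbf u_q).
\]
The difficulty is that $\mathbf u_p$ and $\mathbf u_q$ separately need not take values in $\mathcal U_n(\Ped(A))$. Indeed, $e^{2\pi i t p} - 1_n = (e^{2\pi i t}-1)p$, and $p$ lies only in $M_n(\Ped(A)^\dagger)$. So this naive splitting is not justified as it stands.

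\textbf{Step 2 (rewriting the loop).} To get around this, we would express the loop through a self-adjoint path in $M_n(\Ped(A))$. Write $s = s(p) = s(q)$, a scalar projection in $M_n(\mathbb C)$; the two scalar parts agree because $p-q \in M_n(\Ped(A))$. Set $p_0 = p - s$ and $q_0 = q - s$, both in $M_n(\Ped(A))_{\mathrm{sa}}$. Since $s$ does not commute with $p$ in general, we cannot simply factor $e^{2\pi i t p}$. Instead we would use the identity
\[
e^{2\pi i t p} = 1_n + (e^{2\pi i t}-1)p.
\]
It gives
\[
\mathbf u_q(t)^\ast \mathbf u_p(t) = \bigl(1_n + (e^{-2\pi i t}-1)q\bigr)\bigl(1_n + (e^{2\pi i t}-1)p\bigr),
\]
and subtracting $1_n$ visibly lands in $M_n(\Ped(A))$ once we use $p-q \in M_n(\Ped(A))$. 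Everything then lives in a $\cs$-subalgebra $D \subseteq \Ped(A)$ generated by the entries of $p_0$ and $q_0$ and their images, and we can work inside $M_n(D^\dagger)$.

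\textbf{Step 3 (computation).} The path is smooth, so formula \eqref{eq:DetPiecewisesmooth} applies directly to $\mathbf w = \mathbf u_q^\ast \mathbf u_p$:
\[
\mathbf w^\ast \dot{\mathbf w} = \mathbf u_p^\ast \dot{\mathbf u}_p + \mathbf u_p^\ast \bigl(\mathbf u_q \dot{\mathbf u}_q^\ast\bigr) \mathbf u_p = 2\pi i \bigl(p - \mathbf u_p^\ast q \mathbf u_p\bigr).
\]
The right-hand side lies in $M_n(D)$, since $p - \mathbf u_p^\ast q \mathbf u_p = \mathbf u_p^\ast (p-q) \mathbf u_p$, using that $\mathbf u_p$ commutes with $p$. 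Note that $\mathbf u_p \in \mathcal U_n(D^\dagger)$ and $M_n(D)$ is an ideal in $M_n(D^\dagger)$. Applying $\tau^{(n)}$, the trace property for the unitary $\mathbf u_p(t) \in M_n(D^\dagger)$ acting on $M_n(D)$ gives $\tau^{(n)}(\mathbf u_p^\ast (p-q)\mathbf u_p) = \tau^{(n)}(p-q)$. Integrating over $[0,1]$ and dividing by $2\pi i$ yields $\widehat{p-q}(\tau)$.

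\textbf{Main obstacle.} The step requiring care is justifying unitary invariance of $\tau^{(n)}$ under conjugation by elements of $\mathcal U_n(D^\dagger)$ (or of the multiplier algebra), rather than only elements of $\Ped(A)$. To handle it, we would write $\mathbf u_p = 1_n + x$ with $x \in M_n(D)$ and expand. The cross terms are of the form $\tau(xy) = \tau(yx)$ with $x, y \in M_n(\Ped(A))$, so the ordinary trace property on the ideal suffices.
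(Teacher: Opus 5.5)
Your proof is correct apart from one misstatement, which is easy to fix, and it takes a different route from the paper.

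\textbf{Comparison with the paper.} The paper resolves the obstacle you identify in Step 1 by changing the ambient algebra rather than the computation. It chooses a $\cs$-subalgebra $D\subseteq\Ped(A)$ with $p,q\in M_n(D^\dagger)$ and extends $\tau|_D$ to a trace $\widetilde\tau$ on the unital algebra $D^\dagger$. It then notes that $\Det_A(\mathbf u_q^\ast\mathbf u_p)(\tau)=\Det_{D^\dagger}(\mathbf u_q^\ast\mathbf u_p)(\widetilde\tau)$, since the integrand lies in $M_n(D)$, where the two traces agree. Inside $D^\dagger$ the paths $\mathbf u_p,\mathbf u_q$ are legitimate unitary paths, so your naive splitting becomes valid there. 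Lemma \ref{l:basicdetproperties} (4), (6) and (8) then give $\widetilde\tau^{(n)}(p)-\widetilde\tau^{(n)}(q)=\tau^{(n)}(p-q)$.

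You instead stay in $A$ and compute the integrand $\mathbf w^\ast\dot{\mathbf w}=2\pi i\,\mathbf u_p^\ast(p-q)\mathbf u_p$ directly. This trades the trace-extension trick for a unitary-invariance check. It also has the merit that your Step 2 explicitly verifies the loop takes values in $\mathcal U_n(D)$, which the paper only asserts.

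\textbf{The misstatement.} In your final paragraph you write $\mathbf u_p=1_n+x$ with $x\in M_n(D)$. But $\mathbf u_p(t)-1_n=(e^{2\pi i t}-1)p$ has nonzero scalar part whenever $s\neq 0$, as you observed yourself in Step 1, so this decomposition is false.

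The repair is immediate. Write $\mathbf u_p(t)=\lambda_t+x_t$ with $\lambda_t=1_n+(e^{2\pi i t}-1)s\in M_n(\mathbb C)$ and $x_t=(e^{2\pi i t}-1)p_0\in M_n(D)$.
\begin{itemize}
\item For $b\in M_n(D)$, linearity alone gives $\tau^{(n)}(\lambda b)=\sum_{i,j}\lambda_{ij}\tau(b_{ji})=\tau^{(n)}(b\lambda)$ for any $\lambda\in M_n(\mathbb C)$.
\item For $x\in M_n(D)$, the trace property on $\Ped(A)$, applied entrywise, gives $\tau^{(n)}(xb)=\tau^{(n)}(bx)$.
\end{itemize}
Hence $\tau^{(n)}(yb)=\tau^{(n)}(by)$ for all $y\in M_n(D^\dagger)$ and $b\in M_n(D)$. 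Taking $y=\mathbf u_p^\ast$ and replacing $b$ by $b\,\mathbf u_p\in M_n(D)$ yields $\tau^{(n)}(\mathbf u_p^\ast b\,\mathbf u_p)=\tau^{(n)}(b\,\mathbf u_p\mathbf u_p^\ast)=\tau^{(n)}(b)$.

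Alternatively, you can simply cite invariance of $\tau^{(n)}$ under conjugation by multiplier unitaries, which the paper uses freely, for instance in Lemma \ref{l:pathconjmulti}.
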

\begin{proof}
Let $\mathbf u = \mathbf u_q^\ast \mathbf u_p$, and let $D \subseteq \Ped(A)$ be a $\cs$-subalgebra such that $p,q \in M_n(D^\dagger)$. Then $\mathbf u$ (which is $C^1$) and $\tfrac{\mathrm d}{\mathrm d t} \mathbf u$ both take values in $M_n(D^\dagger)$.

For each $\tau \in \T_{\mathbb R}(A)$ pick $\widetilde \tau \in \T_{\mathbb R}(D^\dagger)$ such that $\tau|_D = \widetilde \tau|_D$. By definition we see that $\Det_A(\mathbf u)(\tau) = \Det_{D^\dagger}(\mathbf u)(\widetilde \tau)$. As both $\mathbf u_p$ and $\mathbf u_q$ take values in $\mathcal U_n(D^\dagger)$, we may now apply Lemma \ref{l:basicdetproperties} \eqref{l:basicdetproperties 4}, \eqref{l:basicdetproperties 6} and \eqref{l:basicdetproperties 8} to reduce the expression for $\Det_{D^\dagger}(\mathbf u)(\widetilde \tau)$ to $\widetilde \tau^{(n)} (p) - \widetilde \tau^{(n)}(q) = \tau^{(n)}(p-q)$, which is $\rho_A([p]_0-[q]_0)(\tau)$ (see \eqref{eq:Kthtraces}).
\end{proof}

We will use the following which is essentially by de la Harpe and Skandalis \cite[Lemme 3]{dlHS-det}. 

\begin{lemma}\label{l:dlHStrick}
Let $A$ be a $\cs$-algebra and let $\mathbf u\colon [0,1] \to \mathcal U_n(\Ped(A))$ be a norm-continuous path. Then there are elements $w\in \mathcal U_n(\Ped(A))$ and $a\in M_n(\Ped(A))_{\mathrm{sa}}$ such that $\mathbf u(1) = \mathbf u(0) we^{2\pi i a}$, $w$ is in the commutator subgroup of $\mathcal U_n(\Ped(A))$ and $\widehat a = \mathrm{Det}_A(\mathbf u)$.
\end{lemma}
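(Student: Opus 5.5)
Apply Lemma \ref{l:unitary} \eqref{l:unitary-5} to $\mathbf u$. This gives $m \in \mathbb N$, a $\cs$-subalgebra $D \subseteq \Ped(A)$ and self-adjoint elements $a_1,\dots,a_m \in M_n(D)_{\mathrm{sa}}$ with
\[\mathbf u(0)^\ast\mathbf u(1) = e^{2\pi i a_1}\cdots e^{2\pi i a_m}\qquad\text{and}\qquad \Det_A(\mathbf u)=\sum_{j=1}^m \widehat a_j .\]
(The determinant of the approximating path $\mathbf v$ equals that of $\mathbf u$ by Lemma \ref{l:unitary} \eqref{l:unitary-4} and the definition.) The problem is thereby reduced to the following algebraic statement: a product of exponentials $e^{2\pi i a_1}\cdots e^{2\pi i a_m}$ equals $w e^{2\pi i a}$, where $w$ is a product of commutators in $\mathcal U_n(\Ped(A))$ and $a=a_1+\dots+a_m$. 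Then $\widehat a=\sum_j\widehat a_j=\Det_A(\mathbf u)$, and $\mathbf u(1)=\mathbf u(0)we^{2\pi i a}$ as required.

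For this we follow de la Harpe--Skandalis and work in $M_n(D)$ and its unitisation, so that everything stays in $\mathcal U_n(D)\subseteq\mathcal U_n(\Ped(A))$. By induction on $m$ it suffices to treat two factors: we must show that $e^{2\pi i b}e^{2\pi i c}e^{-2\pi i(b+c)}$ lies in the commutator subgroup for $b,c\in M_n(D)_{\mathrm{sa}}$. Commutators can be moved past exponentials, since the commutator subgroup is normal.

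The naive approach, via the Lie product formula, only gives $e^{2\pi i b}e^{2\pi i c}e^{-2\pi i(b+c)}$ as a \emph{limit} of commutators, not an exact product. So we instead use the matrix trick, which requires increasing the matrix size. The plan is to show, after passing to $2n\times 2n$ (or larger) matrices, that $e^{2\pi i b}\oplus 1$ is a commutator multiplied by $e^{2\pi i b'}$, where $b'$ has the same trace-hat as $b$ but is spread out so that it commutes with the other terms. For instance, $e^{2\pi i b}\oplus e^{-2\pi i b}$ is a product of commutators via the rotation homotopy in $M_2$.

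The main obstacle is that the lemma fixes the size $n$, so increasing the matrix size is not obviously allowed. To handle this, we use the exact de la Harpe--Skandalis Lemme 3 approach. Split each $a_j$ into pieces of small norm, writing $a_j=\sum_k a_j/N$ with $e^{2\pi i a_j}=(e^{2\pi i a_j/N})^N$. For unitaries $x=e^{2\pi i b}$ and $y=e^{2\pi i c}$ close to $1$, set $z:=xye^{-2\pi i(b+c)}$. Then $z$ is close to $1$ and $\frac{1}{2\pi i}\log z$ has trace zero on all traces, modulo an error of second order. Iterating drives this error to zero, but again only approximately.

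Since approximation arguments keep producing only limits, the cleaner route is to use the explicit identity that any $d\in M_n(D)_{\mathrm{sa}}$ with $\widehat d=0$ on $D$ is a finite sum of self-adjoint commutators $i[x_k,x_k^\ast]$-type elements (as in \cite{GMtraces}, where $\T_{\mathbb R}(A)^\ast$ is the quotient of $\Ped(A)_{\mathrm{sa}}$ by commutators), and that $e^{2\pi i\,[\text{such}]}$ is a product of multiplicative commutators (Fack/de la Harpe–Skandalis). The commutator description of the kernel is the step I expect to require the most care.
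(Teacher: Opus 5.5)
There is a genuine gap, caused by aiming at a stronger target than the lemma requires. You reduce to showing $e^{2\pi i a_1}\cdots e^{2\pi i a_m} = w\,e^{2\pi i(a_1+\dots+a_m)}$ with $w$ an exact finite product of commutators. In the two-factor case this means showing that $z = e^{2\pi i b}e^{2\pi i c}e^{-2\pi i(b+c)}$ is such a product. For small $b,c$, $\widehat{h}=0$ \emph{exactly} for $h=\tfrac{1}{2\pi i}\log z$, not just up to second order: compute $\Det_A$ of $t\mapsto e^{2\pi i tb}e^{2\pi i tc}e^{-2\pi i t(b+c)}$ once by multiplicativity and once via the logarithm. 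So your target is a special case of the claim your final route rests on: $e^{2\pi i d}$ with $\widehat d=0$ is a finite product of multiplicative commutators in $\mathcal U_n$. No such result is available in general. The additive step is in general only a closure statement in the sense of Cuntz--Pedersen, and the multiplicative statement is a deep property known only for special classes. Combined with the lemma, it would show that every unitary joined to $1_n$ by a path with $\Det_A=0$ lies in the commutator subgroup of $\mathcal U_n(A)$. That is exactly what the paper must import from Ng--Robert, under stable rank one and pureness, in Proposition \ref{p:pure}. As you note yourself, your iteration only produces limits, so the proposal does not close.

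The missing idea is that the lemma only asks for $\widehat a=\Det_A(\mathbf u)=\sum_j\widehat a_j$, not $a=\sum_j a_j$. Your step $e^{2\pi i a_j}=(e^{2\pi i a_j/N})^N$ is the right start. Choose $N$ so large that $\mathbf w(t):=e^{2\pi i t a_1/N}\cdots e^{2\pi i t a_m/N}$ satisfies $\|\mathbf w(t)-1_n\|<2$ for all $t\in[0,1]$. Then \emph{define} $a:=\tfrac{N}{2\pi i}\log\mathbf w(1)\in M_n(D)_{\mathrm{sa}}$, so that $e^{2\pi i a}=\mathbf w(1)^N$. In the abelianisation, the $N$-th power of a product is the product of the $N$-th powers. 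Hence $\mathbf w(1)^N$ agrees with $\prod_j(e^{2\pi i a_j/N})^N=\prod_j e^{2\pi i a_j}$ modulo the commutator subgroup of $\mathcal U_n(D)$, which gives $w$ exactly, with no approximation. For the trace, compute $\Det_A(\mathbf w)$ in two ways. Writing $\mathbf w=e^{2\pi i\mathbf b}$ with $\mathbf b=\tfrac{1}{2\pi i}\log\mathbf w$, $\mathbf b(0)=0$, $\mathbf b(1)=a/N$, Lemma \ref{l:basicdetproperties} \eqref{l:basicdetproperties 9} gives $\widehat a/N$. On the other hand, Lemma \ref{l:basicdetproperties} \eqref{l:basicdetproperties 4} and \eqref{l:basicdetproperties 8} give $\sum_j\widehat a_j/N$. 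Hence $\widehat a=\sum_j\widehat a_j=\Det_A(\mathbf u)$ and $\mathbf u(1)=\mathbf u(0)we^{2\pi i a}$. This is the paper's argument (de la Harpe--Skandalis' Lemme 3): the trace-zero discrepancy between $a$ and $\sum_j a_j$ is absorbed into the choice of $a$, and never has to be written as a product of commutators.
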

\begin{proof}
Replacing $\mathbf u$ with $\mathbf u(0)^\ast  \mathbf u$, we may assume that $\mathbf u(0) = 1_n$. By Lemma \ref{l:unitary} \eqref{l:unitary-4} and \eqref{l:unitary-5} there is a $\cs$-subalgebra $D \subseteq \Ped(A)$ and elements $a_1,\dots,a_m \in M_n(D)_{\mathrm{sa}}$ such that $\mathbf u(1) = e^{2\pi i a_1} \cdots e^{2\pi i a_m}$ and $\Det_A(\mathbf u)  = \sum_{j=1}^m \widehat a_j$.

Now, pick $N\in \mathbb N$ sufficiently large so that $\sum_{j=1}^m\| e^{2\pi i a_j/N} - 1\| < 2$. By consideration of the unitary path 
\begin{equation}
\mathbf w \colon t \mapsto e^{2\pi i t a_1/N} \cdots e^{2\pi i t a_m/N} \colon [0,1] \to \mathcal U_n(\Ped(A)),
\end{equation}
the element
\begin{equation}
a :=  \tfrac{N}{2\pi i} \log(e^{2\pi i a_1/N} \cdots e^{2\pi i a_m/N}) \in M_n(\Ped(A))_{\mathrm{sa}}
\end{equation}
satisfies 
\begin{equation}
\widehat{a} = N \Det_A(\mathbf w) = N \sum_{j=1}^m \widehat{a_j/N}  = \sum_{j=1}^m \widehat a_j = \mathrm{Det}_A(\mathbf u).
\end{equation}
Moreover, as $\mathbf u(1) = e^{2\pi i a_1} \cdots e^{2\pi i a_m}$ and $(e^{2\pi i a_1/N} \cdots e^{2\pi i a_m/N})^N$ agree modulo the commutator subgroup of $\mathcal U_n(D)$, there is a $w\in \mathcal U_n(D)$ which is a product of commutators such that 
\begin{equation}
\mathbf u(1) = w(e^{2\pi i a_1/N} \cdots e^{2\pi i a_m/N})^N = w e^{2\pi i a}. \qedhere
\end{equation}
\end{proof}

We will also be using some results on when unitaries or unitary paths in $\mathcal U_n(\Ped(A))$ are conjugated by paths in $\mathcal U_n(\mathcal M(A))$. The subtlety of the following lemma is that we cannot simply use the multiplicative property of $\Det_A$ to write the determinant as the sum of three determinants, since the determinant is not well-defined for paths in $\mathcal U_n(\mathcal M(A))$.

\begin{lemma}\label{l:pathconjmulti}
Let $A$ be a $\cs$-algebra, let $n \in \mathbb N$ and consider norm-continu\-ous unitary paths $\mathbf u\colon [0,1] \to \mathcal U_n(\Ped(A))$ and $\mathbf v \colon [0,1] \to \mathcal U_n(\mathcal M(A))$. Then $\Det_A(\mathbf v \mathbf u \mathbf v^\ast) = \Det_A(\mathbf u)$.
\end{lemma}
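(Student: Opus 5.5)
The strategy is to reduce to a local computation on short subintervals, where conjugation by a nearly constant multiplier unitary can be absorbed by a logarithm living in the Pedersen ideal. First note that $\mathbf v\mathbf u\mathbf v^\ast$ takes values in $\mathcal U_n(\Ped(A))$ by Lemma \ref{l:unitary} \eqref{l:unitary-1}, and it is norm-continuous, so its determinant is defined. By Lemma \ref{l:basicdetproperties} \eqref{l:basicdetproperties 1} and \eqref{l:basicdetproperties 9} we may first replace $\mathbf u$ by a piecewise smooth path of the form produced in Lemma \ref{l:unitary} \eqref{l:unitary-5}. Conjugating an endpoint-preserving homotopy by $\mathbf v$ gives an endpoint-preserving homotopy within $\mathcal U_n(\Ped(A))$, so both sides are unchanged. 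By the concatenation property (Lemma \ref{l:basicdetproperties} \eqref{l:basicdetproperties 2}), it then suffices to treat small subintervals $[s_{j-1},s_j]$. On each such subinterval we can assume that $\mathbf u(t) = u_0 e^{2\pi i \mathbf a(t)}$ with $\mathbf a$ a path in $M_n(D)_{\mathrm{sa}}$ for some $\cs$-subalgebra $D \subseteq \Ped(A)$, and that $\|\mathbf v(t) - \mathbf v(s_{j-1})\|$ is small.

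On a single subinterval, write $v_0 = \mathbf v(s_{j-1})$, which is constant. Then $\mathbf v\mathbf u\mathbf v^\ast = (\mathbf v v_0^\ast)(v_0 \mathbf u v_0^\ast)(\mathbf v v_0^\ast)^\ast$. For the middle term we have $v_0\mathbf u v_0^\ast = v_0u_0v_0^\ast e^{2\pi i v_0\mathbf a v_0^\ast}$. By Lemma \ref{l:basicdetproperties} \eqref{l:basicdetproperties 4}, \eqref{l:basicdetproperties 3} and \eqref{l:basicdetproperties 9}, its determinant is $\widehat{v_0\mathbf a(s_j)v_0^\ast} - \widehat{v_0 \mathbf a(s_{j-1})v_0^\ast}$. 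This equals $\Det_A(\mathbf u|_{[s_{j-1},s_j]})$ because traces on $\Ped(A)$ are invariant under conjugation by unitaries in $M_n(\mathcal M(A))$: for $x\in M_n(\Ped(A))$ we have $v_0xv_0^\ast = (v_0x)v_0^\ast$, and the trace property extends to $\tau^{(n)}(mx) = \tau^{(n)}(xm)$ for multipliers $m$. I expect this to follow from \cite{GMtraces}, or from writing $x$ as a product of two elements of $\Ped(A)$.

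The main obstacle is removing the conjugation by $\mathbf w := \mathbf v v_0^\ast$, which is a multiplier path close to $1_n$ and not in the Pedersen ideal. Write $\mathbf w = e^{i\mathbf h}$ with $\mathbf h$ a small self-adjoint multiplier path, $\mathbf h(s_{j-1}) = 0$. Set $\mathbf x = v_0\mathbf u v_0^\ast$. Consider the two-parameter family $(r,t) \mapsto e^{ir\mathbf h(t)}\mathbf x(t)e^{-ir\mathbf h(t)}$ in $\mathcal U_n(\Ped(A))$. This is a homotopy from $\mathbf x$ to $\mathbf w\mathbf x\mathbf w^\ast$, but it does not preserve the final endpoint.

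Instead, I would run a boundary argument on the rectangle $[0,1]\times[s_{j-1},s_j]$. Since the family is a map of a square into $\mathcal U_n(\Ped(A))$, the determinant of the boundary loop vanishes: the loop is null-homotopic, so Lemma \ref{l:basicdetproperties} \eqref{l:basicdetproperties 1} applies after splitting into pieces. The side $t = s_{j-1}$ is constant. It therefore remains to show that the side $r \mapsto e^{ir h}x e^{-irh}$, with $h = \mathbf h(s_j)$ and $x = \mathbf x(s_j)$ fixed, has determinant $0$.

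For that side, note that $x$ is close to an element of the form $e^{2\pi i b}$ with $b\in M_n(\Ped(A))_{\mathrm{sa}}$ (after factoring out the fixed unitary). Then $e^{irh}e^{2\pi i b}e^{-irh} = e^{2\pi i e^{irh}be^{-irh}}$, and Lemma \ref{l:basicdetproperties} \eqref{l:basicdetproperties 9} gives determinant $\widehat{e^{ih}be^{-ih}} - \widehat b = 0$, again by the multiplier trace invariance. To arrange that $x$ itself has this exponential form, I would subdivide more finely, so that the conjugated unitaries $\mathbf x(t)$ differ from $u_0' := v_0u_0v_0^\ast$ by something of norm less than $2$. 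I would also pull the fixed factor $u_0'$ out, handling $\mathbf w u_0'\mathbf w^\ast u_0'^\ast$ similarly, since it is a product of a Pedersen unitary with a commutator-like term. If this bookkeeping becomes awkward, the fallback is to first reduce to $\mathbf u(s_{j-1}) = 1_n$ by multiplying $\mathbf u$ by the constant $u_0^\ast$. This is legitimate since $\mathbf v u_0\mathbf v^\ast$ and $\mathbf v\mathbf u\mathbf v^\ast$ both lie in $\mathcal U_n(\Ped(A))$ and additivity applies to the product of those Pedersen-valued paths. It leaves only the case $\mathbf u = e^{2\pi i\mathbf a}$ plus the term $\Det_A(\mathbf v u_0\mathbf v^\ast)$ for a constant $u_0$. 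That term is handled by the same subdivision and logarithm argument.
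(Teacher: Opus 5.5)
Your proof has a genuine gap at the constant-unitary case, which is the whole difficulty of the lemma.

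\textbf{What works.} Your reductions are sound. The fallback factorisation $\mathbf v\mathbf u\mathbf v^\ast=(\mathbf v u_0\mathbf v^\ast)(\mathbf v u_0^\ast\mathbf u\mathbf v^\ast)$ is valid. So is the identity $\mathbf v e^{2\pi i\mathbf a}\mathbf v^\ast=e^{2\pi i\mathbf v\mathbf a\mathbf v^\ast}$, used with Lemma \ref{l:basicdetproperties} \eqref{l:basicdetproperties 9} (which holds for any norm-continuous path in $M_n(\Ped(A))_{\mathrm{sa}}$) and with invariance of $\tau^{(n)}$ under conjugation by multiplier unitaries. Together these correctly reduce everything to showing $\Det_A(\mathbf v u_0\mathbf v^\ast)=0$ for a \emph{constant} $u_0\in\mathcal U_n(\Ped(A))$. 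The paper makes the same reduction, using a concatenation instead of a product.

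\textbf{Where it fails.} "The same subdivision and logarithm argument" does not handle the constant case. The logarithm trick needs the conjugated unitary to be a product of exponentials $e^{2\pi i b}$ with $b\in M_n(\Ped(A))_{\mathrm{sa}}$. A general $u_0$ is not, for example when $[u_0]_1\neq 0$ in $\K_1(A)$.

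Suppose you subdivide so that $\mathbf v$ is nearly constant on $[s_{j-1},s_j]$ and factor out $u_1=v_0u_0v_0^\ast$. What remains is to show $\widehat c=0$, where $c=\tfrac{1}{2\pi i}\log(u_1^\ast w u_1 w^\ast)$ and $w=e^{ih}=\mathbf v(s_j)v_0^\ast$. But $u_1^\ast w u_1 w^\ast=e^{iu_1^\ast h u_1}e^{-ih}$ is a product of exponentials of multipliers. It is not a multiplier conjugate of a Pedersen exponential, so the identity $e^{irh}e^{2\pi i b}e^{-irh}=e^{2\pi i e^{irh}be^{-irh}}$ is unavailable. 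Trace invariance alone gives no reason that $\tau^{(n)}(c)=0$, and running your square argument on this path just returns the same problem. So "handling $\mathbf w u_0'\mathbf w^\ast u_0'^\ast$ similarly" is circular: it is exactly the statement to be proved.

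\textbf{The missing idea.} Stabilise. The standard rotation connects $u_0\oplus u_0^\ast$ to $1_{2n}$ inside $\mathcal U_{2n}(\Ped(A))$. Subdividing and taking logarithms (Lemma \ref{l:unitary} \eqref{l:unitary-3}) then gives $u_0\oplus u_0^\ast=e^{2\pi i a_1}\cdots e^{2\pi i a_m}$ with $a_k\in M_{2n}(\Ped(A))_{\mathrm{sa}}$. Set $\mathbf V=\mathbf v\oplus 1_n$. Then
\[
(\mathbf v u_0\mathbf v^\ast)\oplus u_0^\ast=\prod_{k=1}^m e^{2\pi i\mathbf V a_k\mathbf V^\ast},
\]
and each factor is a Pedersen-valued exponential path. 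By your own exponential computation, the determinant is $\sum_k\big(\widehat{\mathbf V(1)a_k\mathbf V(1)^\ast}-\widehat{\mathbf V(0)a_k\mathbf V(0)^\ast}\big)=0$. Lemma \ref{l:basicdetproperties} \eqref{l:basicdetproperties 5} and \eqref{l:basicdetproperties 3} then give $\Det_A(\mathbf v u_0\mathbf v^\ast)=0$.

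The paper does the same thing geometrically. It takes a path $\mathbf w$ from $u\oplus u^\ast$ to $1_{2n}$ and uses the square $(s,t)\mapsto\mathbf V(t)\mathbf w(s)\mathbf V(t)^\ast$. With this step inserted your argument is complete, and the subinterval and square machinery for the non-constant part is no longer needed.
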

\begin{proof}
    The path $\mathbf v\mathbf u \mathbf v^\ast$ is endpoint-preserving homotopic to the concatenation of the paths $v \mathbf u v^\ast$ and $\mathbf v u \mathbf v^\ast$, where $v = \mathbf{v}(0)$ and $u = \mathbf{u}(1)$. It follows from the fact that $\tau^{(n)}(v a v^\ast) = \tau^{(n)}(a)$ for each $\tau \in \T_{\mathbb R}(A)$ and $a \in M_n(\Ped(A))$ that $\Det_A( v \mathbf u v^\ast) = \Det_A(\mathbf u)$, so it suffices to show that $\Det_A(\mathbf v u \mathbf v^\ast)=0$. 
    
    By a standard rotation argument (see for instance \cite[Lemma 2.1.5]{Rordam-Kbook}), we may find a norm-continuous $\mathbf w \colon [0,1] \to \mathcal U_{2n}(\Ped(A))$ with $\mathbf w(0) = u\oplus u^\ast$ and $\mathbf w(1)= 1_{2n}$. 
    Let $\mathbf V = \mathbf v \oplus \mathbf 1_n$. Then 
    $(\mathbf v u \mathbf v^\ast)\oplus u^\ast = \mathbf V (u\oplus u^\ast) \mathbf V^\ast$
    is endpoint-preserving homotopic to the concatenation of the three paths 
    \[
    \mathbf V(0) \mathbf w \mathbf V(0)^\ast, \quad \mathbf V \mathbf w(1) \mathbf V^\ast = \mathbf 1_{2n}, \quad \mathbf V(1)  \overline{\mathbf w} \mathbf V(1)^\ast
    \]
    where $\overline{\mathbf w}(s) = \mathbf w(1-s)$. 
    The determinant of this concatenation is zero by Lemma \ref{l:basicdetproperties}. Hence
    \[
    0 = \Det_A((\mathbf v u \mathbf v^\ast) \oplus u^\ast) = \Det_A(\mathbf v u \mathbf v^\ast). \qedhere
    \]
\end{proof}

To handle paths defined only by asymptotic behaviour, we will use the following.

\begin{lemma}\label{l:Dvtrace}
Let $A$ be a $\cs$-algebra and let $D\subseteq \Ped(A)$ be a hereditary $\cs$-subalgebra. For each $n \in \mathbb N$ and $v\in \mathcal U_n(\mathcal M(A))$ define $D_v = \cs(M_n(D), vM_n(D)v^\ast)$. For every $\tau\in \T_{\mathbb R}(A)$ we have $\| \tau^{(n)}|_{D_v}\| \leq 2 n \| \tau|_D\|$. 
\end{lemma}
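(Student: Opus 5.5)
The plan is to reduce to positive traces and then use a ``row matrix'' trick. The trick splits a positive element of $D_v$ into one piece living in $M_n(D)$ and one piece living in $vM_n(D)v^\ast$. The estimate on each piece is the easy part.

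First, $D_v \subseteq M_n(\Ped(A))$: both $M_n(D)$ and $vM_n(D)v^\ast$ are $\cs$-subalgebras of $M_n(\Ped(A))$, since $M_n(\Ped(A))$ is an ideal in $M_n(\mathcal M(A))$. So \cite[Proposition 2.2]{GMtraces} applies, and $\tau^{(n)}|_{D_v}$ makes sense. Write $B_1 = M_n(D)$ and $B_2 = vB_1v^\ast$; both are hereditary in $M_n(A)$ because $D$ is hereditary in $A$. For $a\in B_1$ we have $|\tau^{(n)}(a)| \le \sum_i |\tau(a_{ii})| \le n\|\tau|_D\|\|a\|$, so $\|\tau^{(n)}|_{B_1}\|\le n\|\tau|_D\|$. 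Since $\tau^{(n)}(vbv^\ast)=\tau^{(n)}(b)$ for $b\in M_n(\Ped(A))$ (as in the proof of Lemma \ref{l:pathconjmulti}), the same bound holds on $B_2$.

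Next I reduce to positive $\tau$. I would invoke a Jordan-type decomposition of self-adjoint traces from \cite{GMtraces}, restricted to the hereditary subalgebra $D$: $\tau|_D = \tau_+|_D - \tau_-|_D$ with $\tau_\pm$ positive traces and $\|\tau|_D\| = \|\tau_+|_D\| + \|\tau_-|_D\|$. If only a cruder decomposition is available, I would instead compute the norm of $\tau^{(n)}|_{D_v}$ on positive contractions and treat positive and negative parts separately. Either way, it suffices to show
\[
0\le \tau^{(n)}(x) \le 2n\|\tau|_D\|\|x\|
\]
for positive $\tau$ and positive $x\in D_v$.

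For the main estimate, take positive contractions $e\in B_1$ and $f\in B_2$ from approximate units, and set $g=e+f$. For $\delta>0$ put
\[
c_\delta=(g+\delta)^{-1/2}x(g+\delta)^{-1/2}.
\]
Since $x$ lies in the hereditary subalgebra generated by $B_1\cup B_2$, the element $x_\delta := g^{1/2}c_\delta g^{1/2}$ is close to $x$ for suitable $e,f,\delta$; it also lies in $M_n(\Ped(A))$. Using the trace property $\tau^{(n)}(y^\ast y)=\tau^{(n)}(yy^\ast)$ on the Pedersen ideal, I get
\[
\tau^{(n)}(x_\delta)=\tau^{(n)}(c_\delta^{1/2}ec_\delta^{1/2})+\tau^{(n)}(c_\delta^{1/2}fc_\delta^{1/2}) =\tau^{(n)}(e^{1/2}c_\delta e^{1/2})+\tau^{(n)}(f^{1/2}c_\delta f^{1/2}).
\]
Here $e^{1/2}c_\delta e^{1/2}\in B_1$. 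Because $e\le g\le g+\delta$, we have $\|e^{1/2}(g+\delta)^{-1/2}\|\le 1$, so $\|e^{1/2}c_\delta e^{1/2}\|\le\|x\|$; the same holds with $f$ and $B_2$. Hence $\tau^{(n)}(x_\delta)\le 2n\|\tau|_D\|\|x\|$. Passing to the limit uses continuity of $\tau^{(n)}$ on the $\cs$-algebra $D_v\subseteq M_n(\Ped(A))$, and gives the claim.

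I expect the main obstacle to be the approximation $x_\delta\to x$, together with checking that every element in this computation stays in $M_n(\Ped(A))$, so that the cyclicity of $\tau^{(n)}$ is legitimate. The approximation needs $e$ and $f$ to asymptotically act as a unit on $D_v$, which I would obtain from approximate units of $B_1$ and $B_2$ and a standard hereditary-subalgebra argument. The second main point is making the positive-part reduction precise with the correct constant.
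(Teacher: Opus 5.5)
Your proof is correct. The reduction to positive traces is the same as in the paper, but the core estimate takes a different route.

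The paper first reduces to $n=1$. From an approximate unit $(e_i)$ of $D$ it builds Pedersen's combined approximate unit $f_i=(1+d_i+vd_iv^\ast)^{-1}(d_i+vd_iv^\ast)$ of $D_v$, where $d_i=(1-e_i)^{-1}e_i$. It checks that this is an approximate unit via $f_i\ge e_i$, $f_i\ge ve_iv^\ast$ and $D_v\subseteq\overline{DA+vDv^\ast A}$. It then uses $\|\tau|_{D_v}\|\le\limsup_i\tau(f_i)$, and bounds $\tau(f_i)\le\tau(e_i)+\tau(ve_iv^\ast)\le 2\|\tau|_D\|$ by splitting $f_i$ into two pieces and using operator monotonicity of inversion.

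Your Jordan-decomposition step is exactly the paper's. The decomposition with $\|\tau^+|_D\|+\|\tau^-|_D\|=\|\tau|_D\|$ for hereditary $D$ is what the paper invokes, so your fallback is unnecessary.

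You instead stay in $M_n$ and bound $\tau^{(n)}(x)$ for an arbitrary positive $x\in D_v$. You use the sandwich $x\approx g^{1/2}c_\delta g^{1/2}$ and cyclicity to move the trace onto $e^{1/2}c_\delta e^{1/2}\in M_n(D)$ and $f^{1/2}c_\delta f^{1/2}\in vM_n(D)v^\ast$, each of norm at most $\|x\|$ since $e,f\le g+\delta$. This avoids Pedersen's specific formula and the detour through traces of an approximate unit. It also makes plain that the constant $2$ simply counts the two hereditary pieces.

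The cost is the approximation $x_\delta\to x$, which you only sketch. It is the analogue of the paper's approximate-unit check and goes through the same way. Put $h=g^{1/2}(g+\delta)^{-1/2}$. Then $1-h^2=\delta(g+\delta)^{-1}\le\delta(e+\delta)^{-1}\le 1-e+\delta$. Hence $\|(1-h)y\|^2\le\|y^\ast(1-e)y\|+\delta\|y\|^2$ for $y\in M_n(D)M_n(A)$, and likewise with $f$ for $y\in vM_n(D)v^\ast M_n(A)$. Choosing $\delta$ small first and then $e,f$ far out in the approximate units gives $x_\delta=hxh\to x$.
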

\begin{proof}
It suffices to prove the case $n=1$ since $M_n(D) \subseteq \Ped(M_n(A))$ is hereditary and $\| \tau^{(n)}|_{M_n(D)}\| = n \| \tau|_D\|$.

We first prove the lemma for positive traces $\tau\in \T_+(A)$. Let $(e_i)_i$ be an approximate identity in $D$ for which $\| e_i\| < 1$ for all $i$. Let $d_i := (1-e_i)^{-1} e_i$ and set
\[
f_i := (1+d_i + vd_i v^\ast)^{-1} (d_i + vd_iv^\ast) \in D_v.
\]
Then $(f_i)_i$ forms a net of positive contractions in $D_v$ for which $f_i \geq e_i$ and $f_i \geq ve_i v^\ast$ (see the proof of \cite[Theorem 1.4.2]{Pedersen-book-automorphism}). Hence for $y\in DA$ we have
\[
\| (1-f_i)y\| = \| y^\ast(1-f_i)^2 y\|^{1/2} \leq \| y^\ast (1-f_i) y \|^{1/2} \leq \| y^\ast(1-e_i)y \|^{1/2} \to 0,
\]
and similarly $\| (1-f_i)z \| \to 0$ for $z\in vDv^\ast A$. Since $D_v \subseteq \overline{DA + vDv^\ast A}$ it follows that $\|f_i x-  x\| \to 0$ for all $x\in D_v$. By \cite[Proposition 2.2]{GMtraces} we have $D_v \subseteq \Ped(A)$ and hence $\tau|_{D_v}$ is bounded. For every positive contraction $x\in D_v$ we therefore have
\[
\tau(x) = \lim_i \tau(f_i x) = \lim_i \tau(f_i^{1/2} x f_i^{1/2}) \leq \limsup_i \tau(f_i).
\]
Consequently $\|\tau|_{D_v}\|  \leq \limsup_i \tau(f_i)$. Note that
\begin{eqnarray}
f_i &=& d_i^{1/2} (1+d_i + vd_iv^\ast)^{-1} d_i^{1/2} + vd_i^{1/2} v^\ast (1+d_i + vd_i v^\ast)^{-1} vd_i^{1/2} v^\ast \nonumber\\
&\leq& \tfrac{d_i}{1+d_i} + \tfrac{vd_iv^\ast}{1+vd_i v^\ast}, \nonumber
\end{eqnarray}
since taking inverses is operator monotone decreasing on positive invertible elements, see \cite[Proposition 1.3.6]{Pedersen-book-automorphism}. The result for $\tau$ positive therefore follows since
\[
\tau(f_i) \leq \tau \left(\tfrac{d_i}{1+d_i}+\tfrac{vd_iv^\ast}{1+ vd_iv^\ast}\right) = 2 \tau\left(\tfrac{d_i}{1+d_i}\right) \leq 2 \| \tau|_D\|.
\]
For a general $\tau \in \T_{\mathbb R}(A)$, let $(\tau^+,\tau^-)$ be the Jordan decomposition of $\tau$ (see \cite[Definition 3.6]{GMtraces}). In particular, $\tau = \tau^+- \tau^-$, $\tau^\pm \in \T_+(A)$, and $\| \tau^+|_D\| + \| \tau^-|_D\| = \| \tau|_D\|$ (since $D$ is hereditary). By the above,
\[
\| \tau|_{D_v}\| \leq \| \tau^+|_{D_v} \| + \| \tau^- |_{D_v} \| \leq 2(\| \tau^+|_D\| + \| \tau^-|_D\|) = 2 \| \tau|_D\|. \qedhere
\]
\end{proof}

\begin{proposition}\label{p:DetOfShortPath}
Let $A$ be a $\cs$-algebra, let $D \subseteq \Ped(A)$ be a $\cs$-subalge\-bra, let $n \in \mathbb N$, and let $\mathbf u \colon [0,1] \to \mathcal U_n(\Ped(A))$ be a norm-continuous path such that $\mathbf u ([0,1]) \subseteq \bigcup_{v\in \mathcal U_n(\mathcal M(A))} v \mathcal U_n(D) v^\ast$. Then
\[ 
\Det_A(\mathbf u |_{[\delta, 1]}) \to \Det_A(\mathbf u) 
\]
as $\delta \to 0$ uniformly on compact subsets of $\T_{\mathbb R}(A)$.
\end{proposition}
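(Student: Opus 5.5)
By Lemma \ref{l:basicdetproperties} \eqref{l:basicdetproperties 2} we have $\Det_A(\mathbf u) - \Det_A(\mathbf u|_{[\delta,1]}) = \Det_A(\mathbf u|_{[0,\delta]})$. It therefore suffices to show that $\Det_A(\mathbf u|_{[0,\delta]}) \to 0$ uniformly on compact subsets of $\T_{\mathbb R}(A)$. The plan is to represent this short path, up to endpoint-preserving homotopy, as a single exponential of an element of small norm inside a conjugate of $M_n(D)$, and then bound its determinant using Lemma \ref{l:Dvtrace}.

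\textbf{Step 1: replace $D$ by a hereditary subalgebra.} Let $D' \supseteq D$ be the hereditary $\cs$-subalgebra of $A$ generated by $D$. By \cite[Proposition 2.2]{GMtraces}, $D' \subseteq \Ped(A)$. The hypothesis on $\mathbf u$ still holds with $D'$ in place of $D$, so we may assume $D$ is hereditary.

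\textbf{Step 2: reduce to a short path and take a logarithm.} Choose $\delta_0>0$ such that $\|\mathbf u(t) - \mathbf u(0)\| < 2$ for $t \le \delta_0$. By hypothesis $\mathbf u(0) = v u_0 v^\ast$ for some $v \in \mathcal U_n(\mathcal M(A))$ and $u_0 \in \mathcal U_n(D)$. Conjugating by $v^\ast$ is harmless: by Lemma \ref{l:pathconjmulti} with a constant path, $\Det_A$ is invariant under conjugation by a constant multiplier unitary. Also, $\tau \mapsto \tau\circ\Ad v$ is a homeomorphism of $\T_{\mathbb R}(A)$, and in fact it is the identity, since $\tau(vav^\ast) = \tau(a)$ for traces. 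So we may assume $\mathbf u(0) \in \mathcal U_n(D)$.

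For $t \le \delta$ put $a_t = \tfrac{1}{2\pi i}\log(\mathbf u(0)^\ast \mathbf u(t))$. By Lemma \ref{l:unitary} \eqref{l:unitary-3} and \eqref{l:unitary-4}, the path $\mathbf u|_{[0,\delta]}$ is endpoint-preserving homotopic to $s \mapsto \mathbf u(0) e^{2\pi i s a_\delta}$. Lemma \ref{l:basicdetproperties} \eqref{l:basicdetproperties 4}, \eqref{l:basicdetproperties 3} and \eqref{l:basicdetproperties 8} then give
\[\Det_A(\mathbf u|_{[0,\delta]}) = \widehat{a_\delta}.\]
By the norm formula in Lemma \ref{l:unitary} \eqref{l:unitary-3}, $\|a_\delta\| \to 0$ as $\delta \to 0$.

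\textbf{Step 3: bound the trace of $a_\delta$.} By hypothesis $\mathbf u(\delta) = w u_\delta w^\ast$ with $w \in \mathcal U_n(\mathcal M(A))$ and $u_\delta \in \mathcal U_n(D)$. Hence $\mathbf u(0)^\ast\mathbf u(\delta) - 1_n$ lies in the $\cs$-algebra generated by $M_n(D)$ and $wM_n(D)w^\ast$. Since $a_\delta$ is a function of this element vanishing at $0$, $a_\delta$ lies there too.

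This is the main obstacle. Lemma \ref{l:Dvtrace} only controls $\cs(M_n(D), vM_n(D)v^\ast)$ for a single conjugating unitary, and here the relevant algebra depends on $\delta$ through $w$. The first thing to try is to rewrite the pair as $\mathbf u(0)^\ast \mathbf u(\delta) = u_0^\ast w u_\delta w^\ast$, which involves exactly $M_n(D)$ and $wM_n(D)w^\ast$. This is the $D_w$ of Lemma \ref{l:Dvtrace}, so for $\tau\in\T_{\mathbb R}(A)$,
\[|\widehat{a_\delta}(\tau)| \le \|\tau^{(n)}|_{D_w}\|\,\|a_\delta\| \le 2n\|\tau|_D\|\,\|a_\delta\|.\]
The bound is independent of $w$, which is precisely why Lemma \ref{l:Dvtrace} is stated uniformly in $v$.

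\textbf{Step 4: uniformity on compacts.} The map $\tau \mapsto \|\tau|_D\|$ is bounded on compact subsets $K \subseteq \T_{\mathbb R}(A)$. Indeed, compact sets in the weak$^\ast$ topology are pointwise bounded on $D$, and the uniform boundedness principle applies on the Banach space $D$. Hence $\sup_{\tau\in K}|\widehat{a_\delta}(\tau)| \le 2n\,C_K\|a_\delta\| \to 0$ as $\delta \to 0$, which is the claimed uniform convergence.
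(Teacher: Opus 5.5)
Your proposal is correct and follows essentially the same route as the paper. The shared steps are: reduce to $\Det_A(\mathbf u|_{[0,\delta]}) = \widehat{a_\delta}$ with $a_\delta \in \cs(M_n(D), wM_n(D)w^\ast)$ for a hereditary $D$ with $\mathbf u(0)\in\mathcal U_n(D)$; bound this by $2n\|\tau|_D\|\,\|a_\delta\|$ via Lemma \ref{l:Dvtrace}; and use boundedness of $\tau\mapsto\|\tau|_D\|$ on compacts. The paper differs only in minor ways: it enlarges $D$ to contain $\mathbf u(0)$ instead of conjugating by $v^\ast$, and it cites \cite[Lemma 4.2]{GMtraces} where you use Banach--Steinhaus.

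One small point concerns your citation of Lemma \ref{l:unitary}\eqref{l:unitary-4}. That lemma needs the two paths to stay within distance strictly less than $2$ of each other, and your choice $\|\mathbf u(t)-\mathbf u(0)\|<2$ does not guarantee this for every $\delta\le\delta_0$. The identity $\Det_A(\mathbf u|_{[0,\delta]})=\widehat{a_\delta}$ instead follows directly from $\mathbf u|_{[0,\delta]}=\mathbf u(0)e^{2\pi i\mathbf a}$ with $\mathbf a(t)=a_t$, together with Lemma \ref{l:basicdetproperties}\eqref{l:basicdetproperties 9}, as in the paper.
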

\begin{proof}
We prove instead that $\Det_A(\mathbf u|_{[0,\delta]}) \to 0$ uniformly on compact subsets, which is equivalent by Lemma \ref{l:basicdetproperties}.

Pick $t_0 > 0$ so that $\lVert \mathbf u(0) - \mathbf u(t) \rVert < 2$ for $t \leq t_0$, and define $\mathbf a \colon [0, t_0] \to M_n(\Ped(A))_{\mathrm{sa}}$ by $\mathbf a(t) = \tfrac{1}{2\pi i } \log(\mathbf u(0)^\ast \mathbf u(t))$. Then $\mathbf a$ is norm-continuous, $\mathbf a(0)= 0$ and $\mathbf u|_{[0,t_0]} = \mathbf u(0)e^{2\pi i \mathbf a}$. Hence, by Lemma \ref{l:basicdetproperties},
\[
\Det_A(\mathbf u|_{[0,\delta]}) = \widehat{\mathbf a(\delta)} , \qquad \delta \in [0,t_0].
\]
By \cite[Proposition 2.2]{GMtraces} we may assume without loss of generality that $D$ is hereditary and $\mathbf u(0) \in \mathcal U_n(D)$. For every $t\in [0,t_0]$ we pick $v_t \in \mathcal U_n(\mathcal M(A))$ such that $\mathbf u(t) \in v_t \mathcal U_n(D) v_t^\ast$, and hence $\mathbf a(t) \in \cs(M_n(D), v_t M_n(D) v_t^\ast)$.

Let $S \subseteq \T_{\mathbb R}(A)$ be compact. By Lemma \ref{l:Dvtrace}, for $\tau\in S$ and $0 < \delta \leq t_0$ we have
\[
|\Det_A(\mathbf u|_{[0,\delta]})(\tau)| = |\tau^{(n)}(\mathbf a(\delta))| \leq 2 n \| \tau|_D\| \| \mathbf a(\delta)\|.
\]
Since $\sup_{\tau\in S} \| \tau|_D\| < \infty$ by \cite[Lemma 4.2]{GMtraces}, and since $\|\mathbf a(\delta)\| \to 0$ as $\delta\to 0$, the result follows.
\end{proof}

\begin{example}
 The following example illustrates that it is generally not true that $\Det_A(\mathbf u|_{[\delta,1]}) \to \Det_A(\mathbf u)$ as $\delta\to 0$.
 
    Consider the $\cs$-algebra of compact operators $\mathcal K$ with standard matrix units $e_{i,j}$ ($i,j\in \mathbb N$), for which $\T_{\mathbb R}(\mathcal K)$ is isomorphic to $\mathbb R$, spanned by the standard trace $\Tr$. Define a continuous map $\mathbf h \colon [1,\infty) \to \Ped(\mathcal K)_{\mathrm{sa}}$ by 
    \[
        \mathbf h(n+s) = (1-s)\frac{1}{n} \sum_{j=1}^n e_{j,j} + s \frac{1}{n+1} \sum_{j=1}^{n+1} e_{j,j}, \qquad n\in \mathbb N, \, s\in [0,1].
    \]
    Note that $\Tr(\mathbf h(t)) = 1$ for all $t\in [1,\infty)$ but that $\| \mathbf h(t)\| \to 0$ as $t\to \infty$. 
    Use this to define $\mathbf a \colon [0,1] \to \Ped(\mathcal K)_{\mathrm{sa}}$  by $\mathbf a(0) = 0$ and
    \[
    \mathbf a(t) = \mathbf h(1/t) , \quad t\in (0,1].
    \]
    Let $\mathbf u \colon [0,1] \to \mathcal U(\Ped(\mathcal K))$ be $e^{2\pi i \mathbf a}$.  By Lemma \ref{l:basicdetproperties} \eqref{l:basicdetproperties 9}, we have 
    \[
\Det_{\mathcal K}(\mathbf u|_{[\delta, 1]}) = \widehat{\mathbf a(1)} - \widehat{\mathbf a(\delta)} = 0
    \]
    for $\delta\in (0,1]$, while $\Det_{\mathcal K}(\mathbf u)(\Tr) =  \Tr(\mathbf a(1)) - \Tr(\mathbf a(0))  = 1$.
\end{example}

The above example can be modified to obtain other exotic behaviour by considering
\[
\mathbf h(n+s) = (1-s) \lambda_n \sum_{j=1}^n e_{j,j} + s\lambda_{n+1} \sum_{j=1}^{n+1} e_{j,j}
\]
for any sequence of scalars $\lambda_n \to 0$.

\begin{proposition}\label{p:Detasym}
    Let $v,w\in \mathcal U_n(\Ped(A))$ and suppose that $\mathbf u \colon [0,1) \to \mathcal U_n(\mathcal M(A))$ is norm-continuous such that $\| \mathbf u(t) v \mathbf u(t)^\ast - w\| \to 0$ as $t\to 1$. Define the continuous path
    \[
        \mathbf U \colon [0,1] \to \mathcal U_n(\Ped(A)), \quad \mathbf U(t) = \left\{ \begin{array}{ll} \mathbf u(t) v \mathbf u(t)^\ast , & \textrm{ for } t\in [0,1), \\
        w , & \textrm{ for } t=1. \end{array}\right.
    \]
    Then $\Det_A(\mathbf U) = 0$.
\end{proposition}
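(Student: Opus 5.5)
The plan is to reduce to Proposition \ref{p:DetOfShortPath} applied to the path $\mathbf U$, after reversing time so that the problematic endpoint $t=1$ sits at $0$. First I check that the hypothesis of that proposition holds. Choose a $\cs$-subalgebra $D \subseteq \Ped(A)$ containing the entries of $1_n - v$ and of $1_n - w$. Then $\mathbf U(t) = \mathbf u(t) v \mathbf u(t)^\ast \in \mathbf u(t)\mathcal U_n(D)\mathbf u(t)^\ast$ for $t<1$, and $\mathbf U(1) = w \in \mathcal U_n(D)$. Hence $\mathbf U([0,1]) \subseteq \bigcup_{x \in \mathcal U_n(\mathcal M(A))} x\,\mathcal U_n(D)\,x^\ast$.

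Continuity of $\mathbf U$ into $\mathcal U_n(\Ped(A))$ holds on $[0,1)$ since $\mathbf u$ is continuous, and at $1$ by hypothesis. Each $\mathbf U(t)$ lies in $\mathcal U_n(\Ped(A))$ by Lemma \ref{l:unitary} \eqref{l:unitary-1}. Applying Proposition \ref{p:DetOfShortPath} to the reversed path $\overline{\mathbf U}$ and using Lemma \ref{l:basicdetproperties} \eqref{l:basicdetproperties 6} gives
\[
\Det_A(\mathbf U|_{[0,1-\delta]}) \to \Det_A(\mathbf U) \quad \text{as } \delta \to 0,
\]
pointwise in $\tau$ (indeed uniformly on compacts).

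It then suffices to show $\Det_A(\mathbf U|_{[0,1-\delta]}) = 0$ for each $\delta>0$. On $[0,1-\delta]$ we have
\[
\mathbf U = \mathbf u\, v\, \mathbf u^\ast,
\]
which is the conjugate of the constant path $v \in \mathcal U_n(\Ped(A))$ by the norm-continuous path $\mathbf u|_{[0,1-\delta]}$ in $\mathcal U_n(\mathcal M(A))$. After reparametrising to $[0,1]$ via Lemma \ref{l:basicdetproperties} \eqref{l:basicdetproperties 7}, Lemma \ref{l:pathconjmulti} gives that this determinant equals the determinant of the constant path $v$. By Lemma \ref{l:basicdetproperties} \eqref{l:basicdetproperties 3}, that is $0$. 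Taking the limit yields $\Det_A(\mathbf U) = 0$.

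The only delicate point is verifying the hypothesis of Proposition \ref{p:DetOfShortPath}, since the example following that proposition shows the limit can fail in general. Here it holds because every $\mathbf U(t)$ is a multiplier-conjugate of a unitary in the fixed algebra $\mathcal U_n(D)$. The endpoint $w$ is handled by enlarging $D$ to include it, which is allowed because Proposition \ref{p:DetOfShortPath} takes an arbitrary $\cs$-subalgebra $D\subseteq\Ped(A)$. If an endpoint-specific version were needed, one would instead recheck its proof with $\mathbf u(0)$ replaced by $w$.
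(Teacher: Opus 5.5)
Your proposal is correct and follows the paper's proof: you choose $D\subseteq\Ped(A)$ with $v,w\in\mathcal U_n(D)$, invoke Proposition~\ref{p:DetOfShortPath} to get $\Det_A(\mathbf U)=\lim_{\delta\to0}\Det_A(\mathbf U|_{[0,1-\delta]})$, and show each truncated determinant vanishes by Lemma~\ref{l:pathconjmulti}. Your explicit time reversal via Lemma~\ref{l:basicdetproperties}~\eqref{l:basicdetproperties 6} and \eqref{l:basicdetproperties 7} fills in a step the paper leaves implicit.
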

\begin{proof}
  Let $D\subseteq \Ped(A)$ be a $\cs$-subalgebra such that $v,w\in \mathcal U_n(D)$. Then $\mathbf U([0,1]) \subseteq \bigcup_{u\in \mathcal U_n(\mathcal M(A))} u \mathcal U_n(D) u^\ast$, so by Proposition \ref{p:DetOfShortPath} we get that $\Det_A(\mathbf U) = \lim_{\delta \to 0} \Det_A(\mathbf U|_{[0,1-\delta]})$, but $\Det_A(\mathbf U|_{[0,1-\delta]}) =0 $ for each $\delta > 0$ by Lemma \ref{l:pathconjmulti}.
\end{proof}

\section{The group $\Kd(A)$}\label{s:K1Delta}

In this section we construct the functor $\Kd$ which assigns to each $\cs$-algebra $A$ a topological abelian group $\Kd(A)$.
Let 
\[\mathcal U_\infty(\Ped(A)) = \varinjlim_{n \in \mathbb N} \mathcal U_n(\Ped(A))\]
denote the inductive limit under the inclusions $u \mapsto u \oplus 1_1$. We consider $\mathcal U_\infty(\Ped(A))$ as a group, but do not directly consider any topology on it.

\begin{definition}
    Let $A$ be a $\cs$-algebra. 
    Unitaries $u,v \in \mathcal U_\infty(\Ped(A))$ are \emph{$\Delta$-equivalent}, written $u \sim_\Delta v$, if there exist $n \in \mathbb N$ with $u,v \in \mathcal U_n(\Ped(A))$ and a norm-continuous homotopy $\mathbf u \colon [0,1] \to \mathcal U_n(\Ped(A))$ from $u$ to $v$ with $\Det_A(\mathbf u) = 0$.

    We define $\Kd(A) = \mathcal U_\infty(\Ped(A))/\!\sim_\Delta$.
\end{definition}

The set $\Kd(A)$ inherits a group structure from $\mathcal U_\infty(\Ped(A))$ because the determinant turns products into sums.
Note that if $a \in M_n(\Ped(A))_{\mathrm{sa}}$ has $\widehat a = 0$ then $[e^{2 \pi i a}]_\Delta = 0$, by Lemma \ref{l:basicdetproperties} which says that $\Det_A(t \mapsto e^{2 \pi i t a}) = \widehat a$.

\begin{lemma}\label{l:abelian}
Let $A$ be a $\cs$-algebra, and let $u\in \mathcal U_n(\Ped(A))$ and $v\in \mathcal U_n(\multialg{A})$. Then there exist $k \in \mathbb N$ and self-adjoint elements $b_1,\dots,b_k \in M_{4n}(\Ped(A))_{\mathrm{sa}}$ with $\widehat b_j = 0$ for each $j$ such that
\[ vuv^\ast u^\ast \oplus 1_{3n} = e^{2 \pi i b_1} \cdots e^{2 \pi i b_k}. \]
In particular, $[v u v^\ast]_\Delta = [u]_\Delta$ and $\Kd(A)$ is abelian.
\end{lemma}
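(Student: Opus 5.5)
We will show that $vuv^\ast u^\ast \oplus 1_{3n}$ is a product of exponentials of trace-zero self-adjoints in $M_{4n}(\Ped(A))$. The idea is to write the commutator as a product of two conjugates, by unitaries in $\mathcal U_{4n}(\mathcal M(A))$ connected to the identity, of a single unitary in $\mathcal U_{4n}(\Ped(A))$ lying close to $1$.

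\textbf{Step 1 (make $v$ homotopic to the identity).} Write
\[
vuv^\ast u^\ast \oplus 1_n = (v\oplus v^\ast)(u\oplus 1_n)(v\oplus v^\ast)^\ast (u^\ast \oplus 1_n).
\]
The unitary $V := v\oplus v^\ast$ is connected to $1_{2n}$ in $\mathcal U_{2n}(\mathcal M(A))$ by the standard rotation path $\mathbf v$. Set $U := u\oplus 1_n \in \mathcal U_{2n}(\Ped(A))$. Then $VUV^\ast U^\ast$ is the endpoint of the path $\mathbf v U \mathbf v^\ast U^\ast$, which starts at $1_{2n}$ and stays in $\mathcal U_{2n}(\Ped(A))$ by Lemma~\ref{l:unitary}\eqref{l:unitary-1}. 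By Lemma~\ref{l:pathconjmulti} and Lemma~\ref{l:basicdetproperties}\eqref{l:basicdetproperties 4},\eqref{l:basicdetproperties 3}, this path has determinant $\Det_A(\mathbf v U\mathbf v^\ast) + \Det_A(U^\ast) = 0$.

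\textbf{Step 2 (a zero-determinant path from $1$ gives a product of trace-zero exponentials).} This step is the main obstacle: Lemma~\ref{l:unitary}\eqref{l:unitary-5} only provides exponentials $e^{2\pi i a_j}$ whose traces $\widehat a_j$ sum to zero, not exponentials that each have trace zero. Lemma~\ref{l:dlHStrick} gives $VUV^\ast U^\ast = w e^{2\pi i a}$ with $\widehat a = 0$ and $w$ a product of commutators, which is circular. So I would argue directly. Partition the path $1_{2n} = \mathbf w(s_0), \dots, \mathbf w(s_m) = VUV^\ast U^\ast$ finely, so that consecutive quotients are $e^{2\pi i a_j}$ with $a_j$ small and $\sum_j \widehat a_j = 0$. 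Then correct each exponent in the doubled space $M_{4n}$ to have trace zero:
\[
e^{2\pi i a_j} \oplus 1_{2n} = \bigl(e^{2\pi i a_j}\oplus e^{-2\pi i a_j}\bigr)\bigl(1_{2n}\oplus e^{2\pi i a_j}\bigr).
\]
The first factor equals $e^{2\pi i (a_j\oplus -a_j)}$, which has trace zero. The second factors multiply in the corner to $\prod_j e^{2\pi i a_j}$. Reordering is the delicate part, since the factors do not commute. To handle it, I would instead telescope with the partial products $p_j := \mathbf w(s_j)$:
\[
e^{2\pi i a_j} \oplus 1 = (p_{j-1}^\ast p_j \oplus p_j^\ast p_{j-1})(1\oplus p_{j-1}^\ast p_j).
\]
Under the swap conjugation, the last factor becomes $p_{j-1}^\ast p_j\oplus 1$ placed in the other corner, and the swap is a permutation unitary in $M_{4n}(\mathbb C)$. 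Conjugating a trace-zero exponential $e^{2\pi i b}$ by a scalar unitary $z$ gives $e^{2\pi i zbz^\ast}$, which still has trace zero. Iterating, the product $\prod_j (e^{2\pi i a_j} \oplus e^{-2\pi i a_j})$ absorbs everything except a term $1\oplus \prod_j e^{2\pi i a_j}$ corrected by $\prod_j e^{-2\pi i a_j}$. What remains is the single exponential $e^{2\pi i \sum_j a_j'}$ with $\sum_j \widehat{a_j} = 0$, obtained by arranging all $a_j$ to be small and combining them via $\log$ as in Lemma~\ref{l:dlHStrick}. If this bookkeeping fails, I would fall back on the path-based description: apply Lemma~\ref{l:unitary}\eqref{l:unitary-3} to the final short path, whose $\log$ has $\widehat{\ } = \Det = 0$ by Lemma~\ref{l:basicdetproperties}\eqref{l:basicdetproperties 9}.

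\textbf{Step 3 (consequences).} The ``in particular'' statements follow directly. By the remark after the definition of $\sim_\Delta$, each $[e^{2\pi i b_j}]_\Delta = 0$, so $[vuv^\ast u^\ast]_\Delta = 0$, which gives $[vuv^\ast]_\Delta = [u]_\Delta$. Taking $v \in \mathcal U_n(\Ped(A))$ shows that all commutators vanish in $\Kd(A)$, so $\Kd(A)$ is abelian.
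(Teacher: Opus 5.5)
The gap is in Step 2. Step 1 is correct, and on its own it already gives the ``in particular'' claims: it exhibits a zero-determinant path from $1_{2n}$ to $vuv^\ast u^\ast\oplus 1_n$. But the main content of the lemma is a factorisation in $M_{4n}(\Ped(A))$ with \emph{each} $\widehat b_j=0$, and Step 2 does not deliver it.

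Do the reordering carefully: push each factor $1_{2n}\oplus e^{2\pi i a_j}$ to the right past the later factors $e^{2\pi i(a_k\oplus(-a_k))}$, conjugating their exponents (this keeps them trace-zero). What is left over is exactly $1_{2n}\oplus X$, where $X=e^{2\pi i a_1}\cdots e^{2\pi i a_m}=VUV^\ast U^\ast$. That is the unitary you started with, moved to the other corner.

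Your claim that this leftover is a single exponential $e^{2\pi i a'}$ with $\widehat{a'}=\sum_j\widehat{a_j}$ is false. Taking each $a_j$ small does not make $\prod_j e^{2\pi i a_j}$ close to $1$, since that product is the endpoint $X$. The rescaling in Lemma~\ref{l:dlHStrick} only achieves closeness to $1$ modulo a product of commutators, which is circular, as you noted.

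The fallback fails for the same reason: there is no short final path. Even when $\|X-1\|<2$, the logarithm path need not be homotopic to yours. So $\widehat a$ for $a=\tfrac{1}{2\pi i}\log X$ can differ from $0$ by an element of $\rho_A(\K_0(A))$.

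The missing idea is to symmetrise the other way round. The element you expand into exponentials should lie in the identity component of $\mathcal U_{2n}(\Ped(A))$, and the multiplier should only conjugate. In your version, neither $U=u\oplus 1_n$ (which carries $[u]_1$) nor the multiplier $V$ can be written as a product of exponentials of Pedersen elements. That is what forces you onto arbitrary zero-determinant paths.

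The paper instead takes $U=u\oplus u^\ast$ and $V=v\oplus 1_n$. Then $VUV^\ast U^\ast=vuv^\ast u^\ast\oplus 1_n$, and $U=e^{2\pi i a_1}\cdots e^{2\pi i a_m}$ with $a_j\in M_{2n}(\Ped(A))_{\mathrm{sa}}$ and no condition on the $\widehat{a_j}$. This gives
\begin{align*}
vuv^\ast u^\ast\oplus 1_{3n} &= VUV^\ast U^\ast\oplus 1_{2n}\\
&= e^{2\pi i(Va_1V^\ast\oplus(-a_1))}\cdots e^{2\pi i(Va_mV^\ast\oplus(-a_m))}\,e^{2\pi i((-a_m)\oplus a_m)}\cdots e^{2\pi i((-a_1)\oplus a_1)},
\end{align*}
and every exponent is trace-zero because $\widehat{Va_jV^\ast}=\widehat{a_j}$. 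This lands exactly in $M_{4n}$.

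Alternatively, you can keep Step 1 and repair Step 2 with the amplification trick behind Lemma~\ref{l:a+b}. Work in $M_{m+1}(M_{2n}(\Ped(A)))$. Let $c_j$ have $a_j$ in slot $0$ and $-a_j$ in slot $j$, and let $c=(-s)\oplus a_1\oplus\cdots\oplus a_m$ with $s=\sum_j a_j$. Then $X\oplus 1_{2nm}=e^{2\pi i c_1}\cdots e^{2\pi i c_m}e^{2\pi i c}e^{2\pi i(s\oplus 0_{2nm})}$, and all exponents are trace-zero since $\widehat s=0$. However, this only gives the factorisation in $M_{2n(m+1)}$, with $m$ depending on the path, not in $M_{4n}$ as the statement requires.
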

\begin{proof}
Note that if we let $U = u \oplus u^\ast \in \mathcal U_{2n}(\Ped(A))$ and $V = v\oplus 1_n \in \mathcal U_{2n}(\multialg{A})$ then
\begin{equation}
vuv^\ast u^\ast \oplus 1_n = VUV^\ast U^\ast
\end{equation}
in $\mathcal U_{2n}(\Ped(A))$. Since $U$ is homotopic to $1_{2n}$ we may pick $m \in \mathbb N$ and $a_1,\dots, a_m \in M_{2n}(\Ped(A))_{\mathrm{sa}}$ such that
\begin{equation}
U = e^{2 \pi i a_1} \cdots e^{2 \pi i a_m}.
\end{equation}
Define $b_1,\dots,b_{2m} \in M_{4n}(\Ped(A))_{\mathrm{sa}}$ by
\begin{equation}
b_j := \begin{cases} Va_jV^\ast \oplus (-a_j) & 1 \leq j \leq m, \\ (-a_{2m-j+1}) \oplus a_{2m-j+1} & m+1 \leq j \leq 2m. \end{cases}
\end{equation}
In $\mathcal U_{4n}(\Ped(A))$ we compute
\begin{eqnarray}
VUV^\ast U^\ast \oplus 1_{2n} &=& vuv^\ast u^\ast \oplus 1_{3n}  \\
&=& (V\oplus 1_{2n})(e^{2 \pi i (a_1 \oplus (-a_1))} \cdots e^{2 \pi i (a_m \oplus (-a_m))}) \nonumber \\
& & \mathop{\cdot} (V^\ast \oplus 1_{2n}) ( e^{ 2 \pi i ((-a_m) \oplus a_m)} \cdots e^{ 2 \pi i ((-a_1)\oplus a_1)}) \nonumber\\
&=&  e^{2 \pi i b_1} \cdots e^{2 \pi i b_{2m}}. \nonumber
\end{eqnarray}
We see that $\widehat{b}_j = 0$ for each $j$ because 
\begin{equation}
(Va_jV^\ast \oplus (-a_j))^\wedge = (Va_jV^\ast)^\wedge - \widehat a_j = \widehat a_j - \widehat a_j = 0,
\end{equation}
and similarly (but easier) $(a_{2m-j+1} \oplus (-a_{2m-j+1}))^\wedge = 0$. It follows that  
\begin{equation}
[vuv^\ast u^\ast \oplus 1_{3n}]_\Delta = [e^{2 \pi i b_1} \cdots e^{2 \pi i b_{2m}}]_\Delta = 0,
\end{equation}
and hence $[vuv^\ast]_{\Delta} = [u]_{\Delta}$. If $v \in \mathcal U_n(\Ped(A))$ we get $[vu]_\Delta = [uv]_\Delta$.
\end{proof}

The following is analogous to Whitehead's Lemma; that a unitary is homotopic to $1$ if and only if it is a product of exponentials, although our version uses matrix amplifications.

\begin{proposition}\label{p:Delta equivalence}
Let $A$ be a $\cs$-algebra and let $u,v \in \mathcal U_n(\Ped(A))$. Then $u \sim_\Delta v$ if and only if there exist $m,k\in \mathbb N$ and self-adjoint elements $a_1$, $a_2,\dots,a_k \in M_{n+m}(\Ped(A))$ with $\widehat a_j = 0$ for each $j$ such that
\[ u^\ast v \oplus 1_m = e^{2 \pi i a_1} \cdots e^{2 \pi i a_k}. \]
\begin{proof}
For the `if' direction, simply observe that
\[
[0,1]\ni t \mapsto (u \oplus 1_m) e^{2\pi i t a_1} \cdots e^{2\pi i t a_k}
\]
is a homotopy from $u \oplus 1_m$ to $v \oplus 1_m$ with determinant $\sum_{j=1}^k \widehat a_j = 0$.

Conversely, if $u \sim_\Delta v$ then there is a norm-continuous path $\mathbf u \colon [0,1] \to \mathcal U_n(\Ped(A))$ from $u$ to $v$ with $\Det_A(\mathbf u) = 0$. By Lemma \ref{l:dlHStrick} there is an element $w \in \mathcal U_n(\Ped(A))$ in the commutator subgroup and a self-adjoint element $a \in M_n(\Ped(A))$ 
with $\widehat a = \Det_A(\mathbf u) = 0$ such that 
\[ u^\ast v = w e^{2 \pi i a}. \]
Since $w$ is in the commutator subgroup of $\mathcal U_n(\Ped(A))$, there are unitaries $r_1,\dots,r_k$ and $s_1,\dots,s_k$ with $w = [r_1,s_1]\cdots[r_k,s_k]$. Applying Lemma \ref{l:abelian} $k$ times, we are done.
\end{proof}
\end{proposition}

$\Kd$ is a functor from the category of $\cs$-algebras with (arbitrary) $\ast$-homomorphisms to the category of abelian groups as follows: let $\phi \colon A \to B$ be a $\ast$-homomorphism. We abuse notation and consider $\phi$ as a unital map from $M_n(A^\dagger)$ to $M_n(B^\dagger)$ for all $n$ in the obvious way. For every $u\in \mathcal U_n(\Ped(A))$ we have $\phi(u) -1_n = \phi(u-1_n) \in M_n(\Ped(B))$ (since $\phi(\Ped(A)) \subseteq \Ped(B)$).  Hence we define
\begin{equation}
\Kd(\phi)([u]_\Delta) = [\phi(u)]_\Delta.
\end{equation}
Clearly this turns $\Kd$ into a functor.

\begin{remark}
As opposed to some constructions of algebraic $\K_1$-like invariants, $\Kd(A)$ generally does not agree with the kernel of the canonical map $\Kd(A^\dagger) \to \Kd(\mathbb C)$ for non-unital $A$, where $A^\dagger \to \mathbb C$ is the canonical character. The main reason is that traces $\tau\in \T_{\mathbb R}(A)$ generally do not extend to $A^\dagger$. For instance, only the zero trace extends to $A^\dagger$ if $A$ is stable. 
\end{remark}

\begin{lemma}
Let $A$ be a $\cs$-algebra and let $u,v\in \mathcal U_n(\Ped(A))$. Then $[uv]_\Delta = [u\oplus v]_\Delta$. 
\end{lemma}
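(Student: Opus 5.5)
We must show $[uv]_\Delta = [u\oplus v]_\Delta$. Here $uv$ is viewed in $\mathcal U_\infty(\Ped(A))$ via $uv\oplus 1_n \in \mathcal U_{2n}(\Ped(A))$. Since $\Kd(A)$ is a group with multiplication inherited from $\mathcal U_\infty(\Ped(A))$, we have
\[
[u\oplus v]_\Delta = [(u\oplus 1_n)(1_n\oplus v)]_\Delta = [u\oplus 1_n]_\Delta\,[1_n\oplus v]_\Delta = [u]_\Delta\,[1_n\oplus v]_\Delta .
\]
Likewise $[uv]_\Delta = [u]_\Delta [v]_\Delta$, where $[v]_\Delta = [v\oplus 1_n]_\Delta$. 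It therefore suffices to show $[1_n\oplus v]_\Delta = [v\oplus 1_n]_\Delta$ in $\mathcal U_{2n}(\Ped(A))$.

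For this, let $S\in M_{2n}(\mathbb C)\subseteq \mathcal U_{2n}(\multialg{A})$ be the flip unitary exchanging the two $n\times n$ blocks. Then $S(v\oplus 1_n)S^\ast = 1_n\oplus v$. Lemma \ref{l:abelian}, applied with $u$ replaced by $v\oplus 1_n\in\mathcal U_{2n}(\Ped(A))$ and the multiplier unitary $S$, gives $[S(v\oplus 1_n)S^\ast]_\Delta = [v\oplus 1_n]_\Delta$. That lemma only needs $S$ to lie in $\mathcal U_{2n}(\multialg{A})$, and scalar matrices do (one may also simply take $S\in\mathcal U_{2n}(\mathbb C 1)\subseteq \mathcal U_{2n}(\multialg{A})$ when $A$ is non-unital). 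This finishes the argument.

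As an alternative, one could argue directly with a rotation path. Take $\mathbf R(t)$ to be the standard rotation from $1_{2n}$ to $S$, and set $\mathbf w(t)=\mathbf R(t)(v\oplus 1_n)\mathbf R(t)^\ast$, a path in $\mathcal U_{2n}(\Ped(A))$ from $v\oplus 1_n$ to $1_n\oplus v$. Lemma \ref{l:pathconjmulti} with the constant path $v\oplus 1_n$ shows $\Det_A(\mathbf w)=\Det_A(\text{constant})=0$, so the two unitaries are $\Delta$-equivalent directly from the definition.

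The only mild subtlety is checking that the identifications in $\mathcal U_\infty(\Ped(A))$ are consistent, i.e.\ that the group operation on $\Kd(A)$ is well defined via matrix padding so that the displayed factorisations hold. This follows from the multiplicativity of $\Det_A$ in Lemma \ref{l:basicdetproperties} \eqref{l:basicdetproperties 4}, so I do not expect a real obstacle.
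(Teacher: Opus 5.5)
Your proposal is correct and essentially the paper's own argument: you apply Lemma \ref{l:abelian} to $v\oplus 1_n$ and the flip unitary to get $[1_n\oplus v]_\Delta = [v\oplus 1_n]_\Delta$, then write both $[uv]_\Delta$ and $[u\oplus v]_\Delta$ as $[u\oplus 1_n]_\Delta$ plus the corresponding class of $v$. Your alternative, using Lemma \ref{l:pathconjmulti} with a path of scalar unitaries and the constant path $v\oplus 1_n$, is also valid.
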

\begin{proof}
By Lemma \ref{l:abelian} applied to $v\oplus 1_n \in \mathcal U_{2n}(\Ped(A))$ and $\begin{psmallmatrix} 0_n & 1_n \\ 1_n & 0_n \end{psmallmatrix} \in \mathcal U_{2n}(\multialg{A})$ we get $[1_n\oplus v]_\Delta = [v\oplus 1_n]_\Delta$. Hence 
\begin{equation}
[uv]_\Delta = [u\oplus 1_n]_\Delta + [v\oplus 1_n]_\Delta = [u\oplus 1_n]_\Delta + [1_n\oplus v]_\Delta = [u\oplus v]_\Delta. \qedhere
\end{equation}
\end{proof}

\begin{lemma}\label{l:a+b}
Let $A$ be a $\cs$-algebra and let $a,b\in M_n(\Ped(A))_{\mathrm{sa}}$. Then 
\begin{equation}\label{eq:a+b}
[e^{2\pi i a}]_\Delta + [e^{2\pi i b}]_\Delta = [e^{2\pi i (a+b)}]_\Delta.
\end{equation} 
\end{lemma}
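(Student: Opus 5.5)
We prove \eqref{eq:a+b} by comparing two unitary paths from $1_n$ to $e^{2\pi i a}e^{2\pi i b}$ and checking that their determinants agree, so that the endpoints are $\Delta$-equivalent. Since the group structure on $\Kd(A)$ is induced by multiplication in $\mathcal U_\infty(\Ped(A))$, the left-hand side equals $[e^{2\pi i a}e^{2\pi i b}]_\Delta$. It therefore suffices to show
\[
e^{2\pi i a}e^{2\pi i b} \sim_\Delta e^{2\pi i (a+b)}.
\]

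First, consider the path $\mathbf v(t) = e^{2\pi i t a}e^{2\pi i t b}$ for $t\in[0,1]$. It takes values in $\mathcal U_n(\Ped(A))$ by Lemma \ref{l:unitary} \eqref{l:unitary-1} and \eqref{l:unitary-2}, and runs from $1_n$ to $e^{2\pi i a}e^{2\pi i b}$. By Lemma \ref{l:basicdetproperties} \eqref{l:basicdetproperties 4} and \eqref{l:basicdetproperties 8},
\[
\Det_A(\mathbf v) = \widehat a + \widehat b.
\]
Second, the path $\mathbf w(t) = e^{2\pi i t(a+b)}$ runs from $1_n$ to $e^{2\pi i(a+b)}$, and again by \eqref{l:basicdetproperties 8}, using linearity of $a\mapsto \widehat a$,
\[
\Det_A(\mathbf w) = \widehat{a+b} = \widehat a + \widehat b.
\]

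Now form the concatenation $\overline{\mathbf v}\hash \mathbf w$, reparametrised to $[0,1]$. It runs from $e^{2\pi i a}e^{2\pi i b}$ through $1_n$ to $e^{2\pi i(a+b)}$. By Lemma \ref{l:basicdetproperties} \eqref{l:basicdetproperties 2}, \eqref{l:basicdetproperties 6} and \eqref{l:basicdetproperties 7}, its determinant is
\[
-(\widehat a+\widehat b) + (\widehat a + \widehat b) = 0.
\]
By the definition of $\sim_\Delta$, this gives $e^{2\pi i a}e^{2\pi i b}\sim_\Delta e^{2\pi i(a+b)}$, which is the claim.

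There is no real obstacle here. The only points to check are that all paths are norm-continuous with values in $\mathcal U_n(\Ped(A))$, which Lemma \ref{l:unitary} guarantees, and that reparametrisation and concatenation are handled by Lemma \ref{l:basicdetproperties}.
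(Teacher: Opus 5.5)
Your proof is correct, but it takes a different route from the paper's.

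You work with two paths out of $1_n$ inside $\mathcal U_n(\Ped(A))$: $t\mapsto e^{2\pi i ta}e^{2\pi i tb}$ and $t\mapsto e^{2\pi i t(a+b)}$. Both have determinant $\widehat a+\widehat b$. Gluing the reverse of the first to the second gives a determinant-zero path from $e^{2\pi i a}e^{2\pi i b}$ to $e^{2\pi i(a+b)}$.

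The paper argues algebraically instead. It sets $a_0 = a\oplus(-a)\oplus 0$, $b_0 = b\oplus 0\oplus(-b)$ and $c=(-a-b)\oplus a\oplus b$ in $M_{3n}(\Ped(A))_{\mathrm{sa}}$, and notes that
\[
e^{2\pi i a}e^{2\pi i b}e^{-2\pi i (a+b)}\oplus 1_{2n} = e^{2\pi i a_0}e^{2\pi i b_0}e^{2\pi i c}.
\]
Each factor on the right is trivial in $\Kd(A)$ because $\widehat a_0=\widehat b_0=\widehat c=0$, by Lemma \ref{l:basicdetproperties}~\eqref{l:basicdetproperties 8}.

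\textbf{What your route buys.} It stays in $M_n$ and is in the spirit of the fundamental theorem of calculus \eqref{eq:FTC} established later. It does draw on more of the path calculus: concatenation, reversal and reparametrisation (items \eqref{l:basicdetproperties 2}, \eqref{l:basicdetproperties 6}, \eqref{l:basicdetproperties 7} of Lemma \ref{l:basicdetproperties}), whose proofs the paper leaves to the reader. Both arguments use multiplicativity \eqref{l:basicdetproperties 4}, since the group structure on $\Kd(A)$ already rests on it.

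\textbf{What the paper's route buys.} It needs only the determinant of straight exponential paths plus the group structure. It also produces an explicit factorisation into exponentials of trace-zero self-adjoints, i.e.\ a witness of exactly the form in Proposition \ref{p:Delta equivalence}. The price is amplifying to $3n\times 3n$ matrices.

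Lemma \ref{l:a+b} is what makes $\Thd_A$ additive in Proposition \ref{p:Thomsen map}. Your argument uses nothing from that proposition, so there is no circularity.
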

\begin{proof}
Let $a_0 = a\oplus (-a) \oplus 0$, $b_0 = b\oplus 0 \oplus (-b)$ and $c = (-a-b) \oplus a\oplus b$ in $M_{3n}(\Ped(A))_{\mathrm{sa}}$. Then $\widehat a_0 = \widehat b_0 = \widehat c = 0$ and 
\begin{equation}
e^{2\pi i a}e^{2\pi i b}e^{-2\pi i (a+b)}\oplus 1_n \oplus 1_n = e^{2\pi i a_0} e^{2\pi i b_0} e^{2\pi i c}
\end{equation}
has trivial $\Kd$-class.
\end{proof}

We equip $\Kd(A)$ with a topology turning it into a topological group. Intuitively, two elements $x,y\in \Kd(A)$ are close if $x=[u]_\Delta$ and $y= [v]_\Delta$ for some $u,v\in \mathcal U_n(\Ped(A))$ that are homotopic via a path $\mathbf u$ with small determinant $\Det_A(\mathbf u)$.

Explicitly, given $x\in \Kd(A)$, a compact subset $S\subseteq \T_{\mathbb R}(A)$ and $\epsilon >0$, we let $U_{x,S,\epsilon}$ denote the set
\[
U_{x,S,\epsilon} = \left\{ y\in \Kd(A) \suchthat \begin{array}{l}
\textrm{there is $n \in \mathbb N$ and a norm-continuous path } \\
\mathbf u \colon [0,1] \to \mathcal U_n(\Ped(A)) \textrm{ with } [ \mathbf u (0) ]_\Delta = x, \\
{} [ \mathbf u(1) ]_\Delta = y,  \textrm{ and } \| \Det_A(\mathbf u) \|_S < \epsilon
\end{array}
\right\} .
\]
For the reader's convenience we recall that $\Det_A(\mathbf u) \in \T_{\mathbb R}(A)^\ast$ and that for $f\in \T_{\mathbb R}(A)^\ast$, $\| f \|_S$ is given by $\sup_{\tau\in S} |f(\tau)|$. The seminorms $\| \cdot \|_S$ for compact $S \subseteq \T_{\mathbb R}(A)$ induce the compact-open topology on $\T_{\mathbb R}(A)^\ast$.

\begin{proposition}
Let $A$ be a $\cs$-algebra. The sets $U_{x,S,\epsilon}$ for $x\in \Kd(A)$, $S\subseteq \T_{\mathbb R}(A)$ compact and $\epsilon >0$ form the basis for a topology on $\Kd(A)$, with respect to which it forms a topological group. 
\end{proposition}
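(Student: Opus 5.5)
My plan is to reduce everything to a single neighbourhood system at $0$, by first showing that the sets $U_{x,S,\epsilon}$ are translates of the sets $U_{0,S,\epsilon}$. The key identity is
\[
U_{x,S,\epsilon} = x + U_{0,S,\epsilon}.
\]
To prove it, write $x=[w]_\Delta$ with $w\in\mathcal U_n(\Ped(A))$. If $\mathbf u$ witnesses $y\in U_{0,S,\epsilon}$, take $\mathbf u(0)\sim_\Delta 1$ and enlarge $n$ so that both $\mathbf u$ and $w$ live in the same matrix size. Then $w\mathbf u$ runs from a representative of $x$ to a representative of $x+y$, and by Lemma \ref{l:basicdetproperties} \eqref{l:basicdetproperties 4} its determinant is $\Det_A(\mathbf u)$ plus the determinant of a constant path, that is, $\Det_A(\mathbf u)$. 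The reverse inclusion follows in the same way using $w^\ast$.

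There is one technical point to settle first: the defining condition $[\mathbf u(0)]_\Delta = x$ does not depend on which representative of $x$ is used, provided we are allowed to prepend a path of determinant zero. So I will note that the set of admissible paths is stable under concatenation with $\Delta$-trivial paths (Lemma \ref{l:basicdetproperties} \eqref{l:basicdetproperties 2}) and under stabilisation $\mathbf u\mapsto \mathbf u\oplus 1_m$ (Lemma \ref{l:basicdetproperties} \eqref{l:basicdetproperties 5}, \eqref{l:basicdetproperties 3}). Consequently, if $[\mathbf u(0)]_\Delta = x$ and $x = [w]_\Delta$, we may assume $\mathbf u(0) = w$ after modifying $\mathbf u$ without changing its determinant.

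Next I will check the standard criteria for a neighbourhood base at $0$ of a group topology, namely that the family $\mathcal N = \{U_{0,S,\epsilon}\}$ satisfies:
\begin{enumerate}
\item Each set contains $0$. This is witnessed by a constant path.
\item The family is closed under finite intersections up to refinement. This holds because $U_{0,S\cup S',\min(\epsilon,\epsilon')}\subseteq U_{0,S,\epsilon}\cap U_{0,S',\epsilon'}$, and $S\cup S'$ is compact.
\item $U_{0,S,\epsilon/2}+U_{0,S,\epsilon/2}\subseteq U_{0,S,\epsilon}$. Here the product of the two witnessing paths works, by Lemma \ref{l:basicdetproperties} \eqref{l:basicdetproperties 4}.
\item $-U_{0,S,\epsilon}= U_{0,S,\epsilon}$. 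Here the path $\mathbf u^\ast$ works, by Lemma \ref{l:basicdetproperties} \eqref{l:basicdetproperties 6}.
\item Conjugation invariance is automatic because $\Kd(A)$ is abelian (Lemma \ref{l:abelian}).
\end{enumerate}
Since the group is abelian, these conditions give a group topology whose open sets are unions of translates $x+U_{0,S,\epsilon}$.

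The step I expect to be the main obstacle is showing that these translates genuinely form a basis, i.e. that each $U_{x,S,\epsilon}$ is open: for $y\in U_{x,S,\epsilon}$ we need some $U_{y,S,\delta}\subseteq U_{x,S,\epsilon}$. Let $\mathbf u$ be the witness, with $\eta = \|\Det_A(\mathbf u)\|_S < \epsilon$, and set $\delta = \epsilon-\eta$. For $z\in U_{y,S,\delta}$ with witness $\mathbf v$, I will arrange after the representative adjustment above that $\mathbf v(0) = \mathbf u(1)$. This uses concatenating a determinant-zero path between two $\Delta$-equivalent representatives of $y$, together with matrix stabilisation. The concatenation $\mathbf u\hash\mathbf v$ then satisfies $\|\Det_A\|_S<\epsilon$ by Lemma \ref{l:basicdetproperties} \eqref{l:basicdetproperties 2}, so $z \in U_{x,S,\epsilon}$. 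Combining this with the translation identity and the neighbourhood-base criteria shows that the sets $U_{x,S,\epsilon}$ form a basis for a topology making $\Kd(A)$ a topological group.
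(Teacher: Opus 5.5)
Your proof is correct. It uses the same mechanics as the paper's proof. These are: moving a witness path's endpoint by concatenating with a determinant-zero path after stabilising, concatenation to get the basis property, and pointwise products and adjoints of witness paths for the group operations. The difference is in how these are assembled.

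You first prove the translation identity $U_{x,S,\epsilon} = x + U_{0,S,\epsilon}$ by multiplying witness paths by a constant unitary. You then verify the standard criteria for a neighbourhood base at $0$ of an abelian group topology: $S\cup S'$ handles intersections, $U_{0,S,\epsilon/2}+U_{0,S,\epsilon/2}\subseteq U_{0,S,\epsilon}$ handles addition, and $-U_{0,S,\epsilon}=U_{0,S,\epsilon}$ handles inversion. Finally you show each $U_{x,S,\epsilon}$ is open using only the triangle inequality for $\|\cdot\|_S$ with the same $S$ and $\delta = \epsilon - \|\Det_A(\mathbf u)\|_S$.

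The paper does not isolate the translation identity. It checks the basis axiom in one step for two basic sets with different centres and different pairs $(S_i,\epsilon_i)$. To do this it chooses a single $(S,\epsilon)$ with $\Det_A(\mathbf u_i) + O_{S,\epsilon} \subseteq O_{S_i,\epsilon_i}$, where $O_{S,\epsilon}=\{f : \|f\|_S<\epsilon\}$; this uses openness of these balls in the compact-open topology. It then gets continuity of addition and inversion from nets of witness paths whose determinants tend to $0$.

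Your version makes the translation invariance of the topology explicit. It also replaces the informal net argument with algebraic inclusions plus a textbook theorem, which is arguably more transparent. The paper's version is shorter and avoids invoking that theorem.
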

\begin{proof}

For $S \subseteq \T_{\mathbb R}(A)$ compact and $\epsilon > 0$ let 
\[O_{S,\epsilon} = \{ f \in \T_{\mathbb R}(A)^\ast \mid \| f \|_S < \epsilon \}.\]
These sets form a neighbourhood basis $\mathcal O = \{O_{S,\epsilon}\}_{S,\epsilon}$ of $0$ for the compact-open topology on $\T_{\mathbb R}(A)^\ast$.
For $i = 0,1$ let $x_i \in \Kd(A)$, let $S_i \subseteq \T_{\mathbb R}(A)$ be compact and let $\epsilon_i > 0$. We show that for any $x \in U_{x_0,S_0,\epsilon_0} \cap U_{x_1,S_1,\epsilon_1}$ there is a compact subset $S \subseteq \T_{\mathbb R}(A)$ and $\epsilon > 0$ such that $U_{x,S,\epsilon} \subseteq U_{x_0,S_0,\epsilon_0} \cap U_{x_1,S_1,\epsilon_1}$.

Pick $n \in \mathbb N$ and norm-continuous paths $\mathbf u_i \colon [0,1] \to \mathcal U_n(\Ped(A))$ for $i = 0,1$ with $[ \mathbf u_i (0) ]_\Delta = x_i$, $ [ \mathbf u_i (1) ]_\Delta = x$ and $\Det_A(\mathbf u_i) \in O_{S_i,\epsilon_i}$. By potentially enlarging $n$, we may assume that $\mathbf u_0(1) = \mathbf u_1(1)$.  
Pick $S \subseteq \T_{\mathbb R}(A)$ compact and $\epsilon > 0$ such that $\Det_A(\mathbf u_0) + O_{S,\epsilon} \subseteq O_{S_0,\epsilon_0}$ and $\Det_A(\mathbf u_1) + O_{S,\epsilon} \subseteq O_{S_1,\epsilon_1}$.
For each $y \in U_{x,S,\epsilon}$, we may (after perhaps enlarging $n$) pick a norm-continuous path $\mathbf u \colon [1,2] \to \mathcal U_n(\Ped(A))$ with $\| \Det_A(\mathbf u) \|_S < \epsilon$, $\mathbf u(1) = \mathbf u_i(1)$ (so that $[\mathbf u(1)]_\Delta = x$) and $[\mathbf u(2)]_\Delta = y$. The concatenation $\mathbf u_i \hash \mathbf u$ satisfies $\Det_A(\mathbf u_i \hash \mathbf u) \in \Det_A(\mathbf u_i) + O_{S,\epsilon} \subseteq O_{S_i,\epsilon_i}$, and so by doubling the speed of $\mathbf u_i \hash \mathbf u$ we find that $y \in U_{x_i,S_i,\epsilon_i}$ for $i=0,1$.

The resulting topology turns $\Kd(A)$ into a topological group because if $x_i \to x$ and $y_i \to y$ in $\Kd(A)$, with witnesses $(\mathbf u_i)_i$ and $(\mathbf v_i)_i$ respectively satisfying $\Det_A(\mathbf u_i) \to 0$ and $\Det_A(\mathbf v_i) \to 0$, then $(\mathbf u^\ast_i)_i$ witnesses that $x_i^{-1} \to x^{-1}$ and $(\mathbf u_i \mathbf v_i)_i$ witnesses that $x_i y_i \to x y$.
\end{proof}

Next we consider the \emph{Thomsen map} $\Thd_A \colon \T_{\mathbb R}(A)^\ast \to \Kd(A)$, inspired by the work of Thomsen \cite{Thomsen-traces}, see also \cite[Section 2.2]{CGSTW}.

\begin{proposition}\label{p:Thomsen map}
Let $A$ be a $\cs$-algebra. There is a continuous open homomorphism $\Thd_A \colon \T_{\mathbb R}(A)^\ast \to \Kd(A)$ such that 
\begin{equation}
\Thd_A(\widehat a) = [e^{2\pi i a}]_\Delta 
\end{equation}
for each $a \in \Ped(A)_{\mathrm{sa}}$. Moreover, for each $n \in \mathbb N$ and norm-continuous path $\mathbf u \colon [0,1] \to \mathcal U_n(\Ped(A))$ we have
    \begin{equation}\label{eq:FTC}
        \Thd_A(\Det_A(\mathbf u)) = [\mathbf u(1)]_{\Delta} - [\mathbf u(0)]_\Delta.
    \end{equation}
\end{proposition}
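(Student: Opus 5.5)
We define $\Thd_A$ through the quotient description of $\T_{\mathbb R}(A)^\ast$. By \cite[Theorem C]{GMtraces}, $\T_{\mathbb R}(A)^\ast$ is the quotient of $\Ped(A)_{\mathrm{sa}}$ under $a \mapsto \widehat a$, and every element has the form $\widehat a$ with $a \in \Ped(A)_{\mathrm{sa}}$. We set $\Thd_A(\widehat a) = [e^{2\pi i a}]_\Delta$.

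\emph{Well-definedness.} If $\widehat a = \widehat b$, then $\widehat{a-b} = 0$, so $[e^{2\pi i(a-b)}]_\Delta = 0$ by the remark after the definition of $\Kd(A)$. Lemma \ref{l:a+b} then gives $[e^{2\pi i a}]_\Delta = [e^{2\pi i b}]_\Delta + [e^{2\pi i(a-b)}]_\Delta = [e^{2\pi i b}]_\Delta$. The same lemma shows that $\Thd_A$ is a homomorphism.

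\emph{The formula \eqref{eq:FTC}.} We apply Lemma \ref{l:dlHStrick} to write $\mathbf u(1) = \mathbf u(0) w e^{2\pi i a}$, where $w$ is a product of commutators in $\mathcal U_n(\Ped(A))$ and $\widehat a = \Det_A(\mathbf u)$. Since $\Kd(A)$ is abelian (Lemma \ref{l:abelian}), $[w]_\Delta = 0$, and hence $[\mathbf u(1)]_\Delta - [\mathbf u(0)]_\Delta = [e^{2\pi i a}]_\Delta$. The element $a$ here lies in $M_n(\Ped(A))_{\mathrm{sa}}$, not in $\Ped(A)_{\mathrm{sa}}$, so we must check that $[e^{2\pi i a}]_\Delta = \Thd_A(\widehat a)$ for matrices too. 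To do this, pick $b \in \Ped(A)_{\mathrm{sa}}$ with $\widehat b = \widehat a$. Then $c = a \oplus (-b)$, viewed in $M_{n+1}$, has $\widehat c = 0$. By Lemma \ref{l:a+b} in $M_{n+1}$, $[e^{2\pi i a}\oplus 1]_\Delta = [e^{2\pi i c}]_\Delta + [1_n \oplus e^{2\pi i b}]_\Delta$. The first term vanishes. The second equals $[e^{2\pi i b}]_\Delta$ after a permutation conjugation, using Lemma \ref{l:abelian} with a scalar permutation unitary in $\mathcal U_{n+1}(\mathcal M(A))$.

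\emph{Continuity.} By \eqref{eq:FTC}, for $f = \widehat a$ the path $t \mapsto e^{2\pi i t a}$ has determinant $f$ (Lemma \ref{l:basicdetproperties}). So if $f \in f_0 + O_{S,\epsilon}$, the path $t \mapsto e^{2\pi i a_0}e^{2\pi i t(a-a_0)}$, where $\widehat{a_0} = f_0$ and $\widehat a = f$, has determinant $f - f_0$. This witnesses $\Thd_A(f) \in U_{\Thd_A(f_0),S,\epsilon}$. Hence $\Thd_A(f_0 + O_{S,\epsilon}) \subseteq U_{\Thd_A(f_0),S,\epsilon}$, and since $\Thd_A$ is a homomorphism, continuity follows.

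\emph{Openness.} This is the main point. We show the reverse inclusion $U_{\Thd_A(f_0),S,\epsilon} \subseteq \Thd_A(f_0 + O_{S,\epsilon})$. Given $y \in U_{x,S,\epsilon}$ with $x = \Thd_A(f_0)$, there is a path $\mathbf u$ with $[\mathbf u(0)]_\Delta = x$, $[\mathbf u(1)]_\Delta = y$ and $\|\Det_A(\mathbf u)\|_S < \epsilon$. By \eqref{eq:FTC}, $y = x + \Thd_A(\Det_A(\mathbf u)) = \Thd_A(f_0 + \Det_A(\mathbf u))$, which lies in $\Thd_A(f_0 + O_{S,\epsilon})$. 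Thus $\Thd_A$ maps the basic open neighbourhoods $f_0 + O_{S,\epsilon}$ onto the basic open sets $U_{\Thd_A(f_0),S,\epsilon}$.

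Every open set in $\T_{\mathbb R}(A)^\ast$ is a union of translates $f_0 + O_{S,\epsilon}$, and $\Thd_A$ sends each such translate onto a basic open set of $\Kd(A)$. Therefore $\Thd_A$ is open. The only delicate step is establishing \eqref{eq:FTC} before openness, in particular the matrix-versus-scalar identification above. Openness itself is then formal once we note that every $U_{x,S,\epsilon}$ meeting the image has this form.
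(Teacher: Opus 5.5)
Your proof is correct and is essentially the paper's argument: you define $\Thd_A$ through the quotient map $a \mapsto \widehat a$ on $\Ped(A)_{\mathrm{sa}}$, obtain \eqref{eq:FTC} from Lemma \ref{l:dlHStrick} together with the vanishing of commutators in $\Kd(A)$, and get continuity and openness by showing that $\Thd_A$ maps $f_0 + O_{S,\epsilon}$ onto $U_{\Thd_A(f_0),S,\epsilon}$ (the paper does this only at $f_0 = 0$, and uses nets for continuity). Your explicit check that $[e^{2\pi i a}]_\Delta = \Thd_A(\widehat a)$ also holds for matrix-valued $a \in M_n(\Ped(A))_{\mathrm{sa}}$ fills in a step the paper uses without comment, and it is right; it can be shortened by comparing $a$ directly with $b \oplus 0_{n-1}$ in $M_n(\Ped(A))_{\mathrm{sa}}$ via Lemma \ref{l:a+b}.
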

\begin{proof}
We first show that $\Thd_A$ is a well-defined homomorphism and continuous.
We view $\T_{\mathbb R}(A)^\ast$ as a quotient of $\Ped(A)_{\mathrm{sa}}$ by \cite[Theorem C]{GMtraces} and see that the map $a \mapsto [e^{2\pi i a}]_\Delta \colon \Ped(A)_{\mathrm{sa}} \to \Kd(A)$ vanishes for $a$ with $\widehat a = 0$ by Lemma \ref{l:basicdetproperties} \eqref{l:basicdetproperties 8}, which says that $\Det_A(t \mapsto e^{2 \pi i t a}) = \widehat a$. For continuity, given an element $a$ and a net $(a_i)_i$ in $\Ped(A)_{\mathrm{sa}}$ with $\widehat{a}_i \to \widehat{a}$ in $\T_{\mathbb R}(A)^\ast$, consider the net $(\mathbf u_i)_i$ of unitary paths $\mathbf u_i \colon t \mapsto e^{2 \pi i (ta + (1-t)a_i)}$ on $[0,1]$. This satisfies $\Det_A(\mathbf u_i) = \widehat a - \widehat a_i \to 0$ and thus $[e^{2\pi i a_i}]_\Delta \to [e^{2\pi i a}]_\Delta$.

Next we show the `moreover' statement.  Replacing $\mathbf u$ with $\mathbf u  \cdot \mathbf u(0)^\ast$, we may assume that $\mathbf u(0) = 1_n$. By Lemma \ref{l:dlHStrick} there are $w\in \mathcal U_n(\Ped(A))$ and $a\in M_n(\Ped(A))_{\mathrm{sa}}$ such that $\mathbf u(1) = w e^{2\pi i a}$, $\widehat a = \Det_A(\mathbf u)$, and $w$ is in the commutator subgroup of $\mathcal U_n(\Ped(A))$. Since $[w]_{\Delta} = 0$  we get
\begin{equation}
    [\mathbf u(1)]_{\Delta} = [w]_{\Delta} +  [e^{2\pi i a}]_{\Delta} = \Thd_A(\widehat a) = \Thd_A(\Det_A(\mathbf u)).
\end{equation}

For openness, we argue that given a compact subset $S \subseteq \T_{\mathbb R}(A)$ and $\epsilon > 0$, we have
\begin{equation}
    \Thd_A(\{f \in \T_{\mathbb R}(A)^\ast \mid \| f \|_S < \epsilon \}) = U_{0,S,\epsilon}. 
\end{equation}
If $ f \in \T_{\mathbb R}(A)^\ast$ satisfies $\| f \|_S < \epsilon$, then for any $a \in \Ped(A)_{\mathrm{sa}}$ with $\widehat a = f$, the exponential path $t \mapsto e^{2 \pi i t a}$ witnesses that $\Thd_A(f) \in U_{0,S,\epsilon}$. Conversely, if $x \in U_{0,S,\epsilon}$, then there is a norm-continuous path $\mathbf u \colon [0,1] \to \mathcal U_n(\Ped(A))$ with $[\mathbf u(0)]_\Delta = 0$ and $[\mathbf u(1)]_\Delta = x$ and $\| \Det_A(\mathbf u) \|_S < \epsilon$. Then by \eqref{eq:FTC} we have $\Thd_A(\Det_A(\mathbf u)) = x$.
\end{proof}

The Thomsen map $\Thd_A$ fits into a diagram
\begin{equation}\label{eq:exactseq}
\K_0(A) \xrightarrow{\rho_A} \T_{\mathbb R}(A)^\ast \xrightarrow{\Thd_A} \Kd(A) \to \K_1(A) \to 0,
\end{equation}
where the map $\Kd(A) \to \K_1(A)$ is the map $[u]_\Delta \mapsto [u]_1$, which is surjective by \eqref{eq:K one}. This map is moreover continuous when taking $\K_1(A)$ with the discrete topology, because each basic open set $U_{x,S,\epsilon} \subseteq \Kd(A)$ is sent to the value at $x \in \Kd(A)$.

\begin{proposition}\label{p:exactseq}
The sequence \eqref{eq:exactseq} is exact for every $\cs$-algebra $A$.
\end{proposition}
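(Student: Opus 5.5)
We verify exactness at the three interior spots in turn; surjectivity of $\Kd(A)\to\K_1(A)$ was already noted from \eqref{eq:K one}.

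\emph{Exactness at $\Kd(A)$.} First, the composition $\T_{\mathbb R}(A)^\ast\to\Kd(A)\to\K_1(A)$ vanishes. Indeed, every $f\in\T_{\mathbb R}(A)^\ast$ is of the form $\widehat a$ with $a\in\Ped(A)_{\mathrm{sa}}$ by \cite[Theorem C]{GMtraces}. Then $\Thd_A(f)=[e^{2\pi i a}]_\Delta$, and $e^{2\pi i a}$ is homotopic to $1$.

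Conversely, suppose $u\in\mathcal U_n(\Ped(A))$ has $[u]_1=0$. By the description following \eqref{eq:K one} there are $k$ and $a_1,\dots,a_m\in M_{n+k}(\Ped(A))_{\mathrm{sa}}$ with $u\oplus 1_k=e^{2\pi i a_1}\cdots e^{2\pi i a_m}$. The path $t\mapsto e^{2\pi i ta_1}\cdots e^{2\pi i ta_m}$ runs from $1$ to $u\oplus1_k$. By \eqref{eq:FTC} we get $[u]_\Delta=\Thd_A(\sum_j\widehat a_j)$, which lies in the image.

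\emph{Composition at $\T_{\mathbb R}(A)^\ast$ is zero.} Take $[p]_0-[q]_0\in\K_0(A)$ with $p,q$ as in \eqref{eq: K nought}. The loop $\mathbf u_q^\ast\mathbf u_p$ lies in $\mathcal U_n(\Ped(A))$ and starts and ends at $1_n$. By Proposition \ref{p:pairing} its determinant is $\rho_A([p]_0-[q]_0)$. So \eqref{eq:FTC} gives $\Thd_A(\rho_A([p]_0-[q]_0))=[1]_\Delta-[1]_\Delta=0$.

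\emph{Kernel of $\Thd_A$ is contained in the image of $\rho_A$.} This is the main step. Let $f=\widehat a$ with $\Thd_A(f)=0$, so $e^{2\pi i a}\sim_\Delta 1$ in some $\mathcal U_n(\Ped(A))$, after amplifying $a$ by zeros. Take a path $\mathbf v$ from $e^{2\pi i a}$ to $1_n$ with $\Det_A(\mathbf v)=0$. Concatenate the exponential path $t\mapsto e^{2\pi i ta}$ with $\mathbf v$. This produces a loop $\mathbf w$ at $1_n$ in $\mathcal U_n(\Ped(A))$, and by Lemma \ref{l:basicdetproperties} \eqref{l:basicdetproperties 2}, \eqref{l:basicdetproperties 8} it satisfies $\Det_A(\mathbf w)=\widehat a=f$.

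By the isomorphism \eqref{eq:K nought unitary loops}, the class of $\mathbf w$ in $\varinjlim_n\pi_1(\mathcal U_n(\Ped(A)),1_n)$ equals $[\mathbf u_q^\ast\mathbf u_p]$ for some $[p]_0-[q]_0\in\K_0(A)$. Thus after stabilising (adding constant $1$'s, which by \eqref{l:basicdetproperties 5}, \eqref{l:basicdetproperties 3} does not change determinants), $\mathbf w$ and $\mathbf u_q^\ast\mathbf u_p$ are based-homotopic within $\mathcal U_N(\Ped(A))$. Lemma \ref{l:basicdetproperties} \eqref{l:basicdetproperties 1} together with Proposition \ref{p:pairing} then gives $f=\Det_A(\mathbf w)=\rho_A([p]_0-[q]_0)$.

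The delicate point is that the homotopy supplied by \eqref{eq:K nought unitary loops} must live in $\mathcal U_N(\Ped(A))$ with the norm topology, rather than only in $\mathcal U_N(A)$. I would justify this as the text preceding \eqref{eq:K nought unitary loops} indicates. By Lemma \ref{l:Det_lemma}, $\mathbf w$ can be replaced by a homotopic loop in $\mathcal U_n(D)$ for a $\cs$-subalgebra $D\subseteq\Ped(A)$. Then $\K_0(A)=\varinjlim_D\K_0(D)$ over such $D$, which are directed by \cite[Proposition 2.2]{GMtraces}, and Bott periodicity applied in $D$ shows the loop is homotopic in some $\mathcal U_N(D')$ to a Bott loop. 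Finally, Proposition \ref{p:pairing} also computes determinants of loops built from $p,q\in M_N(D'^\dagger)$, which is what is needed.
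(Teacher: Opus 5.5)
Your proof is correct and follows the paper's argument step for step: the same four checks, using \eqref{eq:FTC}, Proposition \ref{p:pairing} and the loop picture \eqref{eq:K nought unitary loops}. The one divergence is in the final step, where you take the zero-determinant path directly from the definition of $\sim_\Delta$ instead of going through Proposition \ref{p:Delta equivalence} as the paper does; this is a harmless simplification, and your justification that the homotopy can be taken inside $\mathcal U_N(\Ped(A))$ makes explicit a point the paper leaves implicit.
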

\begin{proof}
As exponential unitaries vanish in $\K_1$, it follows that the composition from $\T_{\mathbb R}(A)^\ast$ to $\K_1(A)$ vanishes. Suppose $u\in \mathcal U_n(\Ped(A))$ vanishes in $\K_1$.  We may therefore find $k,m \in \mathbb N$ and $a_1,\dots,a_m \in M_{n+k}(\Ped(A))_{\mathrm{sa}}$ such that $u \oplus 1_k = e^{2\pi i a_1} \cdots e^{2\pi i a_m}$ (see \eqref{eq:K one}). Then
\begin{equation}
[u]_\Delta = \sum_{j=1}^m [e^{2\pi i a_j}]_\Delta = \Thd_A(\sum_{j=1}^m \widehat a_j).
\end{equation} 
Hence the sequence is exact at $\Kd(A)$. 

It remains to show exactness at $\T_{\mathbb R}(A)^\ast$. 
To see that $\Thd_A \circ \rho_A = 0$, let $x \in \K_0(A)$ and pick a unitary path $\mathbf u \colon [0,1] \to \mathcal U_n(\Ped(A))$ with $\mathbf u(0) = \mathbf u(1) = 1_n$ under the isomorphism \eqref{eq:K nought unitary loops}, so that $\rho_A(x) = \Det_A(\mathbf u)$ by Proposition \ref{p:pairing}. Then $\Thd_A(\rho_A(x)) = \Thd_A(\Det_A(\mathbf u)) = [\mathbf u(1)]_\Delta - [\mathbf u(0)]_\Delta = 0$ by Proposition \ref{p:Thomsen map}.

\begin{comment}
Let $x\in \K_0(A)$. 
By Lemma \ref{l:K0Ped} we pick projections $p,q\in M_n(\Ped(A)^\sim)$ and a positive contraction $d\in M_n(\Ped(A))$ satisfying (1)-(4) in the lemma. Note that 
\begin{equation}
\Thd_A(\rho_A(x)) = [e^{2\pi i (p-q)}]_\Delta = [e^{2\pi i d(p-q)}]_\Delta  \stackrel{\eqref{eq:a+b}}{=} [e^{2\pi i dp}]_\Delta - [e^{2\pi i dq}]_\Delta.
\end{equation}
By the final part of Lemma \ref{l:K0Ped}, $e^{2\pi i dp} = e^{2\pi i dq}$ and so $\Thd_A(\rho_A(x)) = 0$.
\end{comment}

Now, let $a\in \Ped(A)_{\mathrm{sa}}$ such that $\Thd_A(\widehat a)  = 0$. By Proposition \ref{p:Delta equivalence} there exist $m,n\in \mathbb N$ and $b_1,\dots,b_m \in M_n(\Ped(A))_{\mathrm{sa}}$ with $\widehat b_j = 0$ for each $j$ and 
\begin{equation}
e^{2\pi i (a\oplus 0_{n-1})} = e^{2\pi i b_1} \cdots e^{2\pi i b_m}.
\end{equation}
Define the norm-continuous path $\mathbf u \colon [0,1] \to \mathcal U_n(\Ped(A))$ by 
\begin{equation}
\mathbf u(t) = \left\{ \begin{array}{ll} e^{(2t)2\pi i a\oplus 0_{n-1}} , & \textrm{ for } t\in [0,\tfrac{1}{2}], \\
e^{(1-2t)2\pi i b_1} \cdots e^{(1-2t)2\pi i b_m}, & \textrm{ for } t\in [\tfrac{1}{2},1]. \end{array}\right.
\end{equation}
Then $\mathbf u(0) = \mathbf u(1) = 1$ and thus $\mathbf u$ gives a $\K_0$-class $x\in \K_0(A)$ via the isomorphism \eqref{eq:K nought unitary loops}. By Lemma \ref{l:basicdetproperties} we compute
\begin{equation}
\Det_A(\mathbf u) = \widehat a - \sum_{j=1}^m \widehat b_j = \widehat a.
\end{equation}
Hence $\rho_A(x) = \widehat a$ by Proposition \ref{p:pairing}.
\end{proof}

\begin{corollary}\label{c:direct sum}
For any $\cs$-algebra $A$, $\Kd(A)$ is topologically isomorphic (in an unnatural way) to $\K_1(A) \oplus \big( \T_{\mathbb R}(A)^\ast/\rho_A(\K_0(A)) \big)$. 
\end{corollary}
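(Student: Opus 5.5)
By Proposition \ref{p:exactseq} we already have an exact sequence of abelian groups. Passing to the quotient by the image of $\rho_A$, it becomes a short exact sequence
\[
0 \to \T_{\mathbb R}(A)^\ast/\imrho{A} \xrightarrow{\overline{\Thd_A}} \Kd(A) \to \K_1(A) \to 0 .
\]
The plan is to show that this sequence splits, and that the splitting is topological. Since $\K_1(A)$ is discrete, we only need the kernel subgroup $\overline{\Thd_A}(\T_{\mathbb R}(A)^\ast/\imrho{A})$ to be open, together with the identification $\T_{\mathbb R}(A)^\ast/\imrho{A}\cong \Thd_A(\T_{\mathbb R}(A)^\ast)$ as topological groups.

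First I will produce an algebraic splitting. The group $\T_{\mathbb R}(A)^\ast$ is a real vector space, so it is divisible. Its quotient by $\imrho{A}$ is therefore divisible too, hence injective as a $\mathbb Z$-module. Consequently the image $\Thd_A(\T_{\mathbb R}(A)^\ast)\cong \T_{\mathbb R}(A)^\ast/\imrho{A}$ is an injective subgroup of $\Kd(A)$, and it is a direct summand. Choose a retraction $r\colon \Kd(A)\to \Thd_A(\T_{\mathbb R}(A)^\ast)$; this choice is where the map becomes unnatural. Let $\pi\colon \Kd(A)\to \K_1(A)$ be the canonical map. Then $x\mapsto (\pi(x), r(x))$ is a group isomorphism $\Kd(A)\to \K_1(A)\oplus \Thd_A(\T_{\mathbb R}(A)^\ast)$.

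Next I will handle the topology on the kernel. By Proposition \ref{p:Thomsen map}, $\Thd_A$ is a continuous open homomorphism. So $\Thd_A$ induces a continuous bijective open map from $\T_{\mathbb R}(A)^\ast/\imrho{A}$ with the quotient topology onto its image with the subspace topology. An open map onto the whole group restricts to an open map onto its image, so this is a homeomorphism. Moreover the image is $\Thd_A(\T_{\mathbb R}(A)^\ast)$, which is open in $\Kd(A)$ since $\Thd_A$ is open. Equivalently, the image is $\ker\pi$, which is open because $\pi$ is continuous into the discrete group $\K_1(A)$.

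Finally I will check that the algebraic isomorphism is a homeomorphism. The key point, and the only slightly delicate one, is continuity of the retraction $r$. Because $H:=\ker\pi$ is an open subgroup, $\Kd(A)$ is the topological disjoint union of the cosets $x+H$. On each coset, $r(x+h)=r(x)+h$ is a translate, hence continuous. So $r$ is continuous, $\pi$ is continuous, and the map $(\pi,r)$ is continuous. Its inverse $(k,h)\mapsto s(k)+h$ is continuous for the same reason, where $s(k)$ is the chosen lift of $k$ with $r(s(k))=0$: the domain $\K_1(A)\oplus H$ carries the product of the discrete topology and the topology of $H$, so on each slice $\{k\}\times H$ the inverse is a translate of the identity. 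Combining this with the homeomorphism $H\cong \T_{\mathbb R}(A)^\ast/\imrho{A}$ from the previous step gives the claimed topological isomorphism.
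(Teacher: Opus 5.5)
Your proposal is correct and follows the same route as the paper. Divisibility of $\T_{\mathbb R}(A)^\ast$, hence of its quotient by $\rho_A(\K_0(A))$, splits the short exact sequence coming from Proposition \ref{p:exactseq}, and the topological claim follows from discreteness of $\K_1(A)$ together with openness of $\Thd_A$; your coset-by-coset continuity argument for the retraction and its inverse just spells out what the paper compresses into the remark that the splitting is (vacuously) continuous.
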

\begin{proof}
The exact sequence  \eqref{eq:exactseq} induces a short exact sequence
\begin{equation}
0 \to \T_{\mathbb R}(A)^\ast/\rho_A(\K_0(A))  \to \Kd(A) \to \K_1(A) \to 0.
\end{equation}
Since $\T_{\mathbb R}(A)^\ast$ is divisible as an abelian group, so is $\T_{\mathbb R}(A)^\ast/\rho_A(\K_0(A))$ and hence the short exact sequence above splits. The splitting is (vacuously) continuous, and thus the direct sum decomposition is topological.
\end{proof}

For the following result, recall that we equip $\Ped(A)_{\mathrm{sa}}$ with the inductive limit topological vector space topology, regarding it as the directed union of $D_{\mathrm{sa}}$ for $\cs$-subalgebras $D \subseteq \Ped(A)$. 

\begin{corollary}\label{c:ContinuityOutOfK1D}
Let $A$ be a $\cs$-algebra, let $G$ be a semitopological\footnote{A semitopological group is a group with a topology such that inversion and multiplication by fixed elements are continuous, but where multiplication is not assumed jointly continuous.} group and let $\phi \colon \Kd(A) \to G$ be a homomorphism. Then $\phi$ is continuous if and only if the homomorphism $\Phi \colon \Ped(A)_{\mathrm{sa}} \to G$ given at $a \in \Ped(A)_{\mathrm{sa}}$ by
\begin{equation}
\Phi(a) = \phi([e^{2 \pi i a}]_\Delta)
\end{equation}
is continuous. 
\end{corollary}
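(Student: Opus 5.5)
The plan is to factor $\Phi$ through the Thomsen map and then use that $\Thd_A$ is continuous and open. By Proposition \ref{p:Thomsen map}, $\Phi = \phi \circ \Thd_A \circ q$, where $q \colon \Ped(A)_{\mathrm{sa}} \to \T_{\mathbb R}(A)^\ast$ is the map $a \mapsto \widehat a$. By \cite[Theorem C]{GMtraces}, $q$ is a continuous surjection which is a quotient map of topological vector spaces: $\T_{\mathbb R}(A)^\ast$ carries the quotient topology, which is the compact-open topology. Hence $q$ is continuous and open.

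For the forward direction, suppose $\phi$ is continuous. Then $\Phi$ is a composition of the continuous maps $q$, $\Thd_A$ (continuous by Proposition \ref{p:Thomsen map}) and $\phi$, so it is continuous.

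For the converse, suppose $\Phi$ is continuous. Since $\phi$ is a homomorphism into a semitopological group, and translations are homeomorphisms both in $\Kd(A)$ (a topological group) and in $G$, it suffices to check continuity at $0 \in \Kd(A)$. So let $W \subseteq G$ be an open neighbourhood of the identity.
\begin{itemize}
\item[(i)] Continuity of $\Phi$ gives an open neighbourhood $V$ of $0$ in $\Ped(A)_{\mathrm{sa}}$ with $\Phi(V) \subseteq W$.
\item[(ii)] Since $q$ is open, $q(V)$ is an open neighbourhood of $0$ in $\T_{\mathbb R}(A)^\ast$.
\item[(iii)] Since $\Thd_A$ is open (Proposition \ref{p:Thomsen map}), $\Thd_A(q(V))$ is an open neighbourhood of $0$ in $\Kd(A)$.
\item[(iv)] Finally, $\phi(\Thd_A(q(V))) = \Phi(V) \subseteq W$.
\end{itemize}
This proves continuity of $\phi$ at $0$, and hence everywhere by translation.

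The one point needing care is the openness of $q$. It holds because $q$ is a linear surjection onto a space carrying the quotient topology: the saturation $V + \ker q$ is a union of translates of the open set $V$, hence open, so $q(V)$ is open in the quotient topology. With this in hand the argument is purely formal.

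Note that the converse never uses joint continuity of multiplication in $G$, nor anything about the non-identity component $\K_1(A)$. The open set $\Thd_A(q(V))$ already lies in the image of $\Thd_A$, and every basic neighbourhood $U_{x,S,\epsilon}$ is a translate of $U_{0,S,\epsilon} = \Thd_A(O_{S,\epsilon})$.
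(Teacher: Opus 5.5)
Your proposal is correct and is essentially the paper's proof: both reduce to continuity at $0$ and use that $a \mapsto [e^{2\pi i a}]_\Delta$, which factors as $\Thd_A \circ q$, is continuous and open, so the open set $\Thd_A(q(V))$ around $0$ is mapped by $\phi$ into the given neighbourhood of the identity. The only difference is that you spell out why $q$ is open, namely via saturations being unions of translates, whereas the paper takes openness of the composite map as evident from Proposition \ref{p:Thomsen map} and the quotient description of $\T_{\mathbb R}(A)^\ast$.
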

\begin{proof}
The `only if' direction is immediate by continuity of 
\begin{equation}
\psi \colon a \mapsto [e^{2 \pi i a}]_\Delta \colon \Ped(A)_{\mathrm{sa}} \to \Kd(A).
\end{equation}
Now assume that $\Phi$ is continuous. A homomorphism between semitopological groups is continuous if and only if it is continuous at the identity. Since $\psi$ is open, then for any open neighbourhood $U \subseteq G$ of the identity, the restricted preimage $\phi^{-1}(U) \cap \psi(\Ped(A)_{\mathrm{sa}}) = \psi(\Phi^{-1}(U))$ is an open neighbourhood of $0 \in \Kd(A)$. It follows that $\phi$ is continuous at $0 \in \Kd(A)$.
\end{proof}

\begin{corollary}\label{c:K1Dfullher}
A full hereditary inclusion $A \hookrightarrow B$ of $\cs$-algebras induces a topological isomorphism $\Kd(A) \isom \Kd(B)$. 
\end{corollary}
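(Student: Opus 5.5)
Write $\iota \colon A \hookrightarrow B$ for the full hereditary inclusion. The plan is to compare the exact sequences \eqref{eq:exactseq} for $A$ and $B$ via the five lemma, and then check openness using the Thomsen maps. Naturality gives a commutative ladder
\[
\K_0(A) \xrightarrow{\rho_A} \T_{\mathbb R}(A)^\ast \xrightarrow{\Thd_A} \Kd(A) \to \K_1(A) \to 0
\]
mapping to the corresponding sequence for $B$, with vertical maps $\K_0(\iota)$, $\iota_\#$, $\Kd(\iota)$ and $\K_1(\iota)$. Naturality of $\Thd$ is immediate from $\Thd_A(\widehat a) = [e^{2\pi i a}]_\Delta$ and $\iota(e^{2\pi i a}) = e^{2\pi i \iota(a)}$.

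By Brown's theorem, or directly by full corner invariance of K-theory, $\K_0(\iota)$ and $\K_1(\iota)$ are isomorphisms. The tracial input I will use is that restriction gives a homeomorphism $\T_{\mathbb R}(B) \to \T_{\mathbb R}(A)$ for a full hereditary subalgebra, which I expect is in \cite{GMtraces}, using $\Ped(A) = A \cap \Ped(B)$ and that traces extend uniquely from full hereditary subalgebras. Dually, $\iota_\#$ is then a topological isomorphism of $\T_{\mathbb R}(A)^\ast \cong \T_{\mathbb R}(B)^\ast$ under compact-open topologies. Since $\rho_B \circ \K_0(\iota) = \iota_\# \circ \rho_A$, the map $\iota_\#$ induces an isomorphism $\T_{\mathbb R}(A)^\ast/\rho_A(\K_0(A)) \to \T_{\mathbb R}(B)^\ast/\rho_B(\K_0(B))$. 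Applying the five lemma to the short exact sequences $0 \to \T_{\mathbb R}^\ast/\rho(\K_0) \to \Kd \to \K_1 \to 0$ from the proof of Corollary \ref{c:direct sum} shows that $\Kd(\iota)$ is a group isomorphism.

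For topology, continuity of $\Kd(\iota)$ follows from Corollary \ref{c:ContinuityOutOfK1D}, since $a \mapsto [e^{2\pi i \iota(a)}]_\Delta$ is the composition of the continuous maps $\Ped(A)_{\mathrm{sa}} \to \Ped(B)_{\mathrm{sa}}$ and $\psi_B$. Alternatively, it follows because $\iota$ maps witnessing paths to witnessing paths, with $\Det_B(\iota\circ\mathbf u) = \iota_\#\Det_A(\mathbf u)$. For openness, it suffices to check at $0$ because the map is a group isomorphism. The basic neighbourhoods of $0$ in $\Kd(B)$ are $U_{0,S,\epsilon} = \Thd_B(O_{S,\epsilon})$, and likewise in $\Kd(A)$. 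Then $\Kd(\iota)(\Thd_A(O)) = \Thd_B(\iota_\#(O))$, which is open because $\iota_\#$ is a homeomorphism and $\Thd_B$ is open (Proposition \ref{p:Thomsen map}).

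I expect the main obstacle to be the tracial step: showing that $\iota_\#$ is a topological isomorphism. This needs the unique extension of traces on $\Ped(A)$ to $\Ped(B)$ for full hereditary $A$, together with continuity and properness of the restriction map with respect to the weak$^\ast$ topologies. I would cite this from \cite{GMtraces}. Failing that, I would prove it by writing elements of $\Ped(B)$ as finite sums $\sum x_i^\ast x_i$ with $x_i x_i^\ast \in \Ped(A)$ and defining $\tilde\tau(x^\ast x) = \tau(x x^\ast)$.
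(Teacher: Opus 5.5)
Your proposal is correct and follows the paper's proof. The paper applies the five lemma to the natural ladder of exact sequences \eqref{eq:exactseq}, using that $\K_0(\iota)$ and $\K_1(\iota)$ are isomorphisms and that $\iota_\#$ is a topological isomorphism, which it cites as \cite[Corollary 5.4]{GMtraces}. Your explicit openness argument, via $\Kd(\iota)\circ\Thd_A=\Thd_B\circ\iota_\#$ and the openness of $\Thd_B$, spells out what the paper compresses into ``its inverse is automatically continuous''.
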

\begin{proof}
By naturality, the inclusion $\iota \colon A\hookrightarrow B$ induces a commutative diagram
\begin{equation}
\begin{tikzcd}
	{\K_0(A)} & {\T_{\mathbb R}(A)^\ast} & {\Kd(A)} & {\K_1(A)} & {0} \\
	{\K_0(B)} & {\T_{\mathbb R}(B)^\ast} & {\Kd(B)} & {\K_1(B)} & {0}
	\arrow["{\rho_A}", from=1-1, to=1-2]
	\arrow["{\K_0(\iota)}"', "\cong", from=1-1, to=2-1]
	\arrow["{\Thd_A}", from=1-2, to=1-3]
	\arrow["{\iota_\#}"', "\cong", from=1-2, to=2-2]
	\arrow[from=1-3, to=1-4]
    \arrow[from=1-4, to=1-5]
	\arrow["{\Kd(\iota)}"', from=1-3, to=2-3]
	\arrow["{\K_1(\iota)}"', "\cong", from=1-4, to=2-4]
	\arrow["{\rho_B}", from=2-1, to=2-2]
	\arrow["{\Thd_B}", from=2-2, to=2-3]
	\arrow[from=2-3, to=2-4]
    \arrow[from=2-4, to=2-5]
\end{tikzcd}
\end{equation}
with exact rows. It is classical that $\K_0(\iota)$ and $\K_1(\iota)$ are isomorphisms, and $\iota_\#$ is a topological isomorphism by \cite[Corollary 5.4]{GMtraces}. Hence $\Kd(\iota)$ is an isomorphism by the five lemma. Its inverse is automatically continuous.
\end{proof}

\begin{lemma}\label{l:unitarycontinuity}
Let $A$ be a $\cs$-algebra, let $D \subseteq \Ped(A)$ be a $\cs$-subalgebra and let $n \in \mathbb N$. Then the canonical map $\mathcal U_n(D) \to \Kd(A)$ is continuous.
\end{lemma}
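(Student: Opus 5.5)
Since $\mathcal U_n(D)$ carries the norm topology and $\Kd(A)$ is a topological group, it suffices to prove continuity at an arbitrary point $u \in \mathcal U_n(D)$. I will show the following: given a compact $S \subseteq \T_{\mathbb R}(A)$ and $\epsilon > 0$, there is $\delta > 0$ such that every $v \in \mathcal U_n(D)$ with $\|u - v\| < \delta$ has $[v]_\Delta \in U_{[u]_\Delta, S, \epsilon}$. Since the sets $U_{x,S,\epsilon}$ form a basis, and each one containing $[u]_\Delta$ contains a set of the form $U_{[u]_\Delta,S',\epsilon'}$ (by the argument in the proof that these sets form a basis), this gives continuity.

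To construct the witnessing path, take $\delta < 2$ and put $a := \tfrac{1}{2\pi i}\log(u^\ast v)$. By Lemma \ref{l:unitary} \eqref{l:unitary-3}, $a$ is self-adjoint, satisfies $u e^{2\pi i a} = v$, and
\[ \|a\| = \tfrac{1}{2\pi}\arccos\bigl(1 - \tfrac12 \|u - v\|^2\bigr), \]
which tends to $0$ as $\|u-v\| \to 0$. Moreover $u^\ast v - 1_n \in M_n(D)$ and $\log(1+z)$ vanishes at $0$, so functional calculus gives $a \in M_n(D)_{\mathrm{sa}}$. The path $\mathbf u(t) = u e^{2\pi i t a}$ runs from $u$ to $v$ inside $\mathcal U_n(D)$. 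By Lemma \ref{l:basicdetproperties} \eqref{l:basicdetproperties 4} and \eqref{l:basicdetproperties 8}, its determinant is $\Det_A(\mathbf u) = \widehat a$.

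It remains to bound $\widehat a$ uniformly on $S$. For $\tau \in S$ we have $|\widehat a(\tau)| = |\tau^{(n)}(a)| \le n \|\tau|_D\| \, \|a\|$. Here $\tau|_D$ is bounded because $D \subseteq \Ped(A)$, and $\sup_{\tau \in S}\|\tau|_D\| < \infty$ by \cite[Lemma 4.2]{GMtraces}, the same fact used in Proposition \ref{p:DetOfShortPath}. This is the only non-formal input, and the main thing to check is that this lemma applies to an arbitrary (not necessarily hereditary) $\cs$-subalgebra $D \subseteq \Ped(A)$. If it does not, I would first pass to the hereditary $\cs$-subalgebra generated by $D$, which still lies in $\Ped(A)$ by \cite[Proposition 2.2]{GMtraces}.

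With $C := n \sup_{\tau \in S}\|\tau|_D\|$, choose $\delta$ so that $\|a\| < \epsilon/(C+1)$ whenever $\|u - v\| < \delta$. Then $\|\Det_A(\mathbf u)\|_S < \epsilon$, so $[v]_\Delta \in U_{[u]_\Delta,S,\epsilon}$, as required.
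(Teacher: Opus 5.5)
Your proof is correct and is essentially the paper's argument. The paper also sets $a=\tfrac{1}{2\pi i}\log(u^\ast v)\in M_n(D)_{\mathrm{sa}}$, uses the norm formula of Lemma \ref{l:unitary} \eqref{l:unitary-3}, and bounds $|\widehat a(\tau)|\le n\|\tau|_D\|\,\|a\|$ with $\sup_{\tau\in S}\|\tau|_D\|<\infty$ (applied directly to the arbitrary subalgebra $D$, so your hereditary fallback is harmless but unnecessary); the only cosmetic difference is that it concludes via $\Thd_A(\widehat a)=[v]_\Delta-[u]_\Delta$ and continuity of $\Thd_A$, whereas you exhibit the exponential path in $U_{[u]_\Delta,S,\epsilon}$ directly.
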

\begin{proof}
Let $u_i \to u$ in $\mathcal U_n(D)$, let $S \subseteq \T_{\mathbb R}(A)$ be compact and let $\epsilon > 0$. By \cite[Lemma 4.2]{GMtraces}, $M := \sup_{\tau \in S} \| \tau|_D\|$ is finite and, for sufficiently large $i$, $\tfrac{1}{2\pi} \arccos(1-\tfrac{1}{2}\| u_i-u\|^2) < \epsilon/((M+1)n)$. Setting $a_i = \tfrac{1}{2\pi i} \log(u^\ast u_i) \in M_n(D)$, then $\Thd_A(\widehat a_i) = [u^\ast u_i]_\Delta = [u_i]_\Delta - [u]_\Delta$, and $\| a_i \| \leq \epsilon/((M+1)n)$ by Lemma \ref{l:unitary} \eqref{l:unitary-3}, and therefore 
    \begin{equation}
        \sup_{\tau\in S} | \widehat a_i(\tau)| \leq \sup_{\tau\in S} | \tau^{(n)}(a_i)| \leq \sup_{\tau\in S} \frac{\epsilon}{(M+1)n} n \| \tau|_D\| \leq \epsilon.
    \end{equation}
Thus $\widehat a_i \to 0$ and by continuity of $\Thd_A$ (Proposition \ref{p:Thomsen map}), we conclude that $[u_i]_\Delta \to [u]_\Delta$.
\end{proof}

\begin{definition}
Two $*$-homomorphisms $\phi,\psi \colon A \to B$ are said to be \emph{approximately Murr\-ay--von Neumann equivalent}, written $\phi \sim_{\mathrm{aMvN}} \psi$, if for each $\epsilon > 0$ and each finite set $\mathcal F \subseteq A$ there exists a contraction $v \in \multialg{B}$ such that $\| v^* \phi(a) v  - \psi(a) \| < \epsilon$ and $\|v \psi(a) v^\ast - \phi(a) \| < \epsilon$ for each $a \in \mathcal F$. If $v$ may be chosen to be unitary, $\phi$ and $\psi$ are said to be \emph{approximately unitarily equivalent} (with multiplier unitaries), written $\phi \sim_{\mathrm{au}} \psi$.

Assume $A$ to be separable. The $*$-homomorphisms $\phi,\psi \colon A \to B$ are said to be \emph{asymptotically Murray--von Neumann equivalent}, written $\phi \sim_{\mathrm{asMvN}} \psi$, if there is a norm-continuous path $t \mapsto v_t \colon [0,\infty) \to \multialg{B}$ of contractions in the multiplier algebra of $B$ such that, for each $a \in A$, $v_t^* \phi(a) v_t \to \psi(a)$ and $v_t \psi(a) v_t^* \to \phi(a)$ as $t \to \infty$. If $(v_t)_{t \in [0,\infty)}$ may be chosen to be unitary for each $t$, $\phi$ and $\psi$ are said to be \emph{asymptotically unitarily equivalent} (with multiplier unitaries), written $\phi \sim_{\mathrm{asu}} \psi$.
\end{definition}

Many functors that are invariant under approximate or asymptotic unitary equivalence will also be invariant under approximate or asymptotic Murray von Neumann equivalence (in particular, if the functor is $M_2$-stable\footnote{This means that the inclusion $A\to M_2(A)$ in the $(1,1)$-corner induces an isomorphism. Note that this is an inclusion of a full hereditary $C^\ast$-subalgebra, so the functors $\Kd$ and $\T_{\mathbb R}(-)^\ast$ are $M_2$-stable.} by a unitary dilation trick, see \cite[Corollary 3.11]{Gabe-o2class}). This in particular applies to $\Kd$ (and $\HKd$ introduced in the following section).

The following is immediate from Proposition \ref{p:Detasym}.

\begin{proposition}\label{p:asu}
Let $\phi, \psi\colon A \to B$ be $\ast$-homomorphisms between $\cs$-alge\-bras with $A$ separable. If $\phi$ and $\psi$ are asymptotically unitarily equivalent (with multiplier unitaries), then $\Kd(\phi) = \Kd(\psi)$. 

More generally, if $\phi$ and $\psi$ are asymptotically Murray--von Neumann equivalent, then $\Kd(\phi) = \Kd(\psi)$.
\end{proposition}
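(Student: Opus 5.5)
To prove Proposition \ref{p:asu}, fix $x\in \Kd(A)$ and write $x = [u]_\Delta$ with $u\in \mathcal U_n(\Ped(A))$. We must show $[\phi(u)]_\Delta = [\psi(u)]_\Delta$ in $\Kd(B)$. By definition of $\sim_\Delta$, it suffices to produce a norm-continuous path in $\mathcal U_n(\Ped(B))$ from $\psi(u)$ to $\phi(u)$ with vanishing determinant. Proposition \ref{p:Detasym} is designed to deliver exactly such a path.

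First, reparametrise the asymptotic unitary equivalence. Let $(v_t)_{t\in[0,\infty)}$ be the unitaries in $\multialg{B}$ with $v_t\psi(a)v_t^\ast \to \phi(a)$ for all $a\in A$. Apply this entrywise in matrices and set $V_t = v_t\otimes 1_n \in \mathcal U_n(\multialg{B})$. Since $u-1_n$ has finitely many entries in $A$, we get $\|V_t\psi(u)V_t^\ast - \phi(u)\|\to 0$ as $t\to\infty$.

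Next, compose with a homeomorphism $[0,1)\to[0,\infty)$ to obtain a norm-continuous path $\mathbf u\colon [0,1)\to \mathcal U_n(\multialg{B})$ with $\|\mathbf u(s)\psi(u)\mathbf u(s)^\ast - \phi(u)\|\to 0$ as $s\to 1$. Now apply Proposition \ref{p:Detasym} with $v=\psi(u)$ and $w=\phi(u)$; both lie in $\mathcal U_n(\Ped(B))$ because $\phi,\psi$ map $\Ped(A)$ into $\Ped(B)$. This gives a continuous path $\mathbf U$ in $\mathcal U_n(\Ped(B))$ with $\mathbf U(0)=v_0\psi(u)v_0^\ast$, $\mathbf U(1)=\phi(u)$ and $\Det_B(\mathbf U)=0$. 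Hence $[\phi(u)]_\Delta = [v_0\psi(u)v_0^\ast]_\Delta$. By Lemma \ref{l:abelian}, with $v_0\otimes 1_n$ a multiplier unitary, the right-hand side equals $[\psi(u)]_\Delta$. (Equivalently, we could precompose $\mathbf U$ with a constant segment.)

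For the Murray--von Neumann version, use $M_2$-stability of $\Kd$, which holds by Corollary \ref{c:K1Dfullher} since the corner inclusion is full hereditary. Then apply the unitary dilation trick of \cite[Corollary 3.11]{Gabe-o2class}. The contractions $v_t$ dilate to a continuous path of unitaries in $M_2(\multialg{B})$, which asymptotically conjugate $\iota\circ\psi$ to $\iota\circ\phi$, where $\iota$ is the $(1,1)$-corner embedding. The first part then gives $\Kd(\iota)\Kd(\phi)=\Kd(\iota)\Kd(\psi)$, and since $\Kd(\iota)$ is injective, the conclusion follows.

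The only genuinely delicate point is continuity of the determinant at the endpoint $s=1$, where the conjugating path does not converge. That subtlety is entirely absorbed by Proposition \ref{p:DetOfShortPath} via Proposition \ref{p:Detasym}, so we expect the main work to be just checking the dilation details in the Murray--von Neumann case.
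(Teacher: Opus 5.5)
Your proposal is correct and follows the paper's route. The unitary case is an application of Proposition \ref{p:Detasym}, with Lemma \ref{l:abelian} absorbing the conjugation by $v_0$, and the Murray--von Neumann case is the $M_2$-stability plus unitary dilation argument of \cite[Corollary 3.11]{Gabe-o2class} that the paper invokes; your parenthetical about precomposing with a constant segment would not by itself connect $\psi(u)$ to $v_0\psi(u)v_0^\ast$, but it is not needed.
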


\section{The group $\HKd(A)$}\label{s:HK1Delta}

The topological abelian group $\Kd(A)$ often fails to be Hausdorff, with the identity element $0$ not closed. As we shall shortly see this happens precisely when the image of $\rho_A \colon \K_0(A) \to \T_{\mathbb R}(A)^*$ is not closed.

\begin{definition}
Let $A$ be a $\cs$-algebra.
The group $\HKd(A)$ is the quotient of $\Kd(A)$ by the closure $\overline{\{0\}}$ of $0$. For $u\in \mathcal U_n(\Ped(A))$ we let $[u]_{\overline{\Delta}}$ denote the class of $u$ in $\HKd(A)$. We equip $\HKd(A)$ with the quotient topology, under which it is a Hausdorff topological abelian group.
\end{definition}

\begin{remark}
We refer to $\HKd$ as Hausdorffised $\Kd$ and it is the Hausdorffisation of $\Kd$ in the sense that any continuous homomorphism $\Kd(A) \to G$ to a Hausdorff topological abelian group factors through the quotient map $\Kd(A) \to \HKd(A)$.
\end{remark}

Functoriality of $\HKd$ with respect to $*$-homomorphisms is inherited from $\Kd$. The following is immediate from Corollary \ref{c:K1Dfullher}.

\begin{proposition}\label{p:HK1Dfullher}
    A full hereditary inclusion $A\hookrightarrow B$ of $\cs$-algebras induces a topological isomorphism $\HKd(A) \isom \HKd(B)$.
\end{proposition}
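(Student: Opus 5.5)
The plan is to deduce this directly from Corollary \ref{c:K1Dfullher} together with the general fact that Hausdorffisation is functorial for topological isomorphisms. Let $\iota\colon A\hookrightarrow B$ be the full hereditary inclusion. By Corollary \ref{c:K1Dfullher}, $\Kd(\iota)\colon \Kd(A)\to\Kd(B)$ is a topological isomorphism, i.e.\ a group isomorphism which is a homeomorphism.

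First I check that $\Kd(\iota)$ maps $\overline{\{0\}}$ onto $\overline{\{0\}}$. A homeomorphism carries closures to closures, and $\Kd(\iota)(0)=0$, so $\Kd(\iota)(\overline{\{0\}_A}) = \overline{\{0\}_B}$. It follows that $\Kd(\iota)$ descends to a group isomorphism
\[
\HKd(\iota)\colon \Kd(A)/\overline{\{0\}} \to \Kd(B)/\overline{\{0\}}.
\]
This descended map is exactly the functorially induced map $\HKd(\iota)$, since functoriality of $\HKd$ is inherited from $\Kd$.

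Next I check that the descended map is a homeomorphism. Write $q_A, q_B$ for the quotient maps. Then $q_B\circ\Kd(\iota) = \HKd(\iota)\circ q_A$. Since $q_B\circ \Kd(\iota)$ is continuous, the universal property of the quotient topology gives continuity of $\HKd(\iota)$. Applying the same argument to $\Kd(\iota)^{-1}$, which is also a topological isomorphism preserving the closure of $0$, gives a continuous inverse.

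There is no real obstacle here. The only point needing care is that the quotient topology behaves well, and this follows from the universal property rather than from any openness argument. Alternatively, one may note that quotient maps of topological groups by subgroups are open.
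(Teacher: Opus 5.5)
Your proposal is correct and matches the paper, which simply records the result as immediate from Corollary \ref{c:K1Dfullher}. You have spelled out the routine details: a topological isomorphism carries $\overline{\{0\}}$ onto $\overline{\{0\}}$, and so it descends to a homeomorphic isomorphism of the quotients.
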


For the following result we highlight that we consider $\T_{\mathbb R}(A)^\ast$ with the compact-open topology, and that $\rho_A(\K_0(A)) \subseteq \T_{\mathbb R}(A)^\ast$ coincides with the kernel of the Thomsen map $\Thd_A \colon \T_{\mathbb R}(A)^\ast \to \Kd(A)$ by Proposition \ref{p:exactseq}.

\begin{proposition}\label{p:zeroclosure}
For any $\cs$-algebra $A$ we have $\overline{\{0\}} = \Thd_A\left(\oimrho{A}\right)$ in $\Kd(A)$. 
\end{proposition}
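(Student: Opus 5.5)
We prove the two inclusions separately, using throughout that $\Thd_A$ is a continuous, open homomorphism (Proposition \ref{p:Thomsen map}) with kernel exactly $\rho_A(\K_0(A))$ (Proposition \ref{p:exactseq}). Write $W = \oimrho{A}$ for the closure in the compact-open topology.

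\emph{The inclusion $\Thd_A(W)\subseteq \overline{\{0\}}$.} Take $f\in W$ and a net $f_i = \rho_A(x_i)\to f$. Then $\Thd_A(f_i)=0$ for all $i$, so continuity of $\Thd_A$ gives $0 = \Thd_A(f_i)\to \Thd_A(f)$. Since the constant net at $0$ converges to $\Thd_A(f)$, every neighbourhood of $\Thd_A(f)$ contains $0$. In a topological group $\overline{\{0\}}$ is symmetric: $y$ lies in the closure of $0$ if and only if $0$ lies in the closure of $y$. Hence $\Thd_A(f)\in\overline{\{0\}}$.

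\emph{The inclusion $\overline{\{0\}}\subseteq \Thd_A(W)$.} Let $y\in\overline{\{0\}}$.
\begin{enumerate}
\item The map $\Kd(A)\to\K_1(A)$ is continuous for the discrete topology on $\K_1(A)$, so the preimage of $0$ is closed. It contains $0$, hence contains $y$. By exactness we may write $y=\Thd_A(f)$ for some $f\in\T_{\mathbb R}(A)^\ast$.
\item We show $f\in W$. Fix a basic neighbourhood $f+O_{S,\epsilon}$. Since $\Thd_A$ is open, $\Thd_A(f+O_{S,\epsilon}) = y + U_{0,S,\epsilon}$ is an open neighbourhood of $y$. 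Because $y\in\overline{\{0\}}$, this neighbourhood contains $0$.
\item So there is $g\in f+O_{S,\epsilon}$ with $\Thd_A(g)=0$, that is, $g\in\rho_A(\K_0(A))$.
\item As $S$ and $\epsilon$ were arbitrary, $f\in W$.
\end{enumerate}

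The only step needing care is the identification $\Thd_A(f+O_{S,\epsilon}) = \Thd_A(f) + U_{0,S,\epsilon}$. This follows from the homomorphism property together with the identity $\Thd_A(O_{S,\epsilon}) = U_{0,S,\epsilon}$ established in the proof of Proposition \ref{p:Thomsen map}. Everything else is formal.
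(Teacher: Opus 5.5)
Your proposal is correct and follows essentially the same route as the paper. Continuity of $\Thd_A$ gives $\Thd_A(\oimrho{A})\subseteq\overline{\{0\}}$; for the reverse inclusion, both arguments combine closedness of the image of $\Thd_A$ (via the continuous map to the discrete group $\K_1(A)$) with openness of $\Thd_A$. You simply unpack the paper's remark that openness makes $\Thd_A(\oimrho{A})$ closed into a pointwise neighbourhood argument. Your appeal to symmetry of $\overline{\{0\}}$ in the first inclusion is harmless but unnecessary, since ``every neighbourhood of $\Thd_A(f)$ contains $0$'' already says $\Thd_A(f)\in\overline{\{0\}}$.
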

\begin{proof}
The inclusion $\Thd_A\left(\overline{\rho_A(\K_0(A))}\right) \subseteq \overline{\{0\}}$ follows from continuity of $\Thd_A$.
The image of $\Thd_A$ is closed because the exact sequence \eqref{eq:exactseq} has continuous maps with $\K_1(A)$ discrete. Openness of $\Thd_A$ thus implies that $\Thd_A\left(\overline{\rho_A(\K_0(A))}\right)$ is closed, and therefore contains $\overline{\{0\}}$.
\end{proof}

The canonical surjection $\Kd(A) \to \K_1(A)$ factors through $\HKd(A)$. Let 
\begin{equation}
\Thod_A \colon \T_{\mathbb R}(A)^\ast \to \HKd(A)
\end{equation}
denote the composition of $\Thd_A$ with the quotient map. Explicitly, this is given by $\widehat a \mapsto [e^{2\pi i a}]_{\overline \Delta}$ for $a\in \Ped(A)_{\mathrm{sa}}$.
The following is immediate from Propositions \ref{p:exactseq} and \ref{p:zeroclosure}.
\begin{corollary}\label{c:HK1Dexactseq}
For every $\cs$-algebra $A$, the sequence below is exact.
\begin{equation}\label{eq:ThomsenODelta}
0 \to \frac{\T_{\mathbb{R}}(A)^\ast}{\, \, \overline{\rho_A(\K_0(A))} \, \,} \xrightarrow{\Thod_A} \HKd (A) \to \K_1(A) \to 0
\end{equation}
\end{corollary}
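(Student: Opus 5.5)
The plan is to derive the exact sequence \eqref{eq:ThomsenODelta} from the exact sequence \eqref{eq:exactseq} of Proposition \ref{p:exactseq} by passing to the quotient $\HKd(A) = \Kd(A)/\overline{\{0\}}$, with Proposition \ref{p:zeroclosure} identifying $\overline{\{0\}}$ explicitly. I would verify exactness at each of the three nontrivial places in turn.

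\emph{Surjectivity onto $\K_1(A)$.} The map $\Kd(A) \to \K_1(A)$, $[u]_\Delta \mapsto [u]_1$, is surjective by Proposition \ref{p:exactseq}. It factors through $\HKd(A)$ because it vanishes on $\overline{\{0\}}$. Indeed, $\overline{\{0\}} = \Thd_A(\oimrho{A})$ lies in the image of $\Thd_A$, which is the kernel of $\Kd(A)\to\K_1(A)$. Alternatively, $\K_1(A)$ is discrete, hence Hausdorff, so the continuous map must kill $\overline{\{0\}}$. Surjectivity of the induced map $\HKd(A) \to \K_1(A)$ is then immediate.

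\emph{Exactness in the middle.} Let $q\colon \Kd(A)\to\HKd(A)$ be the quotient map. If $q(x)$ maps to $0$ in $\K_1(A)$, then $x$ maps to $0$ in $\K_1(A)$, so $x = \Thd_A(f)$ for some $f$, and hence $q(x) = \Thod_A(f)$. Conversely, the composition $\Thod_A$ followed by the map to $\K_1(A)$ vanishes, since this already holds before taking the quotient.

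\emph{Well-definedness and injectivity on the left.} Here I compute $\ker \Thod_A = \Thd_A^{-1}(\overline{\{0\}}) = \Thd_A^{-1}\big(\Thd_A(\oimrho{A})\big)$. Since $\ker\Thd_A = \imrho{A} \subseteq \oimrho{A}$, this preimage equals $\oimrho{A} + \imrho{A} = \oimrho{A}$. So $\Thod_A$ descends to an injective map on $\T_{\mathbb R}(A)^\ast/\oimrho{A}$, which gives exactness at the left.

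The only slightly delicate point is this preimage computation, which needs $\ker\Thd_A$ contained in $\oimrho{A}$; that is exactly exactness of \eqref{eq:exactseq} at $\T_{\mathbb R}(A)^\ast$. Everything else is formal.
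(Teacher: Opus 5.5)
Your proof is correct and follows the paper's own route. The paper records this corollary as immediate from the exact sequence of Proposition \ref{p:exactseq} together with the identification $\overline{\{0\}} = \Thd_A(\oimrho{A})$ of Proposition \ref{p:zeroclosure}. Your three checks, in particular the preimage computation $\Thd_A^{-1}(\Thd_A(\oimrho{A})) = \oimrho{A} + \imrho{A} = \oimrho{A}$, are exactly the details the paper leaves implicit.
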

\begin{remark}
The short exact sequence \eqref{eq:ThomsenODelta} splits unnaturally because $\T_{\mathbb R}(A)^\ast$ is divisible, so there is an unnatural topological isomorphism
\begin{equation}
\HKd (A) \cong  \frac{\T_{\mathbb{R}}(A)^\ast}{\, \, \overline{\rho_A(\K_0(A))} \, \,}\oplus \K_1(A).
\end{equation}
\end{remark}

One of the key advantages for working with $\HKd$ instead of $\Kd$, is that, on morphisms, it is an approximate unitary equivalence invariant.

We say that two unitaries $v,w \in \mathcal U(A)$ are \emph{approximately unitarily equivalent} (with unitaries in $\mathcal U(\mathcal M(A))$) if there is a sequence $(u_n)_{n\in \mathbb N}$ of unitaries in $\mathcal M(A)$ such that $u_n v u_n^\ast$ converges to $w$ in norm.

\begin{lemma}
Let $A$ be a $\cs$-algebra, let $n \in \mathbb N$, and let $v,w\in \mathcal U_n(\Ped(A))$ be approximately unitarily equivalent (with unitaries in $\mathcal U_n(\multialg{A})$). Then $[v]_{\overline{\Delta}} = [w]_{\overline{\Delta}}$. 
\end{lemma}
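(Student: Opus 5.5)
We want $[v]_{\overline\Delta}=[w]_{\overline\Delta}$. By Lemma \ref{l:abelian}, $[u_kvu_k^\ast]_\Delta=[v]_\Delta$ for every multiplier unitary $u_k$, so it suffices to show that $[u_kvu_k^\ast]_\Delta\to[w]_\Delta$ in $\Kd(A)$. Then $[v]_\Delta$ lies in the closure of $\{[w]_\Delta\}$, so $[v]_\Delta-[w]_\Delta\in\overline{\{0\}}$ and the classes agree in $\HKd(A)$. Lemma \ref{l:unitarycontinuity} cannot be applied directly, because the unitaries $u_kvu_k^\ast$ need not lie in $\mathcal U_n(D)$ for any single $\cs$-subalgebra $D\subseteq\Ped(A)$. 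We therefore use the norm estimate from Lemma \ref{l:Dvtrace}, as in the proof of Proposition \ref{p:DetOfShortPath}.

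By \cite[Proposition 2.2]{GMtraces}, fix a hereditary $\cs$-subalgebra $D\subseteq\Ped(A)$ with $v,w\in\mathcal U_n(D)$. Set $v_k=u_kvu_k^\ast\in u_k\mathcal U_n(D)u_k^\ast$. For large $k$ we have $\|w-v_k\|<2$, so by Lemma \ref{l:unitary}\eqref{l:unitary-3} the element $a_k=\tfrac{1}{2\pi i}\log(w^\ast v_k)$ lies in $M_n(\Ped(A))_{\mathrm{sa}}$, satisfies $w e^{2\pi i a_k}=v_k$, and has $\|a_k\|\to0$. Since $w^\ast v_k-1_n$ lies in $D_{u_k}=\cs(M_n(D),u_kM_n(D)u_k^\ast)$, functional calculus gives $a_k\in D_{u_k}$.

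Lemma \ref{l:Dvtrace} gives $|\tau^{(n)}(a_k)|\le 2n\|\tau|_D\|\,\|a_k\|$. Since $\sup_{\tau\in S}\|\tau|_D\|<\infty$ for every compact $S\subseteq\T_{\mathbb R}(A)$ by \cite[Lemma 4.2]{GMtraces}, it follows that $\widehat{a_k}\to0$ in the compact-open topology. By Proposition \ref{p:Thomsen map}, $\Thd_A$ is continuous, so $[v_k]_\Delta-[w]_\Delta=[e^{2\pi i a_k}]_\Delta=\Thd_A(\widehat{a_k})\to0$. This gives the required convergence.

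The main obstacle is that the trace bound must be uniform over the varying algebras $D_{u_k}$. Lemma \ref{l:Dvtrace} supplies exactly this uniformity, which is why $D$ must be chosen hereditary.
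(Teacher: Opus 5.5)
Your proof is correct and is essentially the paper's proof: conjugation invariance from Lemma~\ref{l:abelian}, the logarithm $a_k=\tfrac{1}{2\pi i}\log(w^\ast u_kvu_k^\ast)\in D_{u_k}$ for a hereditary $D$, and the uniform bound from Lemma~\ref{l:Dvtrace} together with \cite[Lemma 4.2]{GMtraces}, giving $\|\widehat{a_k}\|_S\to 0$. The only cosmetic difference is that you conclude via continuity of $\Thd_A$, whereas the paper uses the path $s\mapsto we^{2\pi i s a_k}$, whose determinant $\widehat{a_k}$ is small, directly in the definition of the topology.
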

\begin{proof}
Let $(u_k)_k$ be a sequence in $\mathcal U_n(\mathcal M(A))$ such that $\| u_k v u_k^\ast - w \| \to 0$. We may assume $\| u_k v u_k^\ast - w\| < 2$ for all $k$. Let $a_k = \tfrac{1}{2\pi i} \log(w^\ast u_kv u_k^\ast)$. Then $[0,1] \ni s \mapsto w e^{2\pi i s a_k}$ is a unitary path from $w$ to $u_k v u_k^\ast$ with determinant
\[
\Det_A(s \mapsto e^{2\pi i s a_k} w) = \widehat{a}_k.
\]
Since $[u_k v u_k^\ast]_\Delta = [v]_\Delta$ by Lemma \ref{l:abelian}, it suffices to check that $[u_k v u_k^\ast]_\Delta \to [w]_\Delta$, which happens if $\| \widehat{a}_k \|_{S} \to 0$ for each compact $S \subseteq \T_{\mathbb R}(A)$.
Let $D\subseteq \Ped(A)$ be a hereditary $\cs$-subalgebra such that $v,w\in \mathcal U_n(D)$. 
Setting $D_k = \cs(M_n(D), u_k M_n(D) u_k^\ast)$ for each $k$, Lemma \ref{l:Dvtrace} gives us
\[
\sup_{\tau\in S}\| \tau^{(n)}|_{D_k} \| \leq \sup_{\tau\in S} 2n \| \tau|_D\| =: K < \infty. 
\]
Therefore, since $\|a_k\| \to 0$ and $a_k \in D_k$ for each $k$, we have
\[
\| \widehat{a}_k \|_{S} \leq K \| a_k\| \to 0. \qedhere
\]
\end{proof}

\begin{corollary}\label{c:approx}
Let $\phi, \psi\colon A \to B$ be $\ast$-homomorphisms between $\cs$-algebras. If $\phi$ and $\psi$ are approximately unitarily equivalent (with multiplier unitaries), then $\HKd(\phi) = \HKd(\psi)$. 

More generally, if $\phi$ and $\psi$ are approximately Murray--von Neumann equivalent, then $\HKd(\phi) = \HKd(\psi)$. 
\end{corollary}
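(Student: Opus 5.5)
The first statement follows from the lemma just proved, applied elementwise. Let $u\in \mathcal U_n(\Ped(A))$. Suppose $(w_k)_k$ in $\mathcal U(\mathcal M(B))$ witnesses $\phi\sim_{\mathrm{au}}\psi$ on the finitely many entries of $1_n-u$ (these lie in $\Ped(A)$). Then $w_k^{(n)}\phi(u)w_k^{(n)\ast}\to \psi(u)$ in norm, where $w_k^{(n)} = w_k\otimes 1_n\in \mathcal U_n(\mathcal M(B))$. Here the unitised maps are used, and the unit is fixed by conjugation. Both $\phi(u)$ and $\psi(u)$ lie in $\mathcal U_n(\Ped(B))$, since $\phi(\Ped(A))\subseteq\Ped(B)$ and likewise for $\psi$.

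The lemma therefore gives $[\phi(u)]_{\overline\Delta}=[\psi(u)]_{\overline\Delta}$. Since every element of $\HKd(A)$ is of the form $[u]_{\overline\Delta}$, this gives $\HKd(\phi)=\HKd(\psi)$. I should check that a single sequence handles all matrix entries simultaneously. This is fine by a diagonal argument over finite sets and tolerances $1/k$.

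For the Murray--von Neumann version, I would use the unitary dilation trick referenced before Proposition~\ref{p:asu} (\cite[Corollary 3.11]{Gabe-o2class}). If $\phi\sim_{\mathrm{aMvN}}\psi$, then $\phi\oplus 0$ and $\psi\oplus 0$, viewed as maps $A\to M_2(B)$, are approximately unitarily equivalent with unitaries in $\mathcal M(M_2(B))$. Concretely, take $v$ a contraction as in the definition and form the unitary $\begin{psmallmatrix} v & (1-vv^\ast)^{1/2}\\ -(1-v^\ast v)^{1/2} & v^\ast\end{psmallmatrix}$.

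The key step is to verify that $\begin{psmallmatrix} v & (1-vv^\ast)^{1/2}\\ -(1-v^\ast v)^{1/2} & v^\ast\end{psmallmatrix}$ approximately conjugates $\psi\oplus 0$ to $\phi\oplus 0$ (or vice versa). This requires the off-diagonal terms $(1-vv^\ast)^{1/2}\phi(a)$ to be small. The two-sided approximate conditions $v^\ast\phi(a)v\approx\psi(a)$ and $v\psi(a)v^\ast\approx\phi(a)$ give $\|\phi(a)-vv^\ast\phi(a)vv^\ast\|$ small. For positive contractions $a$, a standard estimate then yields $\|(1-vv^\ast)\phi(a)\|$ small. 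I expect this estimate to be the main obstacle, and I would simply cite \cite{Gabe-o2class} for it.

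Applying the first part to the dilated maps gives $\HKd(\iota\circ\phi)=\HKd(\iota\circ\psi)$, where $\iota\colon B\to M_2(B)$ is the corner inclusion. Since $\iota$ is a full hereditary inclusion, $\HKd(\iota)$ is an isomorphism by Proposition~\ref{p:HK1Dfullher}. Cancelling it gives $\HKd(\phi)=\HKd(\psi)$.
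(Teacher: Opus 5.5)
Your proposal is correct and matches the paper's argument: the first part is the preceding lemma applied to $\phi(u)$ and $\psi(u)$ with the amplified multiplier unitaries, and the Murray--von Neumann case is the unitary-dilation/$M_2$-stability reduction of \cite[Corollary 3.11]{Gabe-o2class} that the paper invokes, finished with Proposition~\ref{p:HK1Dfullher}. The estimate you defer to that reference can also be obtained directly, e.g.\ by including $a^2$ alongside $a$ in the finite set, which makes $\|(1-vv^\ast)^{1/2}\phi(a)v\|$ small.
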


\section{The $\zeta$-maps}\label{s:zeta}

In \cite{CGSTW}, natural maps $\zeta_n \colon \K_0(A; \mathbb Z/n\mathbb Z) \to \HKa(A)$ were introduced, where $\K_0(A; \mathbb Z/n\mathbb Z)$ is the $\K$-theory group with $\mathbb Z/n\mathbb Z$-coefficients. The maps $\zeta_n$ comprised part of the total invariant $\TKTu(A)$ of a unital $\cs$-algebra, and played a subtle but crucial role in the new approach to classifying $\cs$-algebras. In this section we show that there is a similar construction $\zeta_n \colon \K_0(A;\mathbb Z/n\mathbb Z)\to \Kd(A)$. 

In fact, we give a more general construction of $\zeta_\phi \colon \K_1(C_\phi) \to \Kd(A)$ for mapping cones $C_\phi$ of $\ast$-homomorphisms $\phi \colon A\to B$ which induce an isomorphism on traces, see Lemma \ref{l:mappingcone}.

For a $*$-homomorphism $\phi \colon A \to B$ between $\cs$-algebras, the \emph{mapping cone} $C_\phi$ is the $\cs$-algebra
\begin{equation}
C_\phi := \{ (a,\mathbf b) \in A \oplus C([0,1],B) \mid \phi(a) = \mathbf b(0), \, \mathbf b(1) = 0\}.
\end{equation}
This fits naturally into a short exact sequence 
\begin{equation}\label{eq:cone short exact}
0 \to SB \to C_\phi \to A \to 0, 
\end{equation}
where the suspension $SB:= C_0((0,1),B)$ sits as an ideal via the inclusion $\mathbf b \mapsto (0,\mathbf b)$, and $C_\phi \to A$ is given by $(a,\mathbf b) \mapsto a$. By identifying $\K_{i}(SB)$ with $\K_{1-i}(B)$, the six term exact sequence in $\K$-theory becomes
\begin{equation}\label{eq:cone six term}
\begin{tikzcd}
	{\K_1(B)} & {\K_0(C_\phi)} & {\K_0(A)} \\
	{\K_1(A)} & {\K_1(C_\phi)} & {\K_0(B).}
	\arrow[from=1-1, to=1-2]
	\arrow[from=1-2, to=1-3]
	\arrow["{\K_0(\phi)}", from=1-3, to=2-3]
	\arrow["{\K_1(\phi)}", from=2-1, to=1-1]
	\arrow[from=2-2, to=2-1]
	\arrow[from=2-3, to=2-2]
\end{tikzcd}
\end{equation}

If $\phi_i \colon A_i \to B_i$ are $*$-homomorphisms for $i=0,1$ and $\psi_A \colon A_0 \to A_1$ and $\psi_B \colon B_0 \to B_1$ are $*$-homomorphisms such that $\psi_B \circ \phi_0 = \phi_1 \circ \psi_A$, we write $\psi = (\psi_A,\psi_B) \colon C_{\phi_0} \to C_{\phi_1}$ for the induced $*$-homomorphism satisfying $\psi(a,\mathbf b) = (\psi_A(a),\psi_B(\mathbf b))$ for $(a,\mathbf b) \in C_{\phi_0}$.

In the following lemma, given $n \in \mathbb N$ and $(u,\mathbf v) \in \mathcal U_n(\Ped(C_\phi))$, we consider the element $\mathbf v$ as a norm-continuous path $[0,1] \to \mathcal U_n(\Ped(B))$ in order to take its determinant $\Det_B(\mathbf v)\in \T_{\mathbb R}(B)^\ast$.

\begin{lemma}\label{l:mappingcone}
   Let $A$ and $B$ be $\cs$-algebras and suppose that $\phi \colon A\to B$ is a $\ast$-homomorphism such that $\phi_\# \colon \T_{\mathbb R}(A)^\ast \to \T_{\mathbb R}(B)^\ast$ is an isomorphism. Let $C_\phi$ be the mapping cone of $\phi$. Then there is a (unique) homomorphism $\zeta_\phi \colon \K_1(C_\phi) \to \Kd(A)$ such that 
   \begin{equation}\label{eq:zetaphi_statement}
   \zeta_\phi([u,\mathbf v]_1) = [u]_{\Delta} + \mathrm{Th}_A^\Delta(\phi_\#^{-1}(\Det_B(\mathbf v)))
   \end{equation}
   for each $(u,\mathbf v) \in \mathcal U_\infty(\Ped(C_\phi))$.  
   This makes the following diagram commute:
    \begin{equation}\label{eq:zetacone}
    \begin{tikzcd}
	{\K_0(B)} & {\K_1(C_\phi)} & {\K_1(A)} \\
	{\T_{\mathbb R}(A)^\ast} & {\Kd(A)} & {\K_1(A)}
	\arrow[from=1-1, to=1-2]
	\arrow["{\phi_\#^{-1} \circ \rho_B}", from=1-1, to=2-1]
	\arrow[from=1-2, to=1-3]
	\arrow["{\zeta_\phi}", from=1-2, to=2-2]
	\arrow["\id", from=1-3, to=2-3]
	\arrow["{\Thd_A}", from=2-1, to=2-2]
	\arrow[two heads, from=2-2, to=2-3]
    \end{tikzcd}
    \end{equation}
    Moreover, $\zeta_\phi$ is natural in the following sense: suppose $\phi_j \colon A_j \to B_j$ are $\ast$-homomorphisms for $j=0,1$ for which $(\phi_j)_\#$ are isomorphisms, and $\psi_A \colon A_0 \to A_1$ and $\psi_B \colon B_0 \to B_1$ are $\ast$-homomorphisms such that $\psi_B \circ \phi_0 = \phi_1 \circ \psi_A$. Let $\psi \colon C_{\phi_0} \to C_{\phi_1}$ be the induced $\ast$-homomorphism between mapping cones. Then $\Kd(\psi_A) \circ \zeta_{\phi_0} = \zeta_{\phi_1} \circ \K_1(\psi)$.
\end{lemma}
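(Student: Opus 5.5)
The plan is to first define a map on unitaries and then show that it descends to $\K_1(C_\phi)$. For $(u,\mathbf v)\in \mathcal U_n(\Ped(C_\phi))$ put
\[
Z(u,\mathbf v) := [u]_\Delta + \Thd_A(\phi_\#^{-1}(\Det_B(\mathbf v))).
\]
This makes sense because $\Ped(C_\phi)$ maps into $\Ped(A)$ and into $C([0,1],\Ped(B))$, the latter via evaluation at each point. So $u\in\mathcal U_n(\Ped(A))$ and $\mathbf v$ is a norm-continuous path in $\mathcal U_n(\Ped(B))$ with $\mathbf v(0)=\phi(u)$ and $\mathbf v(1)=1_n$.

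\textbf{$Z$ is a homomorphism compatible with stabilisation.} By Lemma \ref{l:basicdetproperties} \eqref{l:basicdetproperties 4} and \eqref{l:basicdetproperties 5}, $Z$ is a homomorphism on each $\mathcal U_n(\Ped(C_\phi))$, and it is compatible with $u\mapsto u\oplus 1$.

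\textbf{$Z$ is homotopy invariant.} Let $(u_s,\mathbf v_s)_{s\in[0,1]}$ be a norm-continuous path in $\mathcal U_n(\Ped(C_\phi))$. By Lemma \ref{l:basicdetproperties} \eqref{l:basicdetproperties 1} and \eqref{l:basicdetproperties 2}, we apply homotopy invariance of $\Det_B$ to the square $[0,1]^2\to\mathcal U_n(\Ped(B))$, $(s,t)\mapsto \mathbf v_s(t)$. The right edge $t=1$ is constant at $1_n$, so this gives
\[
\Det_B(\mathbf v_1)-\Det_B(\mathbf v_0) = -\Det_B(s\mapsto \mathbf v_s(0)) = -\Det_B(\phi(\mathbf u)),
\]
where $\mathbf u(s)=u_s$ is the bottom-edge path in $A$. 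Naturality of the determinant, $\Det_B(\phi\circ\mathbf u)=\phi_\#(\Det_A(\mathbf u))$, follows directly from the definition applied to smooth representatives. Then \eqref{eq:FTC} gives
\[
\Thd_A(\phi_\#^{-1}(\Det_B(\mathbf v_1)-\Det_B(\mathbf v_0))) = -[u_1]_\Delta+[u_0]_\Delta,
\]
so $Z(u_0,\mathbf v_0)=Z(u_1,\mathbf v_1)$.

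\textbf{Descent to $\K_1(C_\phi)$.} By \eqref{eq:K one} applied to $C_\phi$, every class in $\K_1(C_\phi)$ is represented in $\mathcal U_\infty(\Ped(C_\phi))$. Two representatives of the same class differ, after stabilising, by a product of exponentials $e^{2\pi i c}$ with $c\in M_m(\Ped(C_\phi))_{\mathrm{sa}}$, and each such exponential is homotopic to $1$ within $\mathcal U_m(\Ped(C_\phi))$. Since $Z$ is multiplicative and homotopy invariant, $Z$ descends to a homomorphism $\zeta_\phi$ on $\K_1(C_\phi)$. This establishes existence, and uniqueness holds because every class has such a representative.

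\textbf{Commutativity of \eqref{eq:zetacone}.} The right square commutes because $\Thd_A$ vanishes in $\K_1(A)$. For the left square, an element of $\K_0(B)$ maps to the class of $(1_n,\mathbf w)$, where $\mathbf w$ is a loop at $1_n$ in $\mathcal U_n(\Ped(B))$ representing the element under \eqref{eq:K nought unitary loops}. One should check that this identification of the boundary map matches the orientation convention of $SB\subseteq C_\phi$; if it does not, a sign must be tracked. Proposition \ref{p:pairing} then gives $\zeta_\phi = \Thd_A(\phi_\#^{-1}(\rho_B(x)))$.

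\textbf{Naturality.} Naturality follows from $\psi_{B\#}\circ\phi_{0\#}=\phi_{1\#}\circ\psi_{A\#}$ and from naturality of $\Det$ and $\Thd$, both of which are visible on the generators $e^{2\pi i a}$.

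\textbf{Main obstacle.} The main obstacle is the homotopy-invariance step. There one must make sure that $\mathbf v$ and the two-parameter family genuinely take values in $\mathcal U_n(\Ped(B))$ and vary continuously there, so that Lemma \ref{l:basicdetproperties} \eqref{l:basicdetproperties 1} applies. This requires checking that $\Ped(C_\phi)$ lands in $C([0,1],D)$ for some $\cs$-subalgebra $D\subseteq\Ped(B)$. I would handle this via the finite-set hereditary subalgebra results cited in the preliminaries, applied to the image of a $\cs$-subalgebra of $\Ped(C_\phi)$ under evaluation maps.
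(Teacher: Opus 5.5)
Your proof is correct. It differs from the paper only in how you show that $Z$ vanishes on exponentials. The paper never proves homotopy invariance of $Z$. It evaluates $Z$ directly on $e^{2\pi i(a,\mathbf b)}$ for self-adjoint $(a,\mathbf b)\in M_n(\Ped(C_\phi))$. There $[e^{2\pi i a}]_\Delta=\Thd_A(\widehat a)$. Since $\mathbf b(0)=\phi(a)$ and $\mathbf b(1)=0$, Lemma \ref{l:basicdetproperties} \eqref{l:basicdetproperties 9} gives $\Det_B(e^{2\pi i\mathbf b})=-\phi_\#(\widehat a)$, so the two terms cancel, and \eqref{eq:K one} finishes.

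Your route combines the square argument, naturality $\Det_B(\phi\circ\mathbf u)=\phi_\#(\Det_A(\mathbf u))$, and \eqref{eq:FTC}. It proves the stronger fact that $Z$ is constant along every norm-continuous path in $\mathcal U_n(\Ped(C_\phi))$. That is more conceptual but needs more machinery. Specialised to the paths $s\mapsto e^{2\pi i s c}$ that you actually use, it reduces to the paper's one-line computation.

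Two of your caveats are non-issues.
\begin{enumerate}
\item $\Det_B$ and its endpoint-preserving homotopy invariance (Lemma \ref{l:basicdetproperties} \eqref{l:basicdetproperties 1}) hold for arbitrary norm-continuous paths and homotopies in $\mathcal U_n(\Ped(B))$. You therefore never need $\Ped(C_\phi)$ to land in $C([0,1],D)$ for a single $\cs$-subalgebra $D\subseteq\Ped(B)$. It suffices that evaluation at each point is a $*$-homomorphism and that $(s,t)\mapsto\mathbf v_s(t)$ is jointly norm-continuous, both of which you already noted.
\item The sign in the left square is fixed by the convention \eqref{eq:K nought unitary loops} for $\K_0(B)\cong\K_1(SB)$. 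Under it, $[p]_0-[q]_0$ goes to $[(1_n,\mathbf u_q^\ast\mathbf u_p)]_1$, and Proposition \ref{p:pairing} gives commutativity with no sign correction.
\end{enumerate}
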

\begin{proof}
     We define $\widetilde \zeta_\phi \colon \mathcal U_\infty(\Ped(C_\psi)) \to \Kd(A)$ by
    \begin{equation}\label{eq:zetaphi}
    \widetilde \zeta_\phi(u,\mathbf v) = [u]_{\Delta} + \mathrm{Th}_A^\Delta(\phi_\#^{-1}(\Det_B(\mathbf v)))
    \end{equation}
    which is easily seen to be a homomorphism. If $(a,\mathbf b) \in M_n(\Ped(C_\psi))$ is self-adjoint, then $[e^{2\pi i a}]_{\Delta} = \mathrm{Th}_A^\Delta(\widehat a)$, and since $\mathbf b(0) = \phi^{(n)}(a)$ and $\mathbf b(1)=0$ we get
    \[
    \Det_B(e^{2\pi i \mathbf b}) = \widehat{\mathbf b(1)}  - \widehat{\mathbf b(0)} = -\phi_\#(\widehat a)
    \]
    by Lemma \ref{l:basicdetproperties}. Hence
    \[
    \widetilde \zeta_\phi(e^{2\pi i (a,\mathbf b)}) = \mathrm{Th}_A^\Delta(\widehat a) + \mathrm{Th}_A^\Delta(\phi_\#^{-1} (-\phi_\#(\widehat a))) = 0.
    \]
    By \eqref{eq:K one}, $\widetilde \zeta_\phi$ descends to a
    homomorphism $\zeta_\phi \colon \K_1(C_\phi) \to \Kd(A)$.

    For commutativity of the right square in \eqref{eq:zetacone}, let $(u,\mathbf v) \in \mathcal U_\infty(\Ped(C_\phi))$. Then $[(u,\mathbf v)]_1$ is sent to $[u]_1$ under $\K_1(C_\phi) \to \K_1(A)$, and $\zeta_\phi([(u,\mathbf v)]_1)$ also maps to $[u]_1$ in $\K_1$ as the image of $\Thd_A$ vanishes in $\K_1(A)$.

    For the left square of \eqref{eq:zetacone}, let $x\in \K_0(B)$. Identifying $\K_0(B)$ with $\K_1(SB)$ we find $\mathbf v \in \mathcal U_n(\Ped(SB))$ inducing $x$. Then $[(1_n,\mathbf v)]_1 \in \K_1(C_\phi)$ is the value of $x$ under $\K_0(B) \to \K_1(C_\phi)$. Therefore,
    \[
    \zeta_\phi([(1_n,\mathbf v)]_1) \stackrel{\eqref{eq:zetaphi_statement}}{=} \Thd_A (\phi_\#^{-1}(\Det_A(\mathbf v))) \stackrel{\textrm{Prop.}~\ref{p:pairing}}{=} \Thd_A(\phi_\#^{-1}(\rho_B(x))),
    \]
    and so the left square in \eqref{eq:zetacone} commutes.

    Naturality of $\zeta_\phi$ is immediate from \eqref{eq:zetaphi_statement} since all the maps involved in the definition are natural.
\end{proof}

\begin{example}
    Let $A$ be a $\cs$-algebra and let $\mathcal R$ be the hyperfinite $\mathrm{II}_1$ factor. Let $\iota \colon A \to A\otimes_{\min{}} \mathcal R$ be the obvious inclusion and let $C_\iota$ be the mapping cone. Then the above lemma constructs a canonical homomorphism $\zeta_\iota \colon \K_1(C_\iota) \to \Kd(A)$.

    Assume moreover that $A$ is separable and satisfies the UCT. Then we have $\K_1(C_\iota) = \K_0(A; \mathbb R/\mathbb Z)$ in the sense of \cite{AAS}. Moreover, $\K_0(A\otimes_{\min} \mathcal R) \cong \K_0(A) \otimes \mathbb R$ canonically by the Künneth theorem, and the diagram \eqref{eq:zetacone} becomes
    \[
    \begin{tikzcd}
	{\K_0(A) \otimes \mathbb R} & {\K_0(A; \mathbb R/\mathbb Z)} & {\K_1(A)} \\
	{\T_{\mathbb R}(A)^\ast} & {\Kd(A)} & {\K_1(A)}
	\arrow[from=1-1, to=1-2]
	\arrow["{\rho_A^{\mathbb{R}}}", from=1-1, to=2-1]
	\arrow[from=1-2, to=1-3]
	\arrow["{\zeta_\iota}", from=1-2, to=2-2]
	\arrow["\id", from=1-3, to=2-3]
	\arrow["{\Thd_A}", from=2-1, to=2-2]
	\arrow[two heads, from=2-2, to=2-3]
    \end{tikzcd}
    \]
    where $\rho_A^{\mathbb R} \colon \K_0(A)\otimes \mathbb R \to \T_{\mathbb R}(A)^\ast$ is the obvious $\mathbb R$-linear map induced by $\rho_A$.
\end{example}

We use the following picture of K-theory with $\mathbb Z/n\mathbb Z$-coefficients in terms of the diagonal embedding $\iota_{n,A} := \id \otimes 1_n \colon A \to M_n(A)$, closely matching the picture in \cite{CGSTW}. 
 \begin{definition}
    Let $A$ be a $\cs$-algebra, let $n \in \mathbb N$ with $n \geq 2$ and let $i=0,1$. Consider the mapping cone 
    \[ \mathbb I_{n,A} := C_{\iota_{n,A}} = \{ f \in C([0,1],M_n(A)) \mid f(0) \in A \otimes 1_n, f(1) = 0 \}\]
    of the diagonal embedding $\iota_{n,A} \colon A \to M_n(A)$. The \emph{K-theory with $\mathbb Z/n\mathbb Z$-coefficients} $\K_i(A;\mathbb Z/n\mathbb Z)$ is given by $\K_i(A;\mathbb Z/n\mathbb Z) := \K_{1-i}(\mathbb I_{n,A})$. Given a $*$-homomorphism $\psi \colon A \to B$, the map  $\K_i(\psi;\mathbb Z/n \mathbb Z) \colon \K_i(A;\mathbb Z/n\mathbb Z) \to \K_i(B;\mathbb Z/n\mathbb Z)$ is induced by the induced $*$-homomorphism $(\psi,\psi^{(n)})\colon \mathbb I_{n,A} \to \mathbb I_{n,B}$ of mapping cones.
    
    The mapping cone six-term sequence \eqref{eq:cone six term} produces a natural six-term exact sequence of the following form, after identifying $\K_i(M_n(A))$ with $\K_i(A)$ via the top-corner inclusion $\epsilon_{n,A} \colon A \to M_n(A)$.
    \begin{equation}\label{eq:mod n}
    \begin{tikzcd}
	{\K_0(A)} & {\K_0(A)} & {\K_0(A;\mathbb Z/n\mathbb Z)} \\
	{\K_1(A;\mathbb Z/n\mathbb Z)} & {\K_1(A)} & {\K_1(A)}
	\arrow["{\times n}", from=1-1, to=1-2]
	\arrow["\mu^{(n)}_{0,A}", from=1-2, to=1-3]
	\arrow["\nu^{(n)}_{0,A}", from=1-3, to=2-3]
	\arrow["\nu^{(n)}_{1,A}", from=2-1, to=1-1]
	\arrow["\mu^{(n)}_{1,A}", from=2-2, to=2-1]
	\arrow["{\times n}", from=2-3, to=2-2]
    \end{tikzcd}
    \end{equation}
    The \emph{Bockstein operations/maps} are the natural homomorphisms $\mu^{(n)}_{i,A}$ and $\nu^{(n)}_{i,A}$ in \eqref{eq:mod n}
together with, given further $m \geq 2$, the homomorphisms 
\begin{eqnarray}
& \kappa^{(mn,n)}_{i,A} \colon & \K_i(A;\mathbb Z/n \mathbb Z) \to \K_i(A;\mathbb Z/mn\mathbb Z) \\
& \kappa^{(n,mn)}_{i,A} \colon & \K_i(A;\mathbb Z/mn \mathbb Z)  \to \K_i(A;\mathbb Z/n\mathbb Z)
\end{eqnarray} 
defined as follows. Set $\iota_1 \colon \mathbb I_{n,A} \hookrightarrow \mathbb I_{mn,A}$ to be the inclusion $f \mapsto f \otimes 1_m$ and $\iota_2 \colon \mathbb I_{mn,A} \hookrightarrow M_m(\mathbb I_{n,A})$ to be the inclusion obtained by viewing both algebras as subalgebras of $C([0,1],M_{mn}(A))$ in the canonical way, and consider the top-corner inclusion $\epsilon_{n,A} \colon A \to M_n(A)$. Then $\kappa^{(mn,n)}_{i,A} := \K_1(\iota_1)$ and $\kappa^{(n,mn)}_{i,A} := \K_1(\epsilon_{n,A})^{-1} \circ \K_1(\iota_2)$.  
 \end{definition}

The diagonal embedding $\iota_{n,A} \colon A \to M_n(A)$ induces the $\times n$ map on both $\T_{\mathbb R}(A)^\ast$ and $\Kd(A)$, after we identify $\T_{\mathbb R}(M_n(A))^\ast$ and $\Kd(M_n(A))$ with $\T_{\mathbb R}(A)^\ast$ and $\Kd(A)$ via the top-corner inclusion $\epsilon_{n,A} \colon A \to M_n(A)$. 
In particular, $(\iota_{n,A})_\#$ is an isomorphism.

\begin{definition}
Let $A$ be a $\cs$-algebra and let $n \geq 2$. The \emph{$\zeta$-map}
\[ \zeta_{n,A} \colon \K_0(A;\mathbb Z/n \mathbb Z) \to \Kd(A)\]
is the homomorphism $\zeta_{\iota_{n,A}} \colon \K_1(\mathbb I_{n,A}) \to \Kd(A)$ from Lemma \ref{l:mappingcone} associated to the diagonal map $\iota_{n,A} \colon A \to M_n(A)$. 
Explicitly, for $\mathbf u \in \mathcal U_k(\Ped(\mathbb I_{n,A}))$ with $\mathbf u(0) = u \otimes 1_n$, $u \in \mathcal U_k(\Ped(A))$, we have
\begin{equation}\label{eq:zeta map}
\zeta_n([\mathbf u]_1) := [u]_\Delta + \Th_A^\Delta (\tfrac{1}{n}\Det_{A}(\mathbf u)).
\end{equation}
\end{definition}

Naturality of $\zeta_{n,-}$ with respect to $*$-homomorphisms $\psi \colon A \to B$ follows from the naturality in Lemma \ref{l:mappingcone}.

\begin{proposition}\label{p:zeta maps}
Let $A$ be a $\cs$-algebra and let $m,n \geq 2$. Then the following diagrams commute.
\[\begin{tikzcd}
	{\K_0(A)} & {\K_0(A;\mathbb Z/n\mathbb Z)} & {\K_1(A)} \\
	{\T_{\mathbb R}(A)^\ast} & {\Kd(A)} & {\K_1(A)}
	\arrow["\mu^{(n)}_{0,A}", from=1-1, to=1-2]
	\arrow["{\frac{1}{n}\rho_A}", from=1-1, to=2-1]
	\arrow["\nu^{(n)}_{0,A}", from=1-2, to=1-3]
	\arrow["{\zeta_{n,A}}", from=1-2, to=2-2]
	\arrow[from=1-3, to=2-3, "\id"]
	\arrow[from=2-1, to=2-2, "\Th_A^\Delta"]
	\arrow[from=2-2, to=2-3]
\end{tikzcd}\]
\[\begin{tikzcd}
	{\K_0(A;\mathbb Z / n \mathbb Z)} & {\K_0(A;\mathbb Z / mn \mathbb Z)} & {\K_0(A;\mathbb Z / n \mathbb Z)}  \\
	 {\Kd(A)} & {\Kd(A)} &  {\Kd(A)}
	\arrow["{\kappa_{0,A}^{(n,mn)}}", from=1-2, to=1-3]
	\arrow["{\times m}"', from=2-2, to=2-3]
    \arrow["{\id}"', from=2-1, to=2-2]
	\arrow["{\kappa_{0,A}^{(mn,n)}}", from=1-1, to=1-2]
	\arrow["{\zeta_{n,A}}", from=1-1, to=2-1]
    \arrow["{\zeta_{n,A}}", from=1-3, to=2-3]
	\arrow["{\zeta_{mn,A}}", from=1-2, to=2-2]
\end{tikzcd}\]
\end{proposition}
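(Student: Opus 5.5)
The first diagram is the specialisation of diagram \eqref{eq:zetacone} in Lemma \ref{l:mappingcone} to $\phi = \iota_{n,A}$. The plan is therefore to identify the maps in \eqref{eq:zetacone} with those in the statement; no new determinant computation should be needed. The top row of \eqref{eq:zetacone} is a portion of the mapping cone six-term sequence \eqref{eq:cone six term}. After identifying $\K_0(M_n(A))$ with $\K_0(A)$ via $\epsilon_{n,A}$, this portion is precisely $\mu^{(n)}_{0,A}$ followed by $\nu^{(n)}_{0,A}$, by the definition of the Bockstein maps through \eqref{eq:mod n}. Hence the right square commutes directly from Lemma \ref{l:mappingcone}.

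For the left square, the vertical map in \eqref{eq:zetacone} is $(\iota_{n,A})_\#^{-1}\circ\rho_{M_n(A)}$. Under the top-corner identification, $\rho_{M_n(A)}$ becomes $\rho_A$, by naturality of $\rho$ and since $(\epsilon_{n,A})_\#$ is an isomorphism. We also noted that $(\iota_{n,A})_\# = n\cdot(\epsilon_{n,A})_\#$, so the composite is $\tfrac1n\rho_A$. Concretely, for $x\in\K_0(A)$ represented by a loop $\mathbf v$ in $\mathcal U_k(\Ped(SA))$, the class $\mu^{(n)}_{0,A}(x)$ is represented by $(1_k,\epsilon_{n,A}(\mathbf v))$. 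Formula \eqref{eq:zeta map} then gives $\Thd_A(\tfrac1n\Det_A(\mathbf v))=\Thd_A(\tfrac1n\rho_A(x))$ by Proposition \ref{p:pairing}.

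For the second diagram I compute on representatives with formula \eqref{eq:zeta map}. For the left square, take $\mathbf u\in\mathcal U_k(\Ped(\mathbb I_{n,A}))$ with $\mathbf u(0)=u\otimes 1_n$. Then $\kappa^{(mn,n)}$ sends $[\mathbf u]_1$ to $[\mathbf u\otimes 1_m]_1$, whose value at $0$ is $u\otimes 1_{mn}$. By Lemma \ref{l:basicdetproperties}\eqref{l:basicdetproperties 5}, $\Det_A(\mathbf u\otimes 1_m)=m\Det_A(\mathbf u)$, so
\[\zeta_{mn}([\mathbf u\otimes 1_m]_1)=[u]_\Delta+\Thd_A(\tfrac{1}{mn}m\Det_A(\mathbf u))=\zeta_n([\mathbf u]_1).\]
Here $\Det_A$ of a path in matrices over $A$ means the determinant of the path viewed in $\mathcal U_\bullet(\Ped(A))$, which is consistent with identifying $\T_{\mathbb R}(M_n(A))^\ast$ with $\T_{\mathbb R}(A)^\ast$ via the top corner.

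For the right square, take $\mathbf w\in\mathcal U_k(\Ped(\mathbb I_{mn,A}))$ with $\mathbf w(0)=w\otimes 1_{mn}$. Its image under $\iota_2$ is the same path, now viewed in $M_m(\mathbb I_{n,A})\cong M_{km}(\mathbb I_{n,A})$, with value at $0$ equal to $(w\otimes 1_m)\otimes 1_n$ after reordering tensor factors. To apply $\K_1(\epsilon)^{-1}$ I would use naturality of $\zeta$ (Lemma \ref{l:mappingcone}) with respect to the top-corner inclusion, together with matrix stability of $\Kd$ and $\T_{\mathbb R}(-)^\ast$. This gives
\[\zeta_n(\kappa^{(n,mn)}[\mathbf w]_1)=[w\otimes 1_m]_\Delta+\Thd_A(\tfrac1n\Det_A(\mathbf w)).\]
The first term equals $m[w]_\Delta$ by the lemma $[uv]_\Delta=[u\oplus v]_\Delta$. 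The second term equals $m\cdot\Thd_A(\tfrac1{mn}\Det_A(\mathbf w))$. Together these give $m\,\zeta_{mn}([\mathbf w]_1)$, as required.

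The main obstacle is bookkeeping rather than analysis. One must check that the identification of $\K_1(M_m(\mathbb I_{n,A}))$ with $\K_1(\mathbb I_{n,A})$, combined with the permutation of tensor factors implicit in $\iota_2$, is compatible with \eqref{eq:zeta map}. I would handle this by conjugating with a permutation unitary and invoking Lemma \ref{l:abelian} and Lemma \ref{l:pathconjmulti}, which make both $\Kd$-classes and determinants invariant under such conjugation.
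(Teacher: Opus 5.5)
Your proposal is correct. The first diagram is handled exactly as in the paper: you specialise \eqref{eq:zetacone} to $\phi=\iota_{n,A}$ and use $(\iota_{n,A})_\#=n(\epsilon_{n,A})_\#$.

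For the second diagram your route is more hands-on than the paper's. The paper never evaluates on cycles. It writes down the commutative diagram \eqref{eq:bockstein compatibility} and notes that $\iota_1$ and $\iota_2$ are the mapping-cone maps induced by its first two squares. It then reads off $\zeta_{n,A}=\zeta_{mn,A}\circ\kappa^{(mn,n)}_{0,A}$ and $\Kd(\iota_{m,A})\circ\zeta_{mn,A}=\zeta_{n,M_m(A)}\circ\K_1(\iota_2)$ directly from the naturality clause of Lemma \ref{l:mappingcone}. There the change from $\tfrac1n$ to $\tfrac1{mn}$ is absorbed into $\phi_\#^{-1}$. What remains is naturality of $\zeta_{n,-}$ along $\epsilon_{m,A}$ and the identity $\Kd(\iota_{m,A})=\Kd(\epsilon_{m,A})\circ(\times m)$.

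You instead evaluate \eqref{eq:zeta map} on representatives. This forces you to check by hand that $\Det_A(\mathbf u\otimes 1_m)=m\Det_A(\mathbf u)$ and $[w\otimes 1_m]_\Delta=m[w]_\Delta$. You also have to track the reordering of tensor factors using Lemmas \ref{l:abelian} and \ref{l:pathconjmulti}. These facts are exactly what is hidden inside the paper's naturality statements and its ``bottom triangle''. Your version makes the origin of the factor $m$ explicit; the paper's is shorter and avoids the bookkeeping.

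In your right square you can drop naturality along $\epsilon$ altogether. The map $\K_1(\epsilon_{m,\mathbb I_{n,A}})^{-1}$ is given on representatives by regarding an element of $\mathcal U_k(M_m(\mathbb I_{n,A}))$ as one of $\mathcal U_{km}(\mathbb I_{n,A})$, so \eqref{eq:zeta map} applies directly. Also, your ``$M_m(\mathbb I_{n,A})\cong M_{km}(\mathbb I_{n,A})$'' should read $M_k(M_m(\mathbb I_{n,A}))\cong M_{km}(\mathbb I_{n,A})$.
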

\begin{proof}
Commutativity of the first diagram is immediate from Lemma \ref{l:mappingcone}. For the second diagram, consider the following commutative diagram
\begin{equation}\label{eq:bockstein compatibility}
\begin{tikzcd}[column sep = 4em]
	A & A & {M_m(A)} & A \\
	{M_n(A)} & {M_{mn}(A)} & {M_{mn}(A)} & {M_n(A),}
	\arrow["\id", from=1-1, to=1-2]
	\arrow["{\iota_{n,A}}"', from=1-1, to=2-1]
	\arrow["{\iota_{m,A}}", from=1-2, to=1-3]
	\arrow["{\iota_{mn,A}}"', from=1-2, to=2-2]
	\arrow["{\iota_{n,M_m(A)}}"', from=1-3, to=2-3]
	\arrow["{\iota_{m,M_n(A)}}", from=2-1, to=2-2]
	\arrow["\id", from=2-2, to=2-3]
    \arrow["\epsilon_{m,A}", from=1-4, to=1-3]	
    \arrow["M_n(\epsilon_{m,A})", from=2-4, to=2-3]
    \arrow["{\iota_{n,A}}"', from=1-4, to=2-4]
\end{tikzcd}
\end{equation}
where $\iota$ represents the diagonal inclusions and $\epsilon$ the top-corner inclusions.
The maps $(\id,\iota_{m,M_n(A)}) \colon \mathbb I_{n,A} \to \mathbb I_{mn,A}$ and $(\iota_{m,A},\id) \colon \mathbb I_{mn,A} \to \mathbb I_{n,M_m(A)} = M_m(\mathbb I_{n,A})$ induced at the level of mapping cones by the first and second squares of \eqref{eq:bockstein compatibility} are precisely the $*$-homomorphisms $\iota_1$ and $\iota_2$ which, by definition, induce $\kappa_{0,A}^{(mn,n)}$ and $\kappa_{0,A}^{(n,mn)}$ in $\K_1$. Explicitly, $\kappa_{0,A}^{(mn,n)} = \K_1(\iota_1)$ and $\kappa_{0,A}^{(n,mn)} = \K_1(\epsilon_{m,\mathbb I_{n,A}})^{-1} \circ \K_1(\iota_2)$. On the other hand, the induced map $(\epsilon_{m,A},(\epsilon_{m,A})^{(n)}) \colon \mathbb I_{n,A} \to \mathbb I_{n,M_m(A)} = M_m(\mathbb I_{n,A})$ is given by $\epsilon_{m,\mathbb I_{n,A}}$, so that $\K_0(\epsilon_{m,A};\mathbb Z / n \mathbb Z) = \K_1(\epsilon_{m,\mathbb I_{n,A}})$.

Naturality of $\zeta_\phi$ in Lemma \ref{l:mappingcone} applied to the first square of \eqref{eq:bockstein compatibility} yields $\zeta_{n,A} = \zeta_{mn,A} \circ \K_1(\iota_1) = \zeta_{mn,A} \circ \kappa_{0,A}^{(mn,n)}$. Now consider the following diagram.
\[\begin{tikzcd}[column sep = 3.5em]
	{\K_0(A;\mathbb Z/mn\mathbb Z)} && {\K_0(A;\mathbb Z/n\mathbb Z)} \\
	& {\K_0(M_m(A);\mathbb Z / n \mathbb Z)} \\
	& {\Kd(M_m(A))} \\
	{\Kd(A)} && {\Kd(A)}
	\arrow["{\kappa_{0,A}^{(n,mn)}}", from=1-1, to=1-3]
	\arrow["{\K_1(\iota_2)}"', from=1-1, to=2-2]
	\arrow["{\zeta_{mn,A}}"', from=1-1, to=4-1]
	\arrow["{\K_0(\epsilon_{m,A};\mathbb Z/n\mathbb Z)}", "\cong"', from=1-3, to=2-2]
	\arrow["{\zeta_{n,A}}"', from=1-3, to=4-3]
	\arrow["{\zeta_{n,M_m(A)}}"', from=2-2, to=3-2]
	\arrow["{\Kd(\iota_{m,A})}", from=4-1, to=3-2]
	\arrow["{\times m}", from=4-1, to=4-3]
	\arrow["{\Kd(\epsilon_{m,A})}"', "\cong", from=4-3, to=3-2]
\end{tikzcd}\]
The left square commutes by naturality of $\zeta$ applied to the second square of \eqref{eq:bockstein compatibility}. The top triangle commutes because $\K_0(\epsilon_{m,A};\mathbb Z / n \mathbb Z) = \K_1(\epsilon_{m,\mathbb I_{n,A}})$, and the bottom triangle commutes by construction. The right square commutes by naturality of $\zeta_{n,-}$. We conclude that $\zeta_{n,A} \circ \kappa_{0,A}^{(n,mn)} = m \zeta_{mn,A}$.
\end{proof}

\section{Comparison of $\Kd$ to $\Ka$}

Let $A$ be a unital $\cs$-algebra. The unitary algebraic $\K_1$ group
$\Ka(A)$ is the abelianisation of $\mathcal U_{\infty}(A)$. We will use the (na\"ive) topology on $\Ka(A)$ induced by the inductive limit topology on $\mathcal U_{\infty}(A) = \varinjlim_{n} \mathcal U_n(A)$ as a topological space. Under the resulting quotient topology $\Ka(A)$ is a semitopological abelian group.\footnote{We warn readers that multiplication need not (a priori) be jointly continuous.} 
For nonunital $A$ with unitisation $A^\dagger$ and scalar map $s \colon A^\dagger \to \mathbb C$, we set $\Ka(A) = \ker(\Ka(s)) \subseteq \Ka(A^\dagger)$.
The Hausdorffisation $\HKa(A) = \Ka(A)/{\overline{\{0\}}}$ is a Hausdorff topological abelian group (see e.g. \cite[Proposition 2.12]{CGSTW}). 

We note that $\Ka$ and its Hausdorffisation are very far from being stable functors (see Proposition \ref{p:K1a not stable}), but for unital $\cs$-algebras $A$ it is reasonable to ask whether the canonical homomorphism $\Ka(A) \to \Kd(A)$ is an isomorphism.

\begin{proposition}\label{p:pure}
Let $A$ be a unital separable simple finite pure $\cs$-algebra whose $2$-quasitraces are traces. Then there is a topological isomorphism
$\Ka(A) \isom \Kd(A)$ sending $[u]_{\mathrm{alg}}$ to $[u]_\Delta$ for $u \in \mathcal U_\infty(A)$.
\end{proposition}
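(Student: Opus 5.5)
We compare the two exact sequences describing $\Ka(A)$ and $\Kd(A)$. For unital $A$ we have $\Ped(A)=A$. Hence $\mathcal U_\infty(\Ped(A))=\mathcal U_\infty(A)$, and $\T_{\mathbb R}(A)^\ast\cong\Aff\T_1(A)$, since every trace is bounded and $\T_{\mathbb R}(A)$ is spanned by $\T_1(A)$.

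\textbf{Step 1: the map is a well-defined continuous surjection.} The identity $\mathcal U_\infty(A)\to\mathcal U_\infty(Ped(A))$ induces a homomorphism $\mathcal U_\infty(A)\to\Kd(A)$, $u\mapsto[u]_\Delta$. Its target is abelian by Lemma~\ref{l:abelian}, so it factors through the abelianisation, giving $\Ka(A)\to\Kd(A)$ with $[u]_{\mathrm{alg}}\mapsto[u]_\Delta$. The map is surjective by construction. It is continuous because each $\mathcal U_n(A)\to\Kd(A)$ is continuous by Lemma~\ref{l:unitarycontinuity} with $D=A$, and $\Ka(A)$ carries the quotient of the inductive limit topology.

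\textbf{Step 2: injectivity via Thomsen's exact sequence.} Thomsen's description of $\Ka$ gives the exact sequence
\[
\K_0(A)\xrightarrow{\rho_A}\Aff\T_1(A)\xrightarrow{\Th_A}\Ka(A)\to\K_1(A)\to 0.
\]
The key input is that $\ker(\mathcal U_\infty(A)\to\K_1(A))$ modulo commutators is parametrised by the de la Harpe--Skandalis determinant: $u\in\mathcal U_\infty^0(A)$ is a product of commutators if and only if $\Det(\mathbf u)\in\rho_A(\K_0(A))$ for a path $\mathbf u$ from $1$ to $u$. This holds for arbitrary unital $A$ once the matrix size $n$ is allowed to grow, i.e.\ in $\mathcal U_\infty$, because the commutator subgroup of $\mathcal U_\infty^0$ stabilises. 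The sequence maps to \eqref{eq:exactseq}, with identities on $\K_0$, $\K_1$ and $\T^\ast$, and compatibility $\Th_A(\widehat a)=[e^{2\pi ia}]_{\mathrm{alg}}\mapsto[e^{2\pi ia}]_\Delta=\Thd_A(\widehat a)$. The five lemma then makes $\Ka(A)\to\Kd(A)$ an isomorphism.

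If one only has Thomsen's theorem at a fixed matrix level $\mathcal U_n$, injectivity at the unstabilised level is where the hypotheses enter. Pureness, simplicity, finiteness and quasitraces being traces ensure, via results of Ng--Robert on commutators and stable-rank-one type cancellation, that $\K_1$-trivial unitaries in $\mathcal U(A)$ lie in $\mathcal U^0(A)$ and that the determinant-kernel equals the closure-free commutator subgroup. Exactness is what the algebraic $\Ka$ needs. I expect this to be the main obstacle: identifying $\ker\Det$ with the commutator subgroup precisely, not just its closure. I would cite Ng--Robert's theorem that in pure simple algebras with the stated trace hypothesis every unitary in $\mathcal U^0$ with determinant in $\rho_A(\K_0(A))$ is a finite product of commutators.

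\textbf{Step 3: openness.} It remains to show the inverse is continuous. Since $\Ka(A)$ is only semitopological, I would use openness of $\Thd_A$ (Proposition~\ref{p:Thomsen map}), together with $\K_1$ being discrete, to reduce continuity of the inverse to continuity of $\Aff\T_1(A)\to\Ka(A)$, $\widehat a\mapsto[e^{2\pi ia}]_{\mathrm{alg}}$. This map is continuous because $a\mapsto e^{2\pi ia}$ is norm continuous and the topology on $\Aff\T_1(A)$ is the quotient of the norm topology on $A_{\mathrm{sa}}$. An argument in the spirit of Corollary~\ref{c:ContinuityOutOfK1D} then gives continuity of $\Kd(A)\to\Ka(A)$ at $0$, and hence everywhere.
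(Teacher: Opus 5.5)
Your Steps 1 and 3 are fine and match the paper: continuity comes from Lemma \ref{l:unitarycontinuity}, and continuity of the inverse comes from Corollary \ref{c:ContinuityOutOfK1D} together with continuity of $a\mapsto[e^{2\pi i a}]_{\mathrm{alg}}$. The gap is in Step 2. Its main line asserts that for \emph{every} unital $A$ the non-Hausdorffised sequence $\K_0(A)\to\Aff\T_1(A)\to\Ka(A)\to\K_1(A)\to 0$ is exact. Equivalently, a unitary joined to $1$ by a path of zero determinant would be a product of commutators ``because the commutator subgroup of $\mathcal U^0_\infty$ stabilises''. That is not justified, and the paper deliberately does not claim it. Thomsen's theorem \cite{Thomsen-traces} (cf.\ \cite[\S 2.2]{CGSTW}) only gives the sequence for $\HKa(A)$, with $\overline{\rho_A(\K_0(A))}$. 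This is why $\HKd\cong\HKa$ holds for all unital $A$ (Proposition \ref{p:HK1Dalg}), while $\Kd\cong\Ka$ needs extra hypotheses.

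The obstruction is a closure, not stabilisation. By Lemma \ref{l:dlHStrick}, a zero-determinant path yields $u=we^{2\pi i a}$ with $w$ a product of commutators and $\widehat a=0$. But $\widehat a=0$ only places $a$ in the closure of the additive commutators (Cuntz--Pedersen), hence $e^{2\pi i a}$ only in the closure of the commutator subgroup. Enlarging the matrix size does nothing to remove that closure. Your five-lemma step is also circular. Well-definedness of $\widehat a\mapsto[e^{2\pi i a}]_{\mathrm{alg}}$ on $\Aff\T_1(A)$ is precisely the assertion that $\widehat a=0$ forces $e^{2\pi i a}$ into the commutator subgroup. By Lemma \ref{l:dlHStrick}, that is exactly what injectivity of $\Ka(A)\to\Kd(A)$ amounts to.

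Your fallback of citing Ng--Robert is the right input, and it is what the paper uses, but it must be applied with its actual hypotheses. \cite[Theorem 1.1]{NgRobert} requires stable rank one, which is not among the assumptions of the proposition. The paper obtains it from \cite[Theorem 1.4]{Lin}: a unital separable simple finite pure $\cs$-algebra has stable rank one. This is where finiteness is used, and your proposal never supplies it; ``stable-rank-one type cancellation'' is not an argument. Once stable rank one is available, you need no exact sequence, no five lemma and no $\K_1$-injectivity. If $[u]_\Delta=0$, the definition of $\sim_\Delta$ already provides $n$ and a path $\mathbf u$ in $\mathcal U_n(A)$ from $1_n$ to $u$ with $\Det_A(\mathbf u)=0$. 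Since $M_n(A)$ satisfies the same hypotheses, including stable rank one, Ng--Robert applied to $M_n(A)$ puts $u$ in the commutator subgroup of $\mathcal U_n(A)$. Hence $[u]_{\mathrm{alg}}=0$.
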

\begin{proof}
The canonical map $\mathcal U_{\infty}(A) \to \Kd(A)$ is continuous by Lemma \ref{l:unitarycontinuity} and surjective, and hence induces a continuous surjective homomorphism $\Ka(A) \to \Kd(A)$. 

To see that this map is injective, suppose for some $u \in \mathcal U_\infty(A)$ there is $n \in \mathbb N$ and a norm-continuous path $\mathbf u \colon [0,1] \to \mathcal U_n(A)$ from $1_n$ to $u$ with $\Det_A(\mathbf u) = 0$. By \cite[Theorem 1.4]{Lin} $A$ has stable rank one, so we may apply
\cite[Theorem 1.1]{NgRobert} to $M_n(A)$, and find that $u$ is in the commutator subgroup of $\mathcal U_n(A)$. Therefore, $u$ also vanishes in $\Ka(A)$.

Corollary \ref{c:ContinuityOutOfK1D} says that continuity of the inverse $\Kd(A) \to \Ka(A)$ follows from continuity of the map $a \mapsto [e^{2 \pi i a}]_{\mathrm{alg}} \colon A_{\mathrm{sa}} \to \Ka(A)$, which is immediate.
\end{proof}

The conditions on $A$ in the above statement are not optimised, but rather give us access to the (stronger than we need) theorem of \cite{NgRobert}. For example, the conclusion also holds for $A = M_n(\mathbb C)$ even though it is not pure (see \cite{Winter-inv} for pureness, as well as \cite{APTV} for recent breakthroughs). Note also that by a theorem of Haagerup \cite{Haagerup}, all 2-quasitraces on unital exact $\cs$-algebras are traces. 

\begin{definition}
We say that a $\cs$-algebra $A$ is \emph{tracially unital} if the set $\T_1(A)$ of tracial states on $A$ is weak$^\ast$-compact and if every positive trace on $\Ped(A)$ is bounded.
\end{definition}

\begin{example}
    Unital $\cs$-algebras are clearly tracially unital (even if they have no tracial states). 
\end{example}

\begin{example} Every simple $\sigma$-unital $\cs$-algebra with continuous scale is tracially unital (see \cite{Lin-contscale1, Lin-contscale2}). This includes the Razak--Jacelon algebra $\mathcal W$ \cite{Razak, Jacelon}. 
\end{example}

\begin{example}
By \cite[Lemma 2.5 and Proposition 2.6]{CastillejosEvington}, every separable simple $\mathcal Z$-stable $\cs$-algebra contains a non-zero hereditary $\cs$-subalgebra which is tracially unital.
\end{example}

For the proof of the following, we let $\T_+(A)$ denote the convex cone of all positive tracial functionals on $\Ped(A)$ equipped with the weak$^\ast$ topology coming from $\Ped(A)$, and $\T_+^{\mathrm b}(A)$ the convex cone of all bounded positive tracial functionals on $A$ equipped with the weak$^\ast$ topology coming from $A$. We let $\Aff_0 \T_+(A)$ and $\Aff_0\T_+^{\mathrm{b}}(A)$ denote the spaces of real continuous affine functions on these cones which vanish at $0$. 
Note that a space of affine functions on an empty trace space is a zero-dimensional vector space.

\begin{proposition}\label{p:tracially unital trace space identification}
Let $A$ be a tracially unital $\cs$-algebra. Then the natural map $\T_{\mathbb R}(A)^\ast \to \Aff \T_1(A)$ is an isomorphism of ordered topological vector spaces. 
\end{proposition}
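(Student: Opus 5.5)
The plan is to factor the natural map through the cone of bounded traces. I would identify $\T_{\mathbb R}(A)$ with $\T_{\mathbb R}^{\mathrm b}(A) = \T_+^{\mathrm b}(A) - \T_+^{\mathrm b}(A)$ and then use the compact base $\T_1(A)$ of that cone.

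The first step is to show that every self-adjoint trace $\tau\in\T_{\mathbb R}(A)$ extends to a bounded self-adjoint tracial functional on $A$. Take the Jordan decomposition $\tau=\tau^+-\tau^-$ with $\tau^\pm\in\T_+(A)$ (\cite[Definition 3.6]{GMtraces}). By tracial unitality each $\tau^\pm$ is bounded, so it extends by continuity to a bounded positive trace on $A$. Each such trace is a nonnegative multiple of a tracial state. Hence $\T_+(A)=\T_+^{\mathrm b}(A)=\mathbb R_{\geq 0}\,\T_1(A)$ as cones, and $\T_{\mathbb R}(A)$ is the real span of $\T_1(A)$.

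The second step is to show the cone $\T_+(A)$ is well capped, that is, $\T_+(A)=\bigcup_{\lambda\ge0}\lambda\T_1(A)$ with $\T_1(A)$ compact. For this I need the weak$^\ast$ topology from $\Ped(A)$ on $\T_+(A)$ to agree with the one from $A$. On norm-bounded sets of traces the two topologies agree, since $\Ped(A)$ is dense in $A$. Thus $\T_1(A)$ is compact in $\T_+(A)$, and it is a base: the map $\tau\mapsto\|\tau\|=\tau(1)$, computed via an approximate unit, is additive and positive. It should also be continuous on $\T_+(A)$. I would check this by noting that $\|\cdot\|$ is lower semicontinuous as a supremum, and that it is affine and bounded on the compact base. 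If that fails, I would instead argue directly with the compact base.

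The third step identifies the dual. The natural map sends $f\in\T_{\mathbb R}(A)^\ast$ to $f|_{\T_1(A)}$, which is continuous and affine. It is injective because $\T_1(A)$ spans $\T_{\mathbb R}(A)$. For surjectivity, any $g\in\Aff\T_1(A)$ extends uniquely to a linear functional on the span via $g(\lambda\sigma-\mu\sigma')=\lambda g(\sigma)-\mu g(\sigma')$. This is well defined by affineness plus a base argument. I then need continuity on $\T_{\mathbb R}(A)$. This is the step I expect to be the main obstacle, because $\T_{\mathbb R}(A)$ carries the weak$^\ast$ topology from $\Ped(A)$ and the Jordan decomposition need not be continuous.

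To handle it, I would avoid the Jordan decomposition and use the identification $\T_{\mathbb R}(A)^\ast\cong\Aff_0\T_+(A)$, which holds under the cone-theoretic hypotheses of \cite{GMtraces}, for instance \cite[Theorem C]{GMtraces}, or the argument behind \cite[Corollary 6.5]{GMtraces}. Then $\Aff_0\T_+(A)\cong\Aff\T_1(A)$ follows from the base property of the second step, extending homogeneously. Continuity of the homogeneous extension uses continuity of $\tau\mapsto\tau(1)$ on $\T_+(A)$.

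Finally, I would match order and topology. Positive elements correspond to functions that are nonnegative on $\T_1(A)$. On the topology side, the compact-open topology on $\T_{\mathbb R}(A)^\ast$ restricts to uniform convergence on the compact set $\T_1(A)$. Conversely, every compact $S\subseteq\T_{\mathbb R}(A)$ is norm-bounded by \cite[Lemma 4.2]{GMtraces}. Its elements can then be written as $\lambda\sigma-\mu\sigma'$ with $\lambda$ and $\mu$ bounded, so $\|f\|_S\le C\sup_{\T_1(A)}|f|$. This gives the topological isomorphism.
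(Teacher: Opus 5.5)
\textbf{There is a genuine gap at the one step that uses tracial unitality.} Your outline matches the paper's: identify $\T_{\mathbb R}(A)^\ast$ with $\Aff_0\T_+(A)$ via \cite{GMtraces}, then use $\T_1(A)$ as a compact base. The identification the paper uses is \cite[Proposition 6.1]{GMtraces}; Theorem C and Corollary 6.5 are not the right results. What you leave unproven is that the $\Ped(A)$-weak$^\ast$ and $A$-weak$^\ast$ topologies agree on $\T_+(A)$, equivalently that $\tau\mapsto\|\tau\|$ is continuous on $\T_+(A)$. That is the whole content of the paper's proof.

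Lower semicontinuity is fine. But ``affine and bounded on the compact base'' says nothing, since the norm is identically $1$ on $\T_1(A)$. It does not rule out a net $\tau_i\to\tau$ in $\T_+(A)$ with $\|\tau_i\|\to\infty$, and that is exactly what must be excluded. The missing idea is local boundedness of the norm. The paper gets it from Hahn--Banach: separate the compact convex set $\T_1(A)$ from $0$ in $\T_{\mathbb R}(A)$, and write the separating functional as $\widehat e$ with $e\in\Ped(A)_{\mathrm{sa}}$. This gives $\|\sigma\|\le\epsilon^{-1}\sigma(e)$ for all $\sigma\in\T_+(A)$. Hence convergent nets are eventually bounded, and your observation about bounded sets finishes the argument.

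A compactness argument also works. Write $\tau_i=\|\tau_i\|\sigma_i$ with $\sigma_i\in\T_1(A)$, and pass to subnets with $\|\tau_i\|\to\lambda\in[0,\infty]$ and $\sigma_i\to\sigma\in\T_1(A)$. If $\lambda=\infty$, then $\sigma$ vanishes on the dense ideal $\Ped(A)$, which is impossible for a tracial state. Otherwise $\tau=\lambda\sigma$, so $\|\tau\|=\lambda$. Your fallback ``argue directly with the compact base'' would have to be one of these two arguments.

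\textbf{The same gap reappears in your last paragraph.} \cite[Lemma 4.2]{GMtraces} only bounds $\sup_{\tau\in S}\|\tau|_D\|$ for a fixed $\cs$-subalgebra $D\subseteq\Ped(A)$. It does not bound $\sup_{\tau\in S}\|\tau\|$: for $A=\mathcal K$ the compact set $\{\Tr\}$ is not norm-bounded at all. In the tracially unital case the bound does hold, but again via $e$. Let $D$ be the hereditary subalgebra generated by $e$ and use the Jordan decomposition:
\[
\|\tau\|\le\|\tau^+\|+\|\tau^-\|\le\epsilon^{-1}\|e\|\,\|\tau|_D\|.
\]

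Once the norm is known to be continuous on $\T_+(A)$, the rest of your argument goes through: the homogeneous extension, the order, and the topology. The paper packages it as showing that the continuous bijection $\T_+^{\mathrm b}(A)\to\T_+(A)$ is a homeomorphism, and quotes \cite[Propositions 6.1 and 6.3]{GMtraces} for the outer identifications.
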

\begin{proof}
We may factor $\T_{\mathbb R}(A)^\ast \to \Aff \T_1(A)$ as 
\[\T_{\mathbb R}(A)^\ast \to \Aff_0 \T_+(A) \to \Aff_0 \T_+^{\mathrm b}(A) \to \Aff \T_1(A),\] 
where the weak$^\ast$ topology on $\T_+^{\mathrm b}(A)$ and $\T_1(A)$ is taken with respect to $A_{\mathrm{sa}}$ (as opposed to $\Ped(A)_{\mathrm{sa}}$). 
By \cite[Propositions 6.1 and 6.3]{GMtraces}, the first and last maps are isomorphisms. 
It suffices to show that the continuous bijection $\T_+^{\mathrm b}(A) \to \T_+(A)$ is a homeomorphism.
Let $\tau_i \to \tau$ in $\T_+(A)$. 
Note that $\T_1(A)$ is also compact in the weak$^\ast$ topology coming from $\Ped(A)^\ast$. Hence we may apply Hahn--Banach separation to the compact convex sets $\T_1(A), \{0\} \subseteq \T_{\mathbb R}(A)$ to obtain an element $f \in \T_{\mathbb R}(A)^\ast$ and $\epsilon \in \mathbb R$ with 
\[
f(0) = 0 < \epsilon < f(\sigma) , \qquad \textrm{for all }\sigma \in \T_1(A).
\]
Let $e\in \Ped(A)_{\mathrm{sa}}$ such that $\widehat e = f$. Then $\|\sigma\| \leq \tfrac{1}{\epsilon}\sigma(e)$ for all $\sigma \in \T_+(A)$. Since $\tau_i(e) \to \tau(e)$, the net $(\tau_i(e))_i$ is eventually bounded and hence so is $(\tau_i)_i$. The topology on $\T_+(A)$ agrees with the topology on $\T_+^{\mathrm b}(A)$ on bounded sets, so $\tau_i \to \tau$ in $\T_+^{\mathrm b}(A)$ too.
\end{proof}

The following lemma was first proved in the context of another (still ongoing) project \cite{CGSTW2}. 

\begin{lemma}\label{tracially unital exact sequence}
Let $A$ be a tracially unital $\cs$-algebra. Then there is a continuous open homomorphism $\overline \Th_A \colon \Aff \T_1(A)/\overline{\imrho{A}} \to \HKa(A)$ determined by $\overline \Th_A([\widehat a]) = [e^{2 \pi i a}]_{\overline{\mathrm{alg}}}$ for $a \in A_{\mathrm{sa}}$ which makes the following sequence exact.
\[\begin{tikzcd}
	0 & {\Aff \T_1(A)/\overline{\imrho{A}}} & {\HKa(A)} & {\K_1(A)} & 0
	\arrow[from=1-2, to=1-3, "\overline \Th_A"]
	\arrow[from=1-3, to=1-4]
	\arrow[from=1-4, to=1-5]
	\arrow[from=1-1, to=1-2]
\end{tikzcd}\]
\begin{proof}
For a unital $\cs$-algebra $B$, we consider the pairing map $\rho_B \colon \K_0(B) \to \Aff \T_1(B)$ and the Thomsen map $\overline \Th_B \colon \Aff \T_1(B) \to \HKa(B)$ determined by $\overline \Th_B ([\widehat b]) = [e^{2 \pi i b}]_{\overline{\mathrm{alg}}}$ for $b \in B_{\mathrm{sa}}$. The Thomsen map $\overline \Th_B$ is continuous and open, vanishes on $\overline{ \imrho{B}}$ and its image is the kernel of the natural map $\HKa(B) \to \K_1(B)$ (see \cite[\S 2.2]{CGSTW}, cf. \cite{Thomsen-traces}).

Consider the map $\widehat a \mapsto \widehat a \colon \Aff \T_1(A) \to \Aff \T_1(A^\dagger)$, $a \in A_{\mathrm{sa}}$ and consider the commuting diagram below. The second and third columns are exact by definition, and we have established that the second and third rows are exact.
\begin{equation}\label{eq:Aff nine}
\begin{tikzcd}
	& 0 & 0 & 0 \\
	0 & {\Aff \T_1(A)/\overline{\imrho{A}}} & {\HKa(A)} & {\K_1(A)} \\
	0 & {\Aff \T_1(A^\dagger)/ \overline{\imrho{A^\dagger}}} & {\HKa(A^\dagger)} & {\K_1(A^\dagger)} \\
	0 & {\Aff \T_1(\mathbb C)/\overline{\imrho{\mathbb C}}} & {\HKa(\mathbb C)} & {\K_1(\mathbb C)}
	\arrow[from=1-4, to=2-4]
	\arrow[from=2-4, to=3-4]
	\arrow[from=3-4, to=4-4]
	\arrow[from=1-3, to=2-3]
	\arrow[from=2-3, to=3-3]
	\arrow[from=3-3, to=4-3]
	\arrow[from=2-3, to=2-4]
	\arrow[from=3-3, to=3-4]
	\arrow[from=4-3, to=4-4]
	\arrow[from=1-2, to=2-2]
	\arrow[from=2-2, to=3-2]
	\arrow[from=3-2, to=4-2]
	\arrow[from=4-2, to=4-3, "\overline \Th_{\mathbb C}"]
	\arrow[from=3-2, to=3-3, "\overline \Th_{A^\dagger}"]
	\arrow[from=2-2, to=2-3, "\overline \Th_A", dashed]
	\arrow[from=2-1, to=2-2]
	\arrow[from=3-1, to=3-2]
	\arrow[from=4-1, to=4-2]
\end{tikzcd}
\end{equation}
A simple diagram chase shows that $\overline \Th_A$ as described in the statement is well-defined. We claim that the sequence of continuous maps
\begin{equation}\label{eq:Aff SES}
0 \to \Aff \T_1(A) \to \Aff \T_1(A^\dagger) \to \Aff \T_1(\mathbb C) \to 0 \
\end{equation}
is exact. Since every tracial state on $A$ extends to a tracial state on $A^\dagger$, the left map is a topological embedding. For exactness in the middle note that if $a + \lambda 1_A \in A^\dagger_{\mathrm{sa}}$ satisfies $\widehat \lambda = 0 \in \Aff \T_1(\mathbb C)$, then $\lambda = 0$ and hence $(a + \lambda 1_A)^\wedge = \widehat a$. Moreover, \eqref{eq:Aff SES} admits a continuous splitting induced by the inclusion of the unit $\mathbb C \to A^\dagger$.  This induces a topological direct sum $\Aff \T_1(A^\dagger) \cong \Aff \T_1(A) \oplus \Aff \T_1(\mathbb C)$ which identifies $\imrho{A^\dagger}$ with $\imrho{A} \oplus \imrho{\mathbb C}$. It follows that the left column of \eqref{eq:Aff nine} is exact, with its inclusion a topological embedding. The top row is therefore exact by the nine lemma. As $\overline \Th_{A^\dagger}$ is a topological embedding and $\ker (\HKa(A) \to \K_1(A)) \subseteq \HKa(A)$ is open, we may conclude that $\overline \Th_A$ is a topological embedding with open image, hence open and continuous. 
\end{proof}
\end{lemma}

\begin{proposition}\label{p:HK1Dalg}
Let $A$ be a tracially unital $\cs$-algebra. Then there is a topological isomorphism $\HKd(A) \isom \HKa(A)$ given by $[u]_{\overline \Delta} \mapsto [u]_{\overline{\mathrm{alg}}}$ for all $u\in \mathcal U_\infty(\Ped(A))$.
\end{proposition}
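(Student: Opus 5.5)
We will construct the map at the level of unitaries, check that it descends through $\Delta$-equivalence and then through Hausdorffisation, and finally compare the two short exact sequences via the five lemma.

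\textbf{Step 1: a continuous map out of $\Kd(A)$.} Every $u \in \mathcal U_n(\Ped(A))$ is in particular a unitary in $\mathcal U_n(A)$, so it has a class $[u]_{\overline{\mathrm{alg}}} \in \HKa(A)$. (For non-unital $A$ this class lies in the kernel of the scalar map, since the scalar part of $u$ is $1_n$.) This gives a homomorphism $\mathcal U_\infty(\Ped(A)) \to \HKa(A)$. It descends to $\Kd(A)$ by the following argument.
\begin{itemize}
\item By Proposition \ref{p:Delta equivalence}, it suffices to show that $[e^{2\pi i a}]_{\overline{\mathrm{alg}}} = 0$ whenever $a \in M_n(\Ped(A))_{\mathrm{sa}}$ satisfies $\widehat a = 0$ in $\T_{\mathbb R}(A)^\ast$.
\item Under the isomorphism $\T_{\mathbb R}(A)^\ast \cong \Aff \T_1(A)$ of Proposition \ref{p:tracially unital trace space identification}, the condition $\widehat a = 0$ means $\widehat a = 0$ in $\Aff \T_1(A)$.
\item By Lemma \ref{tracially unital exact sequence}, $[e^{2\pi i a}]_{\overline{\mathrm{alg}}} = \overline\Th_A([\widehat a]) = 0$. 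Matrix sizes are handled by replacing $A$ with $M_n(A)$, or by the matrix version of the Thomsen map.
\end{itemize}
Call the resulting homomorphism $\Psi \colon \Kd(A) \to \HKa(A)$. It satisfies $\Psi \circ \Thd_A = \overline\Th_A \circ q \circ \iota$, where $\iota$ is the isomorphism of Proposition \ref{p:tracially unital trace space identification} and $q$ is the quotient map. For continuity we use Corollary \ref{c:ContinuityOutOfK1D}: the map $a \mapsto \Psi([e^{2\pi i a}]_\Delta) = \overline\Th_A([\widehat a])$ on $\Ped(A)_{\mathrm{sa}}$ is continuous, because $a \mapsto \widehat a$ is continuous into $\T_{\mathbb R}(A)^\ast$, $\iota$ is a homeomorphism, and $\overline\Th_A$ is continuous. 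Since $\HKa(A)$ is Hausdorff, $\Psi$ factors through a continuous homomorphism $\overline\Psi \colon \HKd(A) \to \HKa(A)$ with $[u]_{\overline\Delta} \mapsto [u]_{\overline{\mathrm{alg}}}$.

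\textbf{Step 2: bijectivity via the five lemma.} Place the exact sequence of Corollary \ref{c:HK1Dexactseq} above the exact sequence of Lemma \ref{tracially unital exact sequence}, with vertical maps $\overline\Psi$, $\id_{\K_1(A)}$, and the map induced by $\iota$ on the left. Both squares commute by construction. The left vertical map is an isomorphism because $\iota$ carries $\rho_A(\K_0(A))$ onto its copy in $\Aff\T_1(A)$, hence closure onto closure. Here one must check that both versions of $\rho_A$ are given by $\widehat{p-q}$, which is compatible via $\iota$. The five lemma then shows that $\overline\Psi$ is a group isomorphism.

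\textbf{Step 3: openness of $\overline\Psi$.} This is the main obstacle, since the five lemma gives only algebraic bijectivity. Both groups contain the image of the Thomsen map as an open subgroup:
\begin{itemize}
\item in $\HKd(A)$, it is the kernel of the continuous map to the discrete group $\K_1(A)$;
\item in $\HKa(A)$, this holds by Lemma \ref{tracially unital exact sequence}.
\end{itemize}
On these subgroups, $\overline\Psi$ restricts to $\overline\Th_A \circ \bar\iota \circ (\Thod_A)^{-1}$. Each factor is a homeomorphism onto its image:
\begin{itemize}
\item $\Thod_A$ is a continuous open injection from the quotient, since $\Thd_A$ is open by Proposition \ref{p:Thomsen map} and the quotient map is open;
\item $\bar\iota$ is a homeomorphism;
\item $\overline\Th_A$ is a continuous open injection.
\end{itemize}
So $\overline\Psi$ maps a neighbourhood basis of $0$ onto a neighbourhood basis of $0$, and because it is a group isomorphism it is therefore a homeomorphism. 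The remaining care is needed in Step 1, to confirm that non-unital elements of $\mathcal U_n(\Ped(A))$ land in $\Ka(A) = \ker \Ka(s)$ and that the matrix amplification of Lemma \ref{tracially unital exact sequence} applies.
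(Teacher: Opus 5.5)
Your proof is correct and shares the paper's skeleton: the five lemma on the exact sequences of Corollary \ref{c:HK1Dexactseq} and Lemma \ref{tracially unital exact sequence}, with the topology coming from openness of the Thomsen maps and discreteness of $\K_1(A)$. Your Step~3 is the paper's remark that any map making the comparison diagram commute is automatically a homeomorphism.

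The real difference is how the comparison map is produced. The paper first builds the map in the opposite direction. For unital $A$, $\mathcal U_\infty(A)\to\HKd(A)$ is a continuous homomorphism (Lemma \ref{l:unitarycontinuity}) into a Hausdorff abelian group. It therefore descends to $\beta_A\colon\HKa(A)\to\HKd(A)$ with no relations to check. The general tracially unital case then follows by inverting $\beta_{A^\dagger}$ and $\beta_{\mathbb C}$ and restricting to $\HKa(A)=\ker(\HKa(A^\dagger)\to\HKa(\mathbb C))$ by naturality.

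You define $\Kd(A)\to\HKa(A)$ directly, which does require checking relations. This is where Proposition \ref{p:Delta equivalence} enters, together with $[e^{2\pi i a}]_{\overline{\mathrm{alg}}}=\overline{\Th}_A([\widehat a])$ for self-adjoint \emph{matrices} $a$ over $\Ped(A)$. That identity is not literally Lemma \ref{tracially unital exact sequence}, but it holds: apply the Thomsen map of the unital algebra $M_n(A^\dagger)$ and use injectivity of $\HKa(A)\to\HKa(A^\dagger)$. So the unitisation reappears, in a lighter form.

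In return, your route is a one-step construction in the direction of the statement. It also shows that the descent needs no tracial unitality. Indeed, $\widehat a=0$ in $\T_{\mathbb R}(A)^\ast$ forces $\sigma^{(n)}(a)=0$ for every tracial state $\sigma$ of $A^\dagger$, since $\sigma|_{\Ped(A)}\in\T_+(A)$. Hence $[u]_\Delta\mapsto[u]_{\overline{\mathrm{alg}}}$ is a well-defined homomorphism $\Kd(A)\to\HKa(A)$ for every $\cs$-algebra $A$, and tracial unitality is only needed to make it bijective and open.
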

\begin{proof}
Consider the following diagram, which has exact rows by Corollary \ref{c:HK1Dexactseq} and Lemma \ref{tracially unital exact sequence}. Moreover, the horizontal maps are continuous and the left vertical map is a topological isomorphism by Proposition \ref{p:tracially unital trace space identification}.
\begin{equation}\label{eq:alg comparison}
\begin{tikzcd}[column sep = 2.2em]
	0 & {\T_{\mathbb R}(A)^*/\overline{\rho_A(\K_0(A))}} & {\HKd(A)} & {\K_1(A)} & 0 \\
	0 & {\Aff \T_1(A)/\overline{\rho_A(\K_0(A))}} & {\HKa(A)} & {\K_1(A)} & 0
	\arrow[from=1-2, to=1-3]
	\arrow[from=1-3, to=1-4]
	\arrow[from=2-3, to=2-4]
	\arrow[from=2-2, to=2-3]
	\arrow["\cong"', from=1-2, to=2-2]
	\arrow["\id"', from=1-4, to=2-4]
	\arrow[from=2-4, to=2-5]
	\arrow[from=1-4, to=1-5]
	\arrow[from=2-1, to=2-2]
	\arrow[from=1-1, to=1-2]
	\arrow["\alpha"', shift right, dashed, from=1-3, to=2-3]
	\arrow["\beta"', shift right, dashed, from=2-3, to=1-3]
\end{tikzcd}
\end{equation}
Any homomorphism $\alpha \colon \HKd(A) \to \HKa(A)$ or $\beta \colon \HKa(A) \to \HKd(A)$ which makes \eqref{eq:alg comparison} commute must be an isomorphism by the five lemma, and is automatically continuous, as $\K_1(A)$ is discrete and the inclusion maps are open by Proposition \ref{p:Thomsen map} and Lemma \ref{tracially unital exact sequence}. 

First assume $A$ is unital. 
Since $\HKd(A)$ is a Hausdorff topological abelian group, the continuous homomorphism $\mathcal U_\infty(A) \to \HKd(A)$ descends to a homomorphism $\beta_A \colon \HKa(A) \to \HKd(A)$, which must then be an isomorphism. We note that $\beta_A$ is natural in $A$ with respect to unital $*$-homomorphisms.

For general tracially unital $A$, consider the following diagram.
\[\begin{tikzcd}
	& {\HKd(A)} & {\HKd(A^\dagger)} & {\HKd(\mathbb C)} \\
	0 & {\HKa(A)} & {\HKa(A^\dagger)} & {\HKa(\mathbb C)} & 0
	\arrow[from=1-2, to=1-3]
	\arrow["\alpha"', dashed, from=1-2, to=2-2]
	\arrow[from=1-3, to=1-4]
	\arrow["\beta_{A^\dagger}^{-1}", from=1-3, to=2-3]
	\arrow["\beta_{\mathbb C}^{-1}", from=1-4, to=2-4]
	\arrow[from=2-1, to=2-2]
	\arrow[from=2-2, to=2-3]
	\arrow[from=2-3, to=2-4]
	\arrow[from=2-4, to=2-5]
\end{tikzcd}\]
By naturality of $\beta$ and exactness of the lower row, there is a homomorphism $\alpha \colon \HKa(A) \to \HKa(A)$ which satisfies $\alpha([u]_{\overline{\Delta}}) = [u]_{\overline{\mathrm{alg}}}$ for each $u \in \mathcal U_{\infty}(\Ped(A))$. This forces the diagram \eqref{eq:alg comparison} to commute, and thus $\alpha$ is a topological isomorphism.
\end{proof}

However, some form of unitality is indeed essential for relating $\HKd(A)$ to $\HKa(A)$, as the latter invariant is not stable:
\begin{proposition}\label{p:K1a not stable}
For any $\cs$-algebra $A$, the natural map $\HKa(A \otimes \mathcal K) \to \K_1(A \otimes \mathcal K)$ is an isomorphism. 
\end{proposition}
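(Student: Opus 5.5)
Write $B = A\otimes\mathcal K$. The natural map $\HKa(B)\to \K_1(B)$ is surjective: $B$ is stable, so every $\K_1$-class is represented by a unitary in $\mathcal U(B^\dagger)$ with scalar part $1$. It therefore suffices to show that the kernel is trivial. That kernel is the image of the closure of the subgroup generated by exponentials $e^{2\pi i h}$, $h\in B_{\mathrm{sa}}$, together with commutators. By Whitehead's lemma, the kernel of $\Ka(B)\to\K_1(B)$ is generated by classes of products of exponentials, and since $B$ is stable we may work in $\mathcal U_1(B)$ instead of matrices. So the plan is to show that $[e^{2\pi i h}]_{\mathrm{alg}}$ lies in $\overline{\{0\}}\subseteq\Ka(B)$ for every $h\in B_{\mathrm{sa}}$. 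Since $\overline{\{0\}}$ is a closed subgroup, this will kill the whole kernel in $\HKa(B)$.

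\textbf{Step 1 (reduce to nice $h$).} Elements $h\in B_{\mathrm{sa}}$ that lie in $M_N(A)_{\mathrm{sa}}\subseteq A\otimes\mathcal K$ for some $N$ are norm dense. The map $h\mapsto[e^{2\pi ih}]_{\mathrm{alg}}$ is continuous into $\Ka(B)$, because it factors through $\mathcal U_1(B^\dagger)$. Hence it is enough to treat $h\in M_N(A)_{\mathrm{sa}}$.

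\textbf{Step 2 (spreading trick).} For $h\in M_N(A)_{\mathrm{sa}}$ and $k\in\mathbb N$, set
\[
h_k=\tfrac1k(h\oplus h\oplus\cdots\oplus h)\quad(k \text{ copies})\in M_{kN}(A)\subseteq B .
\]
Each copy $e^{2\pi i h/k}$ placed in a different diagonal block is unitarily conjugate, by a permutation unitary in $\mathcal M(B)$ (or in $B^\dagger$ after a rotation), to the copy in the first block. In $\Ka(B)$, conjugate elements have the same class, and unitaries with orthogonal supports multiply as direct sums. This gives
\[
[e^{2\pi i h_k}]_{\mathrm{alg}}=k\,[e^{2\pi i h/k}]_{\mathrm{alg}}=[e^{2\pi i h}]_{\mathrm{alg}} ,
\]
where the last equality uses that the copies of $e^{2\pi ih/k}$ all lie in the commutative algebra $C^*(h)$ placed in one block. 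Conjugation in the unitary group of $B^\dagger$ needs care: the permutation unitaries lie in $\mathcal M(B)$, not $B^\dagger$. I would handle this by noting that the relevant permutation acts on finitely many blocks, so it can be realised inside $M_{kN}(\mathbb C)^\sim$ by a self-adjoint unitary of the form $1-p+w$ with $w\in M_{kN}(\mathbb C)\otimes 1\subseteq \mathcal K^\sim$. Alternatively one can use the fact that conjugation by $v\in\mathcal U(\mathcal M(B))$ acts trivially modulo commutators, via a $2\times2$ rotation argument as in Lemma \ref{l:abelian}, again using stability to absorb the matrix amplification.

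\textbf{Step 3 (limit).} $\|h_k\|=\|h\|/k\to0$, so $e^{2\pi i h_k}\to 1$ in norm, and hence $[e^{2\pi i h_k}]_{\mathrm{alg}}\to0$ in $\Ka(B)$. The sequence is constant, equal to $[e^{2\pi ih}]_{\mathrm{alg}}$, so this class lies in every neighbourhood of $0$, that is, in $\overline{\{0\}}$. Combining with Whitehead's lemma and the closedness of $\overline{\{0\}}$, the kernel of $\HKa(B)\to\K_1(B)$ vanishes.

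The main obstacle is Step 2: making the equality of block-spread conjugates rigorous inside $\Ka$, which is defined via $\mathcal U_\infty(B^\dagger)$, using only unitaries available there. It also has to be done compatibly with the identification $\mathcal U_1(B)\subseteq \mathcal U_\infty(B)$, and one must check that the topology on $\Ka$ is the one induced from $\mathcal U_1$, so that norm convergence in Step 3 gives convergence in $\Ka(B)$.
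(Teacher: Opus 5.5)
Your route differs from the paper's, and the strategy is sound: showing that every $[e^{2\pi i h}]_{\mathrm{alg}}$ lies in $\overline{\{0\}}$ by spreading $h$ over $k$ orthogonal blocks does prove the statement. However, Step 2 as written has a genuine gap when $A$ is non-unital, which is the case that matters.

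Your first fix fails. The block matrix units $1\otimes e_{ij}$ lie in $\mathcal M(A\otimes\mathcal K)$, but not in $(A\otimes\mathcal K)^\dagger$ unless $A$ is unital. So $\mathcal K^\sim$ is not a subalgebra of $B^\dagger$, and $1-p+w\notin\mathcal U(B^\dagger)$.

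Your second fix is circular. Lemma \ref{l:abelian} concludes because $\widehat{VaV^\ast}=\widehat a$ and $[e^{2\pi i b}]_\Delta=0$ whenever $\widehat b=0$, which holds in $\Kd$ by construction. Running the same manipulation in $\Ka$ leaves you needing $[e^{2\pi i VaV^\ast}]_{\mathrm{alg}}=[e^{2\pi i a}]_{\mathrm{alg}}$ for a multiplier unitary $V$. That is the very point at issue, and closing it this way amounts to invoking the de la Harpe--Skandalis theorem. Your remark that stability lets you work in $\mathcal U_1(B)$ rather than $\mathcal U_n(B)$ has the same defect, since identifying $M_n(B)$ with $B$ is again implemented by multipliers.

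The repair is to build the conjugating unitaries inside $\mathcal U_1(B)$ itself.
\begin{enumerate}
\item Replace $h\in M_N(A)_{\mathrm{sa}}$ by $h'=\psi(h)$, where $\psi(t)=\mathrm{sign}(t)\max(|t|-\epsilon,0)$, and put $g=\phi(h)$ with $\phi(t)=\min(1,|t|/\epsilon)$. Then $g\in M_N(A)$ is a positive contraction with $gh'=h'g=h'$ and $\|h-h'\|\leq\epsilon$, so by your Step 1 it suffices to treat $h'$.
\item Write $g_{(r,s)}$ for $g$ placed in the $(r,s)$ block of $M_{kN}(A)$. The element $X_j=g_{(j,1)}-g_{(1,j)}\in B$ is skew-adjoint, and $v_j=\exp(\tfrac{\pi}{2}X_j)$ lies in $\mathcal U_1(B)\subseteq\mathcal U(B^\dagger)$. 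On blocks $1,j$ it is $\begin{psmallmatrix}\cos(\frac{\pi}{2}g)&-\sin(\frac{\pi}{2}g)\\ \sin(\frac{\pi}{2}g)&\cos(\frac{\pi}{2}g)\end{psmallmatrix}$.
\item Since $\cos(\tfrac{\pi}{2}g)h'=0$ and $\sin(\tfrac{\pi}{2}g)h'=h'$, conjugation by $v_j$ carries the block-$1$ copy of $e^{2\pi i h'/k}$ exactly onto the block-$j$ copy.
\end{enumerate}
Hence $[e^{2\pi i h'_k}]_{\mathrm{alg}}=k[e^{2\pi i h'/k}]_{\mathrm{alg}}$ holds honestly in the abelianisation of $\mathcal U_\infty(B^\dagger)$, and Step 3 goes through. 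The construction works verbatim for $h\in M_n(A\otimes M_N)_{\mathrm{sa}}\subseteq M_n(B)_{\mathrm{sa}}$, spreading along the $\mathcal K$ factor, so no reduction to $n=1$ is needed.

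Repaired this way, your argument is an elementary, self-contained proof of exactly the piece of de la Harpe--Skandalis needed here. It makes visible that the absence of bounded traces on a stable algebra is what kills the trace part. The paper instead uses that theorem as a black box. The scalar character is the unique tracial state on $(A\otimes\mathcal K)^\dagger$, so Thomsen's exact sequence gives trace part $\mathbb R/\mathbb Z$ in $\HKa((A\otimes\mathcal K)^\dagger)$. This maps isomorphically onto the trace part of $\HKa(\mathbb C)$, and a nine-lemma chase concludes.
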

\begin{proof}
Set $B := (A \otimes \mathcal K)^\dagger$. The scalar map $s \colon B \to \mathbb C$ is the unique tracial state on $B$, so $\rho_B(x) = \K_0(s)(x) \in \mathbb Z \subseteq \mathbb R = \Aff \T_1(B)$ for each $x \in \K_0(B)$. Thus $\Aff \T_1(B)/\imrho{B} = \mathbb R/\mathbb Z$, and the following diagram commutes. 
\[\begin{tikzcd}[row sep = small]
	&& 0 & 0 & \\
	& 0 & {\HKa(A \otimes \mathcal K)} & {\K_1(A \otimes \mathcal K)} & 0 \\
	0 & {\mathbb R/\mathbb Z} & {\HKa(B)} & {\K_1(B)} & 0 \\
	0 & {\mathbb R/\mathbb Z} & {\HKa(\mathbb C)} & 0 \\
	& 0 & 0
	\arrow[from=1-3, to=2-3]
	\arrow[from=1-4, to=2-4]
	\arrow[from=2-2, to=2-3]
	\arrow[from=2-2, to=3-2]
	\arrow[from=2-3, to=2-4]
	\arrow[from=2-3, to=3-3]
	\arrow[from=2-4, to=2-5]
	\arrow[from=2-4, to=3-4]
	\arrow[from=3-1, to=3-2]
	\arrow["{\overline \Th_B}", from=3-2, to=3-3]
	\arrow["\id", from=3-2, to=4-2]
	\arrow[from=3-3, to=3-4]
	\arrow[from=3-3, to=4-3]
	\arrow[from=3-4, to=3-5]
	\arrow[from=3-4, to=4-4]
	\arrow[from=4-1, to=4-2]
	\arrow["{\overline \Th_{\mathbb C}}", from=4-2, to=4-3]
	\arrow[from=4-2, to=5-2]
	\arrow[from=4-3, to=4-4]
	\arrow[from=4-3, to=5-3]
\end{tikzcd}\]
All columns are exact, and the lower two rows are exact. An easy diagram chase (or the nine lemma) shows that the desired map is an isomorphism.
\end{proof}

\section{Classification of stable $\cs$-algebras containing a projection}

The functor $\HKd$ is one part of a larger functor $\TKT$ incorporating K-theory and traces, which can be used to classify $*$-homomorphisms between certain stable $\cs$-algebras.

\begin{definition}\label{d:total}
Let $A$ be a $\cs$-algebra. The \emph{total stable invariant} $\TKT(A)$ consists of the following data:
\begin{enumerate}
\item The K-theory groups $\K_i(A)$ for $i=0,1$, as abelian groups.
\item The dual $\T_{\mathbb R}(A)^\ast$ of the space of self-adjoint traces, taken with the compact-open topology, as an ordered real topological vector space.
\item $\HKd(A)$ taken as a topological abelian group.
\item The K-theory with coefficients groups $\K_i(A;\mathbb Z/n\mathbb Z)$ for $i=0,1$ and $n \geq 2$, as abelian groups.
\item The following homomorphisms: 
\begin{equation}\label{eq:total homomorphisms}
\begin{tikzcd}[row sep = 0em, column sep = 1.3em]
	{\text{the pairing map}} & {\K_0(A)} & {\T_{\mathbb R}(A)^*,} \\
	{\text{the Thomsen map}} & {\T_{\mathbb R}(A)^\ast} & {\HKd(A),} \\
	{\text{the obvious map}} & {\HKd(A)} & {\K_1(A),} \\
	{\text{the $\zeta$-maps (for $n \geq 2$)}} & {\K_0(A;\mathbb Z/n \mathbb Z)} & {\HKd(A),} \\
	{\text{and the Bockstein maps}} & {\K_i(A)} & {\K_i(A;\mathbb Z/n\mathbb Z),} \\
	{\text{(for $i=0,1$ and $m,n \geq 2$)}} & {\K_i(A;\mathbb Z/n\mathbb Z)} & {\K_{1-i}(A),} \\
	& {\K_i(A;\mathbb Z/n\mathbb Z)} & {\K_i(A;\mathbb Z/mn\mathbb Z),} \\
	& {\K_i(A;\mathbb Z/mn\mathbb Z)} & {\K_i(A;\mathbb Z/n\mathbb Z).}
	\arrow["{\rho_A}", from=1-2, to=1-3]
	\arrow["{\overline\Th_A^\Delta}", from=2-2, to=2-3]
	\arrow["{\mathrm{obv}_A}", from=3-2, to=3-3]
	\arrow["{\zeta_n}", from=4-2, to=4-3]
	\arrow["{\mu^{(n)}_{i,A}}", from=5-2, to=5-3]
	\arrow["{\nu^{(n)}_{i,A}}", from=6-2, to=6-3]
	\arrow["{\kappa_{i,A}^{(mn,n)}}", from=7-2, to=7-3]
	\arrow["{\kappa_{i,A}^{(n,mn)}}", from=8-2, to=8-3]
\end{tikzcd}
\end{equation}
\end{enumerate}
A morphism $(\underline \alpha,\beta,\gamma) \colon \TKT(A) \to \TKT(B)$ consists of:
\begin{enumerate}
\item\label{eq:hom K theory} group homomorphisms $\alpha_i \colon \K_i(A) \to \K_i(B)$ for $i=0,1$,
\item a continuous positive linear map $\gamma \colon \T_{\mathbb R}(A)^\ast \to \T_{\mathbb R}(B)^\ast$,
\item a continuous\footnote{Note that continuity of $\beta$ in this definition is automatic from continuity of $\gamma$ because $\K_1$ is discrete and the inclusion map in the exact sequence \eqref{eq:ThomsenODelta} is open by Proposition \ref{p:Thomsen map}.} group homomorphism $\beta \colon \HKd(A) \to \HKd(B)$, and
\item\label{eq:hom K theory with coefficients} group homomorphisms $\alpha_i^{(n)} \colon \K_i(A;\mathbb Z/n \mathbb Z) \to \K_i(B;\mathbb Z/n \mathbb Z)$ for $i=0,1$ and $n \geq 2$,
\end{enumerate}
intertwining the homomorphisms \eqref{eq:total homomorphisms}. 
\end{definition}

The total stable invariant is natural with respect to $*$-homomorphisms. It is stable in the following sense:
\begin{proposition}
A full hereditary inclusion $A \subseteq B$ of $\cs$-algebras induces an isomorphism $\TKT(A) \isom \TKT(B)$.
\end{proposition}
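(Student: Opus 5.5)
The plan is to check component by component that the inclusion $\iota \colon A \hookrightarrow B$ induces isomorphisms. By naturality, $\TKT(\iota)$ is a morphism of invariants, so it is enough that each component is bijective and that the inverses are again a morphism. The inverses automatically intertwine the structure maps: if $\iota_*$ commutes with every structure map and each component of $\iota_*$ is bijective, then the inverses commute with them too. Positivity and continuity of the inverse on traces do need checking; I address this below.

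The components, in order:
\begin{enumerate}
\item For $\K_0$ and $\K_1$, the fact that a full hereditary inclusion induces isomorphisms is classical (Brown's theorem, or the standard full-corner argument); this was already used in Corollary \ref{c:K1Dfullher}.
\item For $\T_{\mathbb R}(-)^\ast$, \cite[Corollary 5.4]{GMtraces} gives a topological isomorphism $\iota_\#$. Order is also preserved. Restriction of traces $\T_{\mathbb R}(B) \to \T_{\mathbb R}(A)$ is a bijection carrying $\T_+(B)$ onto $\T_+(A)$, since a positive trace on $\Ped(A)$ extends uniquely to a positive trace on $\Ped(B)$. Hence $\iota_\#$ and its inverse are positive, as positivity in the dual is tested against positive traces.
\item For $\HKd$, this is Proposition \ref{p:HK1Dfullher}.
\end{enumerate}

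The step needing a little care is K-theory with coefficients. Here $\K_i(A;\mathbb Z/n\mathbb Z) = \K_{1-i}(\mathbb I_{n,A})$, and the induced map comes from $(\iota,\iota^{(n)}) \colon \mathbb I_{n,A} \to \mathbb I_{n,B}$. I would apply the five lemma to the natural six-term sequence \eqref{eq:mod n}. Its maps are natural in the algebra, and $\iota$ induces isomorphisms on $\K_*(A) \to \K_*(B)$; the identifications $\K_*(M_n(A)) \cong \K_*(A)$ via $\epsilon_{n,A}$ are also natural. The five lemma applied to the segment
\[
\K_0(A) \xrightarrow{\times n} \K_0(A) \to \K_0(A;\mathbb Z/n\mathbb Z) \to \K_1(A) \xrightarrow{\times n} \K_1(A)
\]
and its $B$-counterpart then gives bijectivity of $\K_0(\iota;\mathbb Z/n\mathbb Z)$, and similarly for $i=1$.

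Alternatively, $\mathbb I_{n,A} \subseteq \mathbb I_{n,B}$ is itself a full hereditary inclusion. It is hereditary because the boundary conditions are compatible with $M_n(A)$ hereditary in $M_n(B)$. It is full because evaluation at points of $(0,1]$ already sees full hereditary subalgebras. Brown's theorem would then apply directly.

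Since all the structure maps (pairing, Thomsen, obvious, $\zeta$-maps, Bocksteins) are natural, they are intertwined by $\iota_*$, and therefore also by its inverse. This gives the isomorphism $\TKT(A) \isom \TKT(B)$. The only genuine obstacle I expect is the positivity of the inverse on the trace component; everything else is formal.
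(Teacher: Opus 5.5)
Your proposal is correct and follows the paper's route: you check each component separately, using the classical K-theory result, the trace-dual isomorphism from the cited traces paper, and Proposition~\ref{p:HK1Dfullher} for $\HKd$. Your five-lemma argument on the Bockstein six-term sequence is exactly what the paper compresses into ``and therefore in K-theory with coefficients'', and your remarks on positivity of the inverse on $\T_{\mathbb R}(-)^\ast$ and on the inverses automatically intertwining the structure maps just make explicit what the paper leaves implicit.
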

\begin{proof}
It is well-known that full hereditary inclusions induce isomorphisms in K-theory and therefore in K-theory with coefficients. For $\mathrm{T}_{\mathbb R}(-)^\ast$ and $\HKd$ this is \cite[Corollary 5.4]{GMtraces} and Proposition \ref{p:HK1Dfullher} respectively.
\end{proof}

\begin{remark} The collection of K-theory groups and K-theory with coeffic\-ients groups is known as the \emph{total K-theory} $\underline \K(A)$. A morphism $\underline \alpha \colon \underline \K(A) \to \underline \K(B)$ is a collection of homomorphisms \eqref{eq:hom K theory} and \eqref{eq:hom K theory with coefficients} intertwining the Bockstein maps.
\end{remark}

For a unital $\cs$-algebra $A$, the \emph{total unital invariant} $\TKTu(A)$ consists of $\TKT(A)$ and additionally the class $[1]_0 \in \K_0(A)$ of the unit. Morphisms in the target category of the total unital invariant are required to preserve $[1]_0$; the total unital invariant is thus natural with respect to unital $*$-homomorphisms. 

\begin{remark}
For a unital $\cs$-algebra $A$, the total unital invariant here contains the same data as the total unital invariant in \cite{CGSTW} via the natural isomorphisms $\T_{\mathbb R}(A)^\ast \isom \Aff \T_1(A)$ and $\HKd(A) \isom \HKa(A)$ from Propositions \ref{p:tracially unital trace space identification} and \ref{p:HK1Dalg} respectively, which intertwine all the relevant homomorphisms. 
\end{remark}

The total stable invariant is invariant under approximate unitary equivalence.

A $*$-homomorphism $\phi \colon A \to B$ is said to be \emph{full} if $\phi(a)$ generates $B$ as a two-sided closed ideal for each non-zero $a \in A$ (for simple $B$ this just means that $\phi$ is injective). If an embedding $\phi \colon A \to B$ is full, then the morphism $\TKT(\phi) \colon \TKT(A) \to \TKT(B)$ it induces is faithful in the sense that it maps positive traces on $B$ to faithful traces on $A$. The following classification theorem for unital embeddings is Theorem 9.3 of \cite{CGSTW}.

\begin{theorem}[Classification of unital embeddings]\label{t:unital classification}
Let $A$ and $B$ be unital separable nuclear $\cs$-algebras such that $A$ satisfies the UCT, and $B$ is simple and $\mathcal Z$-stable. Then every morphism $\Phi \colon \TKTu(A) \to \TKTu(B)$ of the total unital invariant mapping positive traces on $B$ to faithful traces on $A$ is induced by a unital embedding $\phi \colon A \to B$, and moreover $\phi$ is unique up to approximate unitary equivalence.
\end{theorem}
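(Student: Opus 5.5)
Since this is Theorem~9.3 of \cite{CGSTW}, the plan is to translate rather than reprove. Every morphism $\Phi\colon \TKTu(A)\to\TKTu(B)$ in the sense of this paper should correspond to a morphism of the total unital invariant of \cite{CGSTW}, and conversely. The faithfulness hypothesis should also match under this dictionary. Given that, existence and uniqueness up to approximate unitary equivalence follow verbatim from \cite[Theorem 9.3]{CGSTW}.

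\textbf{Identifying the components.} For unital $A$, Proposition~\ref{p:tracially unital trace space identification} gives a natural isomorphism of ordered topological vector spaces $\T_{\mathbb R}(A)^\ast\cong\Aff\T_1(A)$. Under it, $\widehat a$ corresponds to the function $\tau\mapsto\tau(a)$, so $\rho_A$ becomes the classical pairing. Proposition~\ref{p:HK1Dalg} gives the natural topological isomorphism $\HKd(A)\cong\HKa(A)$, $[u]_{\overline\Delta}\mapsto[u]_{\overline{\mathrm{alg}}}$.

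\textbf{Checking the structure maps.} The Thomsen map $\Thod_A$, sending $\widehat a\mapsto[e^{2\pi i a}]_{\overline\Delta}$, becomes $\overline\Th_A$, sending $[\widehat a]\mapsto[e^{2\pi ia}]_{\overline{\mathrm{alg}}}$. The obvious map to $\K_1$ is preserved. The total K-theory, the Bockstein maps and the unit class $[1_A]_0$ are literally the same data, so nothing needs checking there. The positive linear maps $\gamma$ correspond to continuous positive linear maps $\Aff\T_1(A)\to\Aff\T_1(B)$. Preservation of the unit class forces compatibility with the normalisation of the constant function $1$. The condition that $\gamma$ maps positive traces on $B$ to faithful traces on $A$ translates to the dual map $\T_1(B)\to\T_1(A)$ landing in faithful traces.

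\textbf{Expected obstacle: the $\zeta$-maps.} The step I expect to need genuine care is showing that $\zeta_{n,A}$ from \eqref{eq:zeta map} corresponds to the $\zeta_n$ of \cite{CGSTW} under $\HKd\cong\HKa$. The first thing I would try is to use the formula
\[
\zeta_n([\mathbf u]_1)=[u]_\Delta+\Thd_A\bigl(\tfrac1n\Det_A(\mathbf u)\bigr).
\]
Here $\mathbf u\in\mathcal U_k(\mathbb I_{n,A})$ with $\mathbf u(0)=u\otimes 1_n$, and the identity $\Thd_A(\Det_A(\mathbf w))=[\mathbf w(1)]_\Delta-[\mathbf w(0)]_\Delta$ from Proposition~\ref{p:Thomsen map} is available. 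Because $\mathbb I_{n,A}$ is built from the same picture of mod-$n$ K-theory, the defining formula in \cite[\S 2--3]{CGSTW}, which uses the de la Harpe--Skandalis determinant and the Thomsen map, should agree term by term with this one. The one point to verify there is that the classical de la Harpe--Skandalis determinant on $\Aff\T_1$ agrees with $\Det_A$ for unital $A$. This holds by direct comparison of \eqref{eq:Det} with the classical definition, since all traces on $\Ped(A)=A$ are bounded.

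\textbf{Concluding.} Once all components are identified, every morphism $\TKTu(A)\to\TKTu(B)$ satisfying the faithfulness condition is a morphism of the \cite{CGSTW} invariant. Theorem~9.3 there then yields a unital embedding $\phi$ inducing it, unique up to approximate unitary equivalence. Naturality of the identifications ensures that $\TKTu(\phi)=\Phi$ in our sense as well.
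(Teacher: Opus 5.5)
Your proposal is correct and matches what the paper does. The paper gives no independent proof: it imports \cite[Theorem 9.3]{CGSTW}, justified by its remark that the natural isomorphisms $\T_{\mathbb R}(A)^\ast\cong\Aff\T_1(A)$ and $\HKd(A)\cong\HKa(A)$ of Propositions~\ref{p:tracially unital trace space identification} and~\ref{p:HK1Dalg} intertwine all the relevant structure maps. Your extra checks — unitality of $\gamma$ via $[1_A]_0$ and the pairing, the translation of faithfulness, and the matching of determinants and $\zeta$-maps — are exactly the details that remark leaves implicit.
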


It is well known that a $\sigma$-unital stable $\cs$-algebra $A$ with a full projection $p \in A$ is isomorphic to $pAp \otimes \mathcal K$ by Brown's theorem \cite[Theorem 2.8]{Brown77}. We will use a slightly sharper version of this result to convert the above classification of unital $*$-homomorphisms to a classification of $*$-homomorphisms between stable $\cs$-algebras that contain projections.

\begin{lemma}\label{l:stabilisation trick}
Let $A$ be a $\sigma$-unital stable $\cs$-algebra with a full projection $p \in A$. Then there is an isomorphism $\theta \colon A \isom pAp \otimes \mathcal K$ whose restriction to $pAp$ is the top-corner inclusion $pAp \to pAp \otimes \mathcal K$.
\end{lemma}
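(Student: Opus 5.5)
Let $B := pAp$, a unital $\sigma$-unital $\cs$-algebra, and let $\{e_{ij}\}_{i,j \geq 1}$ denote the matrix units of $\mathcal K$. The plan is to build an isomorphism $\theta \colon A \to B \otimes \mathcal K$ by hand, using partial isometries in $\multialg{A}$, so that the corner $p$ goes to $p \otimes e_{11}$. The construction has three steps.

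\textbf{Step 1: build the partial isometries.} Since $A$ is stable and $p$ is full, Brown's theorem (or rather the standard ingredients in its proof, e.g.\ \cite[Theorem 2.8]{Brown77} together with the fact that in a $\sigma$-unital stable algebra every full projection is properly infinite in $\multialg{A}$ with $1_{\multialg{A}} \precsim p$ in a suitable sense) gives an isometry-type decomposition. Concretely, I will aim to produce partial isometries $v_1, v_2, \ldots \in \multialg{A}$ with the following properties:
\begin{itemize}
\item $v_i^\ast v_i = p$ for every $i$;
\item the range projections $v_iv_i^\ast$ are pairwise orthogonal;
\item $\sum_i v_iv_i^\ast = 1_{\multialg{A}}$ strictly;
\item $v_1 = p$.
\end{itemize}

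\textbf{Step 2: define $\theta$.} Given such $v_i$, set
\[
\theta(a) = \sum_{i,j} v_i^\ast a v_j \otimes e_{ij}.
\]
Each coefficient $v_i^\ast a v_j$ lies in $pAp$. The map $\theta$ is a $\ast$-isomorphism, with inverse $b \otimes e_{ij} \mapsto v_i b v_j^\ast$. Since $v_1 = p$, for $a \in pAp$ we get $v_i^\ast a v_j = \delta_{i1}\delta_{j1} a$. Hence $\theta(a) = a \otimes e_{11}$, which is exactly the top-corner inclusion.

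\textbf{Step 3 (main obstacle): arrange $v_1 = p$.} Brown's theorem only directly gives some isomorphism $\theta_0 \colon A \cong B \otimes \mathcal K$, and it need not send $p$ to $p \otimes e_{11}$. The plan is to correct $\theta_0$ rather than build the $v_i$ from scratch:

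(a) Let $q = \theta_0(p)$. This is a full projection in $B \otimes \mathcal K$, and $q$ is Murray--von Neumann equivalent to $1_B \otimes e_{11}$ via $\theta_0$ restricted to $pAp$. Explicitly, both corners are isomorphic to $B$.

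(b) The goal is a unitary $u \in \multialg{B \otimes \mathcal K}$ with the following properties:
\begin{itemize}
\item $u q u^\ast = 1_B \otimes e_{11}$;
\item the composite $\mathrm{Ad}(u) \circ \theta_0|_{pAp}$ agrees with the top-corner map $b \mapsto b \otimes e_{11}$.
\end{itemize}

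(c) To find $u$, first take a partial isometry $w \in B \otimes \mathcal K$ with $w^\ast w = q$, $ww^\ast = 1_B \otimes e_{11}$, and $w\theta_0(b)w^\ast = b \otimes e_{11}$. One choice is $w = \sum (\ldots)$ obtained by composing $\theta_0|_{pAp}$ with the inverse of the corner identification.

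(d) Then extend $w$ to a unitary. This needs the complements $1-q$ and $1 - 1_B \otimes e_{11}$ to be equivalent in $\multialg{B \otimes \mathcal K}$. Both complements are full and properly infinite, and $\sigma$-unital stability lets us absorb $B \otimes \mathcal K$ on each side. Concretely, use the Kasparov stabilisation/Cuntz--Brown argument: each complement is equivalent to $1_{\multialg{B \otimes \mathcal K}}$.

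(e) Set $u = w + w'$, where $w'$ implements the equivalence of complements. Then $\theta := \mathrm{Ad}(u) \circ \theta_0$ is the required isomorphism.

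I expect step (d), the equivalence of complementary projections in the multiplier algebra, to be the delicate point. I would handle it with Brown's $\sigma$-unital stability results, in the form: a full corner of a stable $\sigma$-unital algebra has complement equivalent to $1$.
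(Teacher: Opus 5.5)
\textbf{There is a genuine gap: Step 1 is never proved, and the Step 3 workaround does not work.}

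Your Step 2 is fine. Steps 1--2 together are in substance the paper's argument. A family $v_1 = p, v_2, v_3, \ldots$ as in Step 1 is the same thing as a Hilbert $A$-module isomorphism $pA \oplus (1-p)A \cong (pA)^{\oplus\infty}$ that is the identity on the first summand. The paper's $\theta = \mathrm{Ad}(\id \oplus u)$ is exactly your formula for $\theta$.

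The problems in Step 3 are these.

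In (a), the isomorphism $\theta_0|_{pAp} \colon pAp \to q(B\otimes\mathcal K)q$ only tells you that two corners are isomorphic. That does not make the projections Murray--von Neumann equivalent: in $M_{2^\infty} \otimes \mathcal K$ the projections $1 \otimes e_{11}$ and $1 \otimes (e_{11}+e_{22})$ have isomorphic corners but $\K_0$-classes $1 \neq 2$. Concretely, take $A = M_{2^\infty}\otimes \mathcal K$ and $p = 1 \otimes e_{11}$. Precomposing a given $\theta_0$ with an automorphism of $A$ that doubles $\K_0(A) = \mathbb Z[\tfrac12]$ gives another isomorphism of the kind Brown's theorem produces. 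The two images of $p$ have different $\K_0$-classes, so at least one of them is not equivalent to $1_B \otimes e_{11}$.

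In (c) you additionally require $w$ to implement the prescribed corner isomorphism. Even if $q = 1_B \otimes e_{11}$, one has $\theta_0(b) = \gamma(b) \otimes e_{11}$ for some $\gamma \in \Aut(B)$. Then $w$ would have to be $w_0 \otimes e_{11}$ with $w_0\gamma(b)w_0^\ast = b$ for all $b$, which is impossible when $\gamma$ is outer. No unitary conjugation can fix this; you would at least need to compose with $\gamma^{-1}\otimes \id_{\mathcal K}$.

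More fundamentally, transporting along $\theta_0$ turns the lemma for $(A,p)$ into literally the same lemma for $(B\otimes\mathcal K, q)$. So the ``correction'' carries the entire content of the statement, and an arbitrary $\theta_0$ from Brown's theorem gives you no grip on it. Relatedly, the heuristic $1_{\mathcal M(A)} \precsim p$ in Step 1 cannot hold literally: $s^\ast s = 1$ with $ss^\ast \leq p \in A$ would force $1 = s^\ast p s \in A$.

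The fix is to prove Step 1 directly, which is what the paper does. Your idea in (d), that a complement is equivalent to $1$, is the right ingredient, but it should be applied to $1-p$ in $\mathcal M(A)$ itself, before any transport.

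1. The corner $(1-p)A(1-p)$ is full: choose isometries $s_n \in \mathcal M(A)$ with $s_n s_n^\ast \to 0$ strictly, so that $s_n^\ast p s_n \to 0$ in norm.
2. It is also stable, by Hjelmborg--R\o rdam.
3. Brown's results then give $1-p \sim 1$ in $\mathcal M(A)$, i.e.\ $(1-p)A \cong A$ as Hilbert $A$-modules.
4. Stability gives $A \cong A^{\oplus\infty}$.
5. Kasparov stabilisation for the full countably generated module $pA$ (in the form of Mingo--Phillips) gives $A^{\oplus\infty} \cong (pA)^{\oplus\infty}$.
6. The components of the resulting unitary $(1-p)A \to (pA)^{\oplus\infty}$ are given by elements $v_2, v_3, \ldots \in (1-p)Ap$ with $v_i^\ast v_i = p$, $v_i^\ast v_j = 0$ for $i\neq j$, and $\sum_{i\geq 2} v_iv_i^\ast = 1-p$ strictly.

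Putting $v_1 = p$ yields exactly your Step 1, and Step 2 then finishes the proof with no need for $\theta_0$ at all.
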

\begin{proof}
As $A$ is stable, the trivial Hilbert $A$-module $A$ is isomorphic to its infinite repeat $A^{\oplus \infty}$ \cite[Lemma 1.3.2]{JensenThomsenKK}. 
Set $B = (1-p)A(1-p)$. This generates $A$ as an ideal because by stability we may pick isometries $v_n \in \multialg{A}$ with $v_n v_n^* \to 0$ strictly, so that $v^*_n p v_n \to 0$ by the $\cs$-identity, hence for any $a \in A$ the sequence $v_n(1-p)v_n^* a v_n (1-p) v_n^* \in \overline{ABA}$ converges to $a$. Moreover, $B$ is stable by \cite[Corollary 4.3]{HjelmborgRordam}.\footnote{In our setting we do not need $A$ to be separable as $B$ inherits $\sigma$-unitality from $A$.} Thus \cite[Theorem 4.23]{Brown88} produces a Murray--von Neumann equivalence $1-p \sim 1 \in \multialg{A}$ from which we obtain an isomorphism of Hilbert $A$-modules $(1-p)A \cong A$. By \cite[Theorem 1.9]{MingoPhillips84}, the Hilbert module $(pA)^{\oplus \infty}$ is isomorphic to $A^{\oplus \infty}$, so we arrive at an isomorphism $u \colon (1-p)A \to (pA)^{\oplus \infty}$ by chaining together the isomorphisms $(1-p)A \cong A \cong A^{\oplus \infty} \cong (pA)^{\oplus \infty}$. We obtain an isomorphism 
\begin{equation}
\id \oplus u \colon pA \oplus (1-p)A \to pA \oplus (pA)^{\oplus \infty}
\end{equation}
of Hilbert $A$-modules. After identifying $\mathcal K(pA \oplus (1-p)A)$ with $A$ and $\mathcal K(pA \oplus (pA)^{\oplus \infty})$ with $pAp \otimes \mathcal K$, the induced isomorphism $\theta = \Ad (\id \oplus u) \colon A \to pAp \otimes \mathcal K$ satisfies $\theta|_{pAp} = \id \otimes e$ by construction, where $e \in \mathcal K$ is the top-corner minimal projection. 
\end{proof}

\begin{theorem}\label{t:stable classification}
Let $A$ and $B$ be stable separable nuclear $\cs$-algebras such that $A$ satisfies the UCT and contains a full projection, and $B$ is simple and $\mathcal Z$-stable. Then every morphism $\Phi \colon \TKT(A) \to \TKT(B)$ of the total stable invariant which maps traces on $B$ to faithful traces on $A$ is induced by an embedding $\phi \colon A \to B$, and moreover $\phi$ is unique up to approximate unitary equivalence.
\end{theorem}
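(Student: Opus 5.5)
The plan is to reduce to the unital classification Theorem \ref{t:unital classification} using Lemma \ref{l:stabilisation trick} and the stability of $\TKT$ under full hereditary inclusions. Let $p \in A$ be a full projection and set $A_0 := pAp$, a unital separable nuclear UCT $\cs$-algebra. By Lemma \ref{l:stabilisation trick} we may identify $A$ with $A_0 \otimes \mathcal K$ so that $A_0 \hookrightarrow A$ is the top-corner inclusion. This inclusion is full hereditary, so it induces an isomorphism $\TKT(A_0) \isom \TKT(A)$. Composing, $\Phi$ gives a morphism $\Phi_0 \colon \TKT(A_0) \to \TKT(B)$ which still sends traces on $B$ to faithful traces on $A_0$; faithfulness restricts to the corner because $p$ is full.

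To get a unital problem I need a projection $q \in B$ with $[q]_0 = \alpha_0([1_{A_0}]_0)$. Since $\Phi$ maps nonzero positive traces to faithful ones, $\gamma(\rho_A([1]_0)) = \rho_B(\alpha_0[1]_0)$ is strictly positive on $\T_+(B)\setminus\{0\}$. I would then use that $B$ is simple, $\mathcal Z$-stable and stable, so it has strict comparison and cancellation of projections and $\K_0$ classes are realised by projections. Concretely, take $x = \alpha_0[1]_0$; if $x \neq 0$ then $x$ is positive in the ordered group (by strict comparison via the pairing with traces, using that $\K_0(B)^+$ is determined by $\rho_B$ for such $B$, possibly up to treating the traceless purely infinite case separately by Kirchberg--Phillips-type facts), and by stability $x = [q]_0$ for some $q \in B$. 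The degenerate case $x=0$ should be excluded by faithfulness when $\T_+(B)\ne 0$. When $\T_+(B)=0$, $B$ is purely infinite and every class is realised by a nonzero projection. Set $B_0 := qBq$, which is unital, simple, separable, nuclear and $\mathcal Z$-stable, and full hereditary in $B$, so $\TKT(B_0)\cong\TKT(B)$. Then $\Phi_0$ becomes a morphism $\TKTu(A_0)\to\TKTu(B_0)$ preserving the unit class. Theorem \ref{t:unital classification} yields a unital embedding $\phi_0 \colon A_0 \to B_0$ inducing it, unique up to approximate unitary equivalence.

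Next I extend the embedding. Set $\phi := \phi_0 \otimes \id_{\mathcal K}\colon A_0\otimes\mathcal K \to B_0 \otimes \mathcal K \cong B$, where the last isomorphism comes from Lemma \ref{l:stabilisation trick} applied to $q\in B$, which is full since $B$ is simple. This isomorphism restricts to the corner inclusion, so $\TKT(\phi)$ restricted through the corners agrees with $\Phi_0$, and hence $\TKT(\phi)=\Phi$ by naturality. Injectivity is clear.

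For uniqueness, let $\phi,\psi\colon A\to B$ induce the same $\Phi$. Then $\phi(p)$ and $\psi(p)$ have equal $\K_0$ classes, so by cancellation in $B$ (stable rank one or pure infiniteness) there is a unitary in $\multialg{B}$, available since $B$ is stable, conjugating $\psi(p)$ to $\phi(p)=:q$. After conjugating, both restrict to unital maps $A_0\to qBq$ with the same $\TKTu$, so they are approximately unitarily equivalent by Theorem \ref{t:unital classification}. Extending the unitaries of $qBq$ by $1-q$ gives unitaries in $\multialg B$. Approximate unitary equivalence on the full corner $pAp$ then propagates to $A$, since $A$ is generated by $pAp$ together with partial isometries. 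I would handle this by the standard argument: with $A\cong A_0\otimes\mathcal K$, the maps are determined by their values on $A_0$ and on the matrix units $1\otimes e_{1j}$. These I would match via an approximately inner argument in $\multialg{B}$, or alternatively via the $\sim_{\mathrm{aMvN}}$ formalism of \cite{Gabe-o2class}.

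I expect the main obstacle to be realising $\alpha_0[1]_0$ by a projection, that is, the positivity and order argument in $\K_0(B)$, together with this final uniqueness propagation.
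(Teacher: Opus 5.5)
Your existence argument is the paper's: pass to the corner $pAp$ via Lemma \ref{l:stabilisation trick} and stability of $\TKT$, realise $\alpha_0([p]_0)$ by a nonzero projection $q\in B$, apply Theorem \ref{t:unital classification} to $pAp\to qBq$, and conjugate $\phi_p\otimes\id$ by $\theta_A,\theta_B$. The realisation step is exactly what the paper imports from R\o rdam's Corollary 4.10. Positivity of $x=\alpha_0([p]_0)$ comes from $\rho_B(x)>0$ on nonzero traces, not from $x\neq 0$. In this non-unital setting, ``$\K_0(B)^+$ is determined by $\rho_B$'' is the content of that citation rather than a formal consequence of strict comparison; it also subsumes your separate traceless case.

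Uniqueness is where you genuinely differ, and your route is valid. The paper never propagates from the corner. It exhausts $A$ by $A_n=\theta_A^{-1}(pAp\otimes M_n)$, each of which is unital and satisfies the hypotheses of Theorem \ref{t:unital classification}. It takes $v_n\in B$ with $v_nv_n^*=\phi(1_n)$ and $v_n^*v_n=\psi(1_n)$ from Jiang's cancellation theorem, and classifies $\phi_n$ against $\Ad v_n\circ\psi_n$ inside $\phi(1_n)B\phi(1_n)$. It then removes $\Ad v_n$ using stability of $B$, via Gabe's Proposition 3.13.

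You instead conjugate $\psi(p)$ onto $q=\phi(p)$ by an honest unitary of $\multialg{B}$. This is legitimate, since $1-\psi(p)\sim 1\sim 1-\phi(p)$ in $\multialg{B}$ by the argument in the proof of Lemma \ref{l:stabilisation trick}. You then classify once on $pAp$ and propagate.

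The propagation should be written out, and the matrix units can be matched exactly rather than by an ``approximately inner'' argument. Identify $A$ with $pAp\otimes\mathcal K$ via $\theta_A$, and let $\psi_k=\Ad(w_k+1-q)\circ\psi$, where $w_k\in\mathcal U(qBq)$ implement the approximate unitary equivalence on $pAp$. Set $E_n=\sum_{j\le n}1\otimes e_{jj}$, and take a partial isometry $r_k\in\multialg{B}$ from $1-\phi(E_n)$ to $1-\psi_k(E_n)$, again using stability. Then $W_k=\sum_{j\le n}\psi_k(1\otimes e_{j1})\phi(1\otimes e_{1j})+r_k$ is a unitary. It satisfies $W_k\phi(a\otimes e_{ij})W_k^*=\psi_k(1\otimes e_{i1})\phi(a\otimes e_{11})\psi_k(1\otimes e_{1j})$, which lies within $\|\phi(a\otimes e_{11})-\psi_k(a\otimes e_{11})\|$ of $\psi_k(a\otimes e_{ij})$.

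So your route calls the unital classification only once, at the cost of this bookkeeping and a second use of stability of $B$. The paper avoids the bookkeeping by applying the black box at every matrix level, which works precisely because each $M_n(pAp)$ is again unital, nuclear and UCT.
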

\begin{proof}
Fix a full projection $p$ in $A$. 

Let $\Phi = (\underline \alpha, \beta, \gamma) \colon \TKT(A) \to \TKT(B)$ be a morphism of total stable invariants which sends traces on $B$ to faithful traces on $A$. 
Since 
\[
\rho_B(\alpha_0([p]_0))(\tau_B) = \rho_A([p]_0)(\gamma^\ast(\tau_B)) > 0
\]
for all $\tau_B\in \T_+(B)$ by faithfulness of $\gamma^\ast(\tau_B) \in \T_+(A)$, it follows from \cite[Corollary 4.10]{ROrdamZstable} using $\mathcal Z$-stability, nuclearity, stability and simplicity of $B$ that there is a (necessarily full) projection $q \in B$ such that $\alpha_0([p]_0) = [q]_0$. Then $q B q$ is a full hereditary subalgebra, so $\Phi$ descends to a morphism $\Phi_p \colon \TKT(pAp) \to \TKT(qBq)$ by stability of $\TKT$. This is compatible with the unit classes $[p]_0$ and $[q]_0$ by construction, and by Theorem \ref{t:unital classification} there is a unital embedding $\phi_p \colon pAp \to qBq$ with $\Phi_p = \TKT(\phi_p)$. By Lemma \ref{l:stabilisation trick}, we pick isomorphisms $\theta_A \colon A \to p A p  \otimes \mathcal K$ and $\theta_B \colon B \to q B q \otimes \mathcal K$ that restrict to the top-corner inclusions $p A p \to p A p \otimes \mathcal K$ and $qBq \to qBq \otimes \mathcal K$, making the following diagram commute.
\[\begin{tikzcd}
	& {pAp}  & qBq \\
	{A} & {pAp \otimes \mathcal K} & {qBq \otimes \mathcal K} & B
	\arrow["{\phi_p}", from=1-2, to=1-3]
	\arrow[from=1-2, to=2-2]
	\arrow[from=1-3, to=2-3]
	\arrow[from=1-3, to=2-4]
    \arrow[from=1-2, to=2-1]
    \arrow["{\theta_A}"', from=2-1, to=2-2]
	\arrow["{\phi_p \otimes \id}"', from=2-2, to=2-3]
	\arrow["\theta^{-1}_B"', from=2-3, to=2-4]
\end{tikzcd}\]
By stability of $\TKT$ we get that $\TKT(\theta^{-1}_B \circ (\phi_p \otimes \id) \circ \theta_A) = \Phi$, so the embedding $\phi = \theta^{-1}_B \circ (\phi_p \otimes \id) \circ \theta_A$ induces $\Phi$.

We turn to uniqueness. For $n \in \mathbb N$ we set $A_n = \theta_A^{-1}(pAp \otimes M_n)$ so that $A = \overline{\bigcup_{n \geq 1} A_n}$. 
We write $1_n \in A_n$ for the unit in $A_n$.
Consider two embeddings $\phi, \psi \colon A \hookrightarrow B$ which agree on the total stable invariant $\TKT$. For each $n \in \mathbb N$ set $\phi_n,\psi_n \colon A_n \to B$ to be the restrictions of $\phi$ and $\psi$ to $A_n$. By a straightforward argument, it suffices to prove that $\phi_n \sim_{\mathrm{au}} \psi_n$ for each $n$ to obtain an approximate unitary equivalence $\phi \sim_{\mathrm{au}} \psi$. Set $e_n = \phi_n(1_n)$ and $f_n = \psi_n(1_n)$. 

Since $A_n \hookrightarrow A$ is a full hereditary $\cs$-subalgebra, we have $\TKT(\phi_n) = \TKT(\psi_n)$ by stability of $\TKT$. The full projections $e_n,f_n \in B$ have the same $\K_0$-class and $B$ is $\mathcal Z$-stable, so we apply \cite[Theorem 3.5]{Jiang97} to find $v_n \in B$ satisfying $v_n v_n^\ast = e_n$ and $v_n^\ast v_n = f_n$. Then by construction and stability of $\TKT$, the unital $*$-homomorphisms $\phi_n, \Ad v_n \circ \psi_n \colon A_n \to e_n B e_n$ agree on $\TKT$ and therefore on $\TKTu$. By Theorem \ref{t:unital classification} they are approximately unitarily equivalent, and thus, by extending unitaries $u\in \mathcal U(e_n B e_n)$ to $u + 1-e_n \in \mathcal U(\mathcal M(B))$, we obtain approximate unitary equivalences $\phi_n \sim_{\mathrm{au}} \Ad v_n \circ \psi_n \colon A_n \to B$. Stability of $B$ implies that $\Ad v_n \circ \psi_n \sim_{\mathrm{au}} \psi_n \colon A_n \to B$ for each $n$ by \cite[Proposition 3.13]{Gabe-o2class}, and we conclude that $\phi$ and $\psi$ are approximately unitarily equivalent.
\end{proof}

As in \cite{CGSTW}, classification of $*$-homomorphisms implies classification of $\cs$-algebras via Elliott intertwining. The \emph{stable Elliott invariant} $\KT(A)$ of a $\cs$-algebra $A$ is given by the tuple
\[ \KT(A) = (\K_0(A),\K_1(A),\T_{\mathbb R}(A)^\ast, \rho_A \colon \K_0(A) \to \T_{\mathbb R}(A)^\ast). \]
We consider $\K_0(A)$ as an abelian group with no order structure encoded.

A \emph{morphism} $(\alpha_0,\alpha_1,\gamma) \colon \KT(A) \to \KT(B)$ of stable Elliott invariants consists of homomorphisms $\alpha_i \colon \K_i(A) \to \K_i(B)$ for $i = 0,1$ and a continuous positive linear map $\gamma \colon \T_{\mathbb R}(A)^\ast \to \T_{\mathbb R}(B)^\ast$ such that $\rho_B \circ \alpha_0 = \gamma \circ \rho_A$. The following lemma has the exact same proof as the analogous result \cite[Theorem 3.9]{CGSTW} for the unital invariants. We omit the details.

\begin{lemma}
Let $A$ and $B$ be $\cs$-algebras. Then any morphism \[(\alpha_0,\alpha_1,\gamma) \colon \KT(A) \to \KT(B)\] of stable Elliott invariants 
extends to a morphism $(\underline \alpha, \beta, \gamma) \colon \TKT(A) \to \TKT(B)$. 
Moreover, if $(\alpha_0,\alpha_1, \gamma)$ is an isomorphism, then any such extension $(\underline \alpha, \beta, \gamma)$ is an isomorphism.
\end{lemma}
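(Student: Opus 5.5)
The plan is to build the extension in stages, following the scheme of \cite[Theorem 3.9]{CGSTW}, with $\T_{\mathbb R}(-)^\ast$ and $\HKd$ replacing $\Aff\T_1$ and $\HKa$.

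\textbf{Step 1: total K-theory.} First extend $(\alpha_0,\alpha_1)$ to a morphism $\underline\alpha\colon\underline\K(A)\to\underline\K(B)$. For each $n\geq 2$, the six-term sequence \eqref{eq:mod n} gives short exact sequences
\[0\to \K_i(A)/n\K_i(A)\to \K_i(A;\mathbb Z/n\mathbb Z)\to \mathrm{Tor}(\K_{1-i}(A),\mathbb Z/n\mathbb Z)\to 0.\]
Then $\alpha_i^{(n)}$ should be chosen compatibly with the Bockstein maps $\mu,\nu,\kappa$. The natural choice is the one used in \cite{CGSTW} (via \cite{DadarlatLoring}-type arguments): realise $(\alpha_0,\alpha_1)$ as a morphism of abstract K-theory data of suitable models and take the induced maps on coefficients. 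The paper says the proof is identical, so this step is purely algebraic and does not involve traces at all. The same algebra gives the isomorphism statement: if $\alpha_0,\alpha_1$ are isomorphisms, the five lemma makes every $\alpha_i^{(n)}$ an isomorphism.

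\textbf{Step 2: the map $\beta$.} Use the unnatural splittings of \eqref{eq:ThomsenODelta} for $A$ and $B$. Since $\gamma\circ\rho_A=\rho_B\circ\alpha_0$ and $\gamma$ is continuous, $\gamma$ descends to a continuous map
\[\overline\gamma\colon \T_{\mathbb R}(A)^\ast/\overline{\rho_A(\K_0(A))}\to \T_{\mathbb R}(B)^\ast/\overline{\rho_B(\K_0(B))}.\]
A first attempt is $\beta:=\overline\Th_B^\Delta\overline\gamma\oplus s_B\alpha_1$, where $s_B$ is a section of the obvious map. This is compatible with the Thomsen and obvious maps, but it need not intertwine the $\zeta$-maps.

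\textbf{Step 3: correcting for the $\zeta$-maps (main obstacle).} Two maps $\beta$ compatible with the Thomsen and obvious maps differ by a homomorphism
\[\delta\colon \K_1(A)\to \T_{\mathbb R}(B)^\ast/\overline{\rho_B(\K_0(B))}.\]
The plan is to choose $\delta$ so that
\[\beta\circ\zeta_{n,A}=\zeta_{n,B}\circ\alpha_0^{(n)}\quad\text{for all } n.\]
\begin{enumerate}
\item By Proposition \ref{p:zeta maps}, the discrepancy $\beta\zeta_{n,A}-\zeta_{n,B}\alpha_0^{(n)}$ vanishes on the image of $\mu^{(n)}_{0,A}$. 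Hence it factors through $\nu^{(n)}_{0,A}$ and lands in $n$-torsion data supported on $\mathrm{Tor}(\K_1(A),\mathbb Z/n)$.
\item By the compatibility of the $\zeta$-maps with the $\kappa$-maps (second diagram of Proposition \ref{p:zeta maps}), these discrepancies form a coherent system over $n$. The coherent system defines a homomorphism from the torsion subgroup of $\K_1(A)$ into the divisible group $\T_{\mathbb R}(B)^\ast/\overline{\rho_B(\K_0(B))}$.
\item Since that target is divisible, this homomorphism extends to all of $\K_1(A)$. Subtracting the extension from $\beta$ gives the correct $\beta$, exactly as in \cite[Theorem 3.9]{CGSTW}.
\item Continuity of $\beta$ is automatic, by the footnote in Definition \ref{d:total}.
\end{enumerate}

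\textbf{Step 4: isomorphism statement.} Suppose $(\alpha_0,\alpha_1,\gamma)$ is an isomorphism. Then $\gamma$ is a topological isomorphism, so $\overline\gamma$ is too. The five lemma applied to \eqref{eq:ThomsenODelta} then makes $\beta$ a bijection, and its inverse is continuous by openness of $\Thod$. Together with Step 1, this shows the whole morphism $(\underline\alpha,\beta,\gamma)$ is invertible in the category of total stable invariants.
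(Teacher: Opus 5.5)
Your proposal is correct and is essentially the argument the paper defers to, namely the proof of \cite[Theorem 3.9]{CGSTW} with $\T_{\mathbb R}(-)^\ast$ and $\HKd$ in place of $\Aff\T_1$ and $\HKa$: lift $(\alpha_0,\alpha_1)$ to $\underline\alpha$, build $\beta$ from splittings of \eqref{eq:ThomsenODelta}, correct it on the torsion of $\K_1(A)$ using Proposition \ref{p:zeta maps} and divisibility of $\T_{\mathbb R}(B)^\ast/\overline{\rho_B(\K_0(B))}$, and get the isomorphism statement from the five lemma. The one adjustment is that Step 1 is not purely algebraic, even though it involves no traces: the standard lift to total K-theory uses the UCT for bootstrap-class models, where a $\mathrm{KK}$-element inducing a $\K_*$-isomorphism induces a $\underline{\K}$-isomorphism by the five lemma applied to \eqref{eq:mod n}.
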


Arguing almost exactly as in \cite[Section 9.3]{CGSTW}, we obtain the following result. The idea is to use Theorem \ref{t:stable classification} (in place of Theorem \ref{t:unital classification}) together with a two-sided intertwining argument to prove classification by $\TKT$ (part \ref{enum.b} below), and then use the above lemma to prove classification by $\KT$ (part \ref{enum.a} below). 

The only subtlety is that when using two-sided intertwining in the non-unital case, there is a gap in the standard reference \cite[Corollary 2.3.4]{RordamClassBook}, see \cite[Remark 6.2]{Gabe-Oinfty}.\footnote{One can modify the proof slightly (\cite[Proposition 6.3]{Gabe-Oinfty}) and still obtain an isomorphism, but it is a priori not possible to arrange for the constructed isomorphism to be approximately unitarily equivalent to the given embedding $A \to B$, they will only be approximately Murray--von Neumann equivalent.} However, combining \cite[Proposition 6.3]{Gabe-Oinfty} and \cite[Proposition 3.13]{Gabe-o2class} yields:

\begin{proposition}[Two-sided intertwining]
Let A and B be stable separable $\cs$-algebras, and let $\phi \colon A \to B$ and $\psi \colon A \to B$ be $*$-homomorphisms. Suppose that $\psi \circ \phi$ is approximately unitarily equivalent to $\id_A$ and $\phi \circ \psi$ is approximately unitarily equivalent to $\id_B$. Then $\phi$ is approximately unitarily equivalent to an isomorphism between A and B.
\end{proposition}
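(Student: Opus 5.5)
The plan is to combine \cite[Proposition 6.3]{Gabe-Oinfty} with \cite[Proposition 3.13]{Gabe-o2class}, as the text preceding the statement indicates.

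First, stability and the hypotheses $\psi\circ\phi \sim_{\mathrm{au}} \id_A$ and $\phi\circ\psi\sim_{\mathrm{au}}\id_B$ put us in the setting of the non-unital two-sided Elliott intertwining of \cite[Proposition 6.3]{Gabe-Oinfty}. Unitary equivalence implies Murray--von Neumann equivalence, so the weaker approximate Murray--von Neumann hypotheses are satisfied. That proposition then produces an isomorphism $\theta\colon A\to B$ with $\theta \sim_{\mathrm{aMvN}} \phi$.

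Concretely, I would build sequences of finite sets and tolerances and alternately twist $\phi$ and $\psi$ by multiplier unitaries (or contractions). This yields an approximately commuting diagram whose limit gives mutually inverse $*$-homomorphisms. The limiting map $\theta$ is a point-norm limit of maps of the form $\Ad v_k\circ\phi$, with $v_k$ contractions in $\multialg{B}$. In the non-unital setting these contractions need not be unitaries, which is exactly the gap noted in \cite[Remark 6.2]{Gabe-Oinfty}.

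Second, I upgrade $\theta\sim_{\mathrm{aMvN}}\phi$ to $\theta\sim_{\mathrm{au}}\phi$ using stability of $B$, via \cite[Proposition 3.13]{Gabe-o2class}. For maps into a stable $\cs$-algebra, approximate Murray--von Neumann equivalence coincides with approximate unitary equivalence with multiplier unitaries. The mechanism is a unitary dilation trick: stability provides isometries $s_1,s_2\in\multialg{B}$ with $s_1s_1^\ast+s_2s_2^\ast=1$. Using these, one absorbs the defect of a partial isometry (or contraction) $v$ into an honest unitary, for instance $u = v + \ldots$ built in $M_2(\multialg{B})\cong\multialg{B}$. This is the same technique already used in the proof of Theorem \ref{t:stable classification} to pass from $\Ad v_n\circ\psi_n$ to $\psi_n$.

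The main obstacle is the first step: checking that the hypotheses of \cite[Proposition 6.3]{Gabe-Oinfty} are met exactly, namely separability, stability, and the appropriate form of equivalence. Once it is established that the output isomorphism is approximately Murray--von Neumann equivalent to $\phi$, the second step is a direct citation. The required statement, that $\phi$ is approximately unitarily equivalent to an isomorphism $A\to B$, then follows.
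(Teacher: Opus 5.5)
Your proposal is correct and is exactly the paper's argument. Proposition 6.3 of Gabe's $\mathcal O_\infty$-classification paper (non-unital two-sided intertwining) gives an isomorphism $\theta\colon A\to B$ with $\theta\sim_{\mathrm{aMvN}}\phi$, and Proposition 3.13 of Gabe's $\mathcal O_2$-classification paper, using stability of $B$, upgrades this to $\theta\sim_{\mathrm{au}}\phi$; you also rightly read $\psi$ as a map $B\to A$, which is what the statement intends.
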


With that in place the following is proved exactly as in \cite[Section 9.3]{CGSTW}. We omit the details.

\begin{theorem}
Let $A$ and $B$ be stable simple separable nuclear $\mathcal Z$-stable $\cs$-algebras which satisfy the UCT and contain non-zero projections. Then 
\begin{enumerate}[label=(\alph*)]
\item\label{enum.a} every isomorphism $\KT(A) \xrightarrow \cong \KT(B)$ lifts to a $\ast$-isomorphism $A \xrightarrow \cong B$;
\item\label{enum.b} every isomorphism $\TKT(A) \xrightarrow \cong \TKT(B)$ lifts to a $\ast$-isomorphism $A \xrightarrow \cong B$, which is unique up to approximate unitary equivalence. 
\end{enumerate}
\end{theorem}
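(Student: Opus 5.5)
We prove \ref{enum.b} first and then deduce \ref{enum.a}. Both follow the two-sided Elliott intertwining scheme of \cite[Section 9.3]{CGSTW}, with Theorem \ref{t:stable classification} replacing Theorem \ref{t:unital classification}.

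For \ref{enum.b}, let $\Phi \colon \TKT(A) \to \TKT(B)$ be an isomorphism.

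\emph{Step 1: $A$ and $B$ satisfy the hypotheses of Theorem \ref{t:stable classification}.} Since $A$ is simple, every non-zero projection in $A$ is full, so $A$ contains a full projection. The same holds for $B$.

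\emph{Step 2: $\Phi$ and $\Phi^{-1}$ send traces to faithful traces.} Since $A$ is simple, every non-zero positive trace on $\Ped(A)$ is faithful. So it suffices that the dual map $\gamma^\ast \colon \T_{\mathbb R}(B) \to \T_{\mathbb R}(A)$ sends non-zero positive traces to non-zero positive traces. This holds because $\gamma$ is a positive linear isomorphism with positive inverse: $\gamma^\ast$ is then an order isomorphism of the trace cones, hence injective on $\T_+(B)$. The same argument applies to $\Phi^{-1}$.

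\emph{Step 3: lift to embeddings.} Theorem \ref{t:stable classification} gives embeddings $\phi \colon A \to B$ and $\psi \colon B \to A$ with $\TKT(\phi) = \Phi$ and $\TKT(\psi) = \Phi^{-1}$.

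\emph{Step 4: the compositions are approximately inner.} We have $\TKT(\psi\circ\phi) = \id = \TKT(\id_A)$, and both $\psi\circ\phi$ and $\id_A$ are full embeddings. The uniqueness part of Theorem \ref{t:stable classification}, applied with $B = A$ (which is simple and $\mathcal Z$-stable), gives $\psi\circ\phi \sim_{\mathrm{au}} \id_A$. Symmetrically, $\phi\circ\psi \sim_{\mathrm{au}} \id_B$.

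\emph{Step 5: two-sided intertwining.} The Two-sided intertwining Proposition now gives an isomorphism $\theta \colon A \to B$ with $\theta \sim_{\mathrm{au}} \phi$. By Corollary \ref{c:approx}, and the corresponding approximate unitary invariance of K-theory, K-theory with coefficients and $\T_{\mathbb R}(-)^\ast$, we get $\TKT(\theta) = \TKT(\phi) = \Phi$.

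\emph{Step 6: uniqueness.} Any two isomorphisms lifting $\Phi$ are approximately unitarily equivalent by the uniqueness part of Theorem \ref{t:stable classification}.

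For \ref{enum.a}, let $(\alpha_0,\alpha_1,\gamma) \colon \KT(A) \to \KT(B)$ be an isomorphism. The preceding lemma extends it to a morphism $(\underline\alpha,\beta,\gamma)$ of total stable invariants, and that lemma also says the extension is an isomorphism. Part \ref{enum.b} then lifts it to a $\ast$-isomorphism, which in particular induces $(\alpha_0,\alpha_1,\gamma)$ on $\KT$.

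The main point needing care is Step 2. Faithfulness in Theorem \ref{t:stable classification} refers to positive traces on $\Ped$, and one must check that a positive isomorphism $\gamma$ with positive inverse really induces a bijection $\T_+(B)\to\T_+(A)$. I would argue this via the duality of \cite{GMtraces}, identifying $\T_{\mathbb R}$ with the dual of $\T_{\mathbb R}(-)^\ast$, under which positive functionals correspond to positive traces. Everything else is bookkeeping already carried out in \cite[Section 9.3]{CGSTW}.
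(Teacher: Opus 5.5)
Your proposal is correct and follows the argument the paper intends; the paper omits the details and defers to \cite[Section 9.3]{CGSTW}. You lift $\Phi$ and $\Phi^{-1}$ to embeddings via Theorem~\ref{t:stable classification}, use its uniqueness clause to show the composites are approximately inner, apply the two-sided intertwining proposition, and deduce part (a) from part (b) using the extension lemma, with your faithfulness check (positivity and bijectivity of $\gamma$ together with simplicity) being exactly what is needed to invoke the theorem in both directions.
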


In the same way that Theorem \ref{t:unital classification} can be used to compute groups of automorphisms of a unital $\cs$-algebra modulo approximate unitary equivalence, Theorem \ref{t:stable classification} can be used to compute groups of stable automorphisms of a unital $\cs$-algebra modulo approximate unitary equivalence. Let $A$ be a unital $\cs$-algebra, and let $\alpha \in \Aut(A)$ be an automorphism. This induces a stable isomorphism $\alpha \otimes \id \in \Aut(A \otimes \mathcal K)$. If $\alpha$ is approximately inner, then so is $\alpha \otimes \id$, thus we obtain a homomorphism
\begin{equation}\label{eq:outer automorphisms}
[\alpha] \mapsto [\alpha \otimes \id] \colon \Aut(A) / \overline{\Inn(A)} \to \Aut(A \otimes \mathcal K)/ \overline{\Inn(A \otimes \mathcal K)}. 
\end{equation}
The relationship between the total stable invariant and the total unital invariant induces an embedding $\Aut(\TKTu(A)) \hookrightarrow \Aut(\TKT(A))$ for each unital $\cs$-algebra $A$. Stability of $\TKT$ induces a canonical isomorphism $\Aut(\TKT(A)) \isom \Aut(\TKT(A \otimes \mathcal K))$ for each $\cs$-algebra $A$.

If $A$ is unital, separable, simple, nuclear, $\mathcal Z$-stable and satisfies the UCT, Theorem \ref{t:unital classification} implies that the group homomorphism
\[ [\alpha] \mapsto \TKTu(\alpha) \colon \Aut(A) / \overline{\Inn(A)} \to \Aut(\TKTu(A)) \]
is an isomorphism (see also \cite[Corollary 9.10]{CGSTW}). Similarly, Theorem \ref{t:stable classification} implies that
\[ [\alpha] \mapsto \TKT(\alpha) \colon \Aut(A \otimes \mathcal K)/ \overline{\Inn(A \otimes \mathcal K)} \to \Aut(\TKT(A \otimes \mathcal K)) \]
is an isomorphism, noting that moreover $\Aut(\TKT(A \otimes \mathcal K)) \cong \Aut(\TKT(A))$.

\begin{example}\label{e:Aut1}
Consider the unital $\cs$-algebra $A = \left( \mathcal Z^{\otimes \mathbb Z} \right) \rtimes \mathbb Z$ examined in \cite[Example 9.11]{CGSTW}. This has invariants
\[\begin{tikzcd}[column sep = 1.85em, row sep = 1em]
	0 & {\ker \rho_A} & {\K_0(A)} & {\T_{\mathbb R}(A)^\ast} & {\Kd(A)} & {\K_1(A)} & 0 \\
	0 & 0 & {\mathbb Z} & {\mathbb R} & {\mathbb T \oplus \mathbb Z} & {\mathbb Z} & 0
	\arrow[from=1-1, to=1-2]
	\arrow[from=1-2, to=1-3]
	\arrow["{\rho_A}", from=1-3, to=1-4]
	\arrow["{\Th_A^\Delta}", from=1-4, to=1-5]
	\arrow[from=1-5, to=1-6]
	\arrow[from=1-6, to=1-7]
	\arrow[from=2-1, to=2-2]
	\arrow[from=2-2, to=2-3]
	\arrow[from=2-3, to=2-4]
	\arrow[from=2-4, to=2-5]
	\arrow[from=2-5, to=2-6]
	\arrow[from=2-6, to=2-7]
\end{tikzcd}\]
with $[1_A]_0 = 1$ in $\K_0(A) = \mathbb Z$ and positive part $[0,\infty)$ in $\T_{\mathbb R}(A)^\ast = \mathbb R$. Note that $\Kd(A)$ is already Hausdorff. Let $\alpha \in \Aut(A \otimes \mathcal K)$ be a stable automorphism of $A$. Then $\alpha$ must preserve the order on $\K_0(A) = \mathbb Z$ induced through $\rho_A$ by the ordered real vector space $\T_{\mathbb R}(A)^\ast = \mathbb R$. It must therefore fix $\K_0(A)$, and in particular the class $[1_A]_0 = 1 \in \mathbb Z$ of the unit. The embedding $\Aut(\TKTu(A)) \hookrightarrow \Aut(\TKT(A))$ is therefore surjective, and so $\Aut(A \otimes \mathcal K)/ \overline{\Inn(A \otimes \mathcal K)} \cong \Aut(\TKTu(A))$. We refer to \cite[Example 9.11]{CGSTW} for a computation of $\Aut(\TKTu(A))$ as $\mathbb T \rtimes \mathbb Z/2\mathbb Z$. 
\end{example}

Let us consider an example which has stable automorphisms that scale the traces, and therefore do not just come from automorphisms.

\begin{example}\label{e:Aut2}
Set $A$ to be the unital separable simple nuclear $\mathcal Z$-stable $\cs$-algebra satisfying the UCT with invariants 
\[\begin{tikzcd}[column sep = 1.4em, row sep = 1em]
	0 & {\ker \rho_A} & {\K_0(A)} & {\T_{\mathbb R}(A)^\ast} & {\HKd(A)} & {\K_1(A)} & 0 \\
	0 & 0 & {\mathbb Z[\tfrac{1}{2}]} & {\mathbb R^2} & {\mathbb R \oplus \mathbb Z} & {\mathbb Z} & 0,
	\arrow[from=1-1, to=1-2]
	\arrow[from=1-2, to=1-3]
	\arrow["{\rho_A}", from=1-3, to=1-4]
	\arrow["{\overline \Th_A^\Delta}", from=1-4, to=1-5]
	\arrow[from=1-5, to=1-6]
	\arrow[from=1-6, to=1-7]
	\arrow[from=2-1, to=2-2]
	\arrow[from=2-2, to=2-3]
	\arrow["{\mathrm{diag}}", from=2-3, to=2-4]
	\arrow[from=2-4, to=2-5]
	\arrow[from=2-5, to=2-6]
	\arrow[from=2-6, to=2-7]
\end{tikzcd}\]
where $[1_A]_0 = 1 \in \mathbb Z[\tfrac{1}{2}] = \K_0(A)$ and $\T_{\mathbb R}(A)^\ast = \mathbb R^2$ has positive part $[0,\infty)^2$ (this algebra exists by a range-of-invariant result \cite[Remark 2.5]{CGSTW}, cf. \cite{RordamRange, ElliottRange}). Here we are identifying the quotient $\mathbb R^2/\mathrm{diag}(\mathbb R)$ appearing as a direct summand of $\HKd(A)$ with $\mathbb R$ via the map $\mathbb R^2 \to \mathbb R \colon (t,s) \mapsto t-s$.

Let $\alpha \in \Aut(A \otimes \mathcal K)$ be a stable automorphism of $A$. We first argue that the action of $\alpha$ on $\TKT(A)$ is determined by its action on $\HKd(A)$. The order-preserving isomorphism of $\K_0(A) = \mathbb Z[\tfrac{1}{2}]$ must be $\K_0(\alpha) = 2^k$ for some $k \in \mathbb Z$. The induced automorphism $\alpha_\#$ of the ordered real vector space $\T_{\mathbb R}(A)^\ast = \mathbb R^2$ must be $\mathrm{flip}^{\epsilon} \circ 2^k$ for some $\epsilon \in \mathbb Z/2\mathbb Z$, i.e. either $(s,t) \mapsto (2^k s,2^k t)$ or $(s,t) \mapsto (2^k t,2^k s)$. The action on $\K_1(A) = \mathbb Z$ must be $(-1)^\eta$ for some $\eta \in \mathbb Z/2\mathbb Z$. Since the K-theory is torsion-free, the boundary maps of \eqref{eq:mod n} must vanish, which implies that the natural maps $\K_i(A) \to \K_i(A;\mathbb Z/n\mathbb Z)$ induce natural isomorphisms $\K_i(A)/n\K_i(A) \isom \K_i(A;\mathbb Z/n\mathbb Z)$, and so the action on $\K_i(A;\mathbb Z/n\mathbb Z)$ is determined by the action on $\K_i(A)$. The action on $\HKd(A) = \mathbb R \oplus \mathbb Z$ is of the form
\begin{equation}\label{example K1DeltaBar automorphism}
\HKd(\alpha) = \begin{bmatrix}  (-1)^\epsilon 2^k & \theta \\ 0 & (-1)^\eta \end{bmatrix} \in \Aut(\mathbb R \oplus \mathbb Z)
\end{equation}
for some $\theta \in \mathbb R$. Note that $\HKd(\alpha)$ determines both $\epsilon, \eta \in \mathbb Z/2\mathbb Z$ and $k \in \mathbb Z$, and thus the entirety of $\TKT(\alpha)$. Note also that $\HKd(\alpha)$ is automatically compatible with $\overline \zeta_n \colon \K_0(A;\mathbb Z/n\mathbb Z) \to \HKd(A)$ by surjectivity of $\K_0(A) \to \K_0(A;\mathbb Z/n\mathbb Z)$ and Proposition \ref{p:zeta maps}.

For any $k \in \mathbb Z$, $\epsilon,\eta \in \mathbb Z/2\mathbb Z$ and $\theta \in \mathbb R$, the automorphisms 
\begin{itemize}
\item $2^k$ on $\K_0(A) = \mathbb Z[\tfrac{1}{2}]$ and on $\K_0(A;\mathbb Z/n\mathbb Z)$, 
\item $\mathrm{flip}^{\epsilon} \circ 2^k$ on $\T_{\mathbb R}(A)^\ast = \mathbb R^2$, 
\item $(-1)^\eta$ on $\K_1(A) = \mathbb Z$ and $\K_1(A;\mathbb Z/n\mathbb Z)$, 
\item and $\begin{bmatrix}  (-1)^\epsilon 2^k & \theta \\ 0 & (-1)^\eta \end{bmatrix}$ on $\HKd(A) = \mathbb R \oplus \mathbb Z$
\end{itemize}
are compatible with all the structure maps of $\TKT(A)$ and thus form an automorphism of $\TKT(A)$. 
Thus $\Aut(A \otimes \mathcal K)/\overline{\Inn(A \otimes \mathcal K)}$ is in bijection with $\mathbb Z \times \mathbb R \times (\mathbb Z/2\mathbb Z)^2$. Since 
\begin{eqnarray}
& \begin{bmatrix}  (-1)^{\epsilon_1} 2^{k_1} & \theta_1 \\ 0 & (-1)^{\eta_1} \end{bmatrix} & \begin{bmatrix}  (-1)^{\epsilon_2} 2^{k_2} & \theta_2 \\ 0 & (-1)^{\eta_2} \end{bmatrix} \nonumber\\ 
&& = \begin{bmatrix}  (-1)^{\epsilon_1 +\epsilon_2} 2^{k_1 + k_2} & (-1)^{\epsilon_1} 2^{k_1} \theta_1 + \theta_2 (-1)^{\eta_2} \\ 0 & (-1)^{\eta_1  + \eta_2} \end{bmatrix}, \nonumber
\end{eqnarray}
the group structure is given by  
\[ \Aut(A \otimes \mathcal K)/\overline{\Inn(A \otimes \mathcal K)} \cong (\mathbb Z/2\mathbb Z \times \mathbb Z) \ltimes \mathbb R \rtimes \mathbb Z/2\mathbb Z,\] 
where $\mathbb Z/2\mathbb Z \times \mathbb Z$ acts on $\mathbb R$ via the multiplicative subgroup $\{ (-1)^\epsilon 2^k \mid \epsilon \in \mathbb Z/2\mathbb Z, k \in \mathbb Z \}$ of $\mathbb R \setminus \{0\}$ and $\mathbb Z/2\mathbb Z$ by the multiplicative subgroup $\{1,-1\} \subseteq \mathbb R \setminus \{0\}$.
\end{example}

\section{Obstructions to asymptotic unitary equivalence}

If two automorphisms $\alpha, \beta \in \Aut(A)$ are asymptotically unitarily equivalent, then $\Kd(\alpha) = \Kd(\beta)$ by Proposition \ref{p:asu}. This can be used to find obstructions to asymptotic unitary equivalence of automorphisms in natural examples.

\begin{example}[The Bunce--Deddens algebra]\label{e:Bunce Deddens}
Consider the odometer act\-ion of $\mathbb Z$ on the Cantor set $X = \{0,1\}^{\mathbb N}$ implemented by addition with carry-over (see \cite{BunceDeddens} and \cite[VIII.4]{DavidsonBook}). The crossed product $A = C(X) \rtimes \mathbb Z$ is the Bunce--Deddens algebra and has the following invariants (e.g. \cite[Example 3.2.11]{RordamClassBook}): 
\[\begin{tikzcd}[row sep = 1em]
	{\K_0(A)} & {\T_{\mathbb R}(A)^\ast} & {\Kd(A)} & {\K_1(A)} \\
	{\mathbb Z[\frac{1}{2}]} & {\mathbb R} & {\mathbb R/\mathbb Z[\frac{1}{2}] \oplus \mathbb Z} & {\mathbb Z}
	\arrow["\rho_A", from=1-1, to=1-2]
	\arrow[hook, from=2-1, to=2-2]
	\arrow["{\Th_A^\Delta}", from=1-2, to=1-3]
	\arrow[from=1-3, to=1-4]
	\arrow[from=2-2, to=2-3]
	\arrow[from=2-3, to=2-4]
\end{tikzcd}\]
The Bernoulli measure on $X$ is the unique invariant probability measure, and induces the unique tracial state on $A$. The class of the unitary $u \in A$ representing the generator $1 \in \mathbb Z$ generates $\K_1(A)$. We identify $\Kd(A)$ with $\mathbb R/\mathbb Z[\frac{1}{2}] \oplus \mathbb Z$ via the splitting $n[u]_1 \mapsto n[u]_\Delta \colon \K_1(A) \to \Kd(A)$, so that $[e^{2 \pi i \theta} u^n]_\Delta$ is identified with $([\theta],n)$ for $\theta \in \mathbb R$ and $n \in \mathbb Z$.

For each $z \in \mathbb T$ there is a `gauge' automorphism $\gamma_z \colon A \to A$ which fixes $C(X)$ and satisfies $\gamma_z(u) = z u$.  The gauge automorphism $\gamma_z$ is trivial on the K-theory groups and on $\T_{\mathbb R}(A)^\ast$, but on $\Kd(A)$ we get
\begin{equation}\label{eq:K1DBD}
\Kd(\gamma_z) = \begin{bmatrix} \id & \phi_z \\ 0 & \id \end{bmatrix} \in \Aut(\mathbb R/\mathbb Z[\tfrac{1}{2}] \oplus \mathbb Z), 
\end{equation}
where $\phi_z \colon \mathbb Z \to \mathbb R / \mathbb Z [\frac{1}{2}]$ is the homomorphism given by $\phi_z(1) = \left[\tfrac{\log(z)}{2 \pi i}\right]$. 

We deduce that if $z, w \in \mathbb T$ do not differ by a dyadic rational viewed as elements of $\mathbb R / \mathbb Z$, then $\gamma_z$ is not asymptotically unitarily equivalent to $\gamma_w$. In fact, we deduce further that the stabilisations $\gamma_z \otimes \id, \gamma_w \otimes \id \in \Aut(A \otimes \mathcal K)$ are not asymptotically unitarily equivalent. Note that all the $\gamma_z$ are approximately unitarily equivalent (e.g.~by Theorem \ref{t:unital classification}).
\end{example}

\begin{example}[The odometer with a point removed]\label{e:odometer2}
Consider the odometer action of $\mathbb Z$ on $X = \{0,1\}^{\mathbb N}$ as above, but remove the point $x_0 = 000\cdots$ from $X$ to form a locally compact space $Y = X \setminus \{x_0\}$. The $\cs$-subalgebra $B = \overline{C_0(Y) A C_0(Y)}$ is a full hereditary subalgebra of $A$. Each gauge automorphism $\gamma_z \in \Aut(A)$ restricts to an automorphism $\gamma'_z \in \Aut(B)$. Since the full hereditary inclusion $B \hookrightarrow A$ induces an isomorphism $\Kd(B) \isom \Kd(A)$ by Proposition \ref{c:K1Dfullher}, we deduce further that $\gamma'_z \not \sim_{\mathrm{asu}} \gamma'_w$ when $z,w \in \mathbb T$ are not related by a dyadic rational. 
\end{example}

\begin{example}\label{e:irr}
Consider the irrational rotation algebra $A_\theta$ for $\theta \in \mathbb R \setminus \mathbb Q$, which is the universal $\cs$-algebra generated by unitaries $U,V$ satisfying $UV = e^{2 \pi i \theta} VU$ \cite{RieffelIRA}. This admits an action $\alpha$ of the torus $\mathbb T^2$ given at $(z,w) \in \mathbb T^2$ by 
\begin{eqnarray}
\alpha_{z,w}(U) = zU, & \alpha_{z,w}(V) = w V.
\end{eqnarray}
The irrational rotation algebra $A_\theta$ has invariants \cite{PV80}
\[\begin{tikzcd}[row sep = 1em, column sep = 3em]
	{\K_0(A_\theta)} & {\T_{\mathbb R}(A_\theta)^\ast} & {\Kd(A_\theta)} & {\K_1(A_\theta)} \\
	{\mathbb Z + \theta \mathbb Z} & {\mathbb R} & {\mathbb R/(\mathbb Z + \theta \mathbb Z) \oplus \mathbb Z^2} & {\mathbb Z^2},
	\arrow["\rho_{A_\theta}", from=1-1, to=1-2]
	\arrow[hook, from=2-1, to=2-2]
	\arrow["{\Th_{A_\theta}^\Delta}", from=1-2, to=1-3]
	\arrow[from=1-3, to=1-4]
	\arrow[from=2-2, to=2-3]
	\arrow[from=2-3, to=2-4]
\end{tikzcd}\]
with $\K_1(A_\theta) = \mathbb Z^2$ generated by $[U]_1$ and $[V]_1$. 
A similar analysis to the one in Example \ref{e:Bunce Deddens} reveals that $\alpha_{z,w}$ fixes $\K_0$, $\K_1$ and $\T_{\mathbb R}$, but on $\Kd$ gives
\begin{equation}\label{eq:K1Dirrational}
\Kd(\alpha_{z,w}) = \begin{bmatrix} 1 & \left[\tfrac{\log(z)}{2 \pi i}\right] & \left[\tfrac{\log(w)}{2 \pi i}\right] \\ 0 & 1 & 0 \\ 0 & 0 & 1 \end{bmatrix} \in \Aut(\mathbb R/(\mathbb Z + \theta \mathbb Z) \oplus \mathbb Z \oplus \mathbb Z).
\end{equation}
The values $\left[\tfrac{\log(z)}{2 \pi i}\right], \left[\tfrac{\log(w)}{2 \pi i}\right] \in \mathbb R/(\mathbb Z + \theta \mathbb Z)$ therefore distinguish the asymptotic unitary equivalence classes of the automorphisms $\{\alpha_{z,w} \mid z,w \in \mathbb T\}$ (and their stabilisations), which are all approximately unitarily equivalent (e.g.~by Theorem \ref{t:unital classification}).
\end{example}

\subsection{An Ext-obstruction}

We give another obstruction for asymptotic unitary equivalence related to $\Kd$ which to our knowledge first appeared in special cases in \cite{KishimotoKumjian01}, modifications of which have been applied to characterise asymptotic unitary equivalence in for instance \cite{LinNiu-Adv, Lin-Inv, GLN1}.

We note that this is a \emph{secondary} obstruction rather than a primary obstruction, that is, one can only use it to compare two $\ast$-homomorphisms, not as an invariant of a single $\ast$-homomorphism. Of course, if applied to automorphisms $\alpha$ of a $\cs$-algebra $A$, one can consider it as a primary invariant by comparing $\alpha$ and $\mathrm{id}_A$, which is how it appears in the work of Kishimoto and Kumjian \cite{KishimotoKumjian01}.

Let $\phi, \psi \colon A \to B$ be $\ast$-homomorphisms, and let 
\[
M_{\phi,\psi} := \{ (a,\mathbf b) \in A \oplus C([0,1], B) \mid \mathbf b(0) = \phi(a) , \, \mathbf b(1) = \psi(a)\}
\]
be the mapping torus of $\phi$ and $\psi$. This fits into a short exact sequence $0 \to SB \to M_{\phi, \psi} \to A \to 0$ in the obvious way, and standard techniques identify the index and exponential maps in the resulting six term exact sequence as follows (this is a variant of \cite[Exercise 12.5]{Rordam-Kbook}).
\begin{equation}\label{eq:mappingtorus}
\begin{tikzcd}
	{\K_0(B)} & {\K_1(M_{\phi,\psi})} & {\K_1(A)} \\
	{\K_1(A)} & {\K_0(M_{\phi,\psi})} & {\K_1(B)}
	\arrow[from=1-1, to=1-2]
	\arrow[from=1-2, to=1-3]
	\arrow["{\K_1(\phi) - \K_1(\psi)}", from=1-3, to=2-3]
	\arrow["{\K_0(\phi) - \K_0(\psi)}", from=2-1, to=1-1]
	\arrow[from=2-2, to=2-1]
	\arrow[from=2-3, to=2-2]
\end{tikzcd}
\end{equation}

\begin{lemma}\label{l:K one torus}
For $*$-homomorphisms $\phi,\psi \colon A \to B$ of $\cs$-algebras, consider the set 
\[
M_{\phi,\psi}^{\Ped} := \{ (a,\mathbf b) \in M_{\phi, \psi} \mid a\in \Ped(A), \, \mathbf b(s) \in \Ped(B) \textrm{ for all }s\in [0,1]\}. 
\]
Then $M_{\phi,\psi}^\Ped$ is a dense ideal in $M_{\phi,\psi}$ and hence $\Ped(M_{\phi, \psi}) \subseteq M_{\phi, \psi}^\Ped$ and
\[ \K_1(M_{\phi,\psi}) = \{ [(u,\mathbf v)]_1 \mid (u,\mathbf v) \in \mathcal U_n(M_{\phi,\psi}^\Ped), n \in \mathbb N \}. \]
Moreover, $(u_0,\mathbf v_0),(u_1,\mathbf v_1) \in \mathcal U_n(M_{\phi,\psi}^\Ped)$\footnote{Here we write $\mathcal U_n(M_{\phi,\psi}^{\Ped})$ for the unitaries $(u,\mathbf v) \in \mathcal U_n(M_{\phi,\psi})$ with $(1_n,\mathbf 1_n) - (u,\mathbf v) \in M_n(M_{\phi,\psi}^{\Ped})$.} define the same class in $\K_1$ if and only if for some $k \in \mathbb N$, the element $(u_0^\ast u_1, \mathbf v_0^\ast \mathbf v_1) \oplus (1_k, \mathbf 1_k)$ is a product of exponential unitaries of self-adjoints from $M_{n+k}(M_{\phi, \psi}^\Ped)$.
\end{lemma}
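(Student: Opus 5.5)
First, I would show that $M_{\phi,\psi}^{\Ped}$ is a two-sided ideal. It is clearly a linear subspace. Given $(a,\mathbf b) \in M_{\phi,\psi}^{\Ped}$ and $(a',\mathbf b') \in M_{\phi,\psi}$, the product $(aa',\mathbf b\mathbf b')$ has $aa' \in \Ped(A)$ and $\mathbf b(s)\mathbf b'(s) \in \Ped(B)$ for each $s$, because Pedersen ideals are ideals; the same holds on the other side.

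Next comes density. Given $(a,\mathbf b) \in M_{\phi,\psi}$, I would approximate it by $(f_\epsilon(e)\, a\, f_\epsilon(e), \ldots)$, using functional calculus on an approximate unit. A cleaner route: for $\epsilon>0$ let $g_\epsilon(t) = \max(t-\epsilon,0)$. Then $(g_\epsilon(x), g_\epsilon\circ\mathbf y)$ for positive $(x,\mathbf y) \in M_{\phi,\psi}$ lies in $M_{\phi,\psi}$, since $g_\epsilon$ commutes with $\phi,\psi$ and evaluation. Moreover, $g_\epsilon(x) \in \Ped(A)$, because $g_\epsilon$ vanishes near $0$. Each $g_\epsilon(\mathbf y(s)) \in \Ped(B)$ for the same reason. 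As $\|g_\epsilon(t)-t\|\le\epsilon$, these elements converge to $(x,\mathbf y)$. Since positive elements span $M_{\phi,\psi}$, density follows. Minimality of the Pedersen ideal then gives $\Ped(M_{\phi,\psi}) \subseteq M_{\phi,\psi}^{\Ped}$.

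For the description of $\K_1$, I would mimic \eqref{eq:K one}, that is, the arguments of Lemma \ref{l:unitary} \eqref{l:unitary-1.5} and \eqref{l:unitary-3}, with $\Ped(A)$ replaced by the dense ideal $I := M_{\phi,\psi}^{\Ped}$. The key point is that those proofs used only that $M_n(I)$ is a dense ideal closed under the holomorphic/continuous functional calculus of functions vanishing at $0$ applied to elements of $M_n(I)$. For $I$ this holds: if $z \in M_n(I)$, then $\cs(z)$ is contained in the set of elements all of whose coordinates lie in Pedersen ideals. This is because $\cs(z) \subseteq M_n(M_{\phi,\psi})$ and each coordinate of an element of $\cs(z)$ lies in $\cs$ of the corresponding coordinate of $z$, which is a $\cs$-subalgebra of $M_n(\Ped(A))$, respectively of $M_n(\Ped(B))$ for each $s$. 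The latter subalgebras sit inside the Pedersen ideal by \cite[Proposition 5.6.2]{Pedersen-book-automorphism}.

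With this, the polar-decomposition approximation of Lemma \ref{l:unitary} \eqref{l:unitary-1.5} shows that $\mathcal U_n(I)$ is dense in $\mathcal U_n(M_{\phi,\psi})$. Since nearby unitaries are homotopic, every $\K_1$-class has a representative in $\mathcal U_n(I)$.

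For the equivalence statement, the `if' direction is clear, since products of exponentials are homotopic to $1$. For `only if', suppose the classes agree. Then $w := (u_0^\ast u_1,\mathbf v_0^\ast\mathbf v_1)\oplus(1_k,\mathbf 1_k)$ is connected to $1$ by a norm-continuous path in $\mathcal U_{n+k}(M_{\phi,\psi})$. I would approximate this path by one in $\mathcal U_{n+k}(I)$ via density, and then reconnect the endpoints using Lemma \ref{l:unitary} \eqref{l:unitary-3} for $I$. Choosing a fine partition $t_0<\dots<t_m$ with consecutive unitaries within distance $<2$, each logarithm $\tfrac{1}{2\pi i}\log(\mathbf w(t_{j-1})^\ast \mathbf w(t_j))$ lies in $M_{n+k}(I)_{\mathrm{sa}}$ by the functional-calculus closure above. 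This writes $w$ as a product of exponentials.

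The main obstacle is checking that the endpoint corrections stay inside $I$ rather than merely inside $M_{\phi,\psi}$. This is handled by taking logarithms of products of elements of $\mathcal U(I)$ relative to the $\cs$-subalgebra they generate, exactly as in Lemma \ref{l:unitary} \eqref{l:unitary-3}.
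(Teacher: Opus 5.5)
Your proposal is correct. The ideal property and the $\K_1$ part follow essentially the paper's argument: a finite chain of unitaries with consecutive distances $<2$, intermediate unitaries moved into the smaller ideal by density, then logarithms as in Lemma \ref{l:unitary} \eqref{l:unitary-3}.

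There are two minor differences in the $\K_1$ part.
\begin{itemize}
\item The paper gets density of unitaries more cheaply. It applies Lemma \ref{l:unitary} \eqref{l:unitary-1.5} to $M_{\phi,\psi}$ itself and uses $\Ped(M_{\phi,\psi}) \subseteq M^{\Ped}_{\phi,\psi}$, whereas you rerun that argument for $I$.
\item You make explicit that $M_n(I)$ is closed under functional calculus, by checking $\cs(z)$ coordinatewise. The paper uses this tacitly when it asserts the logarithms lie in $M_{n+k}(M^{\Ped}_{\phi,\psi})$. It is genuinely needed: the end unitaries of the chain lie only in $\mathcal U_{n+k}(M^{\Ped}_{\phi,\psi})$, not in $\mathcal U_{n+k}(\Ped(M_{\phi,\psi}))$.
\end{itemize}

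The genuinely different step is density.
\begin{itemize}
\item The paper approximates $a$ and $\mathbf b$ separately by $a' \in \Ped(A)$ and $\mathbf b' \in \Ped(C([0,1],B))$. It then repairs the boundary conditions by adding the straight-line path $\mathbf h$ from $\phi(a') - \mathbf b'(0)$ to $\psi(a') - \mathbf b'(1)$. This loses a factor $3$ in the estimate and uses that $\phi$, $\psi$ and the point evaluations map Pedersen ideals into Pedersen ideals.
\item Your route applies $(t-\epsilon)_+$ to positive elements of $M_{\phi,\psi}$. The boundary conditions are then preserved automatically, because functional calculus commutes with $\phi$, $\psi$ and evaluations. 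The small cost is reducing to positive elements. It is essentially the standard proof that a Pedersen ideal is dense, run coordinatewise.
\end{itemize}
Both arguments work.
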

\begin{proof}
That $M_{\phi,\psi}^{\Ped}$ is an ideal is immediate. For density, consider $(a,\mathbf b) \in M_{\phi,\psi}$ and let $\epsilon > 0$. Find $a' \in \Ped(A)$ and $\mathbf b' \in \Ped(C([0,1],B))$ $\epsilon$-close to $a$ and $\mathbf b$ respectively. Note that $\mathbf b'(s) \in \Ped(B)$ for all $s\in [0,1]$. Let $\mathbf h \in C([0,1],B)$ be the straight line from $\phi(a') - \mathbf b'(0)$ to $\psi(a') - \mathbf b'(1)$. Then $(a',\mathbf b' + \mathbf h) \in M_{\phi,\psi}^\Ped$ is $3 \epsilon$-close to $(a,\mathbf b)$.

If $[(u_0,\mathbf v_0)]_1 = [(u_1,\mathbf v_1)]_1$, then there is some $k \in \mathbb N$ and a finite sequence $((u^{(j)}, \mathbf v^{(j)}))_{j=0}^N \subseteq  \mathcal U_{n+k}(M_{\phi,\psi})$ with $(u^{(0)}, \mathbf v^{(0)}) = (u_0,\mathbf v_0) \oplus (1_k, \mathbf 1_k)$ and $(u^{(N)}, \mathbf v^{(N)}) = (u_1,\mathbf v_1) \oplus (1_k, \mathbf 1_k)$ such that 
\[\| (u^{(j-1)}, \mathbf v^{(j-1)}) - (u^{(j)}, \mathbf v^{(j)}) \| < 2\] 
for each $1 \leq j \leq N$. By Lemma \ref{l:unitary} \eqref{l:unitary-1.5}, we may assume for each $1 \leq j \leq N-1$ that $(u^{(j)}, \mathbf v^{(j)}) \in \mathcal U_{n+k}(\Ped(M_{\phi,\psi})) \subseteq \mathcal U_{n+k}(M_{\phi,\psi}^\Ped)$. For each $1 \leq j \leq N$ the element $(a^{(j)},\mathbf b^{(j)}) = \tfrac{1}{2 \pi i} \log( (u^{(j-1)}, \mathbf v^{(j-1)})^\ast (u^{(j)}, \mathbf v^{(j)}))$ produced in Lemma \ref{l:unitary} \eqref{l:unitary-3} is in $M_{n+k}(M_{\phi,\psi}^\Ped)_{\mathrm{sa}}$ and satisfies 
\[(u^{(j)}, \mathbf v^{(j)}) = (u^{(j-1)}, \mathbf v^{(j-1)}) e^{2 \pi i (a^{(j)},\mathbf b^{(j)})}. \qedhere\]
\end{proof}

\begin{proposition}
Let $\phi,\psi \colon A \to B$ be $*$-homomorphisms between $\cs$-algebras. Then there is a (unique) homomorphism 
\[\Delta_{\phi,\psi} \colon \K_1(M_{\phi,\psi}) \to \frac{\T_{\mathbb R}(B)^\ast}{(\phi_\# - \psi_\#)(\T_{\mathbb R}(A)^\ast)} \]
such that $\Delta_{\phi,\psi}([(u,\mathbf v)]_1) = [\Det_B(\mathbf v)]$ for each $(u,\mathbf v) \in \mathcal U_n(M_{\phi,\psi}^\Ped)$. Moreover, $\Delta_{\phi,\psi}$ makes the following diagram commute.
\begin{equation}\label{eq:MTDelta}
\begin{tikzcd}
	{\K_0(B)} & {\K_1(M_{\phi,\psi})} \\
	{\T_{\mathbb R}(B)^\ast} & {\frac{\T_{\mathbb R}(B)^\ast}{(\phi_\# - \psi_\#)(\T_{\mathbb R}(A)^\ast)}}
	\arrow[from=1-1, to=1-2]
	\arrow["{\rho_B}", from=1-1, to=2-1]
	\arrow["{\Delta_{\phi,\psi}}", from=1-2, to=2-2]
	\arrow[two heads, from=2-1, to=2-2]
\end{tikzcd}
\end{equation}
\end{proposition}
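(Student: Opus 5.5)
We define $\Delta_{\phi,\psi}$ first on the group $\mathcal U_\infty(M_{\phi,\psi}^\Ped)$ and then show it descends to $\K_1$ using the presentation of $\K_1(M_{\phi,\psi})$ in Lemma \ref{l:K one torus}. For $(u,\mathbf v) \in \mathcal U_n(M_{\phi,\psi}^\Ped)$, the path $\mathbf v \colon [0,1] \to \mathcal U_n(B^\dagger)$ is norm-continuous with $1_n - \mathbf v(s) \in M_n(\Ped(B))$ for every $s$. Hence $\mathbf v$ is a norm-continuous path in $\mathcal U_n(\Ped(B))$ and $\Det_B(\mathbf v) \in \T_{\mathbb R}(B)^\ast$ is defined. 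We set
\[
\widetilde\Delta(u,\mathbf v) = [\Det_B(\mathbf v)] \in \T_{\mathbb R}(B)^\ast/(\phi_\#-\psi_\#)(\T_{\mathbb R}(A)^\ast).
\]
By Lemma \ref{l:basicdetproperties} \eqref{l:basicdetproperties 4}, $\widetilde\Delta$ is multiplicative. By Lemma \ref{l:basicdetproperties} \eqref{l:basicdetproperties 5} and \eqref{l:basicdetproperties 3}, it is compatible with the stabilisation $(u,\mathbf v) \mapsto (u,\mathbf v)\oplus(1_k,\mathbf 1_k)$. So $\widetilde\Delta$ is a homomorphism on $\mathcal U_\infty(M_{\phi,\psi}^\Ped)$.

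Well-definedness comes down to one computation, since by Lemma \ref{l:K one torus} the kernel of $\mathcal U_\infty(M^\Ped_{\phi,\psi}) \to \K_1(M_{\phi,\psi})$ is generated by exponentials. Let $(a,\mathbf b) \in M_m(M_{\phi,\psi}^\Ped)_{\mathrm{sa}}$. Then $e^{2\pi i(a,\mathbf b)} = (e^{2\pi i a}, e^{2\pi i \mathbf b})$, and $\mathbf b$ is a norm-continuous path in $M_m(\Ped(B))_{\mathrm{sa}}$ with $\mathbf b(0)=\phi(a)$ and $\mathbf b(1)=\psi(a)$. By Lemma \ref{l:basicdetproperties} \eqref{l:basicdetproperties 9},
\[
\Det_B(e^{2\pi i\mathbf b}) = \widehat{\psi(a)} - \widehat{\phi(a)} = -(\phi_\#-\psi_\#)(\widehat a),
\]
which vanishes in the quotient. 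Here we use that $a \in M_m(\Ped(A))_{\mathrm{sa}}$, so $\widehat a \in \T_{\mathbb R}(A)^\ast$, and that $\widehat{\phi(a)} = \phi_\#(\widehat a)$ by definition of the induced map. Products of such exponentials therefore map to $0$. The characterisation of equality of classes in Lemma \ref{l:K one torus} then shows that $\widetilde\Delta$ descends to $\Delta_{\phi,\psi}$ on $\K_1(M_{\phi,\psi})$. Every class in $\K_1(M_{\phi,\psi})$ is represented by some $(u,\mathbf v) \in \mathcal U_n(M^\Ped_{\phi,\psi})$, which gives uniqueness.

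For commutativity of \eqref{eq:MTDelta}, identify $\K_0(B) \cong \K_1(SB)$ as in the proof of Lemma \ref{l:mappingcone}. An element $x \in \K_0(B)$ is represented by the loop $\mathbf u_q^\ast\mathbf u_p \in \mathcal U_n(\Ped(SB))$ via \eqref{eq:K nought unitary loops}, with $p,q \in M_n(\Ped(B)^\dagger)$ and $p-q \in M_n(\Ped(B))$. Its image in $\K_1(M_{\phi,\psi})$ is $[(1_n,\mathbf u_q^\ast\mathbf u_p)]_1$, and $(1_n,\mathbf u_q^\ast\mathbf u_p)$ lies in $\mathcal U_n(M_{\phi,\psi}^\Ped)$. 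Proposition \ref{p:pairing} gives $\Det_B(\mathbf u_q^\ast\mathbf u_p) = \rho_B(x)$, so both routes around the square agree.

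The step needing the most care is compatibility of the loop representative with the identification $\K_1(SB) \cong \K_0(B)$ used in \eqref{eq:mappingtorus}, including its sign convention. If the convention there differs from \eqref{eq:K nought unitary loops} by a sign, the diagram still commutes, because the reversed loop has determinant $-\rho_B(x)$ by Lemma \ref{l:basicdetproperties} \eqref{l:basicdetproperties 6}. In that case we would fix the identification to match the one used in Lemma \ref{l:mappingcone}. The rest is formal once Lemma \ref{l:K one torus} is available.
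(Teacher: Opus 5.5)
Your proposal is correct and follows the paper's proof. You define the map on $\mathcal U_n(M_{\phi,\psi}^{\Ped})$, show that $e^{2\pi i(a,\mathbf b)}$ has determinant $\widehat{\psi(a)}-\widehat{\phi(a)}=(\psi_\#-\phi_\#)(\widehat a)$ so that it descends via Lemma \ref{l:K one torus}, and get commutativity from Proposition \ref{p:pairing} applied to $(1_n,\mathbf u_q^\ast\mathbf u_p)$. Your citation of Lemma \ref{l:basicdetproperties} \eqref{l:basicdetproperties 9} is actually the apt one, since $\mathbf b$ is only norm-continuous.

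One correction to your closing remark. With the opposite orientation of the Bott identification the square would only commute up to a sign; it would not ``still commute''. What makes the square commute is exactly the convention \eqref{eq:K nought unitary loops}, which is the one the paper uses to define $\K_0(B)\to\K_1(M_{\phi,\psi})$.
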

\begin{proof}
For any self-adjoint $(a,\mathbf b) \in M_n(M_{\phi, \psi}^\Ped)$ we have by Lemma \ref{l:unitary} \eqref{l:unitary-7} that
\[
\Det_{\phi, \psi}(e^{(a,\mathbf b)}) = \widehat{\mathbf b(1)} - \widehat{\mathbf b(0)} = (\psi_\#-  \phi_\#)(\widehat a).
\]
Hence $\Delta_{\phi,\psi}$ is well-defined by Lemma \ref{l:K one torus}.

The homomorphism $\K_0(B) \to \K_1(M_{\phi,\psi})$ in \eqref{eq:mappingtorus} is defined via the Bott map $\K_0(B) \to \K_1(SB)$ and thus sends $[p]_0 - [q]_0$ for projections $p,q \in M_n(\Ped(B)^\dagger)$ with $p-q \in M_n(\Ped(B))$ to $[(0,\mathbf u_q^\ast \mathbf u_p)]_1$. Commutativity of \eqref{eq:MTDelta} therefore follows from Proposition \ref{p:pairing}.
\end{proof}

Note that \eqref{eq:MTDelta} induces a map $\ker \rho_B \to \ker \Delta_{\phi, \psi}$ whose image is contained in the kernel of the map $\K_1(M_{\phi,\psi}) \to \K_1(A)$ in \eqref{eq:mappingtorus}.

\begin{proposition}[The $\Ext$-obstruction]
Let $\phi, \psi \colon A \to B$ be $\ast$-homo\-morphisms between $\cs$-algebras. If $\K_0(\phi) = \K_0(\psi)$, $\T_{\mathbb R}(\phi) = \T_{\mathbb R}(\psi)$ and $\Kd(\phi) = \Kd(\psi)$ then the sequence
\begin{equation}\label{eq:Linobs}
0 \to \ker \rho_B \to \ker \Delta_{\phi, \psi} \to \K_1(A) \to 0
\end{equation}
induced by \eqref{eq:mappingtorus} and \eqref{eq:MTDelta} is exact, and if $\phi$ and $\psi$ are asymptotically unitarily equivalent, the induced class in $\Ext^1_{\mathbb Z}(\K_1(A), \ker \rho_B)$ vanishes.
\end{proposition}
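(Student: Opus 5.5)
We first prove exactness of \eqref{eq:Linobs} and then show that an asymptotic unitary equivalence produces a splitting.

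\textbf{Exactness.} Since $\T_{\mathbb R}(\phi)=\T_{\mathbb R}(\psi)$ we have $\phi_\#=\psi_\#$, so the target of $\Delta_{\phi,\psi}$ is just $\T_{\mathbb R}(B)^\ast$. In the six-term sequence \eqref{eq:mappingtorus}, $\K_0(\phi)=\K_0(\psi)$ makes $\K_1(A)\to\K_0(B)$ vanish, so $\K_0(B)\to\K_1(M_{\phi,\psi})$ is injective. By \eqref{eq:MTDelta}, this injection restricts to an injection $\ker\rho_B\to\ker\Delta_{\phi,\psi}$.

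Exactness in the middle is a diagram chase. If $x\in\ker\Delta_{\phi,\psi}$ maps to $0$ in $\K_1(A)$, then $x$ comes from some $y\in\K_0(B)$. Then $\rho_B(y)=\Delta_{\phi,\psi}(x)=0$ by \eqref{eq:MTDelta}, so $y\in\ker\rho_B$.

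The real content is surjectivity of $\ker\Delta_{\phi,\psi}\to\K_1(A)$, and this is where $\Kd(\phi)=\Kd(\psi)$ enters. Take $u\in\mathcal U_n(\Ped(A))$. Since $[\phi(u)]_\Delta=[\psi(u)]_\Delta$, there is (after enlarging $n$) a path $\mathbf v$ in $\mathcal U_n(\Ped(B))$ from $\phi(u)$ to $\psi(u)$ with $\Det_B(\mathbf v)=0$. Then $(u,\mathbf v)\in\mathcal U_n(M^{\Ped}_{\phi,\psi})$ lies in $\ker\Delta_{\phi,\psi}$ and maps to $[u]_1$. By \eqref{eq:K one} every class in $\K_1(A)$ arises this way, so the map is surjective.

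\textbf{Vanishing of the Ext class.} Assume $\phi\sim_{\mathrm{asu}}\psi$ via a path $(w_t)_{t\in[0,\infty)}$ in $\mathcal U(\multialg B)$ with $w_t\psi(a)w_t^\ast\to\phi(a)$. The plan is to build a splitting homomorphism $\sigma\colon\K_1(A)\to\ker\Delta_{\phi,\psi}$.

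For $u\in\mathcal U_n(\Ped(A))$, reparametrise $[0,\infty]$ as $[0,1]$ and use Proposition \ref{p:Detasym} to define a path $\mathbf V_u$ from $\phi(u)$ to $\psi(u)$ by
\[
\mathbf V_u(t)=w_{s(t)}^{(n)}\,\psi(u)\,\bigl(w_{s(t)}^{(n)}\bigr)^\ast ,
\]
with endpoint value $\phi(u)$ at $s=\infty$. We may arrange that this path lies in $\mathcal U_n(\Ped(B))$ by using $\psi(u)$ conjugated by multipliers; to start the path at $w_0$ we may take $w_0=1$ after adjusting by a constant. Proposition \ref{p:Detasym} gives $\Det_B(\mathbf V_u)=0$, so $(u,\overline{\mathbf V_u})$ defines an element of $\ker\Delta_{\phi,\psi}$ lifting $[u]_1$.

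We set $\sigma([u]_1)=[(u,\overline{\mathbf V_u})]_1$ and must check that this is a well-defined homomorphism. Multiplicativity is clear, since $\mathbf V_{uu'}=\mathbf V_u\mathbf V_{u'}$. For well-definedness, if $u$ is a product of exponentials $e^{2\pi i a_j}$, we check that $(u,\overline{\mathbf V_u})$ is a product of the exponentials $e^{2\pi i (a_j,\mathbf b_j)}$ with $\mathbf b_j(t)=w_t\psi(a_j)w_t^\ast$ (suitably reparametrised). These are self-adjoint elements of $M^{\Ped}_{\phi,\psi}$ whose continuity at the endpoint follows from asymptotic equivalence, since the $a_j$ lie in a fixed separable algebra; this also uses $\psi(\Ped(A))\subseteq\Ped(B)$ and the fact that $\Ped$ is an ideal in the multipliers. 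Hence $(u,\overline{\mathbf V_u})$ is trivial in $\K_1(M_{\phi,\psi})$ by Lemma \ref{l:K one torus}.

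Then $\sigma$ splits the surjection, so the extension splits and its Ext class vanishes. I expect the main obstacle to be this well-definedness, in particular the technical point that the lifted paths stay inside $\Ped(B)$ uniformly.
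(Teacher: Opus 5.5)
Your proposal is correct and follows the paper's proof essentially step for step. It uses the same diagram chase for injectivity and middle exactness, the same lift $(u\oplus 1_k,\mathbf v)$ with $\Det_B(\mathbf v)=0$ for surjectivity, and the same splitting $[u]_1\mapsto[(u,\overline{\mathbf V_u})]_1$, which is multiplicative, sends exponentials to exponentials, and lands in $\ker\Delta_{\phi,\psi}$ by Proposition \ref{p:Detasym}.

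One small correction, on a point the paper also passes over silently: you cannot normalise $w_0=1$ by multiplying by a constant unitary, because that changes the limit. Instead, pass to $\phi\oplus 0,\psi\oplus 0\colon A\to M_2(B)$, use $w_t\oplus w_t^\ast$, and prepend a rotation path in $\mathcal U_2(\mathcal M(B))$ from $1_2$ to $w_0\oplus w_0^\ast$. This changes nothing, since by naturality the corner inclusion $B\to M_2(B)$ induces an isomorphism of the two extensions.
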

\begin{proof}
Since $\K_0(\phi) = \K_0(\psi)$ the map $\ker \rho_B \to \ker \Delta_{\phi, \psi}$ is injective by exactness of \eqref{eq:mappingtorus}. That the composition $\ker \rho_B \to \K_1(A)$ is zero follows likewise from \eqref{eq:mappingtorus}. Suppose $x\in \ker \Delta_{\phi, \psi}$ vanishes in $\K_1(A)$. From exactness of \eqref{eq:mappingtorus} we may find $y\in \K_0(B)$ whose image in $\K_1(M_{\phi, \psi})$ is $x$. By \eqref{eq:MTDelta}, $\rho_B(y)$ is in the image of $\phi_\# - \psi_\#$, but this map is zero by duality since $\T_{\mathbb R}(\phi) = \T_{\mathbb R}(\psi)$. Hence $y\in \ker \rho_B$. 

Hence for exactness of \eqref{eq:Linobs} it remains to prove surjectivity of the map $\ker \Delta_{\phi, \psi} \to \K_1(A)$. Let $x\in \K_1(A)$ and pick $n \in \mathbb N$ and $u\in \mathcal U_n(\Ped(A))$ with $[u]_1 = x$. Since $\Kd(\phi) = \Kd(\psi)$ it follows that $\phi(u) \sim_{\Delta} \psi(u)$ and hence we may find $k \in \mathbb N$ and a continuous unitary path $\mathbf v \colon [0,1] \to \mathcal U_{n+k}(\Ped(B))$ such that $\mathbf v(0)= \phi(u) \oplus 1_k$, $\mathbf v(1) = \psi(u) \oplus 1_k$ and $\Det_B(\mathbf v) =0$. The induced class of $(u \oplus 1_k, \mathbf v) \in \mathcal U_{n+k}(M_{\phi, \psi}^\Ped)$ in $\K_1(M_{\phi, \psi})$ lifts $x$ and is in $\ker\Delta_{\phi,\psi}$. Hence, the sequence \eqref{eq:Linobs} is exact.

Suppose now that $\phi$ and $\psi$ are asymptotically unitarily equivalent and let $(u_t)_{t\in [0,1)}$ be a norm-continuous path in $\mathcal U(\mathcal M(B))$ such that $\Ad u_t \circ \phi \to \psi$ in the point-norm topology as $t\to 1$. As usual, we abuse notation and also write $\phi,\psi$ for the matrix amplified maps, and similarly $\Ad u_t$ for the matrix amplified automorphism on $M_n(B)$. Define a homomorphism
\[
\mathcal U_\infty(\Ped(A)) \to \mathcal U_\infty(M_{\phi, \psi}^\Ped), \quad v \mapsto (v, \mathbf w_v),
\]
where for $a \in M_n(\Ped(A)^\dagger)$ the element $\mathbf w_a$ is given by
\[
\mathbf w_a(t) = \Ad u_t (\phi(a)), \textrm{ for }t\in [0,1) , \qquad \mathbf w_a(1) = \psi(a).
\]
This defines a splitting $\K_1(A) \to \K_1(M_{\phi,\psi})$ because if $v = e^{2 \pi i a}$ for $a \in M_n(\Ped(A))_{\mathrm{sa}}$, then  $(v, \mathbf w_v) = e^{2\pi i (a,\mathbf w_a)}$.
Moreover, $\Det_{B}(\mathbf w_v) = 0$ for all $v\in \mathcal U_\infty(\Ped(A))$ by Proposition \ref{p:Detasym}, and so this splitting takes values in $\ker \Delta_{\phi, \psi}$.
\end{proof}

\section{$\Kd$ as an invariant for group actions}

Let $\alpha \colon G \to \Aut(A)$ be an action of a discrete group $G$ on a $\cs$-algebra $A$. Corollary \ref{c:direct sum} says that $\Kd(A)$ is recovered by K-theory and traces as 
\[\Kd(A) \cong \T_{\mathbb R}(A)^\ast / \imrho{A} \oplus \K_1(A).\]
This explains why $\Kd(A)$, and similarly $\HKd(A)$, are not needed as part of the invariant for classifying simple $\mathrm{C}^\ast$-algebras. However, this direct sum decomposition comes from a splitting which is not natural, thus the group $\Kd(A)$ need not decompose $G$-equi\-variantly as a direct sum $\T_{\mathbb R}(A)^\ast / \imrho{A} \oplus \K_1(A)$. Example \ref{e:Bunce Deddens} demonstrates this.

\begin{proposition}\label{prop:no eq split}
Pick $\theta \in \mathbb R \setminus \mathbb Z[\tfrac{1}{2}]$ and consider the action of $\mathbb Z$ on the Bunce--Deddens algebra $A$ via the automorphism $\gamma = \gamma_{e^{2 \pi i \theta}}$ as in Example \ref{e:Bunce Deddens}. Then there is no $\mathbb Z$-equivariant isomorphism between
\[ \Kd(A) \quad \textrm{and} \quad  \T_{\mathbb R}(A)^\ast/{\imrho{A}} \oplus \K_1(A). \]
\end{proposition}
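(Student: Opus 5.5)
We compare the $\mathbb Z[\mathbb Z]$-module structures, i.e.\ the action of the generator. On $\T_{\mathbb R}(A)^\ast/\imrho{A} \oplus \K_1(A) \cong \mathbb R/\mathbb Z[\tfrac12] \oplus \mathbb Z$ the automorphism $\gamma$ acts by $\gamma_\# \oplus \K_1(\gamma)$. By Example \ref{e:Bunce Deddens}, $\gamma$ is trivial on $\T_{\mathbb R}(A)^\ast$ and on $\K_1(A)$, so this direct sum is a trivial $\mathbb Z$-module. On $\Kd(A)$, by \eqref{eq:K1DBD}, $\gamma$ acts by the matrix $\begin{psmallmatrix} \id & \phi \\ 0 & \id\end{psmallmatrix}$ with $\phi(1) = [\theta] \in \mathbb R/\mathbb Z[\tfrac12]$. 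This element is nonzero precisely because $\theta \notin \mathbb Z[\tfrac12]$. I would verify this formula directly: $\gamma(u) = e^{2\pi i\theta}u$, hence $[\gamma(u)]_\Delta - [u]_\Delta = [e^{2\pi i \theta}1]_\Delta = \Thd_A(\widehat{\theta 1}) = \Thd_A(\theta)$, since the unique normalised trace gives $\widehat{\theta 1}=\theta$ under $\T_{\mathbb R}(A)^\ast \cong \Aff\T_1(A) \cong \mathbb R$ (Proposition \ref{p:tracially unital trace space identification}). This element is nonzero in $\Kd(A)$ by exactness of \eqref{eq:exactseq}, since $\theta\notin\rho_A(\K_0(A)) = \mathbb Z[\tfrac12]$.

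The argument then uses an invariant of $\mathbb Z$-modules: the subgroup $(\gamma - \id)M$, or equivalently whether the action is trivial. Any $\mathbb Z$-equivariant group isomorphism $f \colon \Kd(A) \to \T_{\mathbb R}(A)^\ast/\imrho{A} \oplus \K_1(A)$ satisfies $f\circ(\gamma_\ast - \id) = (\gamma'_\ast - \id)\circ f$. On the right side $\gamma'_\ast - \id = 0$. On the left, $(\Kd(\gamma) - \id)([u]_\Delta) = \Thd_A(\theta) \neq 0$. Then $f(\Thd_A(\theta)) = 0$, which contradicts injectivity of $f$.

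The only real check is the computation of $\Kd(\gamma)$ on $[u]_\Delta$, together with the nonvanishing of $\Thd_A(\theta)$. Both follow from Proposition \ref{p:Thomsen map} and Proposition \ref{p:exactseq}, given the identification $\imrho{A} = \mathbb Z[\tfrac12]\subseteq\mathbb R$ recorded in Example \ref{e:Bunce Deddens}. One also needs that $\gamma$ acts trivially on $\T_{\mathbb R}(A)^\ast$, which holds because the trace is unique. No topology is needed, so the conclusion holds even for abstract group isomorphisms.
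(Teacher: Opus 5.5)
Your proof is correct and takes the same approach as the paper. The paper's proof just notes that $\Kd(\gamma)$ is nontrivial by \eqref{eq:K1DBD}, while $\gamma$ acts trivially on $\K_1(A)$ and on $\T_{\mathbb R}(A)^\ast$; you additionally verify the nontriviality directly, showing $(\Kd(\gamma)-\id)([u]_\Delta) = \Thd_A(\theta) \neq 0$ via Proposition \ref{p:Thomsen map} and exactness of \eqref{eq:exactseq}.
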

\begin{proof}
The automorphism $\Kd(\gamma)$ is nontrivial by \eqref{eq:K1DBD}, while $\K_1(\gamma)$ and $\T_{\mathbb R}(\gamma)$ are both trivial.
\end{proof}
A similar analysis also applies to Examples \ref{e:odometer2} and \ref{e:irr}. We can go further than this; $\Kd$ can be used to find obstructions to cocycle conjugacy of group actions on $\cs$-algebras.

\begin{definition}
Let $G$ be a discrete group and let $A$ and $B$ be $\cs$-algebras. Actions $\alpha \colon G \to \Aut(A)$ and $\beta \colon G \to \Aut(B)$ are said to be \emph{cocycle conjugate} if there exists a $*$-isomorphism $\phi \colon A \to B$ and a map $\mathbf u \colon G \to \mathcal U(\multialg{B})$ such that $\mathbf u(1) = 1$ and $\mathbf u({gh}) = \mathbf u(g) \beta(g)(\mathbf u(h))$ for each $g,h \in G$ (i.e. $\mathbf u$ is a $\beta$-cocycle), and $\phi \circ \alpha(g) = \Ad (\mathbf u(g)) \circ \beta(g) \circ \phi$ for each $g \in G$. The actions $\alpha$ and $\beta$ are \emph{stably cocycle conjugate} if $\alpha \otimes \id \colon G \to \Aut(A \otimes \mathcal K)$ and $\beta \otimes \id \colon G \to \Aut(B \otimes \mathcal K)$ are cocycle conjugate.
\end{definition}

For an action $\alpha \colon G \to \Aut(A)$, we write $\Kd(A,\alpha)$ for the structure of $\Kd(A)$ equipped additionally with its $G$-module structure from $\alpha$,  and similarly $\T_{\mathbb R}(A,\alpha)^\ast$, $\K_0(A,\alpha)$ and $\K_1(A,\alpha)$.
It is immediate from unitary equivalence invariance that
a cocycle conjugacy $(\phi,\mathbf u) \colon (A,\alpha) \to (B,\beta)$ induces a $G$-equivariant isomorphism $\Kd(\phi) \colon \Kd(A,\alpha) \isom \Kd(B,\beta)$. This applies equally to $\phi_\#$ and $\K_i(\phi)$. In particular, if $\alpha$ and $\beta$ are stably cocycle conjugate, there are $G$-equivariant isomorphisms
\[\begin{tikzcd}
	{\K_0(A,\alpha)} & {\mathrm{T}_{\mathbb R}(A,\alpha)^\ast} & {\Kd(A,\alpha)} & {\K_1(A,\alpha)} \\
	{\K_0(B,\beta)} & {\mathrm{T}_{\mathbb R}(B,\beta)^\ast} & {\Kd(B,\beta)} & {\K_1(B,\beta)}
	\arrow["{\rho_A}", from=1-1, to=1-2]
	\arrow["\cong", from=1-1, to=2-1]
	\arrow["{\Thd_A}", from=1-2, to=1-3]
	\arrow["\cong", from=1-2, to=2-2]
	\arrow[from=1-3, to=1-4]
	\arrow["\cong", from=1-3, to=2-3]
	\arrow["\cong", from=1-4, to=2-4]
	\arrow["{\rho_B}", from=2-1, to=2-2]
	\arrow["{\Thd_B}", from=2-2, to=2-3]
	\arrow[from=2-3, to=2-4]
\end{tikzcd}\]
making the diagram commute.

Recall that an action $\alpha \colon G \to \Aut(A)$ is \emph{strongly outer} if, for each $g \in G \setminus \{1\}$ and each positive trace $\tau \in \T_{+}(A)$, the automorphism induced by $\alpha(g)$ on $\pi_\tau(A)''$ is outer. This is a standard hypothesis (see e.g. \cite{MatuiSatoStronglyOuter, MatuiSatoStronglyOuterII}) when considering the classification of group actions up to cocycle conjugacy in the tracial setting.

\begin{theorem}
Let $A = A_\theta$ be the irrational rotation algebra for $\theta \in \mathbb R \setminus \mathbb Q$. 
Let $(s_i,t_i) \in \mathbb R^2 \setminus (\mathbb Q + \theta \mathbb Q)^2$ for $i=0,1$. Set $\alpha_i \colon \mathbb Z \to \Aut(A)$ to be the action generated by the automorphism $\alpha_{z_i,w_i}$ with $z_i = e^{2 \pi i s_i}$ and $w_i = e^{2 \pi i t_i}$ as in Example \ref{e:irr}. 

Then $\alpha_0$ and $\alpha_1$ are strongly outer, and there exists a $\mathrm{KK}^{\mathbb Z}$-equivalence 
\[x\in \mathrm{KK}^{\mathbb Z}((A, \alpha_0), (A, \alpha_1))\] 
such that $\K_0(x)([1_A]_0) = [1_A]_0$ and $\rho_A \circ \K_0(x) = \rho_A$. However, if $\theta$ is not an algebraic number of degree $2$, and
\[\begin{bmatrix} [s_0] \\ [t_0] \end{bmatrix}, \begin{bmatrix} [s_1] \\ [t_1] \end{bmatrix} \in \mathbb R/(\mathbb Z + \theta \mathbb Z) \oplus \mathbb R/(\mathbb Z + \theta \mathbb Z)\]
are in distinct orbits under the action of $\mathrm{GL}_2(\mathbb Z)$ by matrix multiplication, then $(\T_{\mathbb R}(A,\alpha_0)^\ast \xrightarrow{\Thd_A} \Kd(A,\alpha_0)) \not \cong (\T_{\mathbb R}(A,\alpha_1)^\ast \xrightarrow{\Thd_A} \Kd(A,\alpha_1))$. In particular, $\alpha_0$ and $\alpha_1$ are not (stably) cocycle conjugate. 
\end{theorem}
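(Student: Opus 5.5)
Three things have to be shown: strong outerness, the existence of the $\mathrm{KK}^{\mathbb Z}$-equivalence, and the $\Kd$-obstruction. We take them in that order.

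\textbf{Strong outerness.} $A_\theta$ has a unique tracial state $\tau$, and $\pi_\tau(A)''$ is the hyperfinite $\mathrm{II}_1$ factor $\mathcal R$. The automorphism $\alpha_{z,w}^k$, for $k\neq 0$, extends to a gauge automorphism of $\mathcal R$. If it were inner, say implemented by a unitary $W \in \mathcal R$, then $W$ would be an eigenvector for the gauge action of $\mathbb T^2$ on the $L^2$-Fourier expansion. So $W$ would be a scalar times $U^mV^n$. Comparing $\Ad(U^mV^n)$ with $\alpha_{z^k,w^k}$ on $U$ and $V$ forces $z^k = e^{2\pi i n\theta}$ and $w^k = e^{-2\pi i m\theta}$ (up to sign conventions). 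Hence $ks_i \in \mathbb Z+\theta\mathbb Z$, which contradicts $s_i \notin \mathbb Q+\theta\mathbb Q$ (and likewise for $t_i$). I would write out the Fourier coefficient computation to make this precise.

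\textbf{$\mathrm{KK}^{\mathbb Z}$-equivalence.} Each $\alpha_i$ is homotopic, through the actions $\alpha_{z,w}$ for $(z,w)\in\mathbb T^2$, to the trivial action, since $\mathbb T^2$ is path connected. A homotopy of $\mathbb Z$-actions $(\alpha_{z(t),w(t)})_{t\in[0,1]}$ gives an equivariant homotopy $C([0,1],A)$ with evaluation maps. Evaluation at an endpoint of the path algebra $C([0,1],A)$ is a $\mathrm{KK}^{\mathbb Z}$-equivalence, because $\mathbb Z$ is amenable and evaluation is a homotopy equivalence equivariantly; equivalently one can invoke homotopy invariance of $\mathrm{KK}^{\mathbb Z}$ for homotopic actions. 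Composing the equivalences $(A,\alpha_0)\sim(A,\mathrm{triv})\sim(A,\alpha_1)$ gives $x$. At each time the induced map on $\K_0$ is the identity, since gauge automorphisms act trivially on $\K_0(A_\theta)$. So $[1]_0$ and $\rho_A$ are preserved.

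\textbf{$\Kd$-obstruction.} Suppose $\Phi$ is an isomorphism of the pairs intertwining the generators, with components $\gamma$ on $\T_{\mathbb R}(A)^\ast=\mathbb R$ and $\Phi$ on $\Kd(A)$. Then $\gamma$ is a continuous linear map $\lambda\in\mathbb R^\times$, so $\Phi$ induces multiplication by $\lambda$ on $\mathbb R/(\mathbb Z+\theta\mathbb Z)$. The map $\lambda$ must also preserve the kernel $\mathbb Z+\theta\mathbb Z$ of $\Thd_A$, by exactness. Since $\theta$ is not quadratic, $\lambda(\mathbb Z+\theta\mathbb Z)=\mathbb Z+\theta\mathbb Z$ forces $\lambda=\pm1$. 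Indeed, $\lambda$ would be a unit in an order of the field $\mathbb Q(\theta)$ stabilising the lattice, and then $\theta$ would satisfy a quadratic equation unless $\lambda\in\mathbb Q$, hence $\lambda=\pm1$.

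On quotients, $\Phi$ induces some $g\in \mathrm{GL}_2(\mathbb Z)$ on $\K_1=\mathbb Z^2$. In the splitting coordinates of \eqref{eq:K1Dirrational}, write
\[
\Phi=\begin{bmatrix}\pm1 & c\\ 0& g\end{bmatrix},\qquad
\Kd(\alpha_i)=\begin{bmatrix}1& r_i\\0&1\end{bmatrix},
\]
where $r_i=([s_i],[t_i])$ is a row vector. The equivariance condition $\Phi\Kd(\alpha_0)=\Kd(\alpha_1)\Phi$ compares upper-right entries and gives $\pm r_0 = r_1 g$. After transposing, $r_1^T=\pm (g^{T})^{-1}r_0^T$. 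Since $\pm (g^T)^{-1}\in\mathrm{GL}_2(\mathbb Z)$, the two vectors lie in the same orbit, which is a contradiction.

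The stable statement follows from Corollary~\ref{c:K1Dfullher} and the diagram of $G$-equivariant isomorphisms induced by cocycle conjugacy preceding the theorem, applied to the stabilisations. The main obstacle is the careful number-theoretic step $\lambda=\pm1$ and justifying that $\gamma$ is linear; positivity and continuity give $\mathbb R$-linearity.
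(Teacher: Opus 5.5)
\textbf{Gap in the $\mathrm{KK}^{\mathbb Z}$ step.} Write $\Gamma$ for the action $f\mapsto(t\mapsto\alpha_{z(t),w(t)}(f(t)))$ on $C([0,1],A)$. The evaluation $\mathrm{ev}_0\colon(C([0,1],A),\Gamma)\to(A,\alpha_0)$ is \emph{not} an equivariant homotopy equivalence. The obvious inverse, constant paths, is not $\Gamma$-equivariant, because the action varies with $t$. In fact, under the theorem's hypotheses no equivariant homotopy inverse $s$ can exist. If it did, $\phi=\mathrm{ev}_1\circ s\colon(A,\alpha_0)\to(A,\alpha_1)$ would be equivariant and non-equivariantly homotopic to $\mathrm{id}_A$ (via $\mathrm{ev}_t\circ s$). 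It would then be unital, since $\tau(\phi(1))=1$ and $\tau$ is faithful, with $\phi_\#=\mathrm{id}$ and $\K_1(\phi)=\mathrm{id}$. Hence $\Kd(\phi)=\begin{bmatrix}1&c\\0&1\end{bmatrix}$, and your own intertwining computation forces $r_0=r_1$, contradicting the orbit hypothesis.

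Your alternative, ``homotopy invariance of $\mathrm{KK}^{\mathbb Z}$ for homotopic actions'', is not a formal property either. The homotopy invariance built into $\mathrm{KK}^G$ concerns homotopies of equivariant maps with the actions held fixed. For $G=\mathbb Z$ the statement you want is true, but it is exactly the nontrivial content of this step. It does not follow formally from amenability; a genuine theorem has to be invoked.

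\textbf{How to repair it.} Use the fact that for $G=\mathbb Z$, an equivariant $*$-homomorphism which is a non-equivariant KK-equivalence is a $\mathrm{KK}^{\mathbb Z}$-equivalence. This is the Meyer--Nest localisation result: $\mathbb Z$ satisfies $\gamma=1$ and its only compact subgroup is trivial. With this, $\mathrm{ev}_0$ and $\mathrm{ev}_1$ are $\mathrm{KK}^{\mathbb Z}$-equivalences, and $x=[\mathrm{ev}_1]\circ[\mathrm{ev}_0]^{-1}$ works. Its image in $\mathrm{KK}(A,A)$ is $1$ because $\mathrm{ev}_0\simeq\mathrm{ev}_1$ non-equivariantly. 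That, rather than gauge automorphisms acting trivially on $\K_0$, is the right reason $\K_0(x)=\mathrm{id}$. Passing through the trivial action is also unnecessary. The paper argues more directly: $[\alpha_i]=[\mathrm{id}_A]$ in $\mathrm{KK}(A,A)$, and the theorem of Meyer it cites lifts $\mathrm{id}_A$ to a $\mathrm{KK}^{\mathbb Z}$-equivalence.

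\textbf{The other two parts are fine.} Your Fourier-coefficient argument is a valid direct substitute for the lemma the paper cites for strong outerness. Note that it yields both $ks_i$ and $kt_i$ in $\mathbb Z+\theta\mathbb Z$, which is what contradicts $(s_i,t_i)\notin(\mathbb Q+\theta\mathbb Q)^2$; the hypothesis does not say $s_i$ itself lies outside $\mathbb Q+\theta\mathbb Q$. The $\Kd$-obstruction is the paper's argument. Allowing $\lambda=-1$ instead of using positivity of $\gamma$ is harmless, since $-1\in\mathrm{GL}_2(\mathbb Z)$.
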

\begin{proof}
Strong outerness of $\alpha_i$ for $i = 0,1$ follows from \cite[Lemma 3.2]{OP06} applied to $\alpha_i^n$, $n \in \mathbb Z \setminus \{0\}$, using that one of $ns_i$ and $nt_i$ is not in $\mathbb Z + \theta \mathbb Z$.  

For $i=0,1$ the KK-class $[\alpha_i] \in \mathrm{KK}(A,A)$ must be $[\id_A]$ because each $\alpha_i$ is homotopic to $\id_A$. Direct application of \cite[Theorem 2.12]{MeyerEqKKClass} yields a $\mathrm{KK}^{\mathbb Z}$-equivalence $x \in \mathrm{KK}^{\mathbb Z}((A, \alpha_0), (A, \alpha_1))$ which maps to $\id_A \in \mathrm{KK}(A,A)$. Thus $\K_\ast(x) =  \id_{\K_\ast (A)}$ trivially satisfies $\K_0(x)([1_A]_0) = [1_A]_0$ and $\rho_A \circ \K_0(x) = \rho_A$.

Suppose now that $\theta$ is not algebraic of degree $2$ and that there are automorphisms $\beta \in \Aut(\Kd(A))$ and $\gamma \in \Aut(\T_{\mathbb R}(A)^\ast)$ such that $\beta \circ \Kd(\alpha_{z_0,w_0}) = \Kd(\alpha_{z_1,w_1}) \circ \beta$ and $\beta \circ \Thd_A = \Thd_A \circ \gamma$ (note that $(\alpha_{z_0,w_0})_\# = (\alpha_{z_1,w_1})_\# = \mathrm{id}$ so $\gamma$ intertwines these). Then $\gamma \in \Aut(\mathbb R)$ is order-preserving and hence given by multiplication by some fixed $\lambda > 0$. From $\beta \circ \Thd_A = \Thd_A \circ \gamma$ it follows that $\lambda(\mathbb Z + \theta \mathbb Z) = (\mathbb Z + \theta \mathbb Z)$. Because $\theta$ is not algebraic of degree $2$ we must have $\lambda = 1$. As $\mathbb R$ is divisible there are no non-zero homomorphisms $\mathbb R/(\mathbb Z + \theta \mathbb Z) \to \mathbb Z$, so $\beta$ must be of the form 
\[ \beta = \begin{bmatrix}  1 & k & \ell \\ 0 & a & b \\ 0 & c & d \end{bmatrix} \in \Aut(\mathbb R/(\mathbb Z + \theta \mathbb Z) \oplus \mathbb Z \oplus \mathbb Z)  \]
for some $\begin{bmatrix} a & b \\  c & d \end{bmatrix} \in \mathrm{GL}_2(\mathbb Z)$ and $k, \ell \in \mathbb R/(\mathbb Z + \theta \mathbb Z)$. 
Plugging the description \eqref{eq:K1Dirrational} of $\Kd(\alpha_{z_0,w_0})$ and $\Kd(\alpha_{z_1,w_1})$ into the equation $\beta  \circ (\Kd(\alpha_{z_0,w_0})- \id) = (\Kd(\alpha_{z_1,w_1}) - \id) \circ \beta$ yields
\[ \begin{bmatrix} [s_0] \\ [t_0] \end{bmatrix} = \begin{bmatrix} a & c \\  b & d \end{bmatrix} \begin{bmatrix} [s_1] \\ [t_1] \end{bmatrix} \in \mathbb R/(\mathbb Z + \theta \mathbb Z) \oplus \mathbb R/(\mathbb Z + \theta \mathbb Z). \qedhere \]
\end{proof}

\newcommand{\etalchar}[1]{$^{#1}$}

\end{document}